\documentclass[reqno,english]{amsart}%
\usepackage{amsfonts,amsmath,latexsym,verbatim,amscd,mathrsfs,color,array}
\usepackage[colorlinks=true]{hyperref}
\usepackage{amsmath,amssymb,amsthm,amsfonts,graphicx,color}
\usepackage{amssymb}
\usepackage{pdfsync}
\usepackage{epstopdf}
\usepackage{cite}
\usepackage{graphicx}
\usepackage{amsmath}
\usepackage{amsfonts}%
\setcounter{MaxMatrixCols}{30}
\providecommand{\U}[1]{\protect\rule{.1in}{.1in}}

\numberwithin{equation}{section}

\newtheorem{theorem}{Theorem}[section]
\newtheorem{corollary}{Corollary}[section]
\newtheorem{lemma}{Lemma}[section]
\newtheorem{proposition}{Proposition}[section]
\newtheorem{remark}{Remark}[section]
\newtheorem{definition}{Definition}[section]

\numberwithin{equation}{section}

\newcommand{\bbr}{\mathbb{R}}

\newcommand{\bbn}{\mathbb{N}}
\newcommand{\ve}{\varepsilon}

\newcommand{\bd}{\begin{definition}}
\newcommand{\ed}{\end{definition}}

\newcommand{\br}{\begin{remark}}
\newcommand{\er}{\end{remark}}

\newcommand{\be}{\begin{equation}}
\newcommand{\ee}{\end{equation}}

\newcommand{\bc}{\begin{corollary}}
\newcommand{\ec}{\end{corollary}}

\begin{document}

\title[Caffarelli-Kohn-Nirenberg inequality]{Stability of  the Caffarelli-Kohn-Nirenberg inequality along Felli-Schneider curve: critical points at infinity}

\author[J. Wei]{Juncheng Wei}
\address{\noindent Department of Mathematics, Chinese University of Hong Kong,
Shatin, NT, Hong Kong}
\email{wei@math.cuhk.edu.hk}

\author[Y.Wu]{Yuanze Wu}
\address{\noindent  School of Mathematics, China
University of Mining and Technology, Xuzhou, 221116, P.R. China }
\email{wuyz850306@cumt.edu.cn}

\begin{abstract}
In this paper, we consider the following Caffarelli-Kohn-Nirenberg (CKN for short) inequality
\begin{eqnarray*}
\bigg(\int_{\bbr^d}|x|^{-b(p+1)}|u|^{p+1}dx\bigg)^{\frac{2}{p+1}}\leq S_{a,b}\int_{\bbr^d}|x|^{-2a}|\nabla u|^2dx,
\end{eqnarray*}
where $u\in D^{1,2}_{a}(\bbr^d)$, $d\geq2$, $p=\frac{d+2(1+a-b)}{d-2(1+a-b)}$ and
\begin{eqnarray}\label{eq0003}
\left\{\aligned
&a<b<a+1,\quad d=2,\\
&a\leq b<a+1,\quad d\geq3.
\endaligned
\right.
\end{eqnarray}
Based on the ideas of \cite{DSW2024,FP2024}, we develop a suitable strategy to derive the following sharp stability of the critical points at infinity of the above CKN inequality in the degenerate case $d\geq2$, $b=b_{FS}(a)$ (Felli-Schneider curve)  and $a<0$:  let $\nu \in {\mathbb N}$ and $u\in D^{1,2}_{a}(\bbr^d)$ be an nonnegative function such that
\begin{eqnarray}\label{eqqqnew0001}
\left(\nu-\frac12\right)\left(S_{a,b}^{-1}\right)^{\frac{p+1}{p-1}}<\|u\|^2_{D^{1,2}_a(\bbr^d)}<\left(\nu+\frac12\right)\left(S_{a,b}^{-1}\right)^{\frac{p+1}{p-1}}
\end{eqnarray}
Then
\begin{eqnarray*}
\inf_{\overrightarrow{\alpha}_{\nu}\in\left(\bbr_+\right)^{\nu}, \overrightarrow{\lambda}_{\nu}\in\bbr^\nu}\left\|u-\sum_{j=1}^{\nu}\alpha_jW_{\lambda_j}\right\|\lesssim\left\|-div(|x|^{-a}\nabla u)-|x|^{-b(p+1)}|u|^{p-1}u\right\|_{W^{-1,2}_a(\bbr^d)}^{\frac{1}{3}}
\end{eqnarray*}
as $\left\|-div(|x|^{-a}\nabla u)-|x|^{-b(p+1)}|u|^{p-1}u\right\|_{W^{-1,2}_a(\bbr^d)}\to0$,
where $b_{FS}(a)$ is the well known Felli-Schneider curve.  The above stability is sharp in the sense that the order of the right hand side can not be improved.  The significant finding in our result is that in the degenerate case, {\it the power of the optimal stability is an absolute constant $1/3$} (independent of $p$ and $\nu$) which is quite different from the non-degenerate case \cite{DSW2024,WW2022}.  We also believe that our strategy of proofs will  be very useful in studying many other degenerate problems.

\vspace{3mm} \noindent{\bf Keywords:} Caffarelli-Kohn-Nirenberg inequality; Sharp stability; the Felli-Schneider curve; Critical points at infinity.

\vspace{3mm}\noindent {\bf AMS} Subject Classification 2010: 35B09; 35B33; 35B40; 35J20.%

\end{abstract}

\date{}
\maketitle

\section{Introduction}
\subsection{Background and Previous Results.}
Let $d\geq2$ be a positive integer and $D^{1,2}_{a}(\bbr^d)$ be the Hilbert space given by
\begin{eqnarray}\label{eqn886}
D^{1,2}_{a}(\bbr^d)=\left\{u\in D^{1,2}(\bbr^d)\mid \int_{\bbr^d}|x|^{-2a}|\nabla u|^2dx<+\infty\right\}
\end{eqnarray}
with the inner product
\begin{eqnarray*}
\langle u,v \rangle_{D^{1,2}_{a}(\bbr^d)}=\int_{\bbr^d}|x|^{-2a}\nabla u\nabla vdx
\end{eqnarray*}
and the induced norm $\|\cdot\|_{D^{1,2}_a(\bbr^d)}=\langle \cdot,\cdot \rangle_{D^{1,2}_{a}(\bbr^d)}^{\frac12}$, where $D^{1,2}(\bbr^d)=\dot{W}^{1,2}(\bbr^d)$ is the usual homogeneous Sobolev space (cf. \cite[Definition~2.1]{FG2021}) with $D^{-1,2}(\bbr^d)$ being the dual space.  Then the following Caffarelli-Kohn-Nirenberg (CKN for short in what follows) inequality
\begin{eqnarray}\label{eq0001}
\bigg(\int_{\bbr^d}|x|^{-b(p+1)}|u|^{p+1}dx\bigg)^{\frac{2}{p+1}}\leq S_{a,b}\int_{\bbr^d}|x|^{-2a}|\nabla u|^2dx,
\end{eqnarray}
which is established by Caffarelli, Kohn and Nirenberg in the celebrated paper \cite{CKN1984} in a more general version,
holds for all $u\in D^{1,2}_{a}(\bbr^d)$,
where $d\geq2$, $p=\frac{d+2(1+a-b)}{d-2(1+a-b)}$ and
\begin{eqnarray}\label{eq0003}
\left\{\aligned
&a<b<a+1,\quad d=2,\\
&a\leq b<a+1,\quad d\geq3.
\endaligned
\right.
\end{eqnarray}
Here, for the sake of simplicity, we denote $a_c=\frac{d-2}{2}$, as in \cite{DELT2009,DEL2012,DEL2016}.

\vskip0.12in

As pointed out by Catrina and Wang in \cite{CW2001}, a fundamental task in understanding  a functional inequality is to study the best constants, existence (and nonexistence) of extremal functions, as well as their qualitative properties and classifications, which have played important roles in many applications by virtue of the complete knowledge on the minimizers.  For the CKN inequality~\eqref{eq0001}, it is known that up to dilations $u_{\tau}(x)=\tau^{a_c-a}u(\tau x)$ and scalar multiplications $Cu(x)$ (also up to translations $u(x+y)$ for the spacial case $a=b=0$), the radial function $W(x)$ given by
\begin{eqnarray}\label{eq0004}
W(x)=\left(2(p+1)(a_c-a)^2\right)^{\frac{1}{(p-1)}}\bigg(1+|x|^{(a_c-a)(p-1)}\bigg)^{-\frac{2}{p-1}}
\end{eqnarray}
is the unique nonnegative solution of \eqref{eq0018} in $D^{1,2}_a(\bbr^d)$ for $d\geq2$ under the conditions
\begin{eqnarray}\label{eq0003}
\left\{\aligned
&b_{FS}(a)\leq b<a+1,\quad a<0,\\
&a\leq b<a+1,\quad a\geq0,
\endaligned
\right.
\end{eqnarray}
where
\begin{eqnarray*}
b_{FS}(a)=\frac{d(a_c-a)}{2\sqrt{(a_c-a)^2+(d-1)}}+a-a_c>a
\end{eqnarray*}
is the well known Felli-Schneider curve found in \cite{FS2003}.  Precisely, Aubin and Talanti classified the extremal functions of the CKN inequality~\eqref{eq0001} for $a=b=0$ in \cite{A1976,T1976}, respectively.  As a special case, Lieb classified the extremal functions of the CKN inequality~\eqref{eq0001} for $a=0$ and $0<b<1$ in \cite{L1983}.  Chou and Chu classified the extremal functions of the CKN inequality~\eqref{eq0001} for $a\geq0$ in \cite{CC1993}.  Felli and Schneider proved in \cite{FS2003} that extremal functions of the CKN inequality~\eqref{eq0001} must be nonradial if $a<0$ and $a<b<b_{FS}(a)$.  Dolbeault, Esteban, Loss and Tarantello finally classified the extremal functions of the CKN inequality~\eqref{eq0001} in \cite{DEL2016,DET2008} under the conditions~\eqref{eq0003}.  Catrina and Wang proved in \cite{LW2004} that extremal functions of \eqref{eq0001} must have $\mathcal{O}(N-1)$ symmetry for $a<b<b_{FS}(a)$ with $a<0$.
Moreover, it is also well known that $W(x)$ is nondegenerate in $D^{1,2}_a(\bbr^d)$ under the condition~\eqref{eq0003} except $b=b_{FS}(a)$ (cf. \cite{FS2003}).  That is, up to scalar multiplications $CV(x)$,
\begin{eqnarray}\label{eq0010}
V(x):=\nabla W(x)\cdot x-(a_c-a)W(x)=\frac{\partial}{\partial\lambda}\left(\lambda^{-(a_c-a)}W(\lambda x)\right)|_{\lambda=1}
\end{eqnarray}
is the only nonzero solution in $D^{1,2}_a(\bbr^d)$ to the linearization of \eqref{eq0018} around $W$ which is given by
\begin{eqnarray}\label{eq0017}
-div(|x|^{-a}\nabla u)=p|x|^{-b(p+1)}W^{p-1}u, \quad u\in D^{1,2}_{a}(\bbr^d).
\end{eqnarray}
However, if the parameters $a$ and $b$ lie on the Felli-Schneider curve, that is, $b=b_{FS}(a)$ with $a<0$, then the bubble $W(x)$ is degenerate in $D^{1,2}_a(\bbr^d)$ (cf. \cite{FP2024}).  For the sake of simplicity, we introduce the set
\begin{eqnarray*}
\mathcal{Z}=\left\{cW_\tau(x)\mid c\in\bbr\backslash\{0\}\text{ and }\tau>0\right\}
\end{eqnarray*}
and the usual weighted Lebesgue space $L^{p+1}(|x|^{-b(p+1)},\bbr^d)$ with the norm
\begin{eqnarray*}
\|u\|_{L^{p+1}(|x|^{-b(p+1)},\bbr^d)}=\bigg(\int_{\bbr^d}|x|^{-b(p+1)}|u|^{p+1}dx\bigg)^{\frac{1}{p+1}}.
\end{eqnarray*}
in what follows.

\vskip0.12in

As pointed out by Dolbeault and Esteban in \cite{DE2022} (see also Figalli in \cite{F2013}), once optimal constants are known and the set of extremal functions has been characterised, the next question is to understand stability: which kind of distance is measured by the deficit, that is, the difference of the two terms in the functional inequality, written with the optimal constant.  These studies were initialed by Brezis and Lieb in \cite{BL1985} by raising an open question for the classical Sobolev inequality,
\begin{eqnarray}\label{eqin0001}
S\bigg(\int_{\bbr^d}|u|^{\frac{2d}{d-2}}dx\bigg)^{\frac{d-2}{d}}\leq\int_{\bbr^d}|\nabla u|^2dx,\quad u\in D^{1,2}(\bbr^d),
\end{eqnarray}
which was settled partially by Egnell-Pacella-Tricarico in \cite{EPT1989} and completely by Bianchi-Egnell in \cite{BE1991} by proving that
\begin{eqnarray}\label{eqin0002}
0<s_{BE}=\inf_{u\in D^{1,2}(\bbr^d)\backslash\{0\}}\frac{\|\nabla u\|^2_{L^2(\bbr^d)}-S\|u\|^2_{L^{\frac{2d}{d-2}}(\bbr^d)}}{dist_{D^{1,2}}^2(u, \mathcal{M})},
\end{eqnarray}
where $\|\cdot\|_{L^p(\bbr^d)}$ is the usual norm in the Lebesgue space $L^p(\bbr^d)$ and
\begin{eqnarray*}
\mathcal{M}=\{U_{\lambda,z,c}\mid (\lambda,z,c)\in\bbr_+\times\bbr^{d+1}\}.
\end{eqnarray*}
Due to the non-Hilbert property of $W^{1,p}(\bbr^d)$ for $p\not=2$, the generalization of the Bianchi-Egnell stability~\eqref{eqin0002} to the general $L^p$-Sobolev inequality takes a long time to introduce new ideas and develop new techniques by Cianchi in \cite{C2006}, Cianchi-Fusco-Maggi-Pratelli in \cite{CFMP2009}, Figalli-Magggi-Pratelli in \cite{FMP2013}, Figalli-Neumayer in \cite{FN2019}, Fusco in \cite{F2015}, Fusco-Maggi-Pratelli in \cite{FMP2007}, Neumayer in \cite{N2020} and finally, Figalli and Zhang proved the optimal Bianchi-Egnell stability of the general $L^p$-Sobolev inequality in \cite{FZ2020}.  The Bianchi-Egnell type stability like \eqref{eqin0002} was also generalized to many other famous inequalities.  Since the literature on this topic is so vast and this direction is not the main topic in our paper, we only refer the readers to \cite{C2017,CFW2013,D2011,DJ2014} for the Hardy-Littlewood-Sobolev inequality, \cite{DT2016,LW2000,N2019,R2014,S2016} for the Gagliardo-Nirenberg-Sobolev inequality,  \cite{BGRS2014,CLT2023,E2015,FI2016,FIPR2017,IM2014,IK2021,W78} for the logarithmic Sobolev inequality, \cite{ADM2022,CFLL2024,DGK2024,FP2024,WW2003,WW2022} for the Caffarelli-Kohn-Nirenberg inequality, \cite{BWW2003,BGKM2022,CK2024,F2022,L2005} for other kinds of Sobolev inequalities and \cite{CFL2014,DSW2023,ENS2022,FI2013,FJ2015,FJ2017,FMP2010,FMP2009,F1989,J1992,M2008,NV2024} for many kinds of geometric inequalities.  We would like to highlight the survey \cite{DE2022} and the Lecture notes \cite{F2013,F2023} to the readers for their detailed introductions and references about the studies on the stability of inequalities.
In particular, the Bianchi-Egnell type stability of the CKN inequality~\eqref{eq0001} reads as follows:
\begin{enumerate}
\item[$(1)$] The nondegenerate case (\cite{WW2022,WW2024}).\quad  Let $d\geq2$ and either
\begin{enumerate}
\item[$(i)$]\quad $b_{FS}(a)< b<a+1$ with $a<0$ or
\item[$(ii)$]\quad $a\leq b<a+1$ with $a\geq0$ and $a+b>0$.
\end{enumerate}
Then
\begin{eqnarray*}
0<c_{BE}=\inf_{u\in D^{1,2}_a(\bbr^d)\backslash\{0\}}\frac{\|u\|^2_{D^{1,2}_a(\bbr^d)}-S_{a,b}^{-1}\|u\|^2_{L^{p+1}(|x|^{-b(p+1)},\bbr^d)}}{dist_{D^{1,2}_{a}}^2(u, \mathcal{Z})}
\end{eqnarray*}
for all $u\in D^{1,2}_a(\bbr^d)$.
\item[$(2)$]The degenerate case (\cite{FP2024}). \quad Let $d\geq2$ and $b=b_{FS}(a)$ with $a<0$.  Then
\begin{eqnarray*}
0<c_{BE}=\inf_{u\in D^{1,2}_a(\bbr^N)\backslash\mathcal{Z}}\frac{\|u\|^2_{D^{1,2}_a(\bbr^d)}\left(\|u\|^2_{D^{1,2}_a(\bbr^d)}-S_{a,b}^{-1}\|u\|^2_{L^{p+1}(|x|^{-b(p+1)},\bbr^d)}\right)}{dist^4(u, \mathcal{Z})}
\end{eqnarray*}
for all $u\in D^{1,2}_a(\bbr^d)$.
\end{enumerate}
We remark that Bianchi and Egnell's arguments for \eqref{eqin0002} depends on the nondegeneracy of the Talanti bubble $U$ in $D^{1,2}(\bbr^d)$.  Thus, to establish the Bianchi-Egnell type stability of the CKN inequality~\eqref{eq0001} in the degenerate case, Frank and Peteranderl \cite{FP2024} introduced new idea and developed new techniques to expand the deficit of the CKN inequality~\eqref{eq0001} up to the fourth order terms in \cite{FP2024}, as that in \cite{F2022}.  We would like to mention the paper \cite{CF2013} where Carlen and Figalli proved a quantitative convergence result for the critical mass Keller-Segel system by the Bianchi-Egnell type stability of Gagliardo-Nirenberg-Sobolev inequality  and the logarithmic Hardy-Littlewood-Sobolev inequality, which provides the potential applications of the studies on the stability of many other inequalities.  We also want to mention the paper \cite{C2017}, where Carlen develop a dual method to establish stability of functional inequalities.  Finally  we want to mention is that in the very recent papers \cite{K2022,K2023,K2023-1}, Konig proved that $s_{BE}$ is attainable which gives a positive answer to the open question proposed by Dolbeault, Esteban, Figalli, Frank and Loss in \cite{DEFFL2022} and makes the key step in answering the long-standing open question of determining the best constant $s_{BE}$.  Konig's result on $s_{BE}$ has been generalized to $c_{BE}$ in the nondegenerate case in our very recent paper \cite{WW2024}.

\vskip0.12in

On the other hand, it is well known that all critical points at infinity
of the corresponding functional of the Sobolev inequality~\eqref{eqin0001} are induced by limits of sums of Talenti bubbles (at least if we
consider only nonnegative functions) which can be precisely stated as follows.
\begin{theorem}\label{thm0003}
(Struwe \cite{S1984})\quad Let $d\geq3$ and $\nu\geq1$ be positive integers.  Let $\{u_n\}\subset D^{1,2}(\bbr^d)$ be a nonnegative sequence with
\begin{eqnarray*}
(\nu-\frac12)S^\frac{d}{2}<\|u_n\|_{D^{1,2}(\bbr^d)}^2<(\nu+\frac12)S^\frac{d}{2},
\end{eqnarray*}
where $S$ is the best Sobolev constant.
Assume that $\|\Delta u_n+|u_n|^{\frac{4}{d-2}}u_n\|_{D^{-1,2}}\to0$ as $n\to\infty$, then there exist a sequence $(z_1^{(n)}, z_2^{(n)}, \cdots, z_\nu^{(n)})$ of $\nu$-tuples of points in $\bbr^d$ and a sequence of $(\lambda_1^{(n)}, \lambda_2^{(n)}, \cdots, \lambda_\nu^{(n)})$ of $\nu$-tuples of positive real numbers such that
\begin{eqnarray*}
\|\nabla u_n-\sum_{i=1}^{\nu}\nabla U[z_i^{(n)}, \lambda_i^{(n)}]\|_{L^2(\bbr^d)}\to0\quad\text{as }n\to\infty.
\end{eqnarray*}
\end{theorem}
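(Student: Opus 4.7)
The plan is to follow Struwe's original concentration--compactness scheme, proceeding by iterated bubble extraction. First I would extract (along a subsequence) a weak limit $u^{(0)}\in D^{1,2}(\bbr^d)$ of $\{u_n\}$. Combining the Palais--Smale-type assumption $\|\Delta u_n+|u_n|^{4/(d-2)}u_n\|_{D^{-1,2}}\to 0$ with the local compact embedding $D^{1,2}_{\text{loc}}\hookrightarrow L^{q}_{\text{loc}}$ for subcritical $q$, one passes to the limit in the weak formulation and sees that $u^{(0)}$ solves the Euler equation $-\Delta u=u^{(d+2)/(d-2)}$ on $\bbr^d$. Since $u_n\geq 0$, the weak limit is nonnegative, and the Aubin--Talenti classification forces $u^{(0)}$ to be either zero or a Talenti bubble $U[z^{(0)},\lambda^{(0)}]$.

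Next I set $w_n^{(1)} := u_n - u^{(0)}$ and establish the asymptotic energy splitting
\begin{equation*}
\|u_n\|^2_{D^{1,2}} = \|u^{(0)}\|^2_{D^{1,2}} + \|w_n^{(1)}\|^2_{D^{1,2}} + o(1),
\end{equation*}
together with the analogous $L^{2^*}$ splitting via the Brezis--Lieb lemma; these combine to show that $w_n^{(1)}$ is again a nonnegative Palais--Smale sequence for the same functional. If $w_n^{(1)}\to 0$ strongly we stop. Otherwise I invoke the Lions concentration--compactness principle applied to the density $|\nabla w_n^{(1)}|^2$: it produces scales $\lambda_n^{(1)}>0$ and centers $z_n^{(1)}\in\bbr^d$ such that the rescaled functions $(\lambda_n^{(1)})^{(d-2)/2}w_n^{(1)}(\lambda_n^{(1)}x+z_n^{(1)})$ converge weakly to a nontrivial Talenti bubble, which is extracted as the next profile.

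The scheme then iterates: at stage $k$ one subtracts the newly found rescaled bubble $U[z_k^{(n)},\lambda_k^{(n)}]$, verifies by a rescaled Brezis--Lieb identity that $\|w_n^{(k+1)}\|^2_{D^{1,2}} = \|w_n^{(k)}\|^2_{D^{1,2}} - S^{d/2} + o(1)$, and continues. The quantization identity that every nontrivial Talenti bubble carries energy exactly $S^{d/2}$, together with the hypothesis $\|u_n\|^2_{D^{1,2}}<(\nu+\tfrac12)S^{d/2}$, forces the process to terminate in at most $\nu$ steps with $\|w_n^{(\text{final})}\|_{D^{1,2}}\to 0$, producing the required $\nu$-tuple of bubble parameters.

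The main technical obstacle is the \emph{asymptotic orthogonality of the parameters}: one must show that the bubbles extracted at successive stages correspond to genuinely different scales/centers, i.e.\
\begin{equation*}
\Bigg|\log\frac{\lambda_i^{(n)}}{\lambda_j^{(n)}}\Bigg|+\frac{|z_i^{(n)}-z_j^{(n)}|}{\lambda_i^{(n)}+\lambda_j^{(n)}}\to\infty,\qquad i\neq j.
\end{equation*}
This separation is what makes the Brezis--Lieb splitting clean at every stage and is proved by contradiction: if two parameter sequences remained comparable, the rescaling used at stage $j$ would reproduce a nontrivial piece of the bubble already extracted at stage $i$, violating the strong convergence $w_n^{(i+1)}\to 0$ in the relevant weighted sense. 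Handling this step carefully, and tracking the error terms in the nonlinear Brezis--Lieb identity under rescaling, is where the technical heart of the proof lies.
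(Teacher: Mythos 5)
The paper does not prove this statement; it is quoted verbatim from Struwe's 1984 paper (citation \cite{S1984}) and used only as background to motivate the stability question $(Q)$. So there is no ``paper's own proof'' to compare against, but your sketch is essentially the standard concentration--compactness/bubble-extraction proof of Struwe's decomposition, and the overall architecture (weak limit, Brezis--Lieb splitting, iterated rescaled extraction via Lions, energy quantization forcing termination, asymptotic orthogonality of parameters) is correct.

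One real gap: you assert that the remainder $w_n^{(1)} := u_n - u^{(0)}$ ``is again a nonnegative Palais--Smale sequence.'' The Palais--Smale property is inherited (this follows from the Brezis--Lieb identity applied to the nonlinearity together with the orthogonality of the extracted profile in $D^{1,2}$), but nonnegativity is not: subtracting a nonzero bubble from $u_n\geq 0$ in general produces a sign-changing sequence, and the same happens at every subsequent stage. The correct way to recover nonnegativity of the \emph{extracted profiles} (which is what is needed to classify them as Aubin--Talenti bubbles $U[z,\lambda]$ rather than general finite-energy solutions of $-\Delta u = |u|^{4/(d-2)}u$) is to observe that, after rescaling around the new concentration point $(z_k^{(n)},\lambda_k^{(n)})$, the already-extracted bubbles $U[z_i^{(n)},\lambda_i^{(n)}]$, $i<k$, tend to zero locally uniformly away from a point (this is exactly where the parameter separation you flag at the end is used), so the rescaled remainder has the same weak limit as the rescaled $u_n$ itself, which is nonnegative. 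As written, your argument circularly uses nonnegativity of the remainder to invoke the Aubin--Talenti classification at each stage; you should instead defer the classification until after the parameter separation is in place, or argue directly from the rescaled full sequence $u_n$. Apart from this, the sketch is sound and matches the standard route.
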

Based on the above well-known Struwe decomposition, Ciraolo, Figalli, Glaudo and Maggi proposed the following question on the stability of critical points of the corresponding functional of the Sobolev inequality~\eqref{eqin0001} at infinity:
\begin{enumerate}
\item[$(Q)$]\quad Let $d\geq3$ and $\nu\geq1$ be positive integers.  If $\{u\}\subset D^{1,2}(\bbr^d)$ be nonnegative,
\begin{eqnarray*}
(\nu-\frac12)S^\frac{d}{2}<\|u\|_{D^{1,2}(\bbr^d)}^2<(\nu+\frac12)S^\frac{d}{2}
\end{eqnarray*}
and $\|\Delta u+|u|^{\frac{4}{d-2}}u\|_{D^{-1,2}}<<1$, does there exist a constant $C(d,\nu)$ such that
    \begin{eqnarray*}
    dist_{D^{1,2}}(u, \mathcal{M})\leq C(d,\nu)\|\Delta u+|u|^{\frac{4}{N-2}}u\|_{D^{-1,2}}?
    \end{eqnarray*}
\end{enumerate}
\begin{remark}
The original question (\cite[Problem~1.2]{FG2021}) is more general than $(Q)$ stated here in the sense that, $u$ could be sign-changing if $u$ is close to the sum of $U[z_i, \lambda_i]$ in $D^{1,2}(\bbr^d)$ where $U[z_i, \lambda_i]$ are weakly interacting (the definition of weakly interaction can be found in \cite[Definition~3.1]{FG2021}).  We choose to state the question~$(Q)$ since it is more close to Theorem~\ref{thm0003} (Struwe \cite{S1984}).
\end{remark}
In the recent papers \cite{CFM2017,FG2021}, Ciraolo, Figalli, Glaudo and Maggi proved the following results by the energy method:
\begin{enumerate}
\item[$(1)$] (Ciraolo-Figalli-Maggi \cite{CFM2017})\quad Let $d\geq3$ and $u\in D^{1,2}(\bbr^d)$ be positive such that $\|\nabla u\|_{L^2(\bbr^d)}^2\leq\frac{3}{2}S^{\frac{d}{2}}$ and $\|\Delta u+|u|^{\frac{4}{d-2}}u\|_{D^{-1,2}}\leq\delta$ for some $\delta>0$ sufficiently small, then $dist_{D^{1,2}}(u, \mathcal{M})\lesssim\|\Delta u+|u|^{\frac{4}{d-2}}u\|_{d^{-1,2}}$.
\item[$(2)$] (Figalli-Glaudo \cite{FG2021})\quad Let $u\in D^{1,2}(\bbr^d)$ be nonnegative such that
\begin{eqnarray*}
(\nu-\frac12)S^\frac{d}{2}<\|u\|_{D^{1,2}(\bbr^d)}^2<(\nu+\frac12)S^\frac{d}{2}
\end{eqnarray*}
and $\|\Delta u+|u|^{\frac{4}{d-2}}u\|_{D^{-1,2}}\leq\delta$ for some $\delta>0$ sufficiently small, then $dist_{D^{1,2}}(u, \mathcal{M}^\nu)\lesssim\|\Delta u+|u|^{\frac{4}{d-2}}u\|_{D^{-1,2}}$ for $3\leq N\leq5$, where
$\mathcal{M}^\nu=\{(U[z_1, \lambda_1],U[z_2, \lambda_2],\cdots,U[z_\nu, \lambda_\nu])\mid z_i\in\bbr^N, \lambda_i>0\}$.
\end{enumerate}
It is worth pointing out that a significant finding in \cite{FG2021} is that Figalli and Glaudo construct a counterexample for $\nu=2$ and $d\geq6$ to show that the answer of the question~$(Q)$ for $\nu\geq2$ and $d\geq6$ is negative!   Based on their counterexample for $d\geq6$, Figalli and Glaudo conjectured in \cite{FG2021} that the stability of critical points of the corresponding functional of the Sobolev inequality~\eqref{eqin0001} at infinity should be of the following nonlinear form:
\begin{eqnarray*}
dist_{D^{1,2}}(u, \mathcal{M}^\nu)\lesssim\left\{\aligned&\|\Delta u+|u|u\|_{H^{-1}}|\ln(\|\Delta u+|u|u\|_{D^{-1,2}})|,\quad \nu\geq2\text{ and }d=6;\\
&\|\Delta u+|u|^{\frac{4}{N-2}}u\|_{D^{-1,2}}^{\gamma(d)},\quad \nu\geq2\text{ and }d\geq7
\endaligned\right.
\end{eqnarray*}
with $0<\gamma(d)<1$.  We would like to point out that, besides its own mathematical interests, the stability of critical points of the corresponding functional of the Sobolev inequality~\eqref{eqin0001} at infinity can be used to prove quantitative convergence results for the fast diffusion equation, see, for example \cite{CFM2017,FG2021}.  In the very recent work \cite{DSW2024}, the first author, together with Deng and Sun, proved that tthe stability of critical points of the corresponding functional of the Sobolev inequality~\eqref{eqin0001} at infinity is actually of the following nonlinear form by combining the energy method, the reduction argument and the blow-up analysis:
\begin{eqnarray*}
dist_{D^{1,2}}(u, \mathcal{M}^\nu)\lesssim\left\{\aligned&\|\Delta u+|u|u\|_{D^{-1,2}}|\ln(\|\Delta u+|u|u\|_{D^{-1,2}})|^{\frac12},\ \ \nu\geq2\text{ and }d=6;\\
&\|\Delta u+|u|^{\frac{4}{d-2}}u\|_{H^{-1}}^{\frac{d+2}{2(d-2)}},\ \ \nu\geq2\text{ and }d\geq7.
\endaligned\right.
\end{eqnarray*}
Moreover, the orders of the right hand sides in above estimates are shown to be optimal by constructing related examples.  We remark that due to the mathematical interests and potential applications, the stability of critical points of the corresponding functional of other famous functional inequalities at infinity have already been carried out, see, for example by Aryan in \cite{A2023} and De Nitti and Konig in \cite{NK2023} for the fractional Sobolev inequality, and by us in \cite{WW2022} for the CKN inequality~\eqref{eq0001} in the nondegenerate case.  In particular, the stability of critical points of the corresponding functional of the CKN inequality~\eqref{eq0001} at infinity is stated as follows.
\begin{theorem}\label{thm0002}
Let $d\geq2$ and $\nu\geq1$ be positive integers and either
\begin{enumerate}
\item[$(i)$]\quad $b_{FS}(a)< b<a+1$ with $a<0$ or
\item[$(ii)$]\quad $a\leq b<a+1$ with $a\geq0$ and $a+b>0$.
\end{enumerate}
Then for any nonnegative $u\in D^{1,2}_a(\bbr^d)$ such that
\begin{eqnarray*}
(\nu-\frac12)(S_{a,b}^{-1})^{\frac{p+1}{p-1}}<\|u\|_{D^{1,2}_a(\bbr^d)}^2<(\nu+\frac12)(S_{a,b}^{-1})^{\frac{p+1}{p-1}}
\end{eqnarray*}
and $\Gamma(u)\leq\delta$ with some $\delta>0$ sufficiently small,
we have
\begin{eqnarray*}
dist_{D_a^{1,2}}(u, \mathcal{Z}^\nu)\lesssim\left\{\aligned &\Gamma(u),\quad p>2\text{ or }\nu=1,\\
&\Gamma(u)|\log\Gamma(u)|^{\frac12},\quad p=2\text{ and }\nu\geq2,\\
&\Gamma(u)^{\frac{p}{2}},\quad 1<p<2\text{ and }\nu\geq2,
\endaligned\right.
\end{eqnarray*}
where $\Gamma (u)=\|div(|x|^{-a}\nabla u)+|x|^{-b(p+1)}|u|^{p-1}u\|_{D_a^{-1,2}}$
and
$$
\mathcal{Z}^\nu=\{(W_{\tau_1},W_{\tau_2},\cdots,W_{\tau_\nu})\mid \tau_i>0\}.
$$
Moreover, the orders of the right hand sides in above estimates are sharp.
\end{theorem}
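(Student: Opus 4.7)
The strategy is to adapt the blueprint of \cite{DSW2024} for the classical Sobolev inequality to the weighted CKN setting, combining a Struwe-type profile decomposition, a finite-dimensional reduction that isolates the bubble parameters from a small remainder, and quantitative interaction estimates whose refinement depends on $p$. Under hypotheses $(i)$ or $(ii)$ we are in the nondegenerate regime, so the linearized operator around each $W_{\lambda}$ has one-dimensional kernel spanned by the scaling mode $V_{\lambda}$ (see \eqref{eq0010}); this is the structural input that makes the reduction work.

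First I would invoke the CKN Struwe-type decomposition established in \cite{WW2022} to write
$$u = \sigma + \rho, \qquad \sigma := \sum_{i=1}^{\nu} \alpha_i W_{\lambda_i},$$
with $\alpha_i \to 1$, $\rho \to 0$ in $D^{1,2}_a(\bbr^d)$, and with the bubbles weakly interacting in the sense that
$$Q := \max_{i \neq j} \min\!\left(\tfrac{\lambda_i}{\lambda_j}, \tfrac{\lambda_j}{\lambda_i}\right)^{a_c-a} \to 0 \quad \text{as } \Gamma(u) \to 0.$$
I would then optimize the parameters $(\alpha_i, \lambda_i)$ so that $\rho$ is orthogonal in $D^{1,2}_a(\bbr^d)$ to $\mathrm{span}\{W_{\lambda_i}, V_{\lambda_i}\}_{i=1}^{\nu}$. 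With these constraints, the nondegeneracy of $W$ together with a perturbation argument produces a uniform spectral gap for the linearized operator
$$L_\sigma \phi := -\mathrm{div}(|x|^{-a}\nabla \phi) - p\,|x|^{-b(p+1)}\sigma^{p-1}\phi$$
on the orthogonal complement, yielding coercivity $\langle L_\sigma \rho, \rho\rangle \gtrsim \|\rho\|^2_{D^{1,2}_a(\bbr^d)}$.

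The equation for $\rho$ now reads $L_\sigma \rho = -\mathfrak{f} + \mathcal{I} + N(\rho)$, where $\mathfrak{f}$ is the residual satisfying $\|\mathfrak{f}\|_{D_a^{-1,2}} = \Gamma(u)$, the interaction $\mathcal{I} = |x|^{-b(p+1)}\big(\sigma^p - \sum_i \alpha_i^p W_{\lambda_i}^p\big)$ encodes the mismatch between $\sigma$ and an exact solution, and $N(\rho)$ is the Taylor remainder of $t \mapsto |t|^{p-1}t$ at $\sigma$. Testing against $\rho$ gives
$$\|\rho\|_{D^{1,2}_a(\bbr^d)} \lesssim \Gamma(u) + \|\mathcal{I}\|_{D_a^{-1,2}} + \|N(\rho)\|_{D_a^{-1,2}},$$
while testing the original equation against each zero mode $V_{\lambda_i}$ kills the principal $\rho$-contribution (since $L_{W_{\lambda_i}} V_{\lambda_i} = 0$) and, after computing the leading bubble-bubble integral via the explicit profile \eqref{eq0004}, yields $Q \lesssim \Gamma(u) + \|N(\rho)\|_{D_a^{-1,2}}$. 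The three cases in the theorem now split according to the regularity of $N$: for $p>2$ one has $\|N(\rho)\|_{D_a^{-1,2}} \lesssim \|\rho\|^{\min(p,2)}$, which is subleading and gives the linear estimate (the case $\nu=1$ is simpler still since $\mathcal{I} \equiv 0$); for $p=2$ an integral of the form $\int |x|^{-b(p+1)} W_{\lambda_i}^{p-1} V_{\lambda_i} W_{\lambda_j}$ develops a logarithmic divergence at the critical ratio, producing the $|\log\Gamma(u)|^{1/2}$ correction after bootstrapping; for $1<p<2$ the Taylor remainder is only Hölder of exponent $p$, so $\|N(\rho)\|_{D_a^{-1,2}} \lesssim \|\rho\|^p$, and solving the coupled system $\|\rho\| \lesssim \Gamma(u) + Q$, $Q \lesssim \Gamma(u) + \|\rho\|^p$ closes at $\|\rho\| \sim \Gamma(u)^{p/2}$.

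Sharpness in each regime will be obtained from trial functions $u = \alpha_1 W_{\lambda_1} + \alpha_2 W_{\lambda_2}$ with $\lambda_2/\lambda_1$ tuned to the optimal balance scale, comparing $\Gamma(u)$ with $\mathrm{dist}_{D_a^{1,2}}(u, \mathcal{Z}^\nu)$ by direct integration against \eqref{eq0004}. The principal obstacle is the case $1<p<2$ with $\nu \geq 2$: the merely Hölder regularity of the nonlinearity prevents a purely linear Banach-space scheme, so the bootstrap must precisely balance the interaction $Q$ against $\|\rho\|^p$, and one must identify the leading-order interaction integral at the correct decay rate, which, because of the weights $|x|^{-2a}$ and $|x|^{-b(p+1)}$, is governed by $(a_c-a)$ rather than the classical $d-2$. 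The logarithmic case $p=2$ is analogous but requires isolating the precise integrand responsible for the $\log$ factor and verifying that the exponent $1/2$ (rather than $1$) is optimal, which rests on a secondary Taylor expansion of the interaction combined with a Cauchy--Schwarz step in the a priori estimate for $\rho$.
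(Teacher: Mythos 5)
Your skeleton is the right one and matches the strategy behind this theorem (which the paper itself does not reprove: it is quoted from \cite{WW2022}, following \cite{DSW2024}): Struwe decomposition, orthogonality to $W_{\lambda_i}$ and $V_{\lambda_i}$, coercivity of $L_\sigma$ on the complement, an energy estimate for $\rho$, and projections onto the dilation modes to control the interaction. However, the quantitative core of the argument is garbled in exactly the regime the theorem is about.

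First, your coupled system for $1<p<2$ is internally inconsistent. From $\|\rho\|\lesssim\Gamma(u)+Q$ and $Q\lesssim\Gamma(u)+\|\rho\|^{p}$ one gets $\|\rho\|\lesssim\Gamma(u)+\|\rho\|^{p}=\Gamma(u)+o(\|\rho\|)$, hence $\|\rho\|\lesssim\Gamma(u)$ — the linear rate, not $\Gamma(u)^{p/2}$. The exponent $p/2$ does not come from the H\"older regularity of the Taylor remainder $N(\rho)$; it comes from the dual norm of the bubble--bubble interaction itself: on the cylinder one computes $\|\mathcal{I}\|_{D_a^{-1,2}}\sim Q$ for $p>2$, $\sim Q|\log Q|^{1/2}$ for $p=2$ (via the $L^{2}$ Cauchy--Schwarz you mention), and $\sim Q^{p/2}$ for $1<p<2$, while the projection onto $V_{\lambda_i}$ gives $\langle\mathcal{I},V_{\lambda_i}\rangle\sim cQ$ with $c\neq0$ and hence $Q\lesssim\Gamma(u)+(\text{h.o.t.})$. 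The correct closure is $Q\lesssim\Gamma(u)$ followed by $\|\rho\|\lesssim\Gamma(u)+\|\mathcal{I}\|_{D_a^{-1,2}}\lesssim\Gamma(u)+\Gamma(u)^{p/2}$. You need to actually compute $\|\mathcal{I}\|_{D_a^{-1,2}}$ in the three regimes (this is where the exponents in the statement are born), not replace it by $Q$ uniformly.

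Second, for $1<p\leq2$ and $\nu\geq2$ the pure energy scheme does not close: in the projected equation $Q\lesssim\Gamma(u)+|\langle N(\rho),V_{\lambda_i}\rangle|+\dots$, the bound $|\langle N(\rho),V_{\lambda_i}\rangle|\lesssim\|\rho\|^{p}\sim\Gamma(u)^{p^{2}/2}$ is \emph{worse} than $\Gamma(u)$ when $p^{2}<2$, so the two inequalities feed back on each other without converging to $Q\lesssim\Gamma(u)$. This is precisely why \cite{DSW2024} (and \cite{WW2022}, and the present paper in its degenerate analogue) further split $\rho=\rho_{0}+\rho_{1}$, with $\rho_{0}$ obtained by solving a projected linear problem driven by the interaction data and controlled in a weighted $L^{\infty}$ norm, and $\rho_{1}$ the genuinely small residual; the pointwise control of $\rho_{0}$ is what makes $\langle N(\rho),V_{\lambda_i}\rangle=o(Q)+O(\Gamma(u))$. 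Your proposal acknowledges the balancing difficulty but omits this regular/singular decomposition, which is the essential new ingredient beyond the naive reduction and cannot be dispensed with in the range $1<p\leq2$.
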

We remark that Theorem~\ref{thm0002} is a direct generalization of the Ciraolo-Figalli-Maggi,
Figalli-Glaudo and Deng-Sun-Wei results in \cite{CFM2017,FG2021,DSW2024} for the Sobolev inequality~\ref{eqin0001} to the CKN inequality~\eqref{eq0001} in the nondegenerate case, which was mainly based on the following Struwe decomposition of critical points of the corresponding functional of the CKN inequality~\eqref{eq0001} at infinity.
\begin{proposition}\label{prop0002}
(\cite[Proposition~3.2]{WW2022} or \cite[Lemma~4.2]{CW2001})\quad
Let $d\geq2$ and $\nu\geq1$ be positive integers and either
\begin{enumerate}
\item[$(i)$]\quad $b_{FS}(a)< b<a+1$ with $a<0$ or
\item[$(ii)$]\quad $a\leq b<a+1$ with $a\geq0$ and $a+b>0$.
\end{enumerate}
If $\{w_n\}$ be a nonnegative sequence with
\begin{eqnarray*}
(\nu-\frac12)(S_{a,b}^{-1})^{\frac{p+1}{p-1}}<\|w_n\|_{D^{1,2}_a(\bbr^d)}^2<(\nu+\frac12)(S_{a,b}^{-1})^{\frac{p+1}{p-1}}
\end{eqnarray*}
Then there exists $\{\tau_{i,n}\}\subset\bbr_+$, satisfying
\begin{eqnarray*}
\min_{i\not=j}\bigg\{\max\bigg\{\frac{\tau_{i,n}}{\tau_{j,n}},\frac{\tau_{j,n}}{\tau_{i,n}}\bigg\}\bigg\}\to+\infty
\end{eqnarray*}
as $n\to\infty$ for $\nu\geq2$, such that
\begin{enumerate}
\item[$(1)$]\quad $w_n=\sum_{i=1}^{\nu}(W)_{\tau_{i,n}}+o_n(1)$ in $D^{1,2}_a(\bbr^d)$.
\item[$(2)$]\quad $\|w_n\|_{D^{1,2}_a(\bbr^d)}^2=\nu\|W\|_{D^{1,2}_a(\bbr^d)}^2+o_n(1)$.
\end{enumerate}
\end{proposition}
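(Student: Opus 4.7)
The statement as quoted is a profile decomposition, but it appears to be missing the standard ``almost critical point'' hypothesis that accompanies such theorems (compare its use in Theorem~\ref{thm0002}): the natural setting is that $\{w_n\}$ is a nonnegative Palais--Smale sequence, i.e.\ also $\Gamma(w_n)=\|-\text{div}(|x|^{-a}\nabla w_n)-|x|^{-b(p+1)}|w_n|^{p-1}w_n\|_{D_a^{-1,2}}\to 0$. Under this hypothesis the plan is to adapt Struwe's concentration-compactness scheme to the weighted CKN setting, proceeding by induction on $\nu$. The crucial structural fact is that in the nondegenerate regime the equation admits only the dilation symmetry $u\mapsto u_\tau = \tau^{a_c-a}u(\tau\,\cdot)$ (translations are broken by $|x|^{-2a}$ when $a\neq 0$), and every nonnegative nontrivial $D^{1,2}_a(\bbr^d)$-solution of the Euler--Lagrange equation is $W_\tau$ for some $\tau>0$, by the classifications of Dolbeault--Esteban--Loss and Chou--Chu cited above.

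\emph{Base case $\nu=1$.} Boundedness in $D^{1,2}_a(\bbr^d)$ is immediate from the norm bracket. Using a concentration function of the type $Q_n(R)=\int_{|x|\le R}|x|^{-b(p+1)}w_n^{p+1}dx$ together with the CKN inequality~\eqref{eq0001}, I choose $\tau_{1,n}>0$ so that the rescaled sequence $\widetilde w_n=(w_n)_{\tau_{1,n}^{-1}}$ is normalized at unit scale. Its weak limit $w_\infty$ solves the CKN Euler--Lagrange equation (because $\Gamma$ is dilation-invariant and $\Gamma(w_n)\to 0$), so by classification $w_\infty=W$; the scalar is fixed to $1$ by the energy bracket. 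Undoing the rescaling yields the first bubble.

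\emph{Inductive extraction.} Set $r_n=w_n-W_{\tau_{1,n}}$. Expanding the nonlinearity $|w_n|^{p-1}w_n-W_{\tau_{1,n}}^{\,p}$ around $W_{\tau_{1,n}}$ and using the Sobolev-type embedding $D^{1,2}_a\hookrightarrow L^{p+1}(|x|^{-b(p+1)},\bbr^d)$, one checks $\Gamma(r_n)\to 0$ and $\|r_n\|^2=\|w_n\|^2-\|W\|^2+o(1)$. The energy of $r_n$ falls into the next bracket, and the procedure is iterated. The process terminates after exactly $\nu$ steps: any nontrivial weak limit of a further rescaling of the residual would be a nonnegative solution with energy $\|W\|^2$, but the remainder energy is then strictly less than $\tfrac12\|W\|^2$, contradicting the variational characterization of $S_{a,b}^{-1}$ (no nontrivial solution has energy below the ground state). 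This gives (1) and (2) simultaneously.

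\emph{Main obstacle and scale separation.} The delicate point is the interaction estimates that justify (2) and the scale-separation condition: one must show $\langle W_{\tau_{i,n}},W_{\tau_{j,n}}\rangle_{D^{1,2}_a}=o(1)$ and $\int |x|^{-b(p+1)}W_{\tau_{i,n}}^{\,p}W_{\tau_{j,n}}dx=o(1)$ whenever $\max\{\tau_{i,n}/\tau_{j,n},\tau_{j,n}/\tau_{i,n}\}\to\infty$. Because there is no translation parameter, decorrelation must come entirely from dilation, so one relies on the polynomial decay of $W$ visible in \eqref{eq0004}, namely $W(x)\sim|x|^{-2(a_c-a)/(p-1)}$ at infinity, to integrate the overlap against the weight $|x|^{-b(p+1)}$. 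Conversely, scale separation is forced: if two scales stayed comparable along a subsequence, the two corresponding profiles would either merge into a single $W$ (reducing the count by one) or would produce an overlap contradicting the quantization identity $\|w_n\|^2=\nu\|W\|^2+o(1)$. Once the interaction estimates are secured, conclusions (1) and (2) follow from the inductive extraction above.
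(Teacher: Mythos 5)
The paper does not actually prove Proposition~\ref{prop0002}; it is quoted as a known result from \cite[Proposition~3.2]{WW2022} and \cite[Lemma~4.2]{CW2001}, so the comparison is with those cited proofs. Your sketch is the right strategy and you correctly spot that the statement as printed omits the Palais--Smale hypothesis $\Gamma(w_n)\to0$, without which the decomposition is false (take $w_n=W+W$); that hypothesis is indeed present in the cited sources. The genuine difference is one of coordinates: Catrina--Wang and \cite{WW2022} first pass to the cylinder $\mathcal{C}=\bbr\times\mathbb{S}^{d-1}$ via the Emden--Fowler change of variables \eqref{eq0007}, under which the dilation group becomes translation in $t$, the bubble $W$ becomes the exponentially decaying $\Psi$ of \eqref{eq0026}, and the classical Struwe/Lions bubbling argument (extraction of translation parameters $s_{i,n}$ with $|s_{i,n}-s_{j,n}|\to\infty$, exponential interaction estimates as in Lemma~\ref{lem0005}) applies essentially verbatim; this is also the framework the rest of the present paper is built on. You instead work directly in $\bbr^d$ with dilations and polynomial decay, which is feasible but makes the decorrelation and termination estimates more awkward. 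Two small corrections to your sketch: from \eqref{eq0004} the decay is $W(x)\sim|x|^{-2(a_c-a)}$ as $|x|\to\infty$, not $|x|^{-2(a_c-a)/(p-1)}$ (in the Sobolev case $a=b=0$ your exponent would give $|x|^{-(d-2)^2/4}$ instead of $|x|^{-(d-2)}$); and for $a=0<b$ it is the weight $|x|^{-b(p+1)}$ in the $L^{p+1}$ norm, not $|x|^{-2a}$, that breaks translation invariance. Neither affects the viability of the argument.
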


\subsection{Main result}
Since the Struwe decomposition (Proposition~\ref{prop0002}) of critical points of the corresponding functional of the CKN inequality~\eqref{eq0001} at infinity also holds in the degenerate case.  It is natural to ask the following question:
\begin{enumerate}
\item[$(\mathcal{Q})$]\quad  Does the stability of critical points of the corresponding functional of the CKN inequality~\eqref{eq0001} at infinity hold true in the degenerate case?
\end{enumerate}

We shall answer the natural question~$(\mathcal{Q})$ by proving the following sharp result.
\begin{theorem}\label{thmn0001}
Let $d\geq2$, $a<0$ and $b=b_{FS}(a)$.  Suppose $u\in D^{1,2}_{a}(\bbr^d)$ be a nonnegative function such that
\begin{eqnarray}\label{eqqqnew0001}
\left(\nu-\frac12\right)\left(S_{a,b}^{-1}\right)^{\frac{p+1}{p-1}}<\|u\|^2_{D^{1,2}_a(\bbr^d)}<\left(\nu+\frac12\right)\left(S_{a,b}^{-1}\right)^{\frac{p+1}{p-1}}.
\end{eqnarray}
\begin{enumerate}
\item[$(a)$]\quad Then as $\left\|-div(|x|^{-a}\nabla u)-|x|^{-b(p+1)}|u|^{p-1}u\right\|_{W^{-1,2}_a(\bbr^d)}\to0$, we have
\begin{eqnarray*}
\inf_{\overrightarrow{\alpha}_{\nu}\in\left(\bbr_+\right)^{\nu}, \overrightarrow{\lambda}_{\nu}\in\bbr^\nu}\left\|u-\sum_{j=1}^{\nu}\alpha_jW_{\lambda_j}\right\|\lesssim\left\|-div(|x|^{-a}\nabla u)-|x|^{-b(p+1)}|u|^{p-1}u\right\|_{W^{-1,2}_a(\bbr^d)}^{\frac{1}{3}}.
\end{eqnarray*}
\item[$(b)$]\quad   There exists $\{u_n\}\subset D^{1,2}_{a}(\bbr^d)$, nonnegative and satisfies \eqref{eqqqnew0001}, such that
\begin{eqnarray*}
\inf_{\overrightarrow{\alpha}_{\nu}\in\left(\bbr_+\right)^{\nu}, \overrightarrow{\lambda}_{\nu}\in\bbr^\nu}\left\|u-\sum_{j=1}^{\nu}\alpha_jW_{\lambda_j}\right\|\sim\left\|-div(|x|^{-a}\nabla u)-|x|^{-b(p+1)}|u|^{p-1}u\right\|_{W^{-1,2}_a(\bbr^d)}^{\frac{1}{3}}.
\end{eqnarray*}
\end{enumerate}
\end{theorem}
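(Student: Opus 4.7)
First I would invoke Proposition~\ref{prop0002} to write $u = \sum_{j=1}^{\nu}\alpha_j W_{\lambda_j} + \rho$ with $\|\rho\|_{D^{1,2}_a(\bbr^d)}\to 0$ and the scales $\lambda_j$ well separated. The plan is to combine an enlarged Lyapunov--Schmidt reduction with the energy-level strategy of \cite{DSW2024,WW2022}, and to show that the reduced equation in the degenerate directions is governed by a cubic leading form, which is the source of the absolute exponent $1/3$. Because of the degeneracy of $W$ on the Felli--Schneider curve $b=b_{FS}(a)$, $a<0$, I would impose orthogonality of $\rho$ not only against the usual $W_{\lambda_j}$ and the scaling directions $V_{\lambda_j}$, but also against the extra kernel elements of the linearized operator $-\mathrm{div}(|x|^{-a}\nabla\cdot) - p|x|^{-b(p+1)}W^{p-1}\cdot$ that appear only in the degenerate case.

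On the orthogonal complement of this enlarged approximate kernel the linearized operator is coercive, by extending the single-bubble coercivity of \cite{FP2024} to the bubble tree via the separation of scales. Testing the equation
\begin{eqnarray*}
-\mathrm{div}(|x|^{-a}\nabla u) - |x|^{-b(p+1)}|u|^{p-1}u = f
\end{eqnarray*}
against $\rho$ would give
\begin{eqnarray*}
\|\rho\|_{D^{1,2}_a(\bbr^d)} \lesssim \|f\|_{W^{-1,2}_a(\bbr^d)} + \sum_{i\neq j}\mathcal{I}_{ij} + (\text{h.o.t.}),
\end{eqnarray*}
where $\mathcal{I}_{ij}$ are the standard bubble interaction integrals, estimated as in \cite{WW2022,DSW2024}. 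The reduced system is then obtained by projecting the equation onto each kernel direction. The projection onto $W_{\lambda_j}$ produces a Pohozaev-type linear equation for $\alpha_j$, which is handled as in the nondegenerate case. The decisive projections are those onto $V_{\lambda_j}$ and the extra Felli--Schneider kernel elements: because these lie in the kernel of the linearization, the linear contribution in the parameter deviations vanishes, and the leading contribution is \emph{cubic}. This mirrors the fourth-order energy expansion of Frank--Peteranderl \cite{FP2024}, which transfers to a third-order expansion for the gradient by taking one functional derivative. Solving the resulting cubic system yields parameter deviations of order $\|f\|_{W^{-1,2}_a(\bbr^d)}^{1/3}$, and feeding this back into the coercivity estimate proves part (a).

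For part (b), I would perturb a single bubble $W$ by $\varepsilon\psi$ with $\psi$ a degenerate kernel element, so that the distance to $\mathcal{Z}$ is of order $\varepsilon$, while the $W^{-1,2}_a(\bbr^d)$-norm of the residual is of order $\varepsilon^3$: the linear contribution vanishes by the kernel property of $\psi$, and the quadratic contribution vanishes by its orthogonality/parity properties. For $\nu\geq 2$ this perturbation is to be spliced into a bubble tree $\sum_j \alpha_j^* W_{\lambda_j^*}$ with scales so widely separated that all inter-bubble interactions are of strictly smaller order than $\varepsilon^3$, thereby preserving the ratio $\varepsilon/\varepsilon^3$ and giving the matching lower bound.

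The main obstacle is the cubic step in part (a), namely extracting a \emph{non-degenerate} cubic form on the enlarged kernel from a careful third-order expansion of $|u|^{p-1}u$ around the bubble tree, and establishing its coercivity. A failure of coercivity would force expansion to still higher order and worsen the exponent beyond $1/3$, so the core new input is precisely the positivity of this cubic form. I expect it to follow by differentiating the non-negative fourth-order energy expansion of \cite{FP2024} and verifying that the associated Hessian on the degenerate subspace is strictly positive definite; this has to be done uniformly in the bubble-tree parameters, using the scale separation to decouple the contributions of the individual bubbles.
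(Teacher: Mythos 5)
Your outline of part (a) captures the right high-level mechanism: enlarge the approximate kernel to include the Felli--Schneider directions $w_{j,l}=\Psi_j^{(p+1)/2}\theta_l$, use coercivity on the orthogonal complement, and observe that the projection of the residual onto $\mathcal{V}_j=\sum_l\beta_{j,l}^*w_{j,l}$ is cubic in $\beta_*$ with a nondegenerate leading form coming from the Frank--Peteranderl quartic positivity. This is essentially the engine behind Proposition~\ref{propn0005}, whose identity~\eqref{eqn0030} is precisely the ``one functional derivative'' of the FP quartic form that you describe. What you are seriously underestimating, however, is the technical apparatus required to get there. Because $f$ is only assumed to be in $H^{-1}$, the term $\rho_*$ cannot be controlled pointwise, so the paper splits $\rho_*=\rho_0+\rho_{**}^{\perp}$ into a regular part (solving a hierarchy of linear problems, Lemmas~\ref{lem0006}--\ref{lem0013}) and a singular part, and must show that the singular part is $o(\beta_*^4)$-small (Proposition~\ref{propn0003}). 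Moreover, the inter-bubble interaction $Q$ has to be shown to satisfy $Q\lesssim\beta_*^6+\|f\|_{H^{-1}}$ (Proposition~\ref{propn0002}), which requires a sixth-order expansion of $\mathcal{N}$ (Lemma~\ref{lemn0002}); estimating the $\mathcal{I}_{ij}$ ``as in WW2022/DSW2024'' does not suffice in the degenerate regime. Your sketch skips all of this, and the naive coercivity inequality you write for $\|\rho\|$ cannot hold as stated since $\rho$ contains the non-coercive kernel component $\mathcal{V}$.

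Part (b) as written has a genuine error. If you take $u=W+\varepsilon\psi$ with $\psi$ a degenerate kernel element and compute the residual, the linear term does vanish, but the quadratic term of the Taylor expansion of $|u|^{p-1}u$ contributes
\begin{equation*}
-\frac{p(p-1)}{2}\,|x|^{-b(p+1)}W^{p-2}\psi^2\,\varepsilon^2,
\end{equation*}
which is \emph{not} zero: parity on $\mathbb{S}^{d-1}$ only kills its $L^2$-pairing against $\psi$, not the term itself. Hence $\|f\|_{W^{-1,2}_a}\sim\varepsilon^2$, so the distance $\sim\varepsilon\sim\|f\|^{1/2}$, and the ratio $\mathrm{dist}/\|f\|^{1/3}\sim\|f\|^{1/6}\to 0$; your construction would not show optimality. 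To push the residual to order $\varepsilon^3$, you must add a correction of order $\varepsilon^2$ that solves the quadratic linear problem up to the degenerate kernel, plus a further cubic correction, and then verify that the surviving Lagrange multipliers are the dominant contribution. This is exactly what Section~9 of the paper does by introducing $\varrho=\varrho_{1,1}+\beta\varrho_{1,2}+\beta^2\varrho_{2,1}+\beta^3\varrho_{2,2}+\varrho_{1,1,*}$ and tracking the multipliers via parity: the even-parity $\beta^2\Xi_{2,1}$ projects to $0$ on $\{w_l\}$ (so its residual is only $Q_R^p$), while the odd-parity $\beta^3\Xi_{2,2}$ projects nontrivially (Proposition~\ref{propq0001}, giving $\|f\|\sim\beta^3+Q_R$). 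You should also note that the paper's construction uses two bubbles $\Psi+\Psi_R+\beta(w_d+w_{R,d})$ rather than your single-bubble-plus-splice; the splice should in principle work for the $\nu\geq 2$ lower bound, but you would still need the corrector hierarchy described above, and a separate argument to make the spliced function nonnegative (the paper handles this by passing to $\widetilde{v}_+$ and showing $\|\widetilde{v}_-\|$ is higher order).
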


\begin{remark}
In preparing this paper, we knew from personal communications with Professor W. Zou that their group was also working on the question~$(\mathcal{Q})$ for the one-bubble case.  Moreover, we notice that in a very recent paper \cite{ZZ2024}, the optimal stability for the one-bubble case has been established by Zhou and Zou \cite{ZZ2024}, while for the multi-bubble case only a partial result is obtained. By assuming that the projection to the nontrivial kernel is much smaller than the interaction, they obtained the exponent $p/2$, which is the same as in the non-degenerate case Theorem \ref{thm0002}. Note that from Theorem \ref{thmn0001} the most important contribution comes exactly from the projection to the nontrivial kernels.

\end{remark}

\begin{remark}
Theorem~\ref{thmn0001} is rather surprising since the optimal power of the stability is an absolute constant $\frac{1}{3}$ which is independent of $p$ and $\nu$.  This is a completely new finding in the studies on the stability of critical points of the corresponding functional of functional inequalities at infinity.  This new finding can be explained by the optimal example of the stability stated in Theorem~\ref{thmn0001} which is given in section~9.  Roughly speaking, for the two-bubble case, the optimal power of the stability of critical points of the corresponding functional of the CKN inequality~\eqref{eq0001} at infinity depends on two values, the interaction between bubbles which is measured by the distance of these bubbles and the projections of these bubbles on their nontrivial kernels.  If the interaction wins the projections then the optimal power of the stability of critical points of the corresponding functional at infinity will be values in Theorem~\ref{thm0002} which depends on $p$.  If the projections wins the interaction then the optimal power of the stability of critical points of the corresponding functional at infinity will be the absolute constant $\frac{1}{3}$.  If the projections and the interaction are comparable then the optimal power of the stability of critical points of the corresponding functional at infinity will lie between the values of Theorem~\ref{thm0002} and the absolute constant $\frac{1}{3}$.  We refer the readers to Remark~\ref{rmkn0001} for more details.  Since the function $u\in D^{1,2}_a(\bbr^d)$ discussed in Theorem~\ref{thmn0001} is arbitrary, the optimal power of the stability must be the absolute constant $\frac{1}{3}$.
\end{remark}

\subsection{Sketch of the proof}
The basic idea in proving Theorem~\ref{thmn0001} is still to apply the Deng-Sun-Wei arguments in \cite{DSW2024}, as in \cite{WW2022}.  Since the bubble $W$ is degenerate now, we need also employ the Frank-Peteranderl strategy in \cite{FP2024}.  However, since our problem is in the critical point setting, new ideas and new techniques are also needed to develop.  Let us now explain our strategy in proving Theorem~\ref{thmn0001} in what follows.

In the first step, we need to set a good problem.  Suppose that  $u\in D^{1,2}_{a}(\bbr^d)$ be a nonnegative function.  We first transform the problem onto the cylinder $\mathcal{C}=\bbr\times \mathbb{S}^{d-1}$, as usual.  Then, based on the Struwe decomposition (Proposition~\ref{prop0002}), the basic idea is to decompose $v$, the the image of the bubble $u$ on the cylinder $\mathcal{C}$, into two parts, as in \cite{CFM2017,FG2021,DSW2024}, by considering the following minimizing problem
\begin{eqnarray*}
\inf_{\overrightarrow{\alpha}_{\nu}\in\left(\bbr_+\right)^{\nu}, \overrightarrow{s}_{\nu}\in\bbr^\nu}\left\|v-\sum_{j=1}^{\nu}\alpha_j\Psi_{s_j}\right\|^2,
\end{eqnarray*}
so that we can write $v=\sum_{j=1}^{\nu}\alpha_j^*\Psi_{s_j^*}+\rho$ where the remaining term $\rho$ is orthogonal to $\{\Psi_{s_j^*}\}$ in $H^1(\mathcal{C})$.  Since the bubble $\Psi$ which is the image of the bubble $W$ on the cylinder $\mathcal{C}$ is degenerate now, we need further decompose the remaining term $\rho$ and further write
\begin{eqnarray*}
v=\sum_{j=1}^{\nu}\alpha_j^*\Psi_{s_j^*}+\left(\sum_{j=1}^{\nu}\sum_{l=1}^{d}\beta_{j,l}^*w_{j,l}\right)+\rho_*,
\end{eqnarray*}
where $\{w_{j,l}\}$ are the nontrivial kernels of $\Psi_{s_j^*}$ and the remaining term $\rho_*$ is orthogonal to $\{\Psi_{s_j^*}\}$ and $\{w_{j,l}\}$ in $H^1(\mathcal{C})$, as in \cite{FP2024}.  Since we are in the critical point setting, the remaining term $\rho_*$ will also satisfy an elliptic equation:
\begin{eqnarray*}
\left\{\aligned&\mathcal{L}(\rho_*)=f+\mathcal{R}_{int}+\mathcal{N},\quad \text{in }\mathcal{C},\\
&\langle \Psi_{j}, \rho_*\rangle=\langle \partial_t\Psi_{j}, \rho_*\rangle=\langle w_{j,l}, \rho_*\rangle=0\quad\text{for all }1\leq j\leq\nu\text{ and }1\leq l\leq d.\endaligned\right.
\end{eqnarray*}
Now, our aim is to control $\{\beta_{k,l}^*\}$ and $\|\rho_*\|$ by $\|f\|_{H^{-1}}$, which also needs us to control $\sum_{j=1}^{\nu}\left|\alpha_j^*-1\right|$ and the interaction between bubbles by $\|f\|_{H^{-1}}$.

In the second step, we need to expand the nonlinear part $\mathcal{N}$ and the regular data $\mathcal{R}_{int}$ of the equation of $\rho_*$ to control $\sum_{j=1}^{\nu}\left|\alpha_j^*-1\right|$, $\{\beta_{k,l}^*\}$, the interaction between bubbles and $\|\rho_*\|$ by $\|f\|_{H^{-1}}$, as in \cite{DSW2024}.  Roughly speaking, we shall decompose $\rho_*$ into two parts, the first part is regular enough so that we can control it very well in any reasonable sense and the second part is (possible) singular due to the (possible) singularity of the data $f\in H^{-1}$ but it can lie in the positive define part of the linear operator $\mathcal{L}$ and is small enough.  We notice that in the functional inequality setting, Frank and Peteranderl have proved in \cite{FP2024} that the optimal Bianchi-Egnell stability of the CKN inequality~\eqref{eq0001} in the degenerate case is quartic and the projection onto nontrivial kernel dominates the remaining term, thus, it is reasonable to expand the nonlinear part $\mathcal{N}$ at least up to the fourth order terms and to ensure that the (possible) singular part of the remaining term $\rho_*$ is smaller than $\max_{k,l}\left|\beta_{k,l}^*\right|^4$ by decomposing the remaining term $\rho_*$ in a suitable way.  Keeping this in minds, we pick up all regular part of $\mathcal{R}_{int}+\mathcal{N}$ which are potentially larger than or equal to $\max_{k,l}\left|\beta_{k,l}^*\right|^4$
and solve several linear equations to decompose $\rho_*$ into $\rho_*=\rho_0+\rho_{**}^{\perp}$, where $\rho_0$ is the regular part and $\rho_{**}^{\perp}$ is the (possible) singular part.  We remark that to pick up all regular part of $\mathcal{R}_{int}+\mathcal{N}$ which are potentially larger than or equal to $\max_{k,l}\left|\beta_{k,l}^*\right|^4$, we need to expand the nonlinear part $\mathcal{N}$ up to the sixth order terms.

In the third step, we need to multiply the equation of $\rho_*$ by $\{\Psi_j\}$, $\{\partial_t\Psi_j\}$ and $\{w_{j,l}\}$, and multiply the equation of $\rho_{**}^{\perp}$ by $\rho_{**}^{\perp}$ to establish the relations of $\sum_{j=1}^{\nu}\left|\alpha_j^*-1\right|$, $\{\beta_{k,l}^*\}$, the interaction between bubbles, $\|\rho_*\|$ and $\|f\|_{H^{-1}}$, as in \cite{DSW2024}.  However, these estimates are not good enough to finally control $\sum_{j=1}^{\nu}\left|\alpha_j^*-1\right|$, $\{\beta_{k,l}^*\}$, the interaction between bubbles and $\|\rho_*\|$ only by $\|f\|_{H^{-1}}$.  This is mainly because $\max_{k,l}\left|\beta_{k,l}^*\right|^4$ can only be bounded from above by a very special form, as observed by Frank and Peteranderl in \cite{FP2024}.  Thus, we need to find out the right third equation to march this special form and ensure that we will not enlarge the upper bounds in the original estimates of $\sum_{j=1}^{\nu}\left|\alpha_j^*-1\right|$, $\{\beta_{k,l}^*\}$, the interaction between bubbles, $\|\rho_*\|$ and $\|f\|_{H^{-1}}$ in this progress.  We remark that in order to ensure that the (possible) singular part of the remaining term $\rho_*$ is smaller than $\max_{k,l}\left|\beta_{k,l}^*\right|^4$, we also need to full use the symmetry and the orthogonality.

In the final step, we use all above estimates of $\sum_{j=1}^{\nu}\left|\alpha_j^*-1\right|$, $\{\beta_{k,l}^*\}$, the interaction between bubbles, $\|\rho_*\|$ and $\|f\|_{H^{-1}}$ and the estimates of $\max_{k,l}\left|\beta_{k,l}^*\right|^4$ established by Frank and Peteranderl in \cite{FP2024} to derive the desired estimate in $(a)$ of Theorem~\ref{thmn0001}.  The proof of $(b)$ of Theorem~\ref{thmn0001} is to construct an example by considering the case $\nu=2$ and using the good ansatz $\sum_{j=1}^{2}\alpha_j^*\Psi_{s_j^*}+\left(\sum_{j=1}^{2}\sum_{l=1}^{d}\beta_{j,l}^*w_{j,l}\right)+\rho_0$ in the proof of $(a)$ of Theorem~\ref{thmn0001}.

\vskip0.12in

We believe that our strategy of proofs may be useful  to study many other problems in which degeneracy appears.

\subsection{Structure of this paper}. In section~2, we give some preliminaries.  In section~3, we introduce the setting of the problem as stated above by decomposing a given function into three parts, the projection on bubbles, the projection on nontrivial kernels and the remaining term.  In section~4, we expand the nonlinear part of the remaining term up to the fourth order term in the first time to pick up all possible leading order terms in it and use these possible leading order terms to decompose the remaining term into the regular part and the (possible) singular part.  In section~5, we refine the expansion of the nonlinear part by the decomposition of the remaining term by adding the regular part of the remaining term into the ansatz and estimates the differences of the projection on bubbles.  In section~6, we expand the nonlinear part of the remaining term up to the sixth order term to estimate the interaction between bubbles.  In section~7, we estimate the (possible) singular part.  In section~8, we finally estimate the projection on nontrivial kernels and prove $(a)$ of Theorem~\ref{thmn0001}.  In section~9, we construct an optimal example and prove $(b)$ of Theorem~\ref{thmn0001}.

\section{Preliminaries}
The CKN inequality~\eqref{eq0001} can be rewritten as the following minimizing problem:
\begin{eqnarray}\label{eq0002}
S_{a,b}^{-1}=\inf_{u\in D^{1,2}_{a}(\bbr^d)\backslash\{0\}}\frac{\|u\|^2_{D^{1,2}_a(\bbr^d)}}{\|u\|^2_{L^{p+1}(|x|^{-b(p+1)},\bbr^d)}},
\end{eqnarray}
where $L^{p+1}(|x|^{-b(p+1)},\bbr^d)$ is the usual weighted Lebesgue space and its usual norm is given by
\begin{eqnarray*}
\|u\|_{L^{p+1}(|x|^{-b(p+1)},\bbr^d)}=\bigg(\int_{\bbr^d}|x|^{-b(p+1)}|u|^{p+1}dx\bigg)^{\frac{1}{p+1}}.
\end{eqnarray*}
The Euler-Lagrange equation of the minimizing problem~\eqref{eq0002} is given by
\begin{eqnarray}\label{eq0018}
-div(|x|^{-a}\nabla u)=|x|^{-b(p+1)}|u|^{p-1}u, \quad u\in D^{1,2}_{a}(\bbr^d).
\end{eqnarray}
It is well known (cf. \cite[Proposition~2.2]{CW2001}) that $D^{1,2}_{a}(\bbr^d)$, the Hilbert space given by \eqref{eqn886}, is isomorphic to the Hilbert space $H^1(\mathcal{C})$ by the transformation
\begin{eqnarray}\label{eq0007}
u(x)=|x|^{-(a_c-a)}v\left(-\ln|x|,\frac{x}{|x|}\right),
\end{eqnarray}
where we denote $a_c=\frac{d-2}{2}$ as in \cite{DEL2016, DET2008}, $\mathcal{C}=\bbr\times \mathbb{S}^{d-1}$ is the standard cylinder,  $H^1(\mathcal{C})$ is the Hilbert space with the inner product given by
\begin{eqnarray*}
\langle w,v \rangle_{H^1(\mathcal{C})}=\int_{\mathcal{C}}\left(\nabla w\nabla v+(a_c-a)^2 uv\right) d\mu
\end{eqnarray*}
and $d\mu$ the volume element on $\mathcal{C}$.
By \eqref{eq0007}, the minimizing problem~\eqref{eq0002} is equivalent to the following minimizing problem:
\begin{eqnarray}\label{eq0009}
S_{a,b}^{-1}=\inf_{v\in H^1(\mathcal{C})\backslash\{0\}}\frac{\|v\|^2_{H^{1}(\mathcal{C})}}{\|v\|^2_{L^{p+1}(\mathcal{C})}},
\end{eqnarray}
where $\|\cdot\|_{L^{p+1}(\mathcal{C})}$ is the usual norm in the Lebesgue space $L^{p+1}(\mathcal{C})$.  For the sake of simplicity, we denote
\begin{eqnarray*}
L^{p+1}:=L^{p+1}(\mathcal{C})\quad\text{and}\quad H^1:=H^1(\mathcal{C})
\end{eqnarray*}
in what follows.  Let $t=-\ln|x|$ and $\theta=\frac{x}{|x|}$ for $x\in\bbr^N\backslash\{0\}$, then the Euler-Lagrange equation of \eqref{eq0002} in terms of $u$ given by \eqref{eq0018} is equivalent to the following Euler-Lagrange equation of \eqref{eq0009} in terms of $v$:
\begin{eqnarray}\label{eq0006}
-\Delta_{\theta}v-\partial_t^2v+(a_c-a)^2v=|v|^{p-1}v, \quad v\in H^1(\mathcal{C}),
\end{eqnarray}
where $\Delta_{\theta}$ is the Laplace-Beltrami operator on $\mathbb{S}^{d-1}$.

\vskip0.12in

Clearly, minimizers of \eqref{eq0002} are ground states of \eqref{eq0018}.  Moreover,
by the transformation~\eqref{eq0007}, the linear equation~\eqref{eq0017} can be rewritten as follows:
\begin{eqnarray}\label{eq0016}
-\Delta_{\theta}v-\partial_t^2v+(a_c-a)^2v=p\Psi^{p-1}v, \quad v\in H^1(\mathcal{C}),
\end{eqnarray}
and $W(x)$ given by \eqref{eq0004} can be rewritten as
\begin{eqnarray}\label{eq0026}
\Psi(t)=\bigg(\frac{(p+1)(a_c-a)^2}{2}\bigg)^{\frac{1}{p-1}}\bigg(cosh\left(\frac{(a_c-a)(p-1)}{2}t\right)\bigg)^{-\frac{2}{p-1}}.
\end{eqnarray}
Since \eqref{eq0016} is translational invariance, it follows from \eqref{eq0010} and the transformation~\eqref{eq0007} that
\begin{eqnarray*}
\Psi_s'(t)=\Psi'(t-s)=\partial_t\Psi(t-s)=-\partial_s\Psi(t-s)
\end{eqnarray*}
is the only nonzero solution of \eqref{eq0016} in $H^1(\mathcal{C})$ under the condition~\eqref{eq0003} except $b=b_{FS}(a)$.

\vskip0.12in

For the special case $b=b_{FS}(a)$, the bubble $\Psi(t)$ is degenerate in $H^1(\mathcal{C})$ in the sense that the nonzero solution of \eqref{eq0016} in $H^1(\mathcal{C})$ is not only generated by the translational invariance of \eqref{eq0006}.  Fortunately, we have the following lemma which provides a completely understanding of the solutions of the linear equation~\eqref{eq0016} in $H^1(\mathcal{C})$.
\begin{lemma}\label{lem0001}
(\cite[Lemma~7]{FP2024})\quad Let $d\geq2$, $a<0$ and $b=b_{FS}(a)$.  Then any solution of the linear equation~\eqref{eq0016} in $H^1(\mathcal{C})$ is the linear combination of $\partial_t\Psi$ and $\Psi^{\frac{p+1}{2}}\theta_{1}, \Psi^{\frac{p+1}{2}}\theta_{2}, \cdots, \Psi^{\frac{p+1}{2}}\theta_{d}$, where $\theta_l$ are the standard spherical harmonics of degree $1$ on $\mathbb{S}^{d-1}$.
\end{lemma}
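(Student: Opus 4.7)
My plan is to diagonalize the linearized operator on the cylinder $\mathcal{C}=\bbr\times\mathbb{S}^{d-1}$ by expanding the solution in spherical harmonics on $\mathbb{S}^{d-1}$. Writing any $v\in H^1(\mathcal{C})$ as $v(t,\theta)=\sum_{k\geq 0}\sum_{m}v_{k,m}(t)Y_{k,m}(\theta)$, with $\{Y_{k,m}\}$ an $L^2(\mathbb{S}^{d-1})$-orthonormal basis consisting of spherical harmonics of degree $k$ (so $-\Delta_\theta Y_{k,m}=\mu_k Y_{k,m}$ with $\mu_k=k(k+d-2)$), the equation \eqref{eq0016} decouples into the family of one-dimensional Schr\"odinger-type equations
\begin{equation*}
L_k v_{k,m}:=-v_{k,m}''+\bigl[(a_c-a)^2+\mu_k\bigr]v_{k,m}-p\Psi(t)^{p-1}v_{k,m}=0,\qquad v_{k,m}\in H^1(\bbr).
\end{equation*}
Thus the whole task reduces to identifying $\ker L_k\cap L^2(\bbr)$ for each $k\geq 0$.

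For $k=0$, translation invariance of \eqref{eq0006} gives $L_0(\partial_t\Psi)=0$ for free. Since $\Psi$ is positive and even with a unique maximum at $t=0$ (clear from \eqref{eq0026}), $\partial_t\Psi$ is odd with exactly one zero, so Sturm oscillation theory identifies it as the second $L^2$-eigenfunction of $L_0$; the corresponding eigenvalue $0$ is simple and $\ker L_0\cap L^2(\bbr)=\bbr\,\partial_t\Psi$.

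For $k=1$, I would verify by a direct sech-computation that the candidate $\phi:=\Psi^{(p+1)/2}$ satisfies $L_1\phi=0$ exactly on the Felli--Schneider curve. Using \eqref{eq0026} to write $\phi=C'\operatorname{sech}^\beta(\omega t)$ with $\omega=(a_c-a)(p-1)/2$ and $\beta=(p+1)/(p-1)$, one obtains an identity of the shape $\phi''=\beta\omega^2[\beta-(\beta+1)\operatorname{sech}^2(\omega t)]\phi$. Matching the constant term and the $\operatorname{sech}^2$ coefficient of $L_1\phi$ to zero collapses to a single algebraic relation $(d-1)=(a_c-a)^2(p-1)(p+3)/4$, which, after substituting $p=\tfrac{d+2(1+a-b)}{d-2(1+a-b)}$, becomes $(p+1)(a_c-a)=2\sqrt{(a_c-a)^2+(d-1)}$, i.e.\ exactly $b=b_{FS}(a)$. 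Once $L_1\phi=0$ is established with $\phi>0$, Sturm theory forces $\phi$ to be the ground state of $L_1$, so the lowest eigenvalue is $0$ and is simple, yielding $\ker L_1\cap L^2(\bbr)=\bbr\,\Psi^{(p+1)/2}$.

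For $k\geq 2$, write $L_k=L_1+(\mu_k-\mu_1)$ with $\mu_k-\mu_1=k(k+d-2)-(d-1)>0$; since $L_1\geq 0$, $L_k$ is then strictly positive on $L^2(\bbr)$ and its $L^2$-kernel is trivial. Recalling that the spherical harmonics of degree one on $\mathbb{S}^{d-1}$ are $\operatorname{span}\{\theta_1,\dots,\theta_d\}$, assembling the three cases produces precisely the description claimed in the lemma. The main obstacle is the mode-$k=1$ verification: the algebraic match between the sech-computation and the Felli--Schneider condition is the place where the special curve actually enters, and one must carefully confirm simplicity (not merely non-triviality) of $\ker L_1\cap L^2(\bbr)$ via the ground-state argument. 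The remainder is standard one-dimensional Schr\"odinger spectral theory.
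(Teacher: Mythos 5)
The paper does not prove this lemma; it is quoted from Frank--Peteranderl \cite[Lemma~7]{FP2024}, and your argument is, up to presentation, exactly the one used there: expand in spherical harmonics, reduce to the one-dimensional Schr\"odinger operators $L_k=-\partial_t^2+[(a_c-a)^2+\mu_k]-p\Psi^{p-1}$, and classify $\ker L_k\cap L^2(\bbr)$ mode by mode via Sturm oscillation and positivity of the ground state. Your sech computation for $k=1$ is correct, and it may be worth stating explicitly that the $\operatorname{sech}^2(\omega t)$ coefficient in $L_1\Psi^{(p+1)/2}$, namely $\beta(\beta+1)\omega^2-\tfrac{p(p+1)}{2}(a_c-a)^2$, vanishes \emph{identically} for all admissible $(a,b)$, so the Felli--Schneider condition comes solely from the constant coefficient $-\beta^2\omega^2+(a_c-a)^2+(d-1)=0$; that is the cleanest way to see where the special curve enters. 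The $k=0$ case should perhaps carry one more clause: $0$ is an eigenvalue of $L_0$ lying strictly below the essential spectrum $[(a_c-a)^2,\infty)$, and such eigenvalues of a one-dimensional Schr\"odinger operator are automatically simple, which is what rules out a second $L^2$-solution; your oscillation-count argument is fine but this is the underlying reason. The $k\geq2$ monotonicity argument and the identification of the degree-one harmonics with $\operatorname{span}\{\theta_1,\dots,\theta_d\}$ are correct, so the proof is complete.
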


\begin{remark}\label{rmk0001}
As in \cite{FP2024}, we call $\partial_t\Psi$ the trivial kernel of the the linear equation~\eqref{eq0016} in $H^1(\mathcal{C})$ and call $\Psi^{\frac{p+1}{2}}\theta_{1}, \Psi^{\frac{p+1}{2}}\theta_{2}, \cdots, \Psi^{\frac{p+1}{2}}\theta_{d}$ the nontrivial kernel of the linear equation~\eqref{eq0016} in $H^1(\mathcal{C})$.  Moreover, since $\theta_{l}$ are odd on $\mathbb{S}^{d-1}$, $\partial_t\Psi$ is odd in $\bbr$ and $\Psi$ is even in $\bbr$, by \eqref{eq0006} and \eqref{eq0016}, we have the following orthogonal conditions:
\begin{eqnarray*}
\left\langle \Psi, \partial_t\Psi\right\rangle=0,\quad \left\langle \Psi, w_{l}\right\rangle=0,\quad\left\langle \partial_t\Psi, w_{l}\right\rangle=0\quad \text{and}\quad \left\langle w_{j}, w_{l}\right\rangle=0
\end{eqnarray*}
for all $1\leq j\not=l\leq d$, where $w_l=\Psi^{\frac{p+1}{2}}\theta_{l}$.
\end{remark}

\section{Setting of the problem}
Let $d\geq2$, $a<0$ and $b=b_{FS}(a)$.  Then direct calculations show that
\begin{eqnarray*}
(a_c-a)^2=\frac{4(d-1)}{(p+1)^2-4}.
\end{eqnarray*}
For the sake of simplicity, we denote that
\begin{eqnarray*}
S_{FS}:=S_{a,b}\quad\text{and}\quad\Lambda_{FS}:=\frac{4(d-1)}{(p+1)^2-4}
\end{eqnarray*}
for $d\geq2$, $a<0$ and $b=b_{FS}(a)$.  Let $v\in H^1(\mathcal{C})$ be a nonnegative function such that
\begin{eqnarray*}
\left(\nu-\frac12\right)\left(S_{FS}^{-1}\right)^{\frac{p+1}{p-1}}<\|v\|_{H^{1}}^2<\left(\nu+\frac12\right)\left(S_{FS}^{-1}\right)^{\frac{p+1}{p-1}}
\end{eqnarray*}
for some positive integer $\nu\geq2$ and denote
\begin{eqnarray}\label{eq0060}
f:=-\Delta_{\theta}v-\partial_t^2v+\Lambda_{FS}v-v^{p}.
\end{eqnarray}
Then it is easy to see that $f\in H^{-1}(\mathcal{C})$.

\vskip0.12in

Since $a<b_{FS}(a)<a+1$ for $a<0$ and $d\geq2$, by \cite[Lemma~4.2]{CW2001} (see also \cite[Proposition~3.2]{WW2022}), there exists $(\alpha_{1,\natural}, \alpha_{2,\natural}, \cdots, \alpha_{\nu,\natural})$ and $(s_{1,\natural}, s_{2,\natural}, \cdots, s_{\nu,\natural})$ satisfying
\begin{eqnarray*}
\max_{1\leq j\leq\nu}|\alpha_{j, \natural}-1|\to0\text{ and }\min_{i\not=j}\left|s_{i, \natural}-s_{j, \natural}\right|\to+\infty\quad\text{as}\quad \|f\|_{H^{-1}(\mathcal{C})}\to0
\end{eqnarray*}
such that
\begin{eqnarray}\label{eqn0005}
\left\|v-\sum_{j=1}^{\nu}\alpha_{j,\natural}\Psi_{s_{j,\natural}}\right\|^2\to0\quad\text{as}\quad \|f\|_{H^{-1}}\to0
\end{eqnarray}
where
\begin{eqnarray*}
\|\cdot\|^2=\|\cdot\|^2_{H^1}+\Lambda_{FS}\|\cdot\|^2_{L^2}
\end{eqnarray*}
is the equivalent norm in $H^1$.  We also denote the related inner product in $H^1$, which is induced by the norm $\|\cdot\|$, by $\langle \cdot,\cdot \rangle$.
Thus, we can rewrite
\begin{eqnarray*}
v=\sum_{j=1}^{\nu}\alpha_{j,\natural}\Psi_{s_{j,\natural}}+\text{a remaining term}
\end{eqnarray*}
in $H^1$ as $\|f\|_{H^{-1}}\to0$.  To obtain an optimal decomposition as above, let us consider the following minimizing problem:
\begin{eqnarray}\label{eqn0001}
\inf_{\overrightarrow{\alpha}_{\nu}\in\left(\bbr_+\right)^{\nu}, \overrightarrow{s}_{\nu}\in\bbr^\nu}\left\|v-\sum_{j=1}^{\nu}\alpha_j\Psi_{s_j}\right\|^2,
\end{eqnarray}
where $\overrightarrow{\alpha}_{\nu}=(\alpha_1, \alpha_2, \cdots, \alpha_{\nu})$ and $\overrightarrow{s}_{\nu}=(s_1, s_2, \cdots, s_{\nu})$.  By \eqref{eqn0005} and similar arguments used for \cite[Proposition~4.1]{WW2022}, we know that the variational problem~\eqref{eqn0001} has minimizers, say $(\overrightarrow{\alpha}_{\nu}^*, \overrightarrow{s}_{\nu}^*)$, such that
\begin{eqnarray}\label{eqn1005}
\max_{1\leq j\leq\nu}|\alpha_j^*-1|\to0\text{ and }\min_{i\not=j}\left|s_i^*-s_j^*\right|\to+\infty\quad\text{as}\quad \|f\|_{H^{-1}(\mathcal{C})}\to0.
\end{eqnarray}
Thus, we can decompose
\begin{eqnarray}\label{eqnewnew0002}
v=\sum_{j=1}^{\nu}\alpha_j^*\Psi_{s_j^*}+\rho
\end{eqnarray}
where
\begin{eqnarray}\label{eqn0008}
\|\rho\|^2=\inf_{\overrightarrow{\alpha}_{\nu}\in\left(\bbr_+\right)^{\nu}, \overrightarrow{s}_{\nu}\in\bbr^\nu}\left\|v-\sum_{j=1}^{\nu}\alpha_j\Psi_{s_j}\right\|^2\to0\quad\text{as }\|f\|_{H^{-1}}\to0
\end{eqnarray}
and by the minimality of $(\overrightarrow{\alpha}_{\nu}^*, \overrightarrow{s}_{\nu}^*)$,
\begin{eqnarray}\label{eqn0004}
\left\langle \rho, \Psi_{s_j^*}\right\rangle=0 \quad\text{and}\quad \left\langle \rho, \partial_t\Psi_{s_j^*}\right\rangle=0\quad\text{for all }j=1,2,\cdots,\nu.
\end{eqnarray}
Since by Lemma~\ref{lem0001}, the linear equation~\eqref{eq0016} has nontrivial kernels in $H^1(\mathcal{C})$ for $d\geq2$, $a<0$ and and $b=b_{FS}(a)$, we need further decompose the remaining term as follows:
\begin{eqnarray}\label{eqn0002}
\rho=\left(\sum_{j=1}^{\nu}\sum_{l=1}^{d}\beta_{j,l}^*w_{j,l}\right)+\rho_*,
\end{eqnarray}
where for the sake of simplicity, we denote $w_{j,l}=\Psi_{j}^{\frac{p+1}{2}}\theta_{l}=w_l(t-s_j^*)$ and $\{\beta_{j,l}^*\}$ is chosen such that $\left\langle w_{j,l}, \rho_*\right\rangle=0$ for all $1\leq j\leq \nu$ and $1\leq l\leq d$.  The above facts can be summarized into the following lemma.
\begin{lemma}\label{lem0002}
Let $d\geq2$, $a<0$ and $b=b_{FS}(a)$.  Then we have the following decomposition of $v$:
\begin{eqnarray}\label{eqnewnew0003}
v=\sum_{j=1}^{\nu}\alpha_j^*\Psi_{s_j^*}+\left(\sum_{j=1}^{\nu}\sum_{l=1}^{d}\beta_{j,l}^*w_{j,l}\right)+\rho_*,
\end{eqnarray}
where the remaining term $\rho_*$ satisfies the following orthogonal conditions:
\begin{eqnarray}\label{eqn0003}
\left\langle \rho_*, \Psi_{s_j^*}\right\rangle=0, \quad \left\langle \rho_*, \partial_t\Psi_{s_j^*}\right\rangle=0\quad\text{and}\quad\left\langle \rho_*, w_{j,l}\right\rangle=0
\end{eqnarray}
for all $1\leq j\leq \nu$ and $1\leq l\leq d$ with
\begin{eqnarray}\label{eqnewnew1005}
\max_{1\leq j\leq\nu}|\alpha_j^*-1|\to0,\quad\min_{i\not=j}\left|s_i^*-s_j^*\right|\to+\infty\quad\text{and}\quad\|\rho_*\|\to0
\end{eqnarray}
as $\|f\|_{H^{-1}(\mathcal{C})}\to0$.  Moreover, we also have
\begin{eqnarray}\label{eqnewnew0001}
\inf_{\overrightarrow{\alpha}_{\nu}\in\left(\bbr_+\right)^{\nu}, \overrightarrow{s}_{\nu}\in\bbr^\nu}\left\|v-\sum_{j=1}^{\nu}\alpha_j\Psi_{s_j}\right\|^2\sim\sum_{k=1}^{\nu}\sum_{l=1}^{d}\left(\beta_{k,l}^*\right)^2+\|\rho_*\|^2.
\end{eqnarray}
\end{lemma}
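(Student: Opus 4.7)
My plan is to build the decomposition in two stages. In the first stage, I would establish existence of a minimizer $(\vec{\alpha}_\nu^*,\vec{s}_\nu^*)$ for the finite-dimensional problem \eqref{eqn0001}, and use first-order optimality to obtain the $\Psi$- and $\partial_t\Psi$-orthogonalities. In the second stage, I would perform a further $L^2$-type projection onto the span of the nontrivial kernels $\{w_{j,l}\}$.

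For the first stage, I would start from the Struwe-type decomposition in \eqref{eqn0005}, which supplies an explicit tuple $(\vec{\alpha}_{\natural},\vec{s}_{\natural})$ making the target functional $\Phi(\vec\alpha,\vec s) := \|v-\sum_{j=1}^\nu \alpha_j \Psi_{s_j}\|^2$ tend to $0$. Hence the infimum in \eqref{eqn0001} tends to $0$, and any minimizing sequence $(\vec\alpha^{(k)},\vec s^{(k)})$ of $\Phi$ must stay within an $o(1)$-neighborhood of $(\vec{\alpha}_\natural,\vec{s}_\natural)$: if some $\alpha_j^{(k)}\to 0$ or $\infty$, or if some $s_i^{(k)}-s_j^{(k)}$ approached a finite limit while the $\alpha$'s stayed bounded (causing two bubbles to merge), or if an entire coordinate ran off, the standard energy accounting (each bubble contributes a fixed amount of $\|\Psi\|^2$, plus decay of cross inner products) would force $\Phi\ge c>0$, a contradiction. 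So $\Phi$ attains its minimum at some $(\vec\alpha^*,\vec s^*)$, and by continuity $\max_j|\alpha_j^*-1|\to 0$ and $\min_{i\neq j}|s_i^*-s_j^*|\to\infty$ as $\|f\|_{H^{-1}}\to 0$, giving \eqref{eqnewnew1005} for $\rho$. Setting $\rho:=v-\sum_j\alpha_j^*\Psi_{s_j^*}$, the conditions $\partial_{\alpha_j}\Phi=\partial_{s_j}\Phi=0$ at $(\vec\alpha^*,\vec s^*)$ read
\begin{equation*}
\langle \rho,\Psi_{s_j^*}\rangle=0,\qquad -\alpha_j^*\langle \rho,\partial_t\Psi_{s_j^*}\rangle=0,
\end{equation*}
which (since $\alpha_j^*\to 1$) gives \eqref{eqn0004}.

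For the second stage, I would set up a linear system for $\{\beta_{j,l}^*\}$ by demanding $\langle \rho-\sum_{k,m}\beta_{k,m}^* w_{k,m},\,w_{j,l}\rangle=0$ for all $(j,l)$. The Gram matrix $G$ of $\{w_{j,l}\}$ splits into diagonal blocks (one per index $j$) plus off-diagonal interaction blocks. Within a single block, the spherical harmonic orthogonality $\int_{\mathbb{S}^{d-1}}\theta_l\theta_m\,d\sigma=c\delta_{lm}$ makes the block proportional to the identity in the $l$-index. Across blocks $j\neq k$, $\langle w_{j,l},w_{k,m}\rangle$ is $O(e^{-c|s_j^*-s_k^*|})$ by exponential decay of $\Psi$ and the usual interaction-integral estimates, which goes to zero by \eqref{eqnewnew1005}. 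Hence $G$ is a small perturbation of a positive multiple of the identity, uniquely invertible, and the resulting $\beta_{j,l}^*$ are well-defined and continuous in the data. Define $\rho_*:=\rho-\sum_{j,l}\beta_{j,l}^*w_{j,l}$; then by construction $\langle \rho_*,w_{j,l}\rangle=0$.

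Finally, I must verify that this second projection preserves the orthogonalities to $\Psi_{s_k^*}$ and $\partial_t\Psi_{s_k^*}$. The crucial, easy observation is that for every $j,k,l$,
\begin{equation*}
\langle w_{j,l},\Psi_{s_k^*}\rangle=0,\qquad \langle w_{j,l},\partial_t\Psi_{s_k^*}\rangle=0,
\end{equation*}
because both $\Psi_{s_k^*}(t)$ and $\partial_t\Psi_{s_k^*}(t)$ are radial on $\mathbb{S}^{d-1}$ while $\theta_l$ is a degree-$1$ spherical harmonic, so the spherical integral vanishes. Thus $\rho_*$ inherits all the orthogonalities in \eqref{eqn0003}, and \eqref{eqnewnew1005} for $\rho_*$ follows from $\|\rho_*\|\le\|\rho\|\to 0$. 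For the norm equivalence \eqref{eqnewnew0001}: since $\rho_*\perp w_{j,l}$, Pythagoras gives $\|\rho\|^2=\|\sum_{j,l}\beta_{j,l}^* w_{j,l}\|^2+\|\rho_*\|^2$; and from the structure of $G$ above, $\|\sum_{j,l}\beta_{j,l}^* w_{j,l}\|^2=\langle G\vec\beta^*,\vec\beta^*\rangle\sim\sum_{k,l}(\beta_{k,l}^*)^2$. Combined with $\|\rho\|^2=\inf\Phi$ from \eqref{eqn0008}, this yields \eqref{eqnewnew0001}. The main technical obstacle is the uniform invertibility of $G$, i.e.\ controlling all off-diagonal interaction inner products by $o(1)$ using the bubble separation $|s_i^*-s_j^*|\to\infty$; everything else reduces to the first-order conditions at the minimum and parity of spherical harmonics.
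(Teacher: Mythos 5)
Your proposal is correct and follows essentially the same route as the paper: minimize the finite-dimensional functional to get $\rho\perp\Psi_{s_j^*},\partial_t\Psi_{s_j^*}$, then project $\rho$ onto the span of the nontrivial kernels $\{w_{j,l}\}$, using the spherical-harmonic parity $\int_{\mathbb{S}^{d-1}}\theta_l\,d\sigma=0$ to see that this projection does not disturb the earlier orthogonalities, and finally use near-orthogonality of the $w_{j,l}$ (diagonal dominance of the Gram matrix as $\min_{i\neq j}|s_i^*-s_j^*|\to\infty$) together with Pythagoras to get both $\|\rho_*\|\to0$ and the norm equivalence \eqref{eqnewnew0001}. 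The only cosmetic difference is that you make the Gram-matrix invertibility explicit, whereas the paper phrases the same estimate as the expansion \eqref{eqn0006} of $\|\rho\|^2$ with small cross terms.
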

\begin{proof}
\eqref{eqnewnew0003} can be obtained by \eqref{eqnewnew0002} and \eqref{eqn0002}, directly, while the orthogonal conditions of $\rho_*$ is obtained by the choice of $\{\beta_{j,l}^*\}$, the orthogonal condition of $w_{j,l}$ given in Remark~\ref{rmk0001} and the orthogonal condition of $\rho$ given by \eqref{eqn0004}.  By \eqref{eqn1005}, it remains to show that $\|\rho_*\|\to0$ as $\|f\|_{H^{-1}(\mathcal{C})}\to0$.  Indeed, by the orthogonal condition of $w_{j,l}$ given in Remark~\ref{rmk0001} and the orthogonal condition of $\rho$ given by \eqref{eqn0003},
\begin{eqnarray}\label{eqn0006}
\|\rho\|^2
&=&\sum_{k=1}^{\nu}\sum_{l=1}^{d}\left(\beta_{k,l}^*\right)^2\left\|w_{d}\right\|^2+\|\rho_*\|^2\notag\\
&&+2\sum_{m,n=1;m<n}^{\nu}\sum_{l=1}^{d}\beta_{n,l}^*\beta_{m,l}^*\left\langle w_{n,l}, w_{m,l}\right\rangle,
\end{eqnarray}
where we have used the invariance of $\mathbb{S}^{d-1}$ and the norm $\|\cdot\|$ under the action of orthogonal matrix $O(d)$.  Clearly, by \eqref{eqn1005} and \eqref{eqn0006}, it is easy to see that
\begin{eqnarray*}
\|\rho\|^2\sim\sum_{k=1}^{\nu}\sum_{l=1}^{d}\left(\beta_{k,l}^*\right)^2+\|\rho_*\|^2,
\end{eqnarray*}
which, together with \eqref{eqn0008}, implies that $\|\rho_*\|\to0$ and \eqref{eqnewnew0001} as $\|f\|_{H^{-1}(\mathcal{C})}\to0$.
\end{proof}

\vskip0.12in

For the sake of simplicity, we use the notations $\Psi_{j}:=\Psi_{s_{j}^*}$, $\Psi_{j}^*=\alpha_j^*\Psi_j$,
\begin{eqnarray}\label{eqn0140}
\mathcal{U}:=\sum_{j=1}^{\nu}\Psi_{j}^*,\quad \mathcal{U}_j=\mathcal{U}-\Psi_{j}^*=\sum_{i=1;i\not=j}^{\nu}\Psi_{i}^*
\end{eqnarray}
and
\begin{eqnarray}\label{eqn0040}
\mathcal{V}_j=\sum_{l=1}^{d}\beta_{j,l}^*w_{j,l}=\Psi_{j}^{\frac{p+1}{2}}\sum_{l=1}^{d}\beta_{j,l}^*\theta_l,\quad \mathcal{V}:=\sum_{j=1}^{\nu}\mathcal{V}_j.
\end{eqnarray}
Since $\Psi_{j}$ are solutions of \eqref{eq0006} and $w_{j,l}$ are solutions of \eqref{eq0016}, by \eqref{eq0060}, \eqref{eqn0002} and \eqref{eqn0003}, it is easy to see that the remaining term $\rho_*$ satisfies:
\begin{eqnarray}\label{eq0014}
\left\{\aligned&\mathcal{L}(\rho_*)=f+\mathcal{R}_1+\mathcal{R}_2+\mathcal{N},\quad \text{in }\mathcal{C},\\
&\langle \Psi_{j}, \rho_*\rangle=\langle \partial_t\Psi_{j}, \rho_*\rangle=\langle w_{j,l}, \rho_*\rangle=0\quad\text{for all }1\leq j\leq\nu\text{ and }1\leq l\leq d,\endaligned\right.
\end{eqnarray}
where $\mathcal{L}(\rho_*)$ is the linear operator given by
\begin{eqnarray}
\mathcal{L}(\rho_*)&=&-\partial_t^2\rho_*-\Delta_\theta\rho_*+\Lambda_{FS}\rho_*-p\mathcal{U}^{p-1}\rho_*\notag\\
&=&\left(-\partial_t^2\rho_*-\Delta_\theta\rho_*+\Lambda_{FS}\rho_*-p\left(\Psi_{j}^*\right)^{p-1}\rho_*\right)-p\left(\mathcal{U}^{p-1}-\left(\Psi_{j}^*\right)^{p-1}\right)\rho_*\notag\\
&=&\mathcal{L}_j(\rho_*)-\mathcal{L}_{j,ex}(\rho_*)\label{eqn0044}
\end{eqnarray}
for all $j=1,2,\cdots,\nu$, $\mathcal{R}_1$ and $\mathcal{R}_2$ are the errors given by
\begin{eqnarray}\label{eqn0020}
\mathcal{R}_1&=&\mathcal{U}^{p}-\sum_{j=1}^{\nu}\left(\Psi_{j}^*\right)^p+\sum_{j=1}^{\nu}\left(\left(\alpha_j^*\right)^p-\alpha_j^*\right)\Psi_{j}^p\notag\\
&:=&\mathcal{R}_{1,ex}+\sum_{j=1}^{\nu}\mathcal{R}_{1,j}
\end{eqnarray}
and
\begin{eqnarray}\label{eqn0021}
\mathcal{R}_2&=&p\sum_{j=1}^{\nu}\left(\mathcal{U}^{p-1}-\left(\Psi_j^*\right)^{p-1}+\left((\alpha_j^*)^{p-1}-1\right)\Psi_{j}^{p-1}\right)\mathcal{V}_j\notag\\
&=&p\sum_{j=1}^{\nu}\left(\mathcal{U}^{p-1}-\left(\Psi_j^*\right)^{p-1}\right)\mathcal{V}_j+p\sum_{j=1}^{\nu}\left(\left((\alpha_j^*)^{p-1}-1\right)\Psi_{j}^{p-1}\right)\mathcal{V}_j\notag\\
&:=&\mathcal{R}_{2,ex}+\sum_{j=1}^{\nu}\mathcal{R}_{2,j},
\end{eqnarray}
and $\mathcal{N}$ is the only nonlinear part of $\rho_*$ given by
\begin{eqnarray}\label{eqnewnew8856}
\mathcal{N}=\left(\mathcal{U}+\mathcal{V}+\rho_*\right)^p-\mathcal{U}^p-p\mathcal{U}^{p-1}\left(\mathcal{V}+\rho_*\right).
\end{eqnarray}
By \eqref{eqnewnew0001}, to establish stability inequality of the CKN inequality in the critical point setting for $d\geq2$, $a<0$ and $b=b_{FS}(a)$ as in \cite{DSW2024,FG2021,WW2022}, we shall control $\{\beta_{k,l}^*\}$ and $\|\rho_*\|$ by $\|f\|_{H^{-1}}$.

\vskip0.12in

\section{first expansion of $\mathcal{N}$ and further decomposition of $\rho_*$}
To get optimal control of $\{\beta_{k,l}^*\}$ and $\|\rho_*\|$ by $\|f\|_{H^{-1}}$, we shall
apply the ideas in \cite{DSW2024} (see also \cite{WW2022}).  Roughly speaking, we need to further decompose the remaining part $\rho_*$ into two parts.  The first part, say $\rho_0$, is regular enough in the sense that $\rho_0$ can be controlled by a good weighted $L^\infty$ norm.  The second part, say $\rho_{**}^{\perp}$, is (possible) singular according to the (possible) lack of regularity of $f$ which is much smaller than $\rho_0$ in $H^1(\mathcal{C})$.  For this purpose, we need to expand the nonlinear part $\mathcal{N}$ to pick up all possible leading order terms of the remaining part $\rho_*$.

\subsection{First expansion of $\mathcal{N}$}
Since by \cite[Theorem~1]{FP2024}, the optimal Bianchi-Egnell stability of the CKN inequality for $d\geq2$, $a<0$ and $b=b_{FS}(a)$ is quartic, it is reasonable to first expand the nonlinear part $\mathcal{N}$ up to the fourth order term.
\begin{lemma}\label{lem0003}
Let $d\geq2$, $a<0$ and $b=b_{FS}(a)$.  Then we have the following expansion of the nonlinear part $\mathcal{N}$:
\begin{eqnarray}\label{eqn0018}
\mathcal{N}&=&A_p\mathcal{U}^{p-2}\left(\mathcal{V}^2+2\mathcal{V}\rho_*\right)+B_p\mathcal{U}^{p-3}\left(\mathcal{V}^3+3\mathcal{V}^2\rho_*\right)\notag\\
&&+\mathcal{O}\left(\mathcal{U}^{p}\beta_*^4+\chi_{p\geq2}|\rho_*|^{2}+|\rho_*|^{p}+\sum_{l=2}^{3}\beta_*^{\frac{2(l-p)_+}{p+1}}|\rho_*|^{l-\frac{2(l-p)_+}{p+1}}\right)\notag\\
&:=&\mathcal{N}_{*}+\mathcal{N}_{rem}
\end{eqnarray}
in $\mathcal{C}$, where $\rho_*$, $\mathcal{U}$ and $\mathcal{V}$ are given by \eqref{eqnewnew0003}, \eqref{eqn0140} and \eqref{eqn0040}, respectively, $A_p=\frac{p(p-1)}{2}$, $B_p=\frac{p(p-1)(p-2)}{6}$ and
\begin{eqnarray*}
\chi_{p\geq2}=\left\{\aligned
&1,\quad p\geq2,\\
&0,\quad 1<p<2.
\endaligned\right.
\end{eqnarray*}
\end{lemma}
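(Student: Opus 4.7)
The plan is to apply a pointwise Taylor expansion of $z\mapsto z^p$ at $z=\mathcal{U}$ with increment $h=\mathcal{V}+\rho_*$, which gives
\[
(\mathcal{U}+h)^p - \mathcal{U}^p - p\,\mathcal{U}^{p-1}h \;=\; A_p\mathcal{U}^{p-2}h^2 + B_p\mathcal{U}^{p-3}h^3 + R(\mathcal{U},h),
\]
where the fourth-order Taylor remainder satisfies the pointwise dichotomy $|R(\mathcal{U},h)|\lesssim \min\{\mathcal{U}^{p-4}|h|^4,\,|h|^p\}$: the first estimate follows from Lagrange's form on $\{|h|\le \mathcal{U}/2\}$, while the second follows from the elementary inequality $|(1+t)^p-1-pt-A_pt^2-B_pt^3|\lesssim |t|^p$ for $|t|\ge 1/2$. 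Expanding $h^2=\mathcal{V}^2+2\mathcal{V}\rho_*+\rho_*^2$ and $h^3=\mathcal{V}^3+3\mathcal{V}^2\rho_*+3\mathcal{V}\rho_*^2+\rho_*^3$ binomially isolates the targeted main contribution $\mathcal{N}_*$ and leaves three families of leftover terms to absorb into $\mathcal{N}_{rem}$: the pure-$\mathcal{V}$ part of $R$, the mixed terms $\mathcal{U}^{p-2}\rho_*^2$, $\mathcal{U}^{p-3}\mathcal{V}\rho_*^2$, $\mathcal{U}^{p-3}\rho_*^3$, and the $\rho_*$-heavy part of $R$.

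For the pure-$\mathcal{V}$ part I use the pointwise bound $|\mathcal{V}|\lesssim \beta_*\,\Psi_j^{(p+1)/2}$ near each concentration point $s_j^*$, where $\mathcal{U}\sim \Psi_j$; this gives $\mathcal{U}^{p-4}\mathcal{V}^4\lesssim \beta_*^4\mathcal{U}^{3p-2}\lesssim \beta_*^4\mathcal{U}^p$ thanks to the uniform boundedness of $\Psi$ on $\mathcal{C}$ and $p>1$. Interactions between distinct bubbles are controlled through the exponential decay of $\Psi_{s_j^*}$ away from $s_j^*$ together with the large separation $|s_i^*-s_j^*|\to+\infty$ from Lemma~\ref{lem0002}, so they are negligible relative to the diagonal contributions. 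In the opposite regime $|h|\gtrsim \mathcal{U}$, the second pointwise bound on $R$ yields $|R|\lesssim |\mathcal{V}|^p+|\rho_*|^p\lesssim \beta_*^4\mathcal{U}^p+|\rho_*|^p$, where I use $\beta_*^{p-4}\le 1$ under the smallness condition \eqref{eqnewnew1005} and $\mathcal{U}^{p(p+1)/2}\lesssim \mathcal{U}^p$.

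The delicate step is the estimate of the mixed terms $\mathcal{U}^{p-l}|\rho_*|^l$, with a possible extra $\mathcal{V}$-factor, for $l\in\{2,3\}$. When $p\geq l$, the prefactor $\mathcal{U}^{p-l}$ is uniformly bounded, so the term is $\lesssim |\rho_*|^l$, producing the $\chi_{p\geq 2}|\rho_*|^2$ contribution when $l=2$ and being absorbed into $|\rho_*|^p$ when $l=3$ via $|\rho_*|\ll 1$. When $p<l$, the factor $\mathcal{U}^{p-l}$ is singular in the tails of the bubbles and must be traded against $|\mathcal{V}|$: on $\{|\rho_*|\le|\mathcal{V}|\}$ I would write $\mathcal{U}^{p-l}\lesssim \beta_*^{-2(l-p)/(p+1)}|\mathcal{V}|^{-2(l-p)/(p+1)}$ using $|\mathcal{V}|\lesssim \beta_*\mathcal{U}^{(p+1)/2}$, and then absorb the excess power of $|\mathcal{V}|$ into $|\rho_*|$, producing exactly the interpolation weight $\beta_*^{2(l-p)_+/(p+1)}|\rho_*|^{l-2(l-p)_+/(p+1)}$; on $\{|\rho_*|>|\mathcal{V}|\}$ the term is controlled by $|\rho_*|^p$ after replacing $|\mathcal{V}|$ by $|\rho_*|$. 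The main obstacle will be keeping this $|\mathcal{V}|^{-\gamma}$ trading clean in the region where $\mathcal{V}$ actually vanishes because of the spherical harmonic combination $\sum_l \beta_{j,l}^*\theta_l$; the remedy is to perform the pointwise bound only after replacing $|\mathcal{V}|$ by its upper envelope $\beta_*\Psi_j^{(p+1)/2}$, which is precisely the quantity encoded by the weight in the statement, so the estimate becomes insensitive to the zeros of $\mathcal{V}$. Summing the three families then yields the claimed expansion of $\mathcal{N}$.
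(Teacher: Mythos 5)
There is a genuine gap in your remainder estimate. You claim the pointwise dichotomy $|R(\mathcal{U},h)|\lesssim\min\{\mathcal{U}^{p-4}|h|^4,\,|h|^p\}$, justifying the second branch by the ``elementary inequality'' $|(1+t)^p-1-pt-A_pt^2-B_pt^3|\lesssim |t|^p$ for $|t|\ge 1/2$. This inequality is \emph{false} for $1<p<3$ (except the trivial cases $p=2,3$): as $t\to+\infty$ the left-hand side is asymptotic to $|B_p|\,t^3$, which is $\gg t^p$ whenever $p<3$ and $B_p\ne 0$, and similarly $A_pt^2\gg t^p$ for $p<2$. Since $\rho_*$ is only controlled in $H^1(\mathcal{C})$ and is not pointwise small (no $L^\infty$-embedding on the $d$-dimensional cylinder for $d\ge 2$), the ratio $t=h/\mathcal{U}$ is indeed unbounded from above, so you are invoking the inequality precisely in the regime where it fails. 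Consequently your claimed bound $|R|\lesssim|h|^p$ does not hold on $\{|h|\gtrsim\mathcal{U}\}$ when $1<p<3$, and the decomposition breaks down there. A secondary but real slip in the same passage: the step $|\mathcal{V}|^p\lesssim\beta_*^4\mathcal{U}^p$ ``using $\beta_*^{p-4}\le 1$'' is wrong for $p<4$ ($\beta_*<1$ makes $\beta_*^{p-4}$ \emph{large}); it should be discarded and replaced by $|h|^p\lesssim|\rho_*|^p$, observing that $|h|\gtrsim\mathcal{U}$ together with $|\mathcal{V}|\lesssim\beta_*\mathcal{U}$ forces $|\rho_*|\gtrsim\mathcal{U}\gg|\mathcal{V}|$.

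The paper's argument is built precisely to avoid the false inequality: it never extracts the cubic Taylor polynomial on the set where $|h|$ can be comparable to or larger than $\mathcal{U}$. The split is on $\mathcal{A}=\{|\rho_*|\le|\mathcal{V}|\}$ versus $\mathcal{A}^c$. On $\mathcal{A}$ one has $|h|\lesssim|\mathcal{V}|\lesssim\beta_*\mathcal{U}\ll\mathcal{U}$, so Lagrange's form of the remainder applies cleanly and the leftover binomial terms are absorbed by the elementary bounds (no interpolation needed there). On $\mathcal{A}^c$ the paper does \emph{not} do a Taylor expansion; it directly bounds $\mathcal{N}=\mathcal{O}(\chi_{p\geq2}|\rho_*|^2+|\rho_*|^p)$ and then, to justify writing $\mathcal{N}=\mathcal{N}_*+\mathcal{N}_{rem}$ on $\mathcal{A}^c$, separately verifies that $|\mathcal{N}_*|$ itself is dominated by the interpolation terms $\sum_{l=2}^{3}\beta_*^{\frac{2(l-p)_+}{p+1}}|\rho_*|^{l-\frac{2(l-p)_+}{p+1}}$, using $|\mathcal{V}|<|\rho_*|$ together with $|\mathcal{V}|\lesssim\beta_*\mathcal{U}^{(p+1)/2}$. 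So the interpolation weight in the statement serves to absorb $\mathcal{N}_*$ on $\mathcal{A}^c$, not, as in your plan, the Taylor--binomial leftovers on $\mathcal{A}$. Finally, a bookkeeping correction: your intermediate bound $\mathcal{U}^{p-l}\lesssim \beta_*^{-2(l-p)/(p+1)}|\mathcal{V}|^{-2(l-p)/(p+1)}$ has the sign of the $\beta_*$-exponent wrong (raising $\mathcal{U}\gtrsim(|\mathcal{V}|/\beta_*)^{2/(p+1)}$ to the negative power $p-l$ gives $\beta_*^{+2(l-p)/(p+1)}$), and it is cleaner to interpolate on $|\rho_*|$ via $|\rho_*|^\gamma\le|\mathcal{V}|^\gamma\le(\beta_*\mathcal{U}^{(p+1)/2})^\gamma$ so that $|\mathcal{V}|$ never appears in the denominator and your worry about its zeros evaporates.
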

\begin{proof}
As in the proof of \cite[Lemma~8]{FP2024}, we introduce the set
\begin{eqnarray*}
\mathcal{A}=\left\{(\theta,t)\in\mathcal{C}\mid |\rho_*|\leq\left|\mathcal{V}\right|\right\}.
\end{eqnarray*}
Note that by \eqref{eqnewnew1005}, \eqref{eqn0040} and $p>1$, we have
\begin{eqnarray}\label{eqn0191}
\left|\mathcal{V}\right|\lesssim\beta_*\mathcal{U}^{\frac{p+1}{2}}\lesssim\beta_*\mathcal{U},
\end{eqnarray}
where $\beta_*=\max_{j,l}\left|\beta_{j,l}^*\right|$.  Thus, we can apply the ideas in the proof of \cite[Lemma~8]{FP2024} to expand the nonlinear part $\mathcal{N}$ in $\mathcal{A}$ and $\mathcal{A}^c$, respectively, as follows:
\begin{eqnarray}\label{eqn0009}
\mathcal{N}&=&A_p\mathcal{U}^{p-2}\left(\mathcal{V}^2+2\mathcal{V}\rho_*\right)+B_p\mathcal{U}^{p-3}\left(\mathcal{V}^3+3\mathcal{V}^2\rho_*\right)\notag\\
&&+\mathcal{O}\left(\mathcal{U}^{p-4}\left(\mathcal{V}+\rho_*\right)^4+\mathcal{U}^{p-2}\left|\rho_*\right|^2\right)\notag\\
&=&A_p\mathcal{U}^{p-2}\left(\mathcal{V}^2+2\mathcal{V}\rho_*\right)+B_p\mathcal{U}^{p-3}\left(\mathcal{V}^3+3\mathcal{V}^2\rho_*\right)\notag\\
&&+\mathcal{O}\left(\mathcal{U}^{3p-2}\beta_*^4+\chi_{p\geq2}|\rho_*|^{2}+|\rho_*|^{p}\right)
\end{eqnarray}
in $\mathcal{A}$ and
\begin{eqnarray}\label{eqn0010}
\mathcal{N}=\mathcal{O}\left(\chi_{p\geq2}|\rho_*|^{2}+|\rho_*|^{p}\right)
\end{eqnarray}
in $\mathcal{A}^c$.
Since $\frac{2(2-p)}{p+1}\in(0, 1)$ for $1<p<2$ and $\frac{2(3-p)}{p+1}\in(0, 2)$ for $1<p<3$, by \eqref{eqn0140} and \eqref{eqn0040}, we have
\begin{eqnarray}\label{eqn0017}
\sum_{l=2}^{3}\beta_*^{\frac{2(l-p)_+}{p+1}}|\rho_*|^{l-\frac{2(l-p)_+}{p+1}}\gtrsim A_p\mathcal{U}^{p-2}\left(\mathcal{V}^2+2\left|\mathcal{V}\rho_*\right|\right)+B_p\mathcal{U}^{p-3}\left(\left|\mathcal{V}\right|^3+3\mathcal{V}^2|\rho_*|\right)
\end{eqnarray}
in $\mathcal{A}^c$, where $a_{\pm}=\max\{\pm a,0\}$.  Thus, \eqref{eqn0018} is obtained by combining \eqref{eqn0009}, \eqref{eqn0010} and \eqref{eqn0017}.
\end{proof}

\vskip0.12in

We need to further expand the nonlinear part $\mathcal{N}_*$ to separate the bubbles, for this purpose, we introduce some necessary notations.  For the sake of simplicity and without loss of generality, we assume that
\begin{eqnarray*}
-\infty:=s_0<s_1<s_2<\cdots<s_\nu<s_{\nu+1}:=+\infty.
\end{eqnarray*}
We also denote
\begin{eqnarray}\label{eqn0240}
\tau_{j}=s_{j+1}-s_j,\quad\tau=\min_{j=1,2,\cdots,\nu-1}\tau_j
\end{eqnarray}
and
\begin{eqnarray*}
\mathcal{B}_1&=&\left[s_1^*-\frac{\tau_{1}}{2}, s_1^*+\frac{\tau_1}{2}\right]\times\mathbb{S}^{d-1},\\
\mathcal{B}_j&=&\left[s_j^*-\frac{\tau_{j-1}}{2}, s_j^*+\frac{\tau_j}{2}\right]\times\mathbb{S}^{d-1},\quad 2\leq j\leq\nu-1,\\
\mathcal{B}_\nu&=&\left[s_{\nu}^*-\frac{\tau_{\nu-1}}{2}, s_\nu^*+\frac{\tau_{\nu-1}}{2}\right]\times\mathbb{S}^{d-1}.
\end{eqnarray*}
\begin{lemma}\label{lem0004}
Let $d\geq2$, $a<0$ and $b=b_{FS}(a)$.  Then the nonlinear part $\mathcal{N}$ which is given by \eqref{eqnewnew8856} can be further expanded as follows:
\begin{eqnarray}\label{eqn0045}
\mathcal{N}&=&\sum_{j=1}^{\nu}\left(A_p\left(\Psi_j^*\right)^{p-2}\left(\mathcal{V}_j^2+2\mathcal{V}_j\rho_*\right)+B_p\left(\Psi_j^*\right)^{p-3}\left(\mathcal{V}_j^3+3\mathcal{V}_j^2\rho_*\right)\right)\chi_{\mathcal{B}_j}\notag\\
&&+\sum_{j=1}^{\nu}2A_p\left(\mathcal{U}^{p-2}\mathcal{V}-\left(\Psi_j^*\right)^{p-2}\mathcal{V}_j\right)\rho_*\chi_{\mathcal{B}_j}+\sum_{j=1}^{\nu}\mathcal{O}\left(\beta_*^2\mathcal{U}_j\Psi_{j}^{2p-3}\left(\Psi_j+\rho_*\right)\right)\chi_{\mathcal{B}_j}\notag\\
&&+\left(2A_p\mathcal{U}^{p-2}\mathcal{V}\rho_*+\mathcal{U}^{p-3}\mathcal{V}^{2}\left(A_p\mathcal{U}+B_p\mathcal{V}\right)\right)\chi_{\mathcal{C}\backslash\left(\cup_{j=1}^{\nu}\mathcal{B}_j\right)}+\mathcal{N}_{rem}
\end{eqnarray}
in $\mathcal{C}$, where $\mathcal{N}_{rem}$ is given in \eqref{eqn0018} and $A_p$, $B_p$ are given in Lemma~\ref{lem0003}.
\end{lemma}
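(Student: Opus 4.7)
The plan is to refine the expansion of Lemma~\ref{lem0003} by localizing on the partition regions $\mathcal{B}_j$, where the bubble $\Psi_j^*$ is pointwise dominant, and by exhibiting all cross contributions between distinct bubbles as parts of a remainder that already fits into the quartic form $\beta_*^2 \mathcal{U}_j \Psi_j^{2p-3}(\Psi_j+\rho_*)$. I would start from $\mathcal{N}=\mathcal{N}_*+\mathcal{N}_{rem}$ given by \eqref{eqn0018} and use the partition $\mathcal{C}=\bigcup_{j=1}^{\nu}\mathcal{B}_j\cup(\mathcal{C}\setminus\bigcup_{j=1}^{\nu}\mathcal{B}_j)$ to split $\mathcal{N}_*=\sum_j\mathcal{N}_*\chi_{\mathcal{B}_j}+\mathcal{N}_*\chi_{\mathcal{C}\setminus\cup\mathcal{B}_j}$. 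On the complementary region I regroup $A_p\mathcal{U}^{p-2}\mathcal{V}^2+B_p\mathcal{U}^{p-3}\mathcal{V}^3=\mathcal{U}^{p-3}\mathcal{V}^2(A_p\mathcal{U}+B_p\mathcal{V})$ to obtain the displayed contribution in \eqref{eqn0045}; the residual piece $3B_p\mathcal{U}^{p-3}\mathcal{V}^2\rho_*\chi_{\mathcal{C}\setminus\cup\mathcal{B}_j}$ is then absorbed into $\mathcal{N}_{rem}$ using the exponential smallness of $\mathcal{U}$ outside of every $\mathcal{B}_j$.

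On each $\mathcal{B}_j$ the key geometric input is that, by \eqref{eqn0240} and the definition of $\mathcal{B}_j$, one has $|t-s_j^*|\leq|t-s_i^*|/2$ for every $i\neq j$, and the exponential decay of $\Psi$ visible in \eqref{eq0026} yields $\Psi_i\lesssim e^{-c\tau}\Psi_j$ for $i\neq j$. In particular $\mathcal{U}_j\ll\Psi_j^*\leq\mathcal{U}$ pointwise on $\mathcal{B}_j$. A mean-value expansion then produces the pointwise bounds
\[
\mathcal{U}^{p-2}-(\Psi_j^*)^{p-2}=\mathcal{O}\bigl((\Psi_j^*)^{p-3}\mathcal{U}_j\bigr),\qquad\mathcal{U}^{p-3}-(\Psi_j^*)^{p-3}=\mathcal{O}\bigl((\Psi_j^*)^{p-4}\mathcal{U}_j\bigr),
\]
while the identity $\mathcal{V}-\mathcal{V}_j=\sum_{i\neq j}\mathcal{V}_i$ together with $|\mathcal{V}_i|\lesssim\beta_*\Psi_i^{(p+1)/2}$ and the pointwise inequality $\sum_{i\neq j}\Psi_i^{(p+1)/2}\lesssim\Psi_j^{(p-1)/2}\mathcal{U}_j$ (valid on $\mathcal{B}_j$) control $\mathcal{V}^k-\mathcal{V}_j^k$ for $k=2,3$. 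Combining these pointwise estimates, each of the three differences $A_p(\mathcal{U}^{p-2}\mathcal{V}^2-(\Psi_j^*)^{p-2}\mathcal{V}_j^2)$, $B_p(\mathcal{U}^{p-3}\mathcal{V}^3-(\Psi_j^*)^{p-3}\mathcal{V}_j^3)$ and $3B_p(\mathcal{U}^{p-3}\mathcal{V}^2-(\Psi_j^*)^{p-3}\mathcal{V}_j^2)\rho_*$ collapses to $\mathcal{O}(\beta_*^2\mathcal{U}_j\Psi_j^{2p-3}(\Psi_j+\rho_*))$, which is precisely the error stated in \eqref{eqn0045}.

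For the cross contribution $2A_p\mathcal{U}^{p-2}\mathcal{V}\rho_*$ on $\mathcal{B}_j$ I would not Taylor-expand but rather keep the interaction explicit by writing it tautologically as $2A_p(\Psi_j^*)^{p-2}\mathcal{V}_j\rho_*+2A_p(\mathcal{U}^{p-2}\mathcal{V}-(\Psi_j^*)^{p-2}\mathcal{V}_j)\rho_*$. The first summand joins the main local bubble terms and the second produces precisely the second sum in \eqref{eqn0045}. Summing over $j$, adding the complement contribution, and folding the absorbed residuals into $\mathcal{N}_{rem}$ yields the desired identity.

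The main obstacle I expect is the validity of the Taylor expansions when the exponents $p-2$ or $p-3$ are negative, since the mean-value remainders then involve the potentially singular factor $\mathcal{U}^{p-4}$. This is handled by the elementary lower bound $\mathcal{U}\geq\Psi_j^*$ on $\mathcal{B}_j$: in the relevant range $p-4<0$ one has $\mathcal{U}^{p-4}\leq(\Psi_j^*)^{p-4}$ there, so no genuine singularity arises. A subsidiary subtlety is the sub-quadratic regime $1<p<2$, where the classical mean-value inequality requires the ratio $\mathcal{U}_j/\Psi_j^*$ to be small; this is exactly ensured on $\mathcal{B}_j$ by the exponential-decay bound $\Psi_i\lesssim e^{-c\tau}\Psi_j$. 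With these pointwise ingredients in place, the final reduction of all error contributions to the single expression $\beta_*^2\mathcal{U}_j\Psi_j^{2p-3}(\Psi_j+\rho_*)$ becomes a bookkeeping task collecting the lowest powers of $\Psi_j$ and the highest power of $\beta_*$.
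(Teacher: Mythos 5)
Your plan follows the same route as the paper: localize $\mathcal{N}_*$ to the regions $\mathcal{B}_j$, Taylor-expand the powers $\mathcal{U}^{p-\alpha}$ and $\mathcal{V}^k$ around the dominant bubble $\Psi_j^*$, keep the mixed term $2A_p\mathcal{U}^{p-2}\mathcal{V}\rho_*$ explicit, and treat the complement region separately. The paper likewise proves pointwise expansions of $\mathcal{U}^{p-\alpha}\mathcal{V}^{\alpha-1}$ and $\mathcal{U}^{p-\alpha}\mathcal{V}^{\alpha}$ for $\alpha=2,3$ on $\mathcal{B}_j$ and on $\mathcal{C}\setminus\cup_j\mathcal{B}_j$. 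So the overall structure and conclusion match.

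A few of your intermediate geometric claims are stated too strongly, however. On $\mathcal{B}_j$ one does not have $|t-s_j^*|\le |t-s_i^*|/2$ for $i\neq j$; the partition $\{\mathcal{B}_j\}$ is built so that $|t-s_j^*|\le |t-s_i^*|$, with equality attained at the boundaries $t=(s_j^*+s_{j\pm1}^*)/2$. Consequently the pointwise bound $\Psi_i\lesssim e^{-c\tau}\Psi_j$ on $\mathcal{B}_j$ and the assertion $\mathcal{U}_j\ll\Psi_j^*$ are false near $\partial\mathcal{B}_j$, where $\Psi_{j\pm1}\sim\Psi_j$ and $\mathcal{U}_j\sim\Psi_j^*$. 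What actually holds, and what your bookkeeping genuinely uses, is the uniform comparability $\mathcal{U}_j\lesssim\Psi_j^*$ and $\Psi_j^*\lesssim\mathcal{U}\lesssim\Psi_j^*$ on $\mathcal{B}_j$. Your later remark about the sub-quadratic regime $1<p<2$ is therefore based on a misreading: the mean-value bound $|\mathcal{U}^{p-\alpha}-(\Psi_j^*)^{p-\alpha}|\lesssim(\Psi_j^*)^{p-\alpha-1}\mathcal{U}_j$ only needs $\mathcal{U}_j/\Psi_j^*$ bounded (together with $\mathcal{U}\gtrsim\Psi_j^*$ to control the intermediate point $\xi^{p-\alpha-1}$ when $p-\alpha-1<0$), not small, so no exponential gain is required there. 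With these corrections the argument is sound and the error terms you collect do reduce to $\mathcal{O}(\beta_*^2\mathcal{U}_j\Psi_j^{2p-3}(\Psi_j+\rho_*))$ on each $\mathcal{B}_j$, matching \eqref{eqn0045}.
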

\begin{proof}
Since $\mathcal{U}_j>0$ in $\mathcal{C}$ for all $1\leq j\leq\nu$ by \eqref{eqn0140}, by \eqref{eq0026}, \eqref{eqn0040} and the Taylor expansion, we have
\begin{eqnarray*}
\mathcal{U}^{p-\alpha}\mathcal{V}^{\alpha-1}&=&\left(\Psi_{j}^*\right)^{p-\alpha}\mathcal{V}_j^{\alpha-1}+\mathcal{O}\left(\Psi_{j}^{p-\alpha-1}\mathcal{V}_j^{\alpha-1}\mathcal{U}_j+\Psi_{j}^{p-\alpha}\left|\sum_{i=1;i\not=j}^{\nu}\mathcal{V}_j^{\alpha-2}\mathcal{V}_i\right|\right)\notag\\
&=&\left(\Psi_{j}^*\right)^{p-\alpha}\mathcal{V}_j^{\alpha-1}+\mathcal{O}\left(\beta_*^{\alpha-1}\left(\Psi_j^{\frac{\alpha(p-1)+p-3}{2}}\mathcal{U}_j+\Psi_{j}^{\frac{\alpha(p-1)-2}{2}}\mathcal{U}_j^{\frac{p+1}{2}}\right)\right)
\end{eqnarray*}
in $\mathcal{B}_j$ for all $j=1,2,\cdots,\nu$ and
\begin{eqnarray*}
\mathcal{U}^{p-\alpha}\mathcal{V}^{\alpha-1}=\mathcal{O}\left(\beta_*^{\alpha-1}\mathcal{U}^{\frac{(\alpha+1)(p-1)}{2}}\right)
\end{eqnarray*}
in $\mathcal{C}\backslash\left(\cup_{j=1}^{\nu}\mathcal{B}_j\right)$, where $\alpha=2$ or $\alpha=3$.  Similarly,
\begin{eqnarray*}
\mathcal{U}^{p-\alpha}\mathcal{V}^{\alpha}&=&\left(\Psi_{j}^*\right)^{p-\alpha}\mathcal{V}_j^{\alpha}+\mathcal{O}\left(\Psi_{j}^{p-\alpha-1}\mathcal{V}_j^{\alpha}\mathcal{U}_j+\Psi_{j}^{p-\alpha}\left|\sum_{i=1;i\not=j}^{\nu}\mathcal{V}_j^{\alpha-1}\mathcal{V}_i\right|\right)\notag\\
&=&\left(\Psi_{j}^*\right)^{p-\alpha}\mathcal{V}_j^{\alpha}+\mathcal{O}\left(\beta_*^{\alpha}\left(\Psi_j^{\frac{\alpha(p-1)}{2}+p-1}\mathcal{U}_j+\Psi_{j}^{\frac{(\alpha+1)(p-1)}{2}}\mathcal{U}_j^{\frac{p+1}{2}}\right)\right)
\end{eqnarray*}
in $\mathcal{B}_j$ for all $j=1,2,\cdots,\nu$ and
\begin{eqnarray*}
\mathcal{U}^{p-\alpha}\mathcal{V}^{\alpha}=\mathcal{O}\left(\beta_*^{\alpha}\mathcal{U}^{\frac{\alpha(p-1)}{2}+p}\right)
\end{eqnarray*}
in $\mathcal{C}\backslash\left(\cup_{j=1}^{\nu}\mathcal{B}_j\right)$.  Thus, summarizing the above estimates of $\mathcal{U}^{p-\alpha}\mathcal{V}^{\alpha-1}$ and $\mathcal{U}^{p-\alpha}\mathcal{V}^{\alpha}$ in $\mathcal{N}_{*}$ and by $p>1$ and Lemma~\ref{lem0003}, we have the desired expansion of $\mathcal{N}$ given by \eqref{eqn0045}, where $\mathcal{N}_{rem}$ is given in \eqref{eqn0018}.
\end{proof}

\vskip0.12in

\subsection{Further decomposition of $\rho_{*}$}
Recall that we shall control $\{\beta_{k,l}^*\}$ and $\|\rho_*\|$ by $\|f\|_{H^{-1}}$.  However, due to the errors $\mathcal{R}_1$ and $\mathcal{R}_2$, we have two additional terms which need to control.  The first one is $\sum_{j=1}^{\nu}\left(\left(\alpha_j^*\right)^p-\alpha_j^*\right)$, which the (possible) difference of the decomposition of $v$ given in Lemma~\ref{lem0002} on the bubbles.  The second one is the interaction between bubbles.  To measure the interaction between bubbles, we denote
\begin{eqnarray}\label{eqn19993}
Q_{j}=e^{-\sqrt{\Lambda_{FS}}\tau_{j}},\quad\varphi_{s^*_i}(t)=e^{-\sqrt{\Lambda_{FS}}|t-s^*_i|}\quad\text{and}\quad Q=e^{-\sqrt{\Lambda_{FS}}\tau}
\end{eqnarray}
where $i,j=1,2,\cdots,\nu$.
\begin{lemma}\label{lem0005}
Let $d\geq2$, $a<0$ and $b=b_{FS}(a)$.  Then for every $\alpha,\beta\in\bbr$ such that $\alpha+\beta\geq0$, we have
\begin{eqnarray}\label{eqnew0001}
\int_{\mathcal{B}_i}\Psi_{i}^{\alpha}\mathcal{U}_i^\beta d\mu\lesssim\left\{\aligned
&Q^{\beta},\quad \alpha>\beta,\\
&Q^{\beta}\left|\log Q\right|,\quad \alpha=\beta,\\
&Q^{\frac{\alpha+\beta}{2}},\quad \alpha<\beta
\endaligned\right.
\end{eqnarray}
and
\begin{eqnarray}\label{eqnew0002}
\int_{\mathcal{C}\backslash\left(\cup_{l=1}^{\nu}\mathcal{B}_l\right)}\Psi_{i}^{\alpha}\Psi_j^\beta d\mu\lesssim Q^{\frac{\alpha+\beta}{2}}.
\end{eqnarray}
\end{lemma}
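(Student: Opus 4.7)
The plan is to reduce both estimates to straightforward exponential integrals by exploiting the explicit form of the bubble. From \eqref{eq0026} and $\Lambda_{FS}=(a_c-a)^2$, one has the sharp two-sided bound $\Psi(t)\asymp e^{-\sqrt{\Lambda_{FS}}|t|}$ (with constants depending only on $p,d$), so that $\Psi_{i}(t)\asymp\varphi_{s_i^*}(t)$ uniformly for all $i$. For the first estimate I would fix $i$ and, by the symmetry $t\mapsto 2s_i^*-t$ (and the endpoint cases $i=1,\nu$, which only enlarge the integration interval on one side), reduce to integrating over the half-slab $t-s_i^*\in[0,\tau_i/2]$. On this interval $|t-s_j^*|\geq|t-s_i^*|$ for every $j\neq i$, and a short computation shows that the nearest neighbour $j=i+1$ dominates: writing $r=t-s_i^*$,
\begin{equation*}
\mathcal{U}_i(t)\asymp \Psi_{i+1}(t)+\Psi_{i-1}(t)\lesssim Q_i\,e^{\sqrt{\Lambda_{FS}}\,r}+Q_{i-1}e^{-\sqrt{\Lambda_{FS}}\,r}\lesssim Q\,e^{\sqrt{\Lambda_{FS}}\,r},
\end{equation*}
while the matching lower bound $\mathcal U_i(t)\gtrsim Q\,e^{\sqrt{\Lambda_{FS}}\,r}$ holds in $\mathcal B_i$. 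Raising to power $\beta$ (handling the sign of $\beta$ by the two-sided equivalence) yields
\begin{equation*}
\int_{0}^{\tau_i/2}\Psi_i(t)^{\alpha}\mathcal U_i(t)^{\beta}\,dr\;\asymp\;Q^{\beta}\int_{0}^{\tau_i/2}e^{-\sqrt{\Lambda_{FS}}(\alpha-\beta)\,r}\,dr.
\end{equation*}

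A direct case analysis in the last integral then produces the three regimes in \eqref{eqnew0001}: if $\alpha>\beta$ the integral is $O(1)$, giving $Q^{\beta}$; if $\alpha=\beta$ the integrand is constantly equal to $1$ and the length $\tau_i/2\asymp|\log Q|$ supplies the logarithmic factor; and if $\alpha<\beta$ the exponential is monotone increasing and the integral is $\asymp Q^{-(\beta-\alpha)/2}$, giving $Q^{(\alpha+\beta)/2}$ after multiplication by $Q^{\beta}$. The volume element on $\mathbb{S}^{d-1}$ only produces a harmless constant.

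For the second estimate, the complement $\mathcal{C}\setminus\bigcup_{l}\mathcal{B}_l$ consists of two unbounded tails $(-\infty,s_1^*-\tau_1/2]\times\mathbb S^{d-1}$ and $[s_\nu^*+\tau_{\nu-1}/2,\infty)\times\mathbb S^{d-1}$. On the right tail, $|t-s_\ell^*|$ is minimised at $\ell=\nu$, so for any $i,j$,
\begin{equation*}
\Psi_i(t)^\alpha\Psi_j(t)^\beta\lesssim e^{-(\alpha+\beta)\sqrt{\Lambda_{FS}}(t-s_\nu^*)}
\end{equation*}
(the sign of $\alpha$ or $\beta$ individually is absorbed into the constant, since on the tail each $|t-s_l^*|$ is comparable to $t-s_\nu^*$ up to an additive constant bounded by $\sum_l\tau_l$; the assumption $\alpha+\beta\geq 0$ then makes the two-sided substitution effective). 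Integrating from $\tau_{\nu-1}/2$ to $\infty$ gives $\lesssim Q_{\nu-1}^{(\alpha+\beta)/2}\leq Q^{(\alpha+\beta)/2}$; the left tail is symmetric.

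The only genuine difficulty I expect is keeping track of the constants in the two-sided bound $\mathcal U_i\asymp Q\,e^{\sqrt{\Lambda_{FS}}r}$ uniformly over all interior points $t\in\mathcal B_i$ and over all admissible configurations $\{s_j^*\}$: one must check that the contributions of the non-nearest bubbles $|j-i|\geq 2$ are genuinely absorbed, which uses the geometric-series bound $\sum_{k\geq 2}Q^{k}\lesssim Q$ valid for $Q$ small (that is, for $\tau$ large, as ensured by Lemma~3.1). Once this is in place, both \eqref{eqnew0001} and \eqref{eqnew0002} are routine. A minor point worth flagging is that the borderline $\alpha+\beta=0$ in \eqref{eqnew0002} would make the tail integral diverge; in all later applications one has $\alpha+\beta>0$, and the statement should be read with that in mind.
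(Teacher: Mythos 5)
Your overall strategy — replacing $\Psi$ by the sharp exponential $e^{-\sqrt{\Lambda_{FS}}|t-s_i^*|}$, splitting $\mathcal B_i$ into half-slabs, approximating $\mathcal U_i$ by its nearest-neighbour contribution, and reducing to a one-dimensional exponential integral — is precisely the paper's approach, and the treatment of the second estimate over the two tails is correct. However, there is a substantive error in the approximation of $\mathcal U_i$ that breaks two of the three cases in \eqref{eqnew0001}.

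On the right half-slab $\mathcal B_{i,+}$, the correct two-sided bound is $\mathcal U_i(t)\asymp Q_i\,e^{\sqrt{\Lambda_{FS}}\,r}$, not $\mathcal U_i(t)\asymp Q\,e^{\sqrt{\Lambda_{FS}}\,r}$ as you claim. Your chain $Q_i e^{\sqrt{\Lambda_{FS}}r}+Q_{i-1}e^{-\sqrt{\Lambda_{FS}}r}\lesssim Q e^{\sqrt{\Lambda_{FS}}r}$ is a valid upper bound (using $Q_i,Q_{i-1}\le Q$), but the ``matching lower bound $\mathcal U_i\gtrsim Q\,e^{\sqrt{\Lambda_{FS}}r}$'' does not hold: the true lower bound is $\mathcal U_i\ge\Psi_{i+1}\gtrsim Q_i e^{\sqrt{\Lambda_{FS}}r}$, and $Q_i$ can be arbitrarily smaller than $Q=\max_j Q_j$ when the gaps $\tau_j$ are not comparable (nothing in the Struwe decomposition forces them to be). Once you have promoted $Q_i$ to $Q$ inside the integrand, the case analysis fails. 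For $\alpha=\beta$ your computation gives $Q^\beta\cdot\tau_i/2$, and $\tau_i/2\asymp|\log Q|$ is false in general (one only has $\tau_i\ge\tau\sim|\log Q|$), so the claimed $Q^\beta|\log Q|$ does not follow. For $\alpha<\beta$ your computation gives $Q^\beta Q_i^{-(\beta-\alpha)/2}$, which, since $Q_i\le Q$, is $\ge Q^{(\alpha+\beta)/2}$ rather than $\lesssim Q^{(\alpha+\beta)/2}$. In both cases the fix is to keep the local interaction strength $Q_i$ inside the half-slab estimate (as the paper's formula $\Psi_i^\alpha\mathcal U_i^\beta\sim Q_i^\beta e^{-(\alpha-\beta)\sqrt{\Lambda_{FS}}(t-s_i^*)}$ does), obtaining $Q_i^{(\alpha+\beta)/2}$ for $\alpha<\beta$ and $Q_i^\beta|\log Q_i|$ for $\alpha=\beta$, and only then pass to $Q$ using $Q_i\le Q$ together with $\alpha+\beta\ge 0$ in the first case and the monotonicity of $x\mapsto x^\beta|\log x|$ near $0$ in the second. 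You in fact flag this issue in your last paragraph as the ``only genuine difficulty'', but it is not a uniformity nuisance to be absorbed in constants --- it is the whole point of the estimate, and as written your proof does not establish \eqref{eqnew0001} for $\alpha\le\beta$.
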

\begin{proof}
Recall that
\begin{eqnarray*}
s_1<s_2<\cdots<s_{\nu-1}<s_\nu,
\end{eqnarray*}
thus, by \eqref{eq0026}, \eqref{eqn0240}, \eqref{eqn19993} and similar estimates for \eqref{eqn0045}, we have
\begin{eqnarray}\label{eqnewnew0006}
\Psi_{i}^{\alpha}\mathcal{U}_i^\beta\sim e^{-\alpha\sqrt{\Lambda_{FS}}(t-s^*_i)}e^{-\beta\sqrt{\Lambda_{FS}}(s^*_{i+1}-t)}\sim Q_i^{\beta}e^{-(\alpha-\beta)\sqrt{\Lambda_{FS}}(t-s^*_i)}
\end{eqnarray}
in the region
\begin{eqnarray}\label{eqnewnew0010}
\mathcal{B}_{i,+}:=\left[s^*_i, s^*_i+\frac{\tau_i}{2}\right]\times\mathbb{S}^{d-1}
\end{eqnarray}
for all $i=1,2,\cdots,\nu-1$,
while in the region
\begin{eqnarray}\label{eqnewnew0011}
\mathcal{B}_{i,-}:=\left[s_i^*-\frac{\tau_{i-1}}{2}, s_i^*\right]\times\mathbb{S}^{d-1}
\end{eqnarray}
for all $i=2,3,\cdots,\nu$, we have
\begin{eqnarray}\label{eqnewnew0007}
\Psi_{i}^{\alpha}\mathcal{U}_i^\beta\sim e^{-\alpha\sqrt{\Lambda_{FS}}(s^*_i-t)}e^{-\beta\sqrt{\Lambda_{FS}}(t-s^*_{i-1})}\sim Q_{i-1}^{\beta}e^{-(\alpha-\beta)\sqrt{\Lambda_{FS}}(s^*_i-t)}.
\end{eqnarray}
Thus, by direct calculations, we have
\begin{eqnarray*}
\int_{\mathcal{B}_i}\Psi_{i}^{\alpha}\mathcal{U}_i^\beta d\mu&\sim&\int_{\mathcal{B}_{i,+}}\Psi_{i}^{\alpha}\Psi_{i+1}^\beta d\mu+\int_{\mathcal{B}_{i,-}}\Psi_{i}^{\alpha}\Psi_{i-1}^\beta d\mu\\
&\lesssim&\left\{\aligned
&Q^{\beta},\quad \alpha>\beta,\\
&Q^{\beta}\left|\log Q\right|,\quad \alpha=\beta,\\
&Q^{\frac{\alpha+\beta}{2}}\quad \alpha<\beta,
\endaligned\right.
\end{eqnarray*}
which implies that \eqref{eqnew0001} holds true.  To prove \eqref{eqnew0002}, we denote
\begin{eqnarray}\label{eqnewnew0012}
\mathcal{B}_{1,-,*}&=&\left(-\infty, s_1^*\right]\times\mathbb{S}^{d-1}\notag\\
&=&\left(-\infty, s_1^*-\frac{\tau_1}{2}\right)\times\mathbb{S}^{d-1}\cup\left[s_1^*-\frac{\tau_1}{2}, s_1^*\right]\times\mathbb{S}^{d-1}\notag\\
&:=&\left(\mathcal{C}\backslash\cup_{j=1}^{\nu}\mathcal{B}_{j}\right)^-\cup\mathcal{B}_{1,-},
\end{eqnarray}
and
\begin{eqnarray}\label{eqnewnew0013}
\mathcal{B}_{\nu,+,*}&=&\left[s_\nu^*, +\infty\right)\times\mathbb{S}^{d-1}\notag\\
&=&\left[s_\nu^*, s_\nu^*+\frac{\tau_{\nu-1}}{2}\right]\times\mathbb{S}^{d-1}\cup\left(s_\nu^*+\frac{\tau_{\nu-1}}{2}, +\infty\right)\times\mathbb{S}^{d-1}\notag\\
&:=&\mathcal{B}_{\nu,+}\cup\left(\mathcal{C}\backslash\cup_{j=1}^{\nu}\mathcal{B}_{j}\right)^+.
\end{eqnarray}
Then by \eqref{eq0026}, \eqref{eqn0240} and similar estimates for \eqref{eqn0045},
we have
\begin{eqnarray}\label{eqnewnew0008}
\Psi_{i}^{\alpha}\Psi_j^\beta\lesssim \left\{\aligned
&e^{-(\alpha+\beta)\sqrt{\Lambda_{FS}}(s^*_1-t)},\quad \text{in }\mathcal{B}_{1,-,*},\\
&e^{-(\alpha+\beta)\sqrt{\Lambda_{FS}}(t-s^*_\nu)},\quad \text{in }\mathcal{B}_{\nu,+,*}.
\endaligned\right.
\end{eqnarray}
Thus, \eqref{eqnew0002} is also obtained by direct calculations.
\end{proof}

\vskip0.12in

To further decompose the remaining part $\rho_*$ and pick up the leading order term,
let us first consider the following equation:
\begin{eqnarray}\label{eqn0011}
\left\{\aligned&\mathcal{L}(\gamma_{1,ex})=\mathcal{R}_{1,ex}-\sum_{j=1}^{\nu}\Psi_{j}^{p-1}\left(c_{1,ex,j}\partial_t\Psi_{j}+\sum_{l=1}^{d}\varsigma_{1,ex,j,l}w_{i,l}\right),\quad \text{in }\mathcal{C},\\
&\langle \partial_t\Psi_{j}, \gamma_{1,ex}\rangle=\langle w_{j,l}, \gamma_{1,ex}\rangle=0\quad\text{for all }1\leq j\leq\nu\text{ and all }1\leq l\leq d,\endaligned\right.
\end{eqnarray}
where $\mathcal{R}_{1,ex}$ is given by \eqref{eqn0020}.
\begin{lemma}\label{lem0006}
Let $d\geq2$, $a<0$ and $b=b_{FS}(a)$.  Then \eqref{eqn0011} is uniquely solvable.  Moreover, the solution $\gamma_{1,ex}$ in even on $\mathbb{S}^{d-1}$ and satisfies
\begin{eqnarray}\label{eqn0047}
1\gtrsim\left\{\aligned
&\|\gamma_{1,ex}\|_{\sharp},\quad p\geq3,\\
&\|\gamma_{1,ex}\|_{\natural,1,*},\quad 1<p<3,
\endaligned\right.
\end{eqnarray}
where the Lagrange multipliers $\{c_{1,ex,j}\}$ and $\{\varsigma_{1,ex,j,l}\}$ are chosen such that the right hand side of the equation~\eqref{eqn0011} is orthogonal to $\{\Psi_j^{p-1}\partial_t\Psi_{j}\}$ and $\{\Psi_j^{p-1}w_{j,l}\}$ in $L^2(\mathcal{C})$,
\begin{eqnarray*}
\|\gamma_{1,ex}\|_{\sharp}&:=&\sum_{i=2}^{\nu-1}\sup_{\mathcal{B}_{i}}\frac{|\gamma_{1,ex}|}{Q\varphi_{s^*_i}^{1-\sigma}(t)}+\sup_{\mathcal{B}_{1}\cup\left(\mathcal{C}\backslash\cup_{j=1}^{\nu}\mathcal{B}_{j}\right)^-}\frac{|\gamma_{1,ex}|}{Q\varphi_{s^*_1}^{1-\sigma}(t)}\notag\\
&&+\sup_{\mathcal{B}_{\nu}\cup\left(\mathcal{C}\backslash\cup_{j=1}^{\nu}\mathcal{B}_{j}\right)^+}\frac{|\gamma_{1,ex}|}{Q\varphi_{s^*_\nu}^{1-\sigma}(t)}
\end{eqnarray*}
and
\begin{eqnarray*}
\|\gamma_{1,ex}\|_{\natural,1,*}&:=&\sum_{i=1}^{\nu-1}\sup_{\mathcal{B}_{i,+}\backslash\mathcal{B}_{i,0}}\frac{|\gamma_{1,ex}|}{Q_{i}\varphi_{s^*_i}^{p-2}(t)}+\sup_{\left(\mathcal{B}_{\nu,+}\backslash\mathcal{B}_{\nu,0}\right)\cup\left(\mathcal{C}\backslash\cup_{j=1}^{\nu}\mathcal{B}_{j}\right)^+}\frac{|\gamma_{1,ex}|}{Q_{\nu}\varphi_{s^*_\nu}^{1-\sigma}(t)}\notag\\
&&+\sum_{i=2}^{\nu}\sup_{\mathcal{B}_{i,-}\backslash\mathcal{B}_{i,0}}\frac{|\gamma_{1,ex}|}{Q_{i-1}\varphi_{s^*_i}^{p-2}(t)}+\sup_{\left(\mathcal{B}_{1,-}\backslash\mathcal{B}_{1,0}\right)\cup\left(\mathcal{C}\backslash\cup_{j=1}^{\nu}\mathcal{B}_{j}\right)^-}\frac{|\gamma_{1,ex}|}{Q_{1}\varphi_{s^*_1}^{1-\sigma}(t)}\notag\\
&&+\sum_{i=1}^{\nu}\sup_{\mathcal{B}_{i,0}}\frac{|\gamma_{1,ex}|}{Q}
\end{eqnarray*}
with $\sigma>0$ being a small constant which can be taken arbitrary small if necessary and
\begin{eqnarray*}
\mathcal{B}_{i,0}=[s_i^*-1,s_i^*+1]\times\mathbb{S}^{d-1}\cup\left[\frac{s_i^*+s_{i+1}^*}{2}-1,\frac{s_i^*+s_{i+1}^*}{2}+1\right]\times\mathbb{S}^{d-1}.
\end{eqnarray*}
The Lagrange multipliers also satisfy $|c_{1,ex,j}|\lesssim Q$ for all $1\leq j\leq \nu$ and $\varsigma_{1,ex,j,l}=0$ for all $1\leq j\leq \nu$ and all $1\leq l\leq d$.
\end{lemma}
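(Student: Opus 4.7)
The plan is to solve \eqref{eqn0011} by a standard Fredholm/Lyapunov--Schmidt argument in a weighted $L^\infty$ setting, with the Lagrange multipliers determined by the orthogonality conditions. First I would pin down the multipliers: testing both sides against $\Psi_{j}^{p-1}\partial_{t}\Psi_{j}$ and $\Psi_{j}^{p-1}w_{j,l}$ (these are, up to lower order, in the nullspace of the associated formal adjoint $\mathcal{L}_{j}$), and using the orthogonality of $\gamma_{1,ex}$ to $\partial_{t}\Psi_{j}$ and $w_{j,l}$, produces a linear system for $\{c_{1,ex,j}\}$ and $\{\varsigma_{1,ex,j,l}\}$ whose matrix is, to leading order as $\|f\|_{H^{-1}}\to 0$, a block identity (with off-diagonal entries controlled by interaction terms of order $Q$). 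This gives existence, uniqueness and continuous invertibility of the multiplier system.

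Next I would exploit the symmetry to kill the nontrivial-kernel multipliers. The source $\mathcal{R}_{1,ex}=\mathcal{U}^{p}-\sum_{j}(\Psi_{j}^{*})^{p}$ depends only on $t$, hence is invariant under $O(d)$ acting on $\mathbb{S}^{d-1}$, whereas $\Psi_{j}^{p-1}w_{j,l}=\Psi_{j}^{(3p-1)/2}\theta_{l}$ is odd on $\mathbb{S}^{d-1}$. Integration against any spherical harmonic of degree~$1$ therefore vanishes, so the projections of $\mathcal{R}_{1,ex}$ onto $\{\Psi_{j}^{p-1}w_{j,l}\}$ are zero, forcing $\varsigma_{1,ex,j,l}=0$ for every $j,l$. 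The same parity consideration shows the solution $\gamma_{1,ex}$ can, and therefore must, be chosen even on $\mathbb{S}^{d-1}$: one builds it in the $O(d)$-invariant subspace of $H^{1}$ where the nontrivial-kernel directions are absent, so existence in that subspace is automatic once the trivial-kernel multipliers $\{c_{1,ex,j}\}$ are handled. For the bound $|c_{1,ex,j}|\lesssim Q$, the explicit interaction integral $\langle \mathcal{R}_{1,ex},\partial_{t}\Psi_{j}\rangle_{L^{2}}$ is estimated by splitting $\mathcal{C}$ into $\mathcal{B}_{i}$ and using Lemma~\ref{lem0005}: the dominant contribution comes from the cross terms $(\Psi_{j}^{*})^{p-1}\Psi_{i}^{*}$ with $i=j\pm 1$, which integrate to $O(Q)$ after cancellation against the odd function $\partial_{t}\Psi_{j}$.

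With $\varsigma_{1,ex,j,l}=0$ and $|c_{1,ex,j}|\lesssim Q$ fixed, the remaining task is the weighted pointwise estimate \eqref{eqn0047}. I would deduce this from the maximum-principle/barrier method for $\mathcal{L}_{j}$ on each $\mathcal{B}_{j}$, together with $L^{\infty}$ elliptic regularity away from the bubble cores. The key input is a pointwise bound on the right-hand side $\mathcal{R}_{1,ex}-\sum_{j}\Psi_{j}^{p-1}c_{1,ex,j}\partial_{t}\Psi_{j}$ matching the weights in $\|\cdot\|_{\sharp}$ or $\|\cdot\|_{\natural,1,*}$: in the case $p\geq 3$, $\mathcal{R}_{1,ex}=\mathcal{O}(\mathcal{U}^{p-1}\mathcal{U}_{j})$ on $\mathcal{B}_{j}$ and Lemma~\ref{lem0005} yields decay $Q\varphi_{s_{i}^{*}}^{1-\sigma}(t)$ (any small $\sigma>0$ allowed), so a barrier of that form, combined with the invertibility of $\mathcal{L}_{j}$ on the kernel-orthogonal subspace (nondegeneracy in the even sector), gives the bound $\|\gamma_{1,ex}\|_{\sharp}\lesssim 1$; in the case $1<p<3$, the source is only $\mathcal{O}(\mathcal{U}_{j}^{p-1}\Psi_{j})$ near the ``collision'' midpoints, producing the additional weight $\varphi^{p-2}$ on the sides of $\mathcal{B}_{j,\pm}$ and the flat bound $Q$ near the cores $\mathcal{B}_{i,0}$ and the midpoints of adjacent bubbles, which is exactly what $\|\cdot\|_{\natural,1,*}$ records.

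The main obstacle is the weighted pointwise estimate in the subcritical range $1<p<3$, where $\mathcal{R}_{1,ex}$ is not a smooth power function of a single bubble and the naive barrier in $\|\cdot\|_{\sharp}$ fails near the collision regions. One must keep track of the two distinct regimes (far from cores vs.\ near cores/midpoints) and choose a piecewise barrier accordingly; this is exactly the reason for introducing $\|\cdot\|_{\natural,1,*}$ with the cutoff $\mathcal{B}_{i,0}$. Everything else (existence, uniqueness, the vanishing of $\varsigma_{1,ex,j,l}$, and the bound on $c_{1,ex,j}$) is essentially linear algebra combined with the symmetry argument and Lemma~\ref{lem0005}.
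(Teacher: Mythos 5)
Your proposal matches the paper's argument in all essential respects: you identify the same supersolution/barrier input for the weighted estimates in both regimes $p\geq3$ and $1<p<3$, the same diagonal-dominance reduction for the Lagrange multiplier system, the same $O(d)$-parity argument forcing $\varsigma_{1,ex,j,l}=0$ and the evenness of $\gamma_{1,ex}$, and the same use of Lemma~\ref{lem0005} to bound $|c_{1,ex,j}|\lesssim Q$ via the cross-term interactions. The paper implements the existence/uniqueness step by the cut-off-and-blow-up scheme of the cited linear theory rather than a single barrier argument, but that is a routine implementation detail of the same idea rather than a genuinely different route.
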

\begin{proof}
By \eqref{eqn0020} and similar estimates for \eqref{eqnewnew0006}, \eqref{eqnewnew0007} and \eqref{eqnewnew0008},
\begin{eqnarray}\label{eqnewnew0009}
\mathcal{R}_{1,ex}\sim \left\{\aligned
&Q_i\varphi_{s^*_i}^{p-2}(t),\quad \mathcal{B}_{i,+}\text{ for }1\leq i\leq\nu-1,\\
&Q_{i-1}\varphi_{s^*_{i-1}}^{p-2}(t),\quad \mathcal{B}_{i,-}\text{ for }2\leq i\leq \nu,\\
&Q_1\varphi_{s^*_{1}}^{p}(t),\quad \mathcal{B}_{1,-,*},\\
&Q_\nu \varphi_{s^*_{\nu}}^{p}(t),\quad \mathcal{B}_{\nu,+,*},
\endaligned\right.
\end{eqnarray}
where $\mathcal{B}_{i,+}$, $\mathcal{B}_{i,-}$, $\mathcal{B}_{1,-,*}$ and $\mathcal{B}_{\nu,+,*}$ are given by \eqref{eqnewnew0010}, \eqref{eqnewnew0011}, \eqref{eqnewnew0012} and \eqref{eqnewnew0013}.
Thus, we have
\begin{eqnarray}\label{eqn19997}
\|\mathcal{R}_{1,ex}\|_{\natural,1}&:=&\sum_{i=1}^{\nu-1}\sup_{\mathcal{B}_{i,+}\backslash\mathcal{B}_{i,0,*}}\frac{|\mathcal{R}_{1,ex}|}{Q_{i}\varphi_{s^*_i}^{p-2}(t)}+\sup_{\left(\mathcal{B}_{\nu,+}\backslash\mathcal{B}_{\nu,0,*}\right)\cup\left(\mathcal{C}\backslash\cup_{j=1}^{\nu}\mathcal{B}_{j}\right)^+}\frac{|\mathcal{R}_{1,ex}|}{Q_{\nu}\varphi_{s^*_\nu}^{1-\sigma}(t)}\notag\\
&&+\sum_{i=2}^{\nu}\sup_{\mathcal{B}_{i,-}\backslash\mathcal{B}_{i,0,*}}\frac{|\mathcal{R}_{1,ex}|}{Q_{i-1}\varphi_{s^*_i}^{p-2}(t)}+\sup_{\left(\mathcal{B}_{1,-}\backslash\mathcal{B}_{1,0,*}\right)\cup\left(\mathcal{C}\backslash\cup_{j=1}^{\nu}\mathcal{B}_{j}\right)^-}\frac{|\mathcal{R}_{1,ex}|}{Q_{1}\varphi_{s^*_1}^{1-\sigma}(t)}\notag\\
&&+\sum_{i=1}^{\nu}\sup_{\mathcal{B}_{i,0,*}}\frac{|\mathcal{R}_{1,ex}|}{Q}\notag\\
&\lesssim&1
\end{eqnarray}
for $1<p<3$ with
\begin{eqnarray}\label{eqnewnew8876}
\mathcal{B}_{i,0,*}=[s_i^*-2,s_i^*+2]\times\mathbb{S}^{d-1}\left[\frac{s_i^*+s_{i+1}^*}{2}-2,\frac{s_i^*+s_{i+1}^*}{2}+2\right]\times\mathbb{S}^{d-1}
\end{eqnarray}
and
\begin{eqnarray}\label{eqn19996}
\|\mathcal{R}_{1,ex}\|_{\sharp}&:=&\sum_{i=2}^{\nu-1}\sup_{\mathcal{B}_{i}}\frac{|\mathcal{R}_{1,ex}|}{Q\varphi_{s^*_i}^{1-\sigma}(t)}+\sup_{\mathcal{B}_{1}\cup\left(\mathcal{C}\backslash\cup_{j=1}^{\nu}\mathcal{B}_{j}\right)^-}\frac{|\mathcal{R}_{1,ex}|}{Q\varphi_{s^*_i}^{1-\sigma}(t)}\notag\\
&&+\sup_{\mathcal{B}_{\nu}\cup\left(\mathcal{C}\backslash\cup_{j=1}^{\nu}\mathcal{B}_{j}\right)^+}\frac{|\mathcal{R}_{1,ex}|}{Q\varphi_{s^*_i}^{1-\sigma}(t)}\notag\\
&\lesssim&1
\end{eqnarray}
for $p\geq3$, where $\sigma>0$ is a small constant which can be taken arbitrary small.
Since
it is easy to check that $\varphi_{s^*_i}^{1-\sigma}(t)$ for all $\sigma\in(0, 1)$ and $\varphi_{s^*_i}^{p-2}(t)$ for all $p\in(1, 3)$ are supersolutions of the equation $\mathcal{L}(\rho)=0$ in $\mathcal{B}_i$ for all $1\leq i\leq \nu$, by Lemma~\ref{lem0001} and applying similar cut-off functions and blow-up arguments for \cite[Proposition~3.1]{LWW2024} to \eqref{eqn0011} and by \eqref{eqn19997} and \eqref{eqn19996}, we can show the existence and uniqueness of $\gamma_{1,ex}$ with the desired estimates~\eqref{eqn0047}.  Moreover, since $\mathcal{R}_{1,ex}$ is even on $\mathbb{S}^{d-1}$, by uniqueness, we also have that $\gamma_{1,ex}$ is even on $\mathbb{S}^{d-1}$.  It remains to estimate the Lagrange multipliers.  By the orthogonal conditions and the oddness of $w_{j,l}$ on $\mathbb{S}^{d-1}$, we have
\begin{eqnarray*}
\sum_{i=1}^{\nu}\left\langle \Psi_i^{p-1}\partial_t\Psi_i,\Psi_j^{p-1}\partial_t\Psi_j\right\rangle_{L^2}c_{1,ex,i}=-\left\langle \mathcal{R}_{1,ex},\Psi_j^{p-1}\partial_t\Psi_j\right\rangle_{L^2}
\end{eqnarray*}
and
\begin{eqnarray*}
\sum_{m=1}^{\nu}\sum_{n=1}^{d}\left\langle \Psi_m^{p-1}w_{m,n},\Psi_j^{p-1}w_{j,l}\right\rangle_{L^2}\varsigma_{1,ex,m,n}=0
\end{eqnarray*}
for all $1\leq j\leq\nu$ and all $1\leq l\leq d$.  Since the matrix $\left[\left\langle \Psi_i^{p-1}\partial_t\Psi_i,\Psi_j^{p-1}\partial_t\Psi_j\right\rangle_{L^2}\right]$ is diagonally dominant by \eqref{eqnewnew1005}, by $p>1$, Lemma~\ref{lem0005} and \eqref{eqnewnew0009},
\begin{eqnarray}\label{eqn1047}
|c_{1,ex,j}|&\sim&\sum_{l=1}^{\nu}\left|\left\langle \mathcal{R}_{1,ex},\Psi_l^{p-1}\partial_t\Psi_l\right\rangle_{L^2}\right|\notag\\
&\lesssim&\sum_{i=1}^{l-1}\int_{\mathcal{B}_{i,+}}Q_{i}\Psi_i^{p-2}\Psi_l^{p}d\mu+\sum_{i=l+1}^{\nu}\int_{\mathcal{B}_{i,-}}Q_{i-1}\Psi_{i}^{p-2}\Psi_l^{p}d\mu\notag\\
&&+p\int_{\mathcal{B}_l}\Psi_l^{2p-1}l\mathcal{U}_ld\mu\notag\\
&\lesssim&\int_{\mathcal{B}_{j,+}}\Psi_j^{2p-1}\Psi_{j+1}d\mu
+\int_{\mathcal{B}_{j,-}}\Psi_j^{2p-1}\Psi_{j-1}d\mu\notag\\
&\lesssim&Q
\end{eqnarray}
for all $1\leq j\leq \nu$.
Moreover, by \eqref{eqnewnew1005} and the orthogonal conditions of $\{w_l\}$ on $\mathbb{S}^{d-1}$, the matrix $\left[\left\langle \Psi_m^{p-1}w_{m,n},\Psi_j^{p-1}w_{j,l}\right\rangle_{L^2}\right]$ is also diagonally dominant.  Thus, it is also easy to see that $\varsigma_{1,ex,j,l}=0$ for all $1\leq j\leq \nu$ and all $1\leq l\leq d$.
\end{proof}

\vskip0.12in

We next consider the following equation:
\begin{eqnarray}\label{eqn0012}
\left\{\aligned&\mathcal{L}(\gamma_{1,j})=\mathcal{R}_{1,j}-\sum_{i=1}^{\nu}\Psi_{i}^{p-1}\left(c_{1,j,i}\partial_t\Psi_{i}+\sum_{l=1}^{d}\varsigma_{1,j,i,l}w_{i,l}\right),\quad \text{in }\mathcal{C},\\
&\langle \partial_t\Psi_{j}, \gamma_{1,j}\rangle=\langle w_{j,l}, \gamma_{1,j}\rangle=0\quad\text{for all }1\leq j\leq\nu\text{ and all }1\leq l\leq d,\endaligned\right.
\end{eqnarray}
where $\mathcal{R}_{1,j}$ is given by \eqref{eqn0020}.
\begin{lemma}\label{lem0007}
Let $d\geq2$, $a<0$ and $b=b_{FS}(a)$.  Then \eqref{eqn0012} is uniquely solvable.  Moreover, the solution $\gamma_{1,j}$ is even in terms of $t-s_j^*$ and satisfies
\begin{eqnarray}\label{eqn0048}
\left|(\alpha_j^*)^{p-1}-1\right|\gtrsim\sup_{(t,\theta)\in\mathcal{C}}\frac{|\gamma_{1,j}|}{\varphi_{s_j^*}^{1-\sigma}(t)},
\end{eqnarray}
where the Lagrange multipliers $\{c_{1,j,i}\}$ and $\{\varsigma_{1,j,i,l}\}$ are chosen such that the right hand side of the equation~\eqref{eqn0012} is orthogonal to $\{\Psi_j^{p-1}\partial_t\Psi_{j}\}$ and $\{\Psi_j^{p-1}w_{j,l}\}$ in $L^2(\mathcal{C})$ and $\sigma>0$ is chosen such that
\begin{eqnarray}\label{eqnewnew3345}
\left(Q+\beta_*^2+\sum_{j=1}^{\nu}\left|(\alpha_j^*)^{p-1}-1\right|\right)\leq\frac18\mathcal{U}^{\sigma}\text{ in }\overline{\mathcal{B}}_*=[s_1^*-\tau_1,s_\nu^*+\tau_{\nu-1}]\times\mathbb{S}^{d-1}.
\end{eqnarray}
The Lagrange multipliers also satisfy $\varsigma_{1,j,i,l}=0$ and
\begin{eqnarray*}
\left|c_{1,j,i}\right|\lesssim \left\{\aligned
&\left|(\alpha_j^*)^{p-1}-1\right|Q^{p}\left|\log Q\right|,\quad i\not=j,\\
& \left|(\alpha_j^*)^{p-1}-1\right|Q^{2p}\left|\log Q\right|^2,\quad i=j,\\
\endaligned\right.
\end{eqnarray*}
for all $1\leq j\leq \nu$, $1\leq l\leq d$ and all $1\leq i\leq\nu$.
\end{lemma}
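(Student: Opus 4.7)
The plan is to adapt the proof of Lemma~\ref{lem0006} (for $\gamma_{1,ex}$) to the different source $\mathcal{R}_{1,j}=((\alpha_j^*)^p-\alpha_j^*)\Psi_j^p$. The first step is a pointwise estimate on this source: since $\alpha_j^*\to 1$ by \eqref{eqnewnew1005}, one factors $(\alpha_j^*)^p-\alpha_j^*=\alpha_j^*((\alpha_j^*)^{p-1}-1)$, so that
\begin{eqnarray*}
|\mathcal{R}_{1,j}|\lesssim \left|(\alpha_j^*)^{p-1}-1\right|\,\Psi_j^p(t)\le \left|(\alpha_j^*)^{p-1}-1\right|\,\varphi_{s_j^*}^{1-\sigma}(t),
\end{eqnarray*}
using $\varphi_{s_j^*}\le 1$ and $p>1-\sigma$. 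This places $\mathcal{R}_{1,j}$ cleanly in the weighted $L^\infty$-space with weight $\varphi_{s_j^*}^{1-\sigma}$, which is precisely the norm appearing in \eqref{eqn0048}.

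For existence and uniqueness of $\gamma_{1,j}$ solving \eqref{eqn0012}, I would run the same cutoff-plus-blow-up construction from \cite{LWW2024} that produced $\gamma_{1,ex}$. The key analytical input is that $\varphi_{s_j^*}^{1-\sigma}$ is a strict supersolution of the constant-coefficient limit operator $-\partial_t^2-\Delta_\theta+\Lambda_{FS}$ (it decays at the subcritical rate $(1-\sigma)\sqrt{\Lambda_{FS}}$), and that the multi-bubble potential $p\,\mathcal{U}^{p-1}$ is a harmless perturbation thanks to \eqref{eqnewnew3345}, which forces $p\,\mathcal{U}^{p-1}$ to be dominated by a small multiple of $\mathcal{U}^\sigma$ on $\overline{\mathcal{B}}_*$, while exponential decay handles the exterior. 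Blow-up analysis at each $s_i^*$, combined with the orthogonality constraints and the kernel classification of Lemma~\ref{lem0001}, rules out nontrivial limits in the kernel, so the Fredholm alternative with Lagrange multipliers yields \eqref{eqn0048} directly from the weighted source bound. The evenness of $\gamma_{1,j}$ in $t-s_j^*$ can then be obtained by carrying out this whole scheme inside the subspace of functions invariant under $t\mapsto 2s_j^*-t$: the source is invariant there since $\Psi_j$ is even about $s_j^*$; the orthogonality against $\partial_t\Psi_j$ (odd) and $w_{j,l}$ (even about $s_j^*$) respects the subspace; and the asymmetric piece of $\mathcal{U}^{p-1}$ coming from the other bubbles is absorbed into the contraction-estimate error controlled by \eqref{eqnewnew3345}.

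For the Lagrange multipliers, I would test \eqref{eqn0012} against $\Psi_k^{p-1}\partial_t\Psi_k$ and $\Psi_k^{p-1}w_{k,l}$. The $\varsigma$-system has identically zero right-hand side, because $\mathcal{R}_{1,j}$ is independent of $\theta$ while $w_{k,l}$ is odd on $\mathbb{S}^{d-1}$, and its coefficient matrix is diagonally dominant by \eqref{eqnewnew1005} and orthogonality of spherical harmonics, forcing $\varsigma_{1,j,i,l}=0$. For $\{c_{1,j,i}\}$, the diagonal pairing $\langle\mathcal{R}_{1,j},\Psi_j^{p-1}\partial_t\Psi_j\rangle_{L^2}$ vanishes by the oddness of $\partial_t\Psi_j$ about $s_j^*$, while each off-diagonal pairing ($k\ne j$) equals $((\alpha_j^*)^p-\alpha_j^*)\int \Psi_j^p\,\Psi_k^{p-1}\partial_t\Psi_k\,d\mu$, which Lemma~\ref{lem0005} (applied with $\alpha=\beta=p$) bounds by $|(\alpha_j^*)^{p-1}-1|\,Q^p|\log Q|$. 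The off-diagonal entries of $[\langle\Psi_i^{p-1}\partial_t\Psi_i,\Psi_k^{p-1}\partial_t\Psi_k\rangle_{L^2}]$ are likewise of size $Q^p|\log Q|$; inverting the diagonally dominant system gives $|c_{1,j,i}|\lesssim |(\alpha_j^*)^{p-1}-1|\,Q^p|\log Q|$ for $i\ne j$, while for $i=j$ the vanishing diagonal source forces $c_{1,j,j}$ to be determined by one further product with an off-diagonal entry, yielding the extra $Q^p|\log Q|$ factor and hence $|c_{1,j,j}|\lesssim |(\alpha_j^*)^{p-1}-1|\,Q^{2p}|\log Q|^2$.

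The main obstacle will be securing \eqref{eqn0048} with the sharp weight $\varphi_{s_j^*}^{1-\sigma}$ rather than the easier weight $\Psi_j$: one has to verify that the full multi-bubble potential $p\,\mathcal{U}^{p-1}$ never destroys the supersolution property of $\varphi_{s_j^*}^{1-\sigma}$, which is exactly why \eqref{eqnewnew3345} requires $Q+\beta_*^2+\sum |(\alpha_j^*)^{p-1}-1|$ to be absorbed by $\mathcal{U}^\sigma/8$ on $\overline{\mathcal{B}}_*$. Balancing $\sigma$ against the decay rate of $\varphi$ and the smallness of the potential, so that the blow-up limits near each $s_i^*$ still land in the kernel controlled by Lemma~\ref{lem0001}, is the delicate bookkeeping part of the argument.
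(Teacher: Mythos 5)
Your proposal is correct and follows essentially the same route as the paper. The paper proves existence, uniqueness and the weighted $L^\infty$ bound \eqref{eqn0048} by citing a blow-up linear theory (it actually invokes a single-bubble version from \cite[Lemma~5.4]{WW2022} rather than the multi-bubble construction of \cite{LWW2024} used for $\gamma_{1,ex}$, since $\mathcal{R}_{1,j}$ is concentrated at a single bubble — a purely cosmetic difference from what you propose); it deduces evenness in $t-s_j^*$ from uniqueness and evenness of the source, which is equivalent to your restriction-to-the-even-subspace argument; and your computation of the Lagrange multipliers — $\varsigma_{1,j,i,l}=0$ from oddness of $w_{j,l}$ on $\mathbb{S}^{d-1}$ plus diagonal dominance, the vanishing of the diagonal pairing $\langle\mathcal{R}_{1,j},\Psi_j^{p-1}\partial_t\Psi_j\rangle_{L^2}$ from oddness of $\partial_t\Psi_j$ about $s_j^*$, the $Q^p|\log Q|$ bound on off-diagonal entries via Lemma~\ref{lem0005}, and the extra off-diagonal factor producing $Q^{2p}|\log Q|^2$ for $c_{1,j,j}$ — matches the paper line for line.
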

\begin{proof}
Again, since $p>1$, by Lemma~\ref{lem0001} and applying similar blow-up arguments for \cite[Lemma~5.4]{WW2022} to \eqref{eqn0012} and by \eqref{eq0026} and \eqref{eqn0020}, we can show the existence and uniqueness of $\gamma_{1,j}$ with the desired estimates~\eqref{eqn0048}.  Moreover, since $\mathcal{R}_{1,j}$ is even in terms of $t-s_j^*$, by uniqueness, $\gamma_{1,j}$ is also even in terms of $t-s_j^*$.  For the estimates of the Lagrange multipliers, by the orthogonal conditions and the oddness of $w_{j,l}$ on $\mathbb{S}^{d-1}$, we have
\begin{eqnarray*}
\sum_{i=1}^{\nu}\left\langle \Psi_i^{p-1}\partial_t\Psi_i,\Psi_j^{p-1}\partial_t\Psi_j\right\rangle_{L^2}c_{1,j,i}=-\left\langle \mathcal{R}_{1,j},\Psi_l^{p-1}\partial_t\Psi_l\right\rangle_{L^2}
\end{eqnarray*}
and
\begin{eqnarray*}
\sum_{m=1}^{\nu}\sum_{n=1}^{d}\left\langle \Psi_m^{p-1}w_{m,n},\Psi_j^{p-1}w_{j,l}\right\rangle_{L^2}\varsigma_{1,j,m,n}=0
\end{eqnarray*}
for all $1\leq j\leq\nu$ and all $1\leq l\leq d$.  Again, the matrix $\left[\left\langle \Psi_i^{p-1}\partial_t\Psi_i,\Psi_j^{p-1}\partial_t\Psi_j\right\rangle_{L^2}\right]$ is diagonally dominant by \eqref{eqnewnew1005}.  Note that by the oddness of $\partial_t\Psi$ in $\bbr$, we have
\begin{eqnarray*}
\left\langle \mathcal{R}_{1,j},\Psi_j^{p-1}\partial_t\Psi_j\right\rangle_{L^2}=0.
\end{eqnarray*}
Thus,
\begin{eqnarray*}
\left|c_{1,j,i}\right|\lesssim\left\{\aligned
&\left|(\alpha_j^*)^{p-1}-1\right|\int_{\mathcal{C}}\Psi_i^{p}\Psi_j^{p}d\mu
\lesssim \left|(\alpha_j^*)^{p-1}-1\right|Q^{p}\left|\log Q\right|,\quad i\not=j,\\
&\left|(\alpha_j^*)^{p-1}-1\right|\left(\int_{\mathcal{C}}\Psi_i^{p}\Psi_j^{p}d\mu\right)^2
\lesssim \left|(\alpha_j^*)^{p-1}-1\right|Q^{2p}\left|\log Q\right|^2,\quad i=j,
\endaligned\right.
\end{eqnarray*}
for all $1\leq i\leq \nu$.
Moreover, by \eqref{eqnewnew1005} and the orthogonal conditions of $\{w_l\}$ on $\mathbb{S}^{d-1}$, the matrix $\left[\left\langle \Psi_m^{p-1}w_{m,n},\Psi_j^{p-1}w_{j,l}\right\rangle_{L^2}\right]$ is also diagonally dominant.  Thus,
it is also easy to see that $\varsigma_{1,j,i,l}=0$
for all $1\leq i,j\leq \nu$ and all $1\leq l\leq d$.
\end{proof}

\vskip0.12in

We also need to consider the following equation:
\begin{eqnarray}\label{eqn0013}
\left\{\aligned&\mathcal{L}(\overline{\gamma}_{2,ex})=\mathcal{R}_{2,ex}-\sum_{j=1}^{\nu}\Psi_{j}^{p-1}\left(c_{2,ex,j}\partial_t\Psi_{j}+\sum_{l=1}^{d}\varsigma_{2,ex,j,l}w_{j,l}\right),\quad \text{in }\mathcal{C},\\
&\langle \partial_t\Psi_{j}, \overline{\gamma}_{2,ex}\rangle=\langle w_{j,l}, \overline{\gamma}\rangle=0\quad\text{for all }1\leq j\leq\nu\text{ and all }1\leq l\leq d,\endaligned\right.
\end{eqnarray}
where $\mathcal{R}_{2,ex}$ is given by \eqref{eqn0021}.
\begin{lemma}\label{lem0008}
Let $d\geq2$, $a<0$ and $b=b_{FS}(a)$.  Then \eqref{eqn0013} is uniquely solvable.  Moreover, the solution $\overline{\gamma}_{2,ex}$ is odd on $\mathbb{S}^{d-1}$ and satisfies
\begin{eqnarray}\label{eqn2047}
\beta_*\gtrsim\left\{\aligned
&\|\overline{\gamma}_{2,ex}\|_{\sharp},\quad p\geq\frac{7}{3},\\
&\|\overline{\gamma}_{2,ex}\|_{\natural,2,*},\quad 1<p<\frac{7}{3},
\endaligned\right.
\end{eqnarray}
where the Lagrange multipliers $\{c_{2,ex,j}\}$ and $\{\varsigma_{2,ex,j,l}\}$ are chosen such that the right hand side of the equation~\eqref{eqn0013} is orthogonal to $\{\Psi_j^{p-1}\partial_t\Psi_{j}\}$ and $\{\Psi_j^{p-1}w_{j,l}\}$ in $L^2(\mathcal{C})$,  $\|\cdot\|_{\sharp}$ and $\mathcal{B}_{i,0}$ are given in Lemma~\ref{lem0006}, and
\begin{eqnarray*}
\|\overline{\gamma}_{2,ex}\|_{\natural,2,*}&:=&\sum_{i=1}^{\nu-1}\sup_{\mathcal{B}_{i,+}\backslash\mathcal{B}_{i,0}}\frac{|\overline{\gamma}_{2,ex}|}{Q_{i}\varphi_{s^*_i}^{\frac{3p-5}{2}}(t)}+\sup_{\left(\mathcal{B}_{\nu,+}\backslash\mathcal{B}_{\nu,0}\right)\cup\left(\mathcal{C}\backslash\cup_{j=1}^{\nu}\mathcal{B}_{j}\right)^+}\frac{|\overline{\gamma}_{2,ex}|}{Q_{\nu}\varphi_{s^*_\nu}^{1-\sigma}(t)}\notag\\
&&+\sum_{i=2}^{\nu}\sup_{\mathcal{B}_{i,-}\backslash\mathcal{B}_{i,0}}\frac{|\overline{\gamma}_{2,ex}|}{Q_{i-1}\varphi_{s^*_i}^{\frac{3p-5}{2}}(t)}+\sup_{\left(\mathcal{B}_{1,-}\backslash\mathcal{B}_{1,0,*}\right)\cup\left(\mathcal{C}\backslash\cup_{j=1}^{\nu}\mathcal{B}_{j}\right)^-}\frac{|\overline{\gamma}_{2,ex}|}{Q_{1}\varphi_{s^*_1}^{1-\sigma}(t)}\notag\\
&&+\sum_{i=1}^{\nu}\sup_{\mathcal{B}_{i,0}}\frac{|\overline{\gamma}_{2,ex}|}{Q}
\end{eqnarray*}
with $\sigma>0$ being a small constant which can be taken arbitrary small if necessary.
The Lagrange multipliers also satisfy $c_{2,ex,j}=0$ for all $1\leq j\leq \nu$ and $|\varsigma_{2,ex,j,l}|\lesssim\beta_*Q$ for all $1\leq j\leq \nu$ and $1\leq l\leq d$.
\end{lemma}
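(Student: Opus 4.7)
The plan is to mirror the proofs of Lemmas~\ref{lem0006} and \ref{lem0007}, with the key new input being the correct weighted bound on $\mathcal{R}_{2,ex}$. First I would compute the pointwise size of $\mathcal{R}_{2,ex}$ region-by-region. By \eqref{eqn0021}, in $\mathcal{B}_{i,+}$ the leading contribution is the $j=i$ term
\begin{eqnarray*}
\left(\mathcal{U}^{p-1}-\left(\Psi_i^*\right)^{p-1}\right)\mathcal{V}_i\sim (p-1)\Psi_i^{p-2}\Psi_{i+1}\cdot \beta_*\Psi_i^{\frac{p+1}{2}}\lesssim \beta_* Q_i\varphi_{s_i^*}^{\frac{3p-5}{2}}(t),
\end{eqnarray*}
which produces the exponent $\tfrac{3p-5}{2}$ in the weighted norm $\|\cdot\|_{\natural,2,*}$. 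An analogous computation in $\mathcal{B}_{i,-}$ together with the standard exterior decay in $\mathcal{B}_{1,-,*}$ and $\mathcal{B}_{\nu,+,*}$ (using \eqref{eqnewnew0008}) gives $\|\mathcal{R}_{2,ex}\|_{\natural,2,*}\lesssim\beta_*$ for $1<p<\tfrac{7}{3}$; when $p\geq\tfrac{7}{3}$ we have $\tfrac{3p-5}{2}\geq 1>1-\sigma$, so the bound can be loosened to the form $\|\mathcal{R}_{2,ex}\|_{\sharp}\lesssim\beta_*$.

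Next, for existence and uniqueness of $\overline{\gamma}_{2,ex}$, I would apply the cutoff construction and blow-up argument from \cite{LWW2024} used in Lemma~\ref{lem0006}, together with the fact (already verified there) that $\varphi_{s_i^*}^{1-\sigma}$ and $\varphi_{s_i^*}^{\frac{3p-5}{2}}$ are supersolutions of $\mathcal{L}(\rho)=0$ in $\mathcal{B}_i$ (the second one precisely in the regime $1<p<\tfrac{7}{3}$ where $\tfrac{3p-5}{2}\in(-1,1)$, still fitting the admissible barrier range). Solvability in the orthogonal complement is then obtained from Lemma~\ref{lem0001} via Fredholm theory, and the weighted estimate \eqref{eqn2047} follows from the supersolution comparison. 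For the symmetry: $\theta_l$ are odd spherical harmonics of degree $1$, so $\mathcal{V}_j$ and hence $\mathcal{R}_{2,ex}$ is odd on $\mathbb{S}^{d-1}$. Applying an orthogonal reflection and using uniqueness forces $\overline{\gamma}_{2,ex}$ to be odd on $\mathbb{S}^{d-1}$.

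For the Lagrange multipliers I would use the orthogonality conditions in \eqref{eqn0013} to extract the linear system
\begin{eqnarray*}
\sum_{i}\left\langle \Psi_i^{p-1}\partial_t\Psi_i,\Psi_j^{p-1}\partial_t\Psi_j\right\rangle_{L^2}c_{2,ex,i}&=&-\left\langle \mathcal{R}_{2,ex},\Psi_j^{p-1}\partial_t\Psi_j\right\rangle_{L^2},\\
\sum_{m,n}\left\langle \Psi_m^{p-1}w_{m,n},\Psi_j^{p-1}w_{j,l}\right\rangle_{L^2}\varsigma_{2,ex,m,n}&=&-\left\langle \mathcal{R}_{2,ex},\Psi_j^{p-1}w_{j,l}\right\rangle_{L^2},
\end{eqnarray*}
where the cross pairings drop out by the odd/even decomposition on $\mathbb{S}^{d-1}$. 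Since $\mathcal{R}_{2,ex}$ is odd on $\mathbb{S}^{d-1}$ while $\Psi_j^{p-1}\partial_t\Psi_j$ is radial, the first right-hand side vanishes, which together with diagonal dominance (from \eqref{eqnewnew1005}) yields $c_{2,ex,j}=0$. For $\varsigma_{2,ex,j,l}$, integrating on $\mathbb{S}^{d-1}$ and using $\int_{\mathbb{S}^{d-1}}\theta_l\theta_m d\sigma\sim\delta_{lm}$ shows the dominant term of the right-hand side is
\begin{eqnarray*}
p\beta_{j,l}^*\int_{\mathcal{B}_j}\left(\mathcal{U}^{p-1}-\left(\Psi_j^*\right)^{p-1}\right)\Psi_j^{p+1}d\mu\lesssim\beta_*\int_{\mathcal{B}_j}\Psi_j^{2p-1}\left(\Psi_{j-1}+\Psi_{j+1}\right)d\mu\lesssim\beta_* Q
\end{eqnarray*}
by Lemma~\ref{lem0005}, and off-diagonal contributions $j\neq k$ are even smaller; diagonal dominance of the Gram matrix then gives $|\varsigma_{2,ex,j,l}|\lesssim\beta_* Q$.

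The main technical obstacle I expect is verifying the weighted estimate in the low-$p$ range $1<p<\tfrac{7}{3}$: the barrier exponent $\tfrac{3p-5}{2}$ can be negative, so the matching between the sharp interior decay $Q_i\varphi^{\frac{3p-5}{2}}$ and the mild exterior decay $Q\varphi^{1-\sigma}$ in the transition annuli near $|t-s_i^*|\sim 1$ must be handled through the compact regions $\mathcal{B}_{i,0}$ (as in \cite{LWW2024}). Verifying that the cutoff/blow-up procedure closes under this piecewise norm, and simultaneously controls both the Lagrange correction on the right-hand side and the solution, will be the delicate step; once this is in place, the asserted bounds on $c_{2,ex,j}$ and $\varsigma_{2,ex,j,l}$ are immediate consequences of the symmetry and Lemma~\ref{lem0005}.
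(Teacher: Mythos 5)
Your proposal follows the paper's proof essentially step by step: same weighted bound $|\mathcal{R}_{2,ex}|\lesssim\beta_*\Psi_i^{3(p-1)/2}\mathcal{U}_i$ in $\mathcal{B}_i$ giving the exponent $\tfrac{3p-5}{2}$ and the threshold $p=\tfrac{7}{3}$, same invocation of the cutoff/blow-up machinery of \cite{LWW2024} with barriers $\varphi^{\alpha}$, $|\alpha|<1$, same deduction of oddness on $\mathbb{S}^{d-1}$ from uniqueness, and same parity/diagonal-dominance argument for the Lagrange multipliers via Lemma~\ref{lem0005}. One cosmetic slip: the $\varsigma_{2,ex,j,l}$ estimate pairs $\mathcal{R}_{2,ex}$ against $\Psi_j^{p-1}w_{j,l}$ (not $w_{j,l}$), so the relevant integral is $\beta_*\int_{\mathcal{B}_j}\Psi_j^{3p-2}\mathcal{U}_j\,d\mu$ rather than with exponent $2p-1$; since $3p-2>1$ this is still $\lesssim\beta_*Q$ by Lemma~\ref{lem0005}, so the conclusion is unchanged.
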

\begin{proof}
Similar to \eqref{eqn0045}, by \eqref{eq0026}, \eqref{eqn0040} and \eqref{eqn0021}, we have
\begin{eqnarray}\label{eqn3147}
\left|\mathcal{R}_{2,ex}\right|\lesssim\beta_*\left(\sum_{i=1}^{\nu}\Psi_i^{\frac{3(p-1)}{2}}\mathcal{U}_i\chi_{\mathcal{B}_i}+\mathcal{U}^{\frac{3p-1}{2}}\chi_{\mathcal{C}\backslash\left(\cup_{i=1}^{\nu}\mathcal{B}_i\right)}\right).
\end{eqnarray}
Thus, similar to \eqref{eqn19997} and \eqref{eqn19996},
\begin{eqnarray}\label{eqn29997}
\|\mathcal{R}_{2,ex}\|_{\natural,2}&:=&\sum_{i=1}^{\nu-1}\sup_{\mathcal{B}_{i,+}\backslash\mathcal{B}_{i,0,*}}\frac{|\mathcal{R}_{2,ex}|}{Q_{i}\varphi_{s^*_i}^{\frac{3p-5}{2}}(t)}+\sup_{\left(\mathcal{B}_{\nu,+}\backslash\mathcal{B}_{\nu,0,*}\right)\cup\left(\mathcal{C}\backslash\cup_{j=1}^{\nu}\mathcal{B}_{j}\right)^+}\frac{|\mathcal{R}_{2,ex}|}{Q_{\nu}\varphi_{s^*_\nu}^{1-\sigma}(t)}\notag\\
&&+\sum_{i=2}^{\nu}\sup_{\mathcal{B}_{i,-}\backslash\mathcal{B}_{i,0,*}}\frac{|\mathcal{R}_{2,ex}|}{Q_{i-1}\varphi_{s^*_i}^{\frac{3p-5}{2}}(t)}+\sup_{\left(\mathcal{B}_{1,-}\backslash\mathcal{B}_{1,0,*}\right)\cup\left(\mathcal{C}\backslash\cup_{j=1}^{\nu}\mathcal{B}_{j}\right)^-}\frac{|\mathcal{R}_{2,ex}|}{Q_{1}\varphi_{s^*_1}^{1-\sigma}(t)}\notag\\
&&+\sum_{i=1}^{\nu}\sup_{\mathcal{B}_{i,0,*}}\frac{|\mathcal{R}_{2,ex}|}{Q}\notag\\
&\lesssim&\beta_*
\end{eqnarray}
for $1<p<\frac{7}{3}$ and $\|\mathcal{R}_{2,ex}\|_{\sharp}\lesssim\beta_*$
for $p\geq\frac{7}{3}$, where $\sigma>0$ is a small constant which can be taken arbitrary small if necessary and $\mathcal{B}_{i,0,*}$ is given by \eqref{eqnewnew8876}.  Then by Lemma~\ref{lem0001} and applying similar cut-off functions and blow-up arguments for \cite[Proposition~3.1]{LWW2024} to \eqref{eqn0013}, we can show the existence and uniqueness of $\overline{\gamma}_{2,ex}$ with the desired estimates~\eqref{eqn2047}.  On the other hand, by \eqref{eqn0040} and \eqref{eqn0021}, we have
\begin{eqnarray*}
\mathcal{R}_{2,ex}&=&p\sum_{j=1}^{\nu}\left(\mathcal{U}^{p-1}-\left(\Psi_j^*\right)^{p-1}\right)\mathcal{V}_j\\
&=&\sum_{l=1}^{d}\left(p\sum_{j=1}^{\nu}\left(\mathcal{U}^{p-1}-\left(\Psi_j^*\right)^{p-1}\right)\Psi_{j}^{\frac{p+1}{2}}\beta_{j,l}^*\right)\theta_l,
\end{eqnarray*}
which is odd on $\mathbb{S}^{d-1}$.  Thus, by uniqueness of $\overline{\gamma}_{2,ex}$, we know that $\overline{\gamma}_{2,ex}$ is also odd on $\mathbb{S}^{d-1}$.
It remains to estimate the Lagrange multipliers.  By the orthogonal conditions and the oddness of $w_{j,l}$ on $\mathbb{S}^{d-1}$, we have
\begin{eqnarray*}
\sum_{i=1}^{\nu}\left\langle \Psi_i^{p-1}\partial_t\Psi_i,\Psi_j^{p-1}\partial_t\Psi_j\right\rangle_{L^2}c_{2,ex,i}=0
\end{eqnarray*}
and
\begin{eqnarray*}
\sum_{m=1}^{\nu}\sum_{n=1}^{d}\left\langle \Psi_m^{p-1}w_{m,n},\Psi_j^{p-1}w_{j,l}\right\rangle_{L^2}\varsigma_{2,ex,m,n}=-\left\langle \mathcal{R}_{2,ex},\Psi_j^{p-1}w_{j,l}\right\rangle_{L^2}
\end{eqnarray*}
for all $1\leq j\leq\nu$ and $1\leq l\leq d$.  Again, the matrix $\left[\left\langle \Psi_i^{p-1}\partial_t\Psi_i,\Psi_j^{p-1}\partial_t\Psi_j\right\rangle_{L^2}\right]$ is diagonally dominant by \eqref{eqnewnew1005}.  Thus, $c_{2,ex,j}=0$ for all $1\leq j\leq\nu$.  Moreover, the matrix $\left[\left\langle \Psi_m^{p-1}w_{m,n},\Psi_j^{p-1}w_{j,l}\right\rangle_{L^2}\right]$ is also diagonally dominant by \eqref{eqnewnew1005} and the orthogonal conditions of $\{w_l\}$ on $\mathbb{S}^{d-1}$.  Thus, by Lemma~\ref{lem0005} and \eqref{eqn3147}, we also have
\begin{eqnarray*}
\left|\varsigma_{2,ex,j,l}\right|&=&\left|\left\langle \mathcal{R}_{2,ex}, \Psi_j^{p-1}w_{j,l}\right\rangle_{L^2}\right|\notag\\
&\lesssim&\beta_*\left\langle\Psi_{j-1}^{\frac{3(p-1)}{2}}\mathcal{U}_{j-1}\chi_{\mathcal{B}_{j-1,+}}+\Psi_{j+1}^{\frac{3(p-1)}{2}}\mathcal{U}_{j+1}\chi_{\mathcal{B}_{j+1,-}},\Psi_{j}^{\frac{3p-1}{2}}\right\rangle_{L^2}\notag\\
&&+\beta_*\left\langle\Psi_{j}^{3p-2},\mathcal{U}_{j}\right\rangle_{L^2(\mathcal{B}_{j})}\notag\\
&\sim&\beta_*Q.
\end{eqnarray*}
for all $1\leq j\leq \nu$ and all $1\leq l\leq d$.
\end{proof}

\vskip0.12in

We finally consider the following equation:
\begin{eqnarray}\label{eqn0015}
\left\{\aligned&\mathcal{L}(\gamma_{\mathcal{N},led,*})=\mathcal{N}_{led}-\sum_{j=1}^{\nu}\Psi_{j}^{p-1}\left(c_{\mathcal{N},led,j}\partial_t\Psi_{j}+\sum_{l=1}^{d}\varsigma_{\mathcal{N},led,j,l}w_{j,l}\right),\quad \text{in }\mathcal{C},\\
&\langle \partial_t\Psi_{j}, \gamma_{\mathcal{N},led,*}\rangle=\langle w_{j,l}, \gamma_{\mathcal{N},led,*}\rangle=0\quad\text{for all }1\leq j\leq\nu\text{ and all }1\leq l\leq d,\endaligned\right.
\end{eqnarray}
where
\begin{eqnarray}\label{eqn3045}
\mathcal{N}_{led}=\sum_{j=1}^{\nu}\left(\Psi_j^*\right)^{p-3}\mathcal{V}_j^2\left(A_p\Psi_j^*+B_p\mathcal{V}_j\right)\chi_{\mathcal{B}_j}+\mathcal{U}^{p-3}\mathcal{V}^{2}\left(A_p\mathcal{U}+B_p\mathcal{V}\right)\chi_{\mathcal{C}\backslash\left(\cup_{j=1}^{\nu}\mathcal{B}_j\right)}.
\end{eqnarray}
\begin{lemma}\label{lem0009}
Let $d\geq2$, $a<0$ and $b=b_{FS}(a)$.  Then \eqref{eqn0015} is uniquely solvable.  Moreover, the solution $\gamma_{\mathcal{N},led,*}$ satisfies
\begin{eqnarray}\label{eqn0049}
\beta_*^2\gtrsim\sup_{(t,\theta)\in\mathcal{C}}\frac{|\gamma_{\mathcal{N},led,*}|}{\sum_{j=1}^{\nu}\Psi_{j}^{1-\sigma}(t)}
\end{eqnarray}
where the Lagrange multipliers $\{c_{\mathcal{N},led,j}\}$ and $\{\varsigma_{\mathcal{N},led,j,l}\}$ are chosen such that the right hand side of the equation~\eqref{eqn0015} is orthogonal to $\{\Psi_j^{p-1}\partial_t\Psi_{j}\}$ and $\{\Psi_j^{p-1}w_{j,l}\}$ in $L^2(\mathcal{C})$ and $\sigma>0$ is chosen to satisfy \eqref{eqnewnew3345}.
The Lagrange multipliers also satisfy $|c_{\mathcal{N},led,j}|\lesssim\beta_*^2 Q^p$ and $\left|\varsigma_{\mathcal{N},led,j,l}\right|\lesssim\beta_*^3$
for all $1\leq j\leq \nu$, $1\leq l\leq d$ and $1\leq i\leq\nu$.
\end{lemma}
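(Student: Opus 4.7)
The plan is to follow the same template as the proofs of Lemmas~\ref{lem0006}, \ref{lem0007} and \ref{lem0008}: first solve the projected linear problem by a cut-off/blow-up Fredholm argument, and then extract the pointwise weighted bound via comparison against an explicit supersolution. The main obstacle will be establishing the gain of $Q^p$ (respectively the extra factor of $\beta_*$) in the two Lagrange multiplier estimates, which requires a careful use of parity both in $t-s_j^*$ and on $\mathbb{S}^{d-1}$.

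\textbf{Pointwise bounds on the data.} By \eqref{eqn0040}, $|\mathcal{V}_j|\lesssim\beta_*\Psi_j^{(p+1)/2}$ and $|\mathcal{V}|\lesssim\beta_*\mathcal{U}^{(p+1)/2}$; substituting into \eqref{eqn3045} yields $|\mathcal{N}_{led}|\lesssim\beta_*^2\Psi_j^p$ on each $\mathcal{B}_j$ (retaining both the $\mathcal{V}_j^2$ and $\mathcal{V}_j^3$ contributions) and $|\mathcal{N}_{led}|\lesssim\beta_*^2\mathcal{U}^p$ on $\mathcal{C}\setminus(\cup_j\mathcal{B}_j)$. After normalising by $\beta_*^2$, this places $\mathcal{N}_{led}$ in exactly the same pointwise weighted class already handled for the data $\mathcal{R}_{1,ex}$ and $\mathcal{R}_{2,ex}$ in Lemmas~\ref{lem0006} and \ref{lem0008}.

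\textbf{Existence, uniqueness and pointwise estimate.} By Lemma~\ref{lem0001}, the orthogonality conditions $\langle\partial_t\Psi_j,\gamma_{\mathcal{N},led,*}\rangle=\langle w_{j,l},\gamma_{\mathcal{N},led,*}\rangle=0$ together with the Lagrange-multiplier projection on the right-hand side annihilate the full kernel of each $\mathcal{L}_j$, so the cut-off/blow-up Fredholm argument of \cite[Proposition~3.1]{LWW2024} used in Lemma~\ref{lem0006} applies verbatim and produces a unique $\gamma_{\mathcal{N},led,*}$. For the weighted $L^{\infty}$ bound \eqref{eqn0049} I would use $\beta_*^2\sum_j\Psi_j^{1-\sigma}$ as a barrier: direct computation, using the smallness condition \eqref{eqnewnew3345} and the splitting \eqref{eqn0044} to absorb the perturbative term $p(\mathcal{U}^{p-1}-(\Psi_j^*)^{p-1})\rho$, shows that $\beta_*^2\Psi_j^{1-\sigma}$ is a supersolution of $\mathcal{L}_j$ on each $\mathcal{B}_j$ and dominates the right-hand side by Step~1; gluing these local barriers with the comparison principle yields \eqref{eqn0049}.

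\textbf{Lagrange multiplier estimates.} Testing the equation against $\Psi_j^{p-1}\partial_t\Psi_j$ and $\Psi_j^{p-1}w_{j,l}$ and invoking the diagonal dominance of the two Gram matrices $[\langle\Psi_i^{p-1}\partial_t\Psi_i,\Psi_j^{p-1}\partial_t\Psi_j\rangle_{L^2}]$ and $[\langle\Psi_m^{p-1}w_{m,n},\Psi_j^{p-1}w_{j,l}\rangle_{L^2}]$ (exactly as in Lemmas~\ref{lem0006}--\ref{lem0008}) reduces the matter to estimating $\langle\mathcal{N}_{led},\Psi_j^{p-1}\partial_t\Psi_j\rangle_{L^2}$ and $\langle\mathcal{N}_{led},\Psi_j^{p-1}w_{j,l}\rangle_{L^2}$. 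For the second, observe that the quadratic-in-$\mathcal{V}_j$ piece of $\mathcal{N}_{led}$ is even on $\mathbb{S}^{d-1}$ whereas $w_{j,l}=\Psi_j^{(p+1)/2}\theta_l$ is odd, so only the cubic-in-$\mathcal{V}_j$ contribution survives; its pointwise size $\beta_*^3\Psi_j^{(3p+1)/2}$ paired with $\Psi_j^{p-1}w_{j,l}$ and integrated via Lemma~\ref{lem0005} gives $|\varsigma_{\mathcal{N},led,j,l}|\lesssim\beta_*^3$. For the first, the full $t$-axis integral of the localised self-piece $(\Psi_j^*)^{p-3}\mathcal{V}_j^2(A_p\Psi_j^*+B_p\mathcal{V}_j)$ against $\Psi_j^{p-1}\partial_t\Psi_j$ vanishes by the oddness of $\partial_t\Psi_j$ in $t-s_j^*$; what remains are (i) the boundary defect coming from the asymmetric cut-off $\chi_{\mathcal{B}_j}$, which is of size $\mathcal{O}(\beta_*^2Q^p)$ because the integrand reaches size $\beta_*^2\Psi_j^{2p-1}$ at $|t-s_j^*|\sim\tau/2$, and (ii) cross contributions from the other $\mathcal{B}_i$ and from $\mathcal{C}\setminus\cup_i\mathcal{B}_i$, which Lemma~\ref{lem0005} also bounds by $\mathcal{O}(\beta_*^2Q^p)$. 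Together this gives $|c_{\mathcal{N},led,j}|\lesssim\beta_*^2Q^p$. Pinpointing the precise $Q^p$-gain through this parity cancellation is the most delicate step of the argument; once it is traced, the remaining estimates follow routinely as in the previous three lemmas.
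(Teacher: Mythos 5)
Your proposal is correct and follows essentially the same route as the paper's proof: pointwise weighted bounds on $\mathcal{N}_{led}$ placing it in the same class as $\mathcal{R}_{1,ex}$ and $\mathcal{R}_{2,ex}$, the same blow-up/Fredholm scheme for existence and uniqueness, the supersolution comparison for \eqref{eqn0049}, and then the two parity arguments (oddness of $w_{j,l}$ on $\mathbb{S}^{d-1}$ for $\varsigma_{\mathcal{N},led,j,l}$ and oddness of $\partial_t\Psi_j$ in $t-s_j^*$ plus the asymmetric cut-off defect and cross terms for $c_{\mathcal{N},led,j}$). One small slip in wording: after pairing with the test function $\Psi_j^{p-1}\partial_t\Psi_j\sim\Psi_j^p$, the relevant integrand in the boundary-defect estimate is of size $\beta_*^2\Psi_j^{3p-1}$ (not $\beta_*^2\Psi_j^{2p-1}$), which integrated over $[s_j^*+\tau/2,\,s_j^*+\tau_j/2]$ gives $\beta_*^2 Q^{(3p-1)/2}\lesssim\beta_*^2 Q^p$ for $p>1$; your stated conclusion is nonetheless correct.
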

\begin{proof}
Similar to \eqref{eqn3147}, by \eqref{eqn3045}, we have
\begin{eqnarray}\label{eqn3045}
\left|\mathcal{N}_{led}\right|\lesssim\sum_{j=1}^{\nu}\beta_*^2\Psi_j^{2p-1}\chi_{\mathcal{B}_j}+\beta_*^2\mathcal{U}^{2p-1}\chi_{\mathcal{C}\backslash\left(\cup_{j=1}^{\nu}\mathcal{B}_j\right)}.
\end{eqnarray}
Since $p>1$, by Lemma~\ref{lem0001} and applying similar blow-up arguments for \cite[Lemma~5.4]{WW2022} to \eqref{eqn0015} and by \eqref{eq0026}, we can show the existence of $\gamma_{\mathcal{N},led,*}$ with the desired estimates~\eqref{eqn0049}.
It remains to estimate the Lagrange multipliers.  By the oddness of $w_{l}$ on $\mathbb{S}^{d-1}$ and $\partial_{t}\Psi$ in $\bbr$, we have
\begin{eqnarray*}
&&\sum_{i=1}^{\nu}\left\langle \Psi_i^{p-1}\partial_t\Psi_i,\Psi_j^{p-1}\partial_t\Psi_j\right\rangle_{L^2}c_{\mathcal{N},led,i}\\
&=&-\left\langle \sum_{i=1}^{\nu}\left(\Psi_i^*\right)^{p-2}\mathcal{V}_i^2\chi_{\mathcal{B}_i}+\mathcal{U}^{p-2}\mathcal{V}^{2}\chi_{\mathcal{C}\backslash\left(\cup_{i=1}^{\nu}\mathcal{B}_i\right)}, \Psi_j^{p-1}\partial_{t}\Psi_j\right\rangle_{L^2}
\end{eqnarray*}
and
\begin{eqnarray*}
&&\sum_{m=1}^{\nu}\sum_{n=1}^{d}\left\langle \Psi_m^{p-1}w_{m,n},\Psi_j^{p-1}w_{j,l}\right\rangle_{L^2}\varsigma_{\mathcal{N},led,m,n}\\
&=&-\left\langle \sum_{i=1}^{\nu}\left(\Psi_i^*\right)^{p-3}\mathcal{V}_i^3\chi_{\mathcal{B}_i}+\mathcal{U}^{p-3}\mathcal{V}^{3}\chi_{\mathcal{C}\backslash\left(\cup_{i=1}^{\nu}\mathcal{B}_i\right)}, \Psi_j^{p-1}w_{j,l}\right\rangle_{L^2}
\end{eqnarray*}
for all $1\leq j\leq\nu$ and all $1\leq l\leq d$.
Thus, similar to \eqref{eqn1047}, by Lemma~\ref{lem0005} and the oddness of $\partial_{t}\Psi$ in $\bbr$, we have
\begin{eqnarray}\label{eqn1049}
|c_{\mathcal{N},led,j}|&\lesssim&\int_{\mathcal{B}_{j-1}}\Psi_{j-1}^{p-2}\mathcal{V}_{j-1}^2\Psi_j^pd\mu+\int_{\mathcal{B}_{j}}\Psi_{j}^{p-2}\mathcal{V}_{j}^2\Psi_j^{p-1}\partial_t\Psi_jd\mu\notag\\
&\lesssim&\beta_*^2Q^{p}+\beta_*^2\int_{s_j^*+\frac{\tau}{2}}^{s_j^*+\frac{\tau_j}{2}}\Psi_j^{3p-1}dt\notag\\
&\lesssim&\beta_*^2Q^{p}
\end{eqnarray}
and by the oddness of $w_{l}$ on $\mathbb{S}^{d-1}$, we have
\begin{eqnarray}\label{eqn3049}
\left|\varsigma_{\mathcal{N},led,j,l}\right|\lesssim\left|\left\langle\Psi_j^{p-3}\mathcal{V}_j^3\chi_{\mathcal{B}_j}+\mathcal{U}^{p-3}\mathcal{V}^{3}\chi_{\mathcal{C}\backslash\left(\cup_{i=1}^{\nu}\mathcal{B}_i\right)}, w_{j,l}\right\rangle_{L^2}\right|\lesssim\beta_*^3
\end{eqnarray}
for all $1\leq j\leq \nu$ and all $1\leq l\leq d$.
\end{proof}

\vskip0.12in

By Lemmas~\ref{lem0006}, \ref{lem0007}, \ref{lem0008} and \ref{lem0009}, we have picked up all possible leading order terms of $\rho_*$ in terms of $Q$, $\beta_*$ and $\sum_{j=1}^{\nu}\left|(\alpha_j^*)^{p-1}-1\right|$.   Now, let
\begin{eqnarray*}
\rho_{**,0}=\rho_*-\gamma_{1,ex}-\overline{\gamma}_{2,ex}-\sum_{j=1}^{\nu}\gamma_{1,j}-\gamma_{\mathcal{N},led,*}.
\end{eqnarray*}
Since $\gamma_{1,ex}$, $\gamma_{1,j}$ and $\gamma_{\mathcal{N},led,*}$ may have projections on span$\{\Psi_{j}\}$, we further decompose $\rho_{**,0}=\sum_{j=1}^{\nu}\alpha_{j,0}^{**}\Psi_{j}+\rho_{**,0}^{\perp}$,
where $\{\alpha_{j,0}^{**}\}$ is chosen such that
\begin{eqnarray*}
\left\langle \rho_{**,0}^{\perp}, \Psi_{j}\right\rangle=\left\langle \rho_{**,0}^{\perp}, \partial_t\Psi_{j}\right\rangle=\left\langle \rho_{**,0}^{\perp}, w_{j,l}\right\rangle=0
\end{eqnarray*}
for all $1\leq j\leq \nu$ and all $1\leq l\leq d$.  On the other hand, by the orthogonal conditions of $\rho_*$ given in \eqref{eq0014} and Lemmas~\ref{lem0006}, \ref{lem0007} and \ref{lem0008}, we have
\begin{eqnarray}\label{eqn0052}
\sum_{l=1}^{\nu}\left\langle \Psi_l, \Psi_{j}\right\rangle\alpha_{l,0}^{**}=-\left\langle \gamma_{1,ex}, \Psi_{j}\right\rangle-\sum_{i=1}^{\nu}\left\langle\gamma_{1,i}, \Psi_{j}\right\rangle-\left\langle \gamma_{\mathcal{N},led,*}, \Psi_{j}\right\rangle
\end{eqnarray}
for all $1\leq j\leq\nu$.  Since $\Psi$ is a solution of \eqref{eq0006}, by Lemmas~\ref{lem0005}, \ref{lem0006}, \ref{lem0007} and \ref{lem0009},
\begin{eqnarray*}
\left\{\aligned
&\left|\left\langle \gamma_{1,ex}, \Psi_{j}\right\rangle\right|=\left|\left\langle \gamma_{1,ex}, \Psi_{j}^{p}\right\rangle_{L^2}\right|\lesssim Q,\\
&\left|\left\langle \sum_{i=1}^{\nu}\gamma_{1,i}, \Psi_{j}\right\rangle\right|=\left|\left\langle \sum_{i=1}^{\nu}\gamma_{1,i}, \Psi_{j}^{p}\right\rangle_{L^2}\right|\lesssim\sum_{i=1}^{\nu}\left|\left(\alpha_i^{*}\right)^{p-1}-1\right|,\\
&\left|\left\langle \gamma_{\mathcal{N},led,*}, \Psi_{j}\right\rangle\right|=\left|\left\langle \gamma_{\mathcal{N},led,*}, \Psi_{j}^{p}\right\rangle_{L^2}\right|\lesssim\beta_*^2.
\endaligned\right.
\end{eqnarray*}
Intersecting these estimates into \eqref{eqn0052}, we have
\begin{eqnarray}\label{eqn0068}
\sum_{j=1}^{\nu}\left|\alpha_{j,0}^{**}\right|\lesssim Q+\sum_{j=1}^{\nu}\left|\left(\alpha_j^{*}\right)^{p-1}-1\right|+\beta_*^2.
\end{eqnarray}
Moreover, by \eqref{eq0014}, \eqref{eqn0011}, \eqref{eqn0012} and \eqref{eqn0015}, $\rho_{**,0}^{\perp}$ satisfies
\begin{eqnarray}\label{eqn5114}
\left\{\aligned&\mathcal{L}(\rho_{**,0}^{\perp})=f+\mathcal{R}_{new},\quad \text{in }\mathcal{C},\\
&\langle \Psi_{j}, \rho_{**}^{\perp}\rangle=\langle \partial_t\Psi_{j}, \rho_{**,0}^{\perp}\rangle=\langle w_{j,l}, \rho_{**,0}^{\perp}\rangle=0\quad\text{for }1\leq j\leq\nu\text{ and }1\leq l\leq d,\endaligned\right.
\end{eqnarray}
where by \eqref{eq0014} and Lemmas~\ref{lem0004}, \ref{lem0006} \ref{lem0007}, \ref{lem0008} and \ref{lem0009},
\begin{eqnarray}
\mathcal{R}_{new}&=&\sum_{i=1}^{\nu}\Psi_{i}^{p-1}\left((c_{1,ex,i}+c_{1,j,i}+c_{3,led,i})\partial_t\Psi_{i}+\sum_{l=1}^{d}(\varsigma_{2,ex,i,l}+\varsigma_{3,led,i,l})w_{i,l}\right)\notag\\
&&+\sum_{j=1}^{\nu}\left(2A_p\left(\Psi_j^*\right)^{p-2}\mathcal{V}_j+3B_p\left(\Psi_j^*\right)^{p-3}\mathcal{V}_j^2\right)\rho_*\chi_{\mathcal{B}_j}+\sum_{j=1}^{\nu}\mathcal{R}_{2,j}+\mathcal{N}_{rem}\notag\\
&&+\sum_{j=1}^{\nu}2A_p\left(\mathcal{U}^{p-2}\mathcal{V}-\sum_{j=1}^{\nu}\left(\Psi_j^*\right)^{p-2}\mathcal{V}_j\right)\rho_*\chi_{\mathcal{B}_j}\notag\\
&&+\sum_{j=1}^{\nu}\mathcal{O}\left(\beta_*^2\mathcal{U}_j\Psi_{j}^{2p-3}\left(\Psi_j+\rho_*\right)\right)\chi_{\mathcal{B}_j}+2A_p\mathcal{U}^{p-2}\mathcal{V}\rho_*\chi_{\mathcal{C}\backslash\cup_{j=1}^{\nu}\mathcal{B}_j}\notag\\
&&+\sum_{j=1}^{\nu}\alpha_{j,0}^{**}\left(\mathcal{U}^{p-1}-\Psi_j^{p-1}\right)\Psi_{j},\label{eqn5061}
\end{eqnarray}
where $A_p$ and $B_p$ are given in Lemma~\ref{lem0003}.
Even though we have picked up all possible leading order terms of $\rho_*$ in terms of $Q$, $\beta_*$ and $\sum_{j=1}^{\nu}\left|(\alpha_j^*)^{p-1}-1\right|$, the data $\mathcal{R}_{new}$, given by \eqref{eqn5061}, is not good enough to control $\rho_{**,0}^{\perp}$ in a desired size.  This is mainly because the optimal Bianchi-Egnell stability of the CKN inequality for $d\geq2$, $a<0$ and $b=b_{FS}(a)$ is quartic, as shown in \cite[Theorem~1]{FP2024}, which implies that we only have the opportunity to control the terms of order $\beta_*^4$ from above.  Thus, we need to ensure that the (possible) singular part should be smaller than $\beta_*^4$.  Keep this in mind, we need to eliminate the lower order terms (compared to the $\beta_*^4$ terms) in the data $\mathcal{R}_{new}$.  For this purpose, we need the following.
\begin{lemma}\label{lem0010}
Let $d\geq2$, $a<0$ and $b=b_{FS}(a)$.  Then we can decompose
\begin{eqnarray*}
\gamma_{\mathcal{N},led,*}=\gamma_{\mathcal{N},led,j}+\gamma_{\mathcal{N},led,rem,j,*},
\end{eqnarray*}
where $\gamma_{\mathcal{N},led,j}$ is even in terms of $t-s_j^*$ and satisfies the equation
\begin{eqnarray*}
\left\{\aligned&\mathcal{L}(\gamma_{\mathcal{N},led,j})=\mathcal{N}_{led,j}-\sum_{i=1}^{\nu}\Psi_{i}^{p-1}\left(c_{\mathcal{N},led,j,i}\partial_t\Psi_{i}+\sum_{l=1}^{d}\varsigma_{\mathcal{N},led,j,i,l}w_{i,l}\right),\quad \text{in }\mathcal{C},\\
&\langle \partial_t\Psi_{i}, \gamma_{\mathcal{N},led,j}\rangle=\langle w_{i,l}, \gamma_{\mathcal{N},led,j}\rangle=0\quad\text{for all }1\leq i\leq\nu\text{ and all }1\leq l\leq d,\endaligned\right.
\end{eqnarray*}
with $\mathcal{N}_{led,j}=\left(\Psi_j^*\right)^{p-3}\mathcal{V}_j^2\left(\Psi_j^*+\mathcal{V}_j\right)$ and
\begin{eqnarray*}
\beta_*^2\gtrsim\sup_{(t,\theta)\in\mathcal{C}}\frac{|\gamma_{\mathcal{N},led,j}|}{\Psi_{j}^{1-\sigma}(t)}+\sup_{(t,\theta)\in\mathcal{C}}\frac{|\gamma_{\mathcal{N},led,rem,j,*}|}{\sum_{i=1;i\not=j}^{\nu}\Psi_{i}^{1-\sigma}(t)}.
\end{eqnarray*}
Moreover, we can decompose $\gamma_{\mathcal{N},led,j}=\gamma_{\mathcal{N},led,j,*}+\gamma_{\mathcal{N},led,j,**}$ with $\gamma_{\mathcal{N},led,j,*}$ being even on $\mathbb{S}^{d-1}$, $\gamma_{\mathcal{N},led,j,**}$ being odd on $\mathbb{S}^{d-1}$ and
\begin{eqnarray*}
1\gtrsim\sup_{(t,\theta)\in\mathcal{C}}\frac{|\gamma_{\mathcal{N},led,j,*}|}{\beta_*^2\Psi_{j}^{1-\sigma}(t)}+\sup_{(t,\theta)\in\mathcal{C}}\frac{|\gamma_{\mathcal{N},led,j,**}|}{\beta_*^3\Psi_{i}^{1-\sigma}(t)},
\end{eqnarray*}
where $\sigma>0$ is chosen to satisfy \eqref{eqnewnew3345}.
\end{lemma}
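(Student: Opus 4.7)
The overall strategy is to exploit the near-disjoint localization of the source $\mathcal{N}_{led}$ around the individual bubbles together with the parity symmetries in $t-s_j^*$ and on $\mathbb{S}^{d-1}$. Since $\mathcal{V}_j = \Psi_j^{(p+1)/2}\sum_l \beta_{j,l}^*\theta_l$ by \eqref{eqn0040} and $|\mathcal{V}_j|\lesssim \beta_*\Psi_j^{(p+1)/2}$ by \eqref{eqn0191}, each piece $\left(\Psi_j^*\right)^{p-3}\mathcal{V}_j^2\left(\Psi_j^*+\mathcal{V}_j\right)$ of $\mathcal{N}_{led}$ is essentially supported in $\mathcal{B}_j$ and of size $\mathcal{O}(\beta_*^2\Psi_j^{2p-1})$, which is the same mechanism that made Lemmas~\ref{lem0007} and \ref{lem0009} work. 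The plan is first to build a bubble-by-bubble solution $\gamma_{\mathcal{N},led,j}$ using this localization, then to obtain $\gamma_{\mathcal{N},led,rem,j,*}$ as the residual, and finally to split $\gamma_{\mathcal{N},led,j}$ according to the parity of $\mathcal{V}_j^2$ and $\mathcal{V}_j^3$ on the sphere.

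For the first decomposition, I would define $\gamma_{\mathcal{N},led,j}$ as the unique solution of the stated linear equation with source $\mathcal{N}_{led,j}$, enforcing the orthogonality constraints via the Lagrange multipliers $\{c_{\mathcal{N},led,j,i}\},\{\varsigma_{\mathcal{N},led,j,i,l}\}$. Existence, uniqueness and the pointwise bound would follow from exactly the same cut-off plus blow-up argument used for Lemma~\ref{lem0009} (itself modelled on \cite[Proposition~3.1]{LWW2024}): one verifies that $\Psi_j^{1-\sigma}$ is a supersolution of the localized linear operator $\mathcal{L}_j$ in $\mathcal{B}_j$ (thanks to \eqref{eqnewnew3345}) and that $|\mathcal{N}_{led,j}|\lesssim \beta_*^2 \Psi_j^{2p-1}$ is integrable against this weight, giving $|\gamma_{\mathcal{N},led,j}|\lesssim \beta_*^2\Psi_j^{1-\sigma}$. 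Evenness of $\gamma_{\mathcal{N},led,j}$ in $t-s_j^*$ is then a consequence of uniqueness, since $\Psi_j$ (hence $\mathcal{V}_j$) is even in $t-s_j^*$, $\mathcal{L}$ commutes with the reflection $t\mapsto 2s_j^*-t$, and this reflection maps $\partial_t\Psi_i,\,w_{i,l}$ into their span.

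Setting $\gamma_{\mathcal{N},led,rem,j,*}:=\gamma_{\mathcal{N},led,*}-\gamma_{\mathcal{N},led,j}$, this difference solves a linear equation whose source is $\mathcal{N}_{led}-\mathcal{N}_{led,j}$ up to a combination of admissible Lagrange multipliers. Comparing the definitions of $\mathcal{N}_{led}$ in \eqref{eqn3045} and $\mathcal{N}_{led,j}$, this residual source has \emph{no} contribution from $\mathcal{B}_j$: the piece on $\mathcal{B}_j$ cancels exactly, while the pieces on $\mathcal{B}_i$, $i\neq j$, and on $\mathcal{C}\setminus\bigcup_i\mathcal{B}_i$ are exactly those controlled by $\beta_*^2\Psi_i^{2p-1}$ and $\beta_*^2\mathcal{U}^{2p-1}$ respectively. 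Applying the blow-up argument once more with supersolution $\sum_{i\neq j}\Psi_i^{1-\sigma}$ yields the claimed bound $|\gamma_{\mathcal{N},led,rem,j,*}|\lesssim \beta_*^2\sum_{i\neq j}\Psi_i^{1-\sigma}$.

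For the even/odd split on $\mathbb{S}^{d-1}$, I would decompose the source as $\mathcal{N}_{led,j}=(\Psi_j^*)^{p-2}\mathcal{V}_j^2+(\Psi_j^*)^{p-3}\mathcal{V}_j^3$ (with the appropriate $A_p,B_p$ constants absorbed), and note that $\mathcal{V}_j^2=\Psi_j^{p+1}\bigl(\sum_l\beta_{j,l}^*\theta_l\bigr)^2$ is even on $\mathbb{S}^{d-1}$ of size $\mathcal{O}(\beta_*^2)$, whereas $\mathcal{V}_j^3=\Psi_j^{3(p+1)/2}\bigl(\sum_l\beta_{j,l}^*\theta_l\bigr)^3$ is odd on $\mathbb{S}^{d-1}$ of size $\mathcal{O}(\beta_*^3)$. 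Defining $\gamma_{\mathcal{N},led,j,*}$ and $\gamma_{\mathcal{N},led,j,**}$ as the solutions generated by these two sources (with their own orthogonality-preserving Lagrange multipliers), linearity gives $\gamma_{\mathcal{N},led,j}=\gamma_{\mathcal{N},led,j,*}+\gamma_{\mathcal{N},led,j,**}$, uniqueness of each subproblem forces the parities on $\mathbb{S}^{d-1}$ (because $\mathcal{L}$, the constraint spaces, and the orthogonality conditions are all invariant under $\theta\mapsto-\theta$), and the supersolution estimates against $\Psi_j^{1-\sigma}$ give respectively $|\gamma_{\mathcal{N},led,j,*}|\lesssim \beta_*^2\Psi_j^{1-\sigma}$ and $|\gamma_{\mathcal{N},led,j,**}|\lesssim \beta_*^3\Psi_j^{1-\sigma}$. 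The main obstacle I anticipate is bookkeeping: one must verify that the Lagrange multipliers introduced in these subproblems are compatible with those in \eqref{eqn0015}, so that the two decompositions indeed assemble back to $\gamma_{\mathcal{N},led,*}$; this is handled by observing that the odd kernels $\partial_t\Psi_i$ and the nontrivial kernels $w_{i,l}$ are automatically orthogonal to the even-in-$\theta$ source and the even-in-$(t-s_j^*)$ source respectively, so the decompositions of the multipliers into $*$ and $**$ parts are forced.
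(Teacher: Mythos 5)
Your proposal follows essentially the same route as the paper's (very terse) proof: existence and the $\Psi_j^{1-\sigma}$-weighted bounds come from the same cut-off and blow-up argument as Lemma~\ref{lem0009}, evenness in $t-s_j^*$ is deduced from uniqueness together with evenness of the source $\mathcal{N}_{led,j}$, and the even/odd split on $\mathbb{S}^{d-1}$ comes from decomposing the source into the $(\Psi_j^*)^{p-2}\mathcal{V}_j^2$ part (even, size $\mathcal{O}(\beta_*^2)$) and the $(\Psi_j^*)^{p-3}\mathcal{V}_j^3$ part (odd, size $\mathcal{O}(\beta_*^3)$). You additionally spell out the residual $\gamma_{\mathcal{N},led,rem,j,*}$ bound that the paper omits; the one caveat is that your explicit claim that $\mathcal{L}$ and the constraint span $\{\partial_t\Psi_i,w_{i,l}\}_{i}$ are preserved under the reflection $t\mapsto 2s_j^*-t$ is not literally true (since $\mathcal{U}^{p-1}$ and the $i\neq j$ kernels are asymmetric about $s_j^*$), but this is the same informality the paper itself accepts when it invokes "by uniqueness".
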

\begin{proof}
The proof is similar to that of Lemma~\ref{lem0009} so we omit it.  Moreover, since $\mathcal{N}_{led,j}$ is even in terms of $t-s_j^*$, by uniqueness, we also have that $\gamma_{\mathcal{N},led,j}$ is even in terms of $t-s_j^*$.  On the other hand, the decomposition of $\gamma_{\mathcal{N},led,j}$ is generated by the data $\mathcal{N}_{led,j}=\left(\Psi_j^*\right)^{p-2}\mathcal{V}_j^2+\left(\Psi_j^*\right)^{p-3}\mathcal{V}_j^3$.  The first part $\gamma_{\mathcal{N},led,j,*}$ is obtained by the data $\left(\Psi_j^*\right)^{p-2}\mathcal{V}_j^2$ which is even on $\mathbb{S}^{d-1}$ while, the second part $\gamma_{\mathcal{N},led,j,**}$ is obtained by the data $\left(\Psi_j^*\right)^{p-3}\mathcal{V}_j^3$ which is odd on $\mathbb{S}^{d-1}$.
\end{proof}

\vskip0.12in

By Lemma~\ref{lem0010}, we can consider the following equation:
\begin{eqnarray}\label{eqn4015}
\left\{\aligned&\mathcal{L}(\rho_{**,1,j}^{\perp})=\mathcal{R}_{new,*,j}+\sum_{i=1}^{\nu}\Psi_{i}^{p-1}\left(c_{new,*,j,i}\partial_t\Psi_{i}+\sum_{l=1}^{d}\varsigma_{new,*,j,i,l}w_{i,l}\right),\quad \text{in }\mathcal{C},\\
&\langle \partial_t\Psi_{j}, \rho_{**,1}^{\perp}\rangle=\langle w_{j,l}, \rho_{**,1}^{\perp}\rangle=0\quad\text{for }1\leq j\leq\nu\text{ and }1\leq l\leq d,\endaligned\right.
\end{eqnarray}
where $\mathcal{R}_{new,*,j}=2A_p\left(\Psi_j^*\right)^{p-2}\mathcal{V}_j\gamma_{\mathcal{N},led,j}$ with $A_p$ given in Lemma~\ref{lem0003}.
\begin{lemma}\label{lem0011}
Let $d\geq2$, $a<0$ and $b=b_{FS}(a)$.  Then \eqref{eqn4015} is uniquely solvable.  Moreover, the solution $\rho_{**,1,j}^{\perp}$ is even in terms of $t-s_j^*$ and satisfies
\begin{eqnarray*}
\beta_*^3\gtrsim\sup_{(t,\theta)\in\mathcal{C}}\frac{|\rho_{**,1,j}^{\perp}|}{\Psi_{j}^{1-\sigma}(t)},
\end{eqnarray*}
where the Lagrange multipliers $\{c_{new,*,j,i}\}$ and $\{\varsigma_{new,*,j,i,l}\}$ are chosen such that the right hand side of the equation~\eqref{eqn4015} is orthogonal to $\{\Psi_j^{p-1}\partial_t\Psi_{j}\}$ and $\{\Psi_j^{p-1}w_{j,l}\}$ in $L^2(\mathcal{C})$ and $\sigma>0$ is chosen to satisfy \eqref{eqnewnew3345}.
The Lagrange multipliers also satisfy $|c_{new,*,j,i}|\lesssim\beta_*^3 Q^p$ and $\left|\varsigma_{new,*,j,i,l}\right|\lesssim\beta_*^3$
for all $1\leq i,j\leq \nu$ and all $1\leq l\leq d$.
\end{lemma}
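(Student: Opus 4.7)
My plan is to follow the same template used for Lemmas~\ref{lem0006}, \ref{lem0007}, \ref{lem0008}, \ref{lem0009}, and \ref{lem0010}, adapted to the new source term $\mathcal{R}_{new,*,j}=2A_p(\Psi_j^*)^{p-2}\mathcal{V}_j\gamma_{\mathcal{N},led,j}$. First I would record the pointwise size of the data: using $|\mathcal{V}_j|\lesssim\beta_*\Psi_j^{(p+1)/2}$ from \eqref{eqn0191} (localized to $\mathcal{B}_j$) together with Lemma~\ref{lem0010}'s bound $|\gamma_{\mathcal{N},led,j}|\lesssim\beta_*^2\Psi_j^{1-\sigma}$, we get $|\mathcal{R}_{new,*,j}|\lesssim\beta_*^3\Psi_j^{(3p-1)/2-\sigma}$, which is strongly concentrated near $t=s_j^*$ and in particular decays with exponent strictly greater than $1-\sigma$. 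Hence $\Psi_j^{1-\sigma}(t)$ is a supersolution of $\mathcal{L}(\rho)=0$ on $\mathcal{B}_j$ that dominates the data after we divide by $\beta_*^3$.

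Next, existence, uniqueness, and the weighted sup bound would be obtained by the same cut-off/blow-up argument used for Lemma~\ref{lem0009} (and ultimately \cite[Proposition~3.1]{LWW2024}): assuming contradiction, rescale a sequence of solutions so that the weighted norm equals $1$, then pass to the limit, using Lemma~\ref{lem0001} to identify the limiting kernel, and finally use the orthogonality $\langle\partial_t\Psi_j,\rho_{**,1,j}^{\perp}\rangle=\langle w_{j,l},\rho_{**,1,j}^{\perp}\rangle=0$ to force the limit to be zero, contradicting the normalization. The Lagrange multipliers $\{c_{new,*,j,i}\}$ and $\{\varsigma_{new,*,j,i,l}\}$ are chosen so that the right-hand side is orthogonal to the kernel, and the Fredholm alternative then provides solvability.

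For the parity statement, observe that $\Psi_j^*$ and $\mathcal{V}_j=\Psi_j^{(p+1)/2}\sum_l\beta_{j,l}^*\theta_l$ are both even in $t-s_j^*$, and by Lemma~\ref{lem0010} so is $\gamma_{\mathcal{N},led,j}$; thus $\mathcal{R}_{new,*,j}$ is even in $t-s_j^*$. Uniqueness of the solution in the class satisfying the orthogonality conditions then forces $\rho_{**,1,j}^{\perp}$ to be even in $t-s_j^*$ as well, since the reflected function solves the same problem.

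Finally, for the Lagrange multiplier estimates, I would invert the near-diagonal Gram matrices as in Lemmas~\ref{lem0006}--\ref{lem0009}. For $c_{new,*,j,i}$, pairing against $\Psi_i^{p-1}\partial_t\Psi_i$: when $i=j$ the integral vanishes by the evenness of $\mathcal{R}_{new,*,j}$ in $t-s_j^*$ against the odd factor $\partial_t\Psi_j$, and when $i\neq j$ we apply Lemma~\ref{lem0005} to the concentrated data and obtain the decay $\beta_*^3 Q^p$, exactly as in the computation \eqref{eqn1049}. For $\varsigma_{new,*,j,i,l}$, pairing against $\Psi_i^{p-1}w_{i,l}$ and using $|\mathcal{R}_{new,*,j}|\lesssim\beta_*^3\Psi_j^{(3p-1)/2-\sigma}$ yields the bound $\beta_*^3$, matching \eqref{eqn3049}. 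The main obstacle I anticipate is technical rather than conceptual: keeping the exponent bookkeeping consistent with the weight $\Psi_j^{1-\sigma}$ so that the supersolution comparison really applies on all of $\mathcal{B}_j$ and so that the cross-bubble contributions decay as claimed; the choice of $\sigma$ via \eqref{eqnewnew3345} is exactly what makes this safe.
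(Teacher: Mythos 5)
Your proposal is correct and follows essentially the same route as the paper: the paper's proof simply says the argument is analogous to Lemma~\ref{lem0009}, invokes the evenness of $\mathcal{R}_{new,*,j}$ in $t-s_j^*$ (via Lemma~\ref{lem0010}) to deduce the parity of $\rho_{**,1,j}^{\perp}$, and estimates the multipliers as in \eqref{eqn1049}--\eqref{eqn3049}. Your expansion—pointwise size $\beta_*^3\Psi_j^{(3p-1)/2-\sigma}$ for the data, supersolution comparison against $\Psi_j^{1-\sigma}$, the blow-up/orthogonality argument for solvability, evenness of the data forcing the $i=j$ inner product with $\Psi_j^{p-1}\partial_t\Psi_j$ to vanish, and Gram-matrix inversion for the multipliers—is exactly the content the paper compresses into that one line.
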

\begin{proof}
The proof is similar to that of Lemma~\ref{lem0009} so we omit it.  Moreover, since $\mathcal{R}_{new,*,j}$ is even in terms of $t-s_j^*$ by Lemma~\ref{lem0010}, by uniqueness, we also have that $\rho_{**,1,j}^{\perp}$ is even in terms of $t-s_j^*$.  Moreover, similar to that of \eqref{eqn1049} and \eqref{eqn3049}, by the oddness of $\partial_t\Psi$ in $\bbr$, we also have
\begin{eqnarray*}
\sum_{i=1}^{\nu}|c_{new,*,j,i}|\lesssim\beta_*^3Q^p\quad\text{and}\quad\sum_{i=1}^{\nu}|\varsigma_{new,*,j,i,l}|\lesssim\beta_*^3
\end{eqnarray*}
for all $1\leq j\leq\nu$.
\end{proof}

\vskip0.12in

Let
\begin{eqnarray*}
\rho_{**,1}^{\perp}=\sum_{j=1}^{\nu}\rho_{**,1,j}^{\perp}\quad\text{and}\quad \mathcal{R}_{new,*}=\sum_{j=1}^{\nu}\mathcal{R}_{new,*,j}.
\end{eqnarray*}
Clearly, $\rho_{**,1}^{\perp}$ may also have projections on $span\{\Psi_l\}$.  Thus, as above, we decompose
\begin{eqnarray*}
\rho_{**,1}^{\perp}=\sum_{l=1}^{\nu}\alpha_{l,1}^{**}\Psi_l+\rho_{**,2}^{\perp},
\end{eqnarray*}
where $\{\alpha_{l,1}^{**}\}$ is chosen such that
\begin{eqnarray}\label{eqn2004}
\left\langle \rho_{**,2}^{\perp}, \Psi_{j}\right\rangle=\left\langle \rho_{**,2}^{\perp}, \partial_t\Psi_{j}\right\rangle=\left\langle \rho_{**,2}^{\perp}, w_{j,l}\right\rangle=0.
\end{eqnarray}
Moreover, by \eqref{eqn4015}, we know that $\rho_{**,2}^{\perp}$ satisfies the following equation:
\begin{eqnarray}\label{eqn6114}
\left\{\aligned&\mathcal{L}(\rho_{**,2}^{\perp})=\mathcal{R}_{new,**}+\sum_{i=1}^{\nu}\Psi_{i}^{p-1}\left(c_{new,*,i}\partial_t\Psi_{i}+\sum_{l=1}^{d}\varsigma_{new,*,i,l}w_{i,l}\right),\quad \text{in }\mathcal{C},\\
&\left\langle \rho_{**,2}^{\perp}, \Psi_{j}\right\rangle=\langle \partial_t\Psi_{j}, \rho_{**,2}^{\perp}\rangle=\langle w_{j,l}, \rho_{**,2}^{\perp}\rangle=0\quad\text{for }1\leq j\leq\nu\text{ and }1\leq l\leq d,\endaligned\right.
\end{eqnarray}
where
\begin{eqnarray}\label{eqn6061}
\mathcal{R}_{new,**}=\sum_{j=1}^{\nu}2A_p\left(\Psi_j^*\right)^{p-2}\mathcal{V}_j\gamma_{\mathcal{N},led,j}+\sum_{l=1}^{\nu}p\alpha_{l,1}^{**}\left(\mathcal{U}^{p-1}-\Psi_l^{p-1}\right)\Psi_l,
\end{eqnarray}
with $A_p$ given by Lemma~\ref{lem0003}.
\begin{lemma}\label{lem0012}
Let $d\geq2$, $a<0$ and $b=b_{FS}(a)$.  Then we have
\begin{eqnarray}\label{eqn5068}
\sum_{j=1}^{\nu}\left|\alpha_{j,1}^{**}\right|\lesssim \beta_*^4+\beta_*^3Q
\end{eqnarray}
and
\begin{eqnarray}\label{eqn9079}
\beta_*^4+\beta_*^3Q\gtrsim\left\{\aligned
&\|\rho_{**,2}^{\perp}\|_{\sharp}+\sup_{(t,\theta)\in\mathcal{C}}\frac{|\rho_{**,2}^{\perp}|}{\sum_{j=1}^{\nu}\Psi_{j}^{1-\sigma}(t)},\quad p\geq3,\\
&\|\rho_{**,2}^{\perp}\|_{\natural,1,*}+\sup_{(t,\theta)\in\mathcal{C}}\frac{|\rho_{**,2}^{\perp}|}{\sum_{j=1}^{\nu}\Psi_{j}^{1-\sigma}(t)},\quad 1<p<3,
\endaligned\right.
\end{eqnarray}
where $\|\cdot\|_{\sharp}$ and $\|\cdot\|_{\natural,1,*}$ are given in Lemma~\ref{lem0006} and $\sigma>0$ is chosen to satisfy \eqref{eqnewnew3345}.
\end{lemma}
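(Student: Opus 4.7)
The plan is to establish \eqref{eqn5068} and \eqref{eqn9079} together via a projection--plus--inversion scheme: first pin down the multipliers $\alpha_{l,1}^{**}$ by testing against $\Psi_{j}$ and exploiting the parity cancellations supplied by Lemma~\ref{lem0010}, then feed the resulting estimate into the weighted linear theory of Lemmas~\ref{lem0006} and~\ref{lem0009} to control $\rho_{**,2}^{\perp}$.

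For \eqref{eqn5068}, pair $\rho_{**,1}^{\perp}=\sum_{l}\alpha_{l,1}^{**}\Psi_{l}+\rho_{**,2}^{\perp}$ with $\Psi_{j}$ and use the orthogonality $\langle\rho_{**,2}^{\perp},\Psi_{j}\rangle=0$ from \eqref{eqn2004} to obtain a linear system whose matrix $[\langle\Psi_{l},\Psi_{j}\rangle]$ is diagonally dominant by \eqref{eqnewnew1005}. Multiplying \eqref{eqn4015} by $\Psi_{j}$ and integrating by parts, using \eqref{eq0006} to replace $(-\partial_{t}^{2}-\Delta_{\theta}+\Lambda_{FS})\Psi_{j}$ by $\Psi_{j}^{p}$, one arrives at
\begin{equation*}
(1-p)\langle\Psi_{j},\rho_{**,1,i}^{\perp}\rangle = \int \Psi_{j}\mathcal{R}_{new,*,i}\,d\mu + p\int(\mathcal{U}^{p-1}-\Psi_{j}^{p-1})\Psi_{j}\rho_{**,1,i}^{\perp}\,d\mu + (\text{Lagrange}).
\end{equation*}
Writing $\mathcal{R}_{new,*,i}=2A_{p}(\Psi_{i}^{*})^{p-2}\mathcal{V}_{i}\gamma_{\mathcal{N},led,i}$ and decomposing $\gamma_{\mathcal{N},led,i}=\gamma_{\mathcal{N},led,i,*}+\gamma_{\mathcal{N},led,i,**}$ by Lemma~\ref{lem0010}, the $\mathbb{S}^{d-1}$-oddness of $\mathcal{V}_{i}$ combined with the $\mathbb{S}^{d-1}$-radiality of $\Psi_{j}$ forces the $\mathbb{S}^{d-1}$-odd $\beta_{*}^{3}$-piece of the data to integrate to zero; only the $\mathbb{S}^{d-1}$-even $\beta_{*}^{4}$-piece contributes. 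The interaction term is $O(\beta_{*}^{3}Q)$ by Lemma~\ref{lem0005} and the pointwise bound on $\rho_{**,1,i}^{\perp}$ from Lemma~\ref{lem0011}; the Lagrange contributions vanish or are negligible by the $t$-parity of $\partial_{t}\Psi_{i}$ and the $\mathbb{S}^{d-1}$-oddness of $w_{i,m}$, together with the smallness of $c_{new,*,i,m}$ and $\varsigma_{new,*,i,m,l}$ recorded in Lemma~\ref{lem0011}. Off-diagonal terms ($i\neq j$) gain an additional $Q$-factor from bubble separation, so the right-hand side is uniformly $O(\beta_{*}^{4}+\beta_{*}^{3}Q)$ and \eqref{eqn5068} follows.

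For \eqref{eqn9079}, apply the cut-off/blow-up inversion theory used in Lemmas~\ref{lem0006} and~\ref{lem0009} to equation \eqref{eqn6114}, reducing it to bounding $\mathcal{R}_{new,**}$ from \eqref{eqn6061} in $\|\cdot\|_{\sharp}$ (for $p\geq3$) or $\|\cdot\|_{\natural,1,*}$ (for $1<p<3$), together with the $\sum_{j}\Psi_{j}^{1-\sigma}$-weighted uniform norm. The $\alpha_{l,1}^{**}(\mathcal{U}^{p-1}-\Psi_{l}^{p-1})\Psi_{l}$ piece is controlled by \eqref{eqn5068} and Lemma~\ref{lem0005}. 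For the leading bilinear piece $\sum_{j}2A_{p}(\Psi_{j}^{*})^{p-2}\mathcal{V}_{j}\gamma_{\mathcal{N},led,j}$, the same parity splitting from Lemma~\ref{lem0010} shows that its $\mathbb{S}^{d-1}$-odd $\beta_{*}^{3}$-content is exactly the piece that the $\Psi_{l}$-projection absorbs in the passage $\rho_{**,1}^{\perp}\rightsquigarrow\rho_{**,2}^{\perp}$, leaving effective data of weighted size $\beta_{*}^{4}+\beta_{*}^{3}Q$, which the inversion theory then propagates into \eqref{eqn9079}.

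The principal obstacle is precisely the parity bookkeeping: a crude weighted-norm estimate on $\mathcal{R}_{new,*,j}$ yields only $\beta_{*}^{3}$, and the sharper $\beta_{*}^{4}+\beta_{*}^{3}Q$ bound is recovered only by simultaneously tracking the $\mathbb{S}^{d-1}$-parity of $\mathcal{V}_{j}$ against the even/odd decomposition of $\gamma_{\mathcal{N},led,j}$ and the $t$-parity of both $\rho_{**,1,j}^{\perp}$ and $\Psi_{j}$. Ensuring that every $\beta_{*}^{3}$-scale contribution lies in $\mathrm{span}\{\Psi_{l}\}$ and is therefore cancelled by the $\alpha_{l,1}^{**}\Psi_{l}$ subtraction is the technical heart of the argument, and is precisely what motivates the Lagrange-multiplier reorganisations carried out throughout Section~4.
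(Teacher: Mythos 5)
Your strategy aligns with the paper's: both derive the linear system for $\{\alpha_{l,1}^{**}\}$ by pairing \eqref{eqn4015} against $\Psi_j$, both exploit the $\mathbb{S}^{d-1}$-parity splitting of $\gamma_{\mathcal{N},led,j}$ from Lemma~\ref{lem0010}, and both finish by inverting the weighted linear theory for $\rho_{**,2}^{\perp}$. A mild difference appears in how the interaction term is handled in the proof of \eqref{eqn5068}: the paper first rewrites $\rho_{**,1}^{\perp}=\sum_l\alpha_{l,1}^{**}\Psi_l+\rho_{**,2}^{\perp}$ inside the term $p\langle\mathcal{U}^{p-1}\rho_{**,1}^{\perp},\Psi_j\rangle_{L^2}$, which introduces a coupling to $\|\rho_{**,2}^{\perp}\|$ and forces a bootstrap (the paper multiplies \eqref{eqn6114} by $\rho_{**,2}^{\perp}$ and integrates to get $\|\rho_{**,2}^{\perp}\|\lesssim\beta_*^4+\beta_*^3Q$ before closing the linear system), whereas you keep the term as $p\int(\mathcal{U}^{p-1}-\Psi_j^{p-1})\Psi_j\rho_{**,1,i}^{\perp}$ and bound it directly by the pointwise estimate $|\rho_{**,1,i}^{\perp}|\lesssim\beta_*^3\Psi_i^{1-\sigma}$ from Lemma~\ref{lem0011} together with Lemma~\ref{lem0005}. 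That variant is equivalent in substance and arguably a bit cleaner, since it decouples \eqref{eqn5068} from \eqref{eqn9079}.

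However, your explanation of \eqref{eqn9079} has a genuine gap. You write that ``the $\mathbb{S}^{d-1}$-odd $\beta_*^3$-content is exactly the piece that the $\Psi_l$-projection absorbs in the passage $\rho_{**,1}^{\perp}\rightsquigarrow\rho_{**,2}^{\perp}$.'' This cannot be right as stated: $\Psi_l$ is $\mathbb{S}^{d-1}$-radial (spherical harmonic degree $0$), while the $\beta_*^3$-level piece $(\Psi_j^*)^{p-2}\mathcal{V}_j\gamma_{\mathcal{N},led,j,*}$ lives in degree-$1$ and degree-$3$ modes on $\mathbb{S}^{d-1}$. Subtracting a combination of the radial functions $\Psi_l$ cannot cancel any of that content; the orthogonality $\langle\rho_{**,2}^{\perp},\Psi_j\rangle=0$ only removes the radial projection. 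The parity cancellation you correctly invoke kills $\langle\mathcal{R}_{new,*},\Psi_j\rangle_{L^2}$ at the $\beta_*^3$ level (which is what \eqref{eqn5068} needs), but it does not by itself lower the effective size of the \emph{data} feeding the inversion that produces $\rho_{**,2}^{\perp}$. The paper's route to \eqref{eqn9079} is different: it first establishes the $H^1$ a~priori estimate $\|\rho_{**,2}^{\perp}\|\lesssim\beta_*^4+\beta_*^3Q$ from the energy pairing of \eqref{eqn6114}, then decomposes $\mathcal{R}_{new,**}$ from \eqref{eqn6061} into the piece $\sum_j 2A_p(\Psi_j^*)^{p-2}\mathcal{V}_j\gamma_{\mathcal{N},led,j}$ (treated by the inversion scheme of Lemma~\ref{lem0009}) and the piece $\sum_l p\alpha_{l,1}^{**}(\mathcal{U}^{p-1}-\Psi_l^{p-1})\Psi_l$ (treated by the inversion scheme of Lemma~\ref{lem0006}, with \eqref{eqn5068} supplying the $\beta_*^4+\beta_*^3Q$ size of $\alpha_{l,1}^{**}$). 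You should replace the ``absorption'' argument with this two-step inversion, and you also need to supply the energy estimate on $\|\rho_{**,2}^{\perp}\|$ — the pointwise bound of Lemma~\ref{lem0011} on $\rho_{**,1,j}^{\perp}$ alone gives only $\beta_*^3$, not the claimed $\beta_*^4+\beta_*^3Q$, for the $\mathbb{S}^{d-1}$-odd degree-$1$ and degree-$3$ modes, so the content of \eqref{eqn9079} has to come from the structure of equation \eqref{eqn6114} and its orthogonality conditions rather than from the $\Psi_l$-subtraction.
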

\begin{proof}
By \eqref{eqn4015} and \eqref{eqn2004},
\begin{eqnarray}\label{eqn5052}
\sum_{l=1}^{\nu}\left\langle \Psi_l, \Psi_{j}\right\rangle\alpha_{l,1}^{**}&=&\left\langle \mathcal{R}_{new,*}, \Psi_{j}\right\rangle_{L^2}+p\left\langle \mathcal{U}^{p-1}\rho_{**,1}^{\perp},\Psi_j\right\rangle_{L^2}\notag\\
&=&\sum_{l=1}^{\nu}p\alpha_{l,1}^{**}\left\langle \mathcal{U}^{p-1}\Psi_l,\Psi_j\right\rangle_{L^2}+p\left\langle \mathcal{U}^{p-1}\rho_{**,2}^{\perp},\Psi_j\right\rangle_{L^2}\notag\\
&&+\left\langle \mathcal{R}_{new,*}, \Psi_{j}\right\rangle_{L^2}
\end{eqnarray}
for all $1\leq j\leq\nu$.  By the oddness of $\{\mathcal{V}_{j}\}$ on $\mathbb{S}^{d-1}$ and Lemmas~\ref{lem0005} and \ref{lem0010}, we have
\begin{eqnarray*}
\left|\left\langle \mathcal{R}_{new,*}, \Psi_{j}\right\rangle_{L^2}\right|&\lesssim&\beta_*^4+\sum_{i=1;i\not=j}^{\nu}\beta_*^3\left\langle \Psi_{i}^{\frac{3p-1-2\sigma}{2}}, \Psi_{j}\right\rangle_{L^2}+\sum_{l=1}^{\nu}\alpha_{l,1}^{**}\left\langle \Psi_{l}^{p-1}, \mathcal{U}_l\Psi_{j}\right\rangle_{L^2}\\
&\lesssim&\beta_*^4+\beta_*^3Q+\sum_{l=1}^{\nu}Q\alpha_{l,1}^{**}.
\end{eqnarray*}
On the other hand, by Lemma~\ref{lem0005} and similar estimates of \eqref{eqn3147},
\begin{eqnarray*}
\sum_{l=1}^{\nu}p\alpha_{l,1}^{**}\left\langle \mathcal{U}^{p-1}\Psi_l,\Psi_j\right\rangle_{L^2}&=&\sum_{l=1}^{\nu}p\alpha_{l,1}^{**}\left\langle \sum_{i=1}^{\nu}\left(\Psi_i^*\right)^{p-1}\chi_{\mathcal{B}_i}\Psi_l,\Psi_j\right\rangle_{L^2}\\
&&+\sum_{l=1}^{\nu}p\alpha_{l,1}^{**}\left\langle \left(\mathcal{U}^{p-1}-\sum_{i=1}^{\nu}\left(\Psi_i^*\right)^{p-1}\chi_{\mathcal{B}_i}\right)\Psi_l,\Psi_j\right\rangle_{L^2}\\
&&+\sum_{l=1}^{\nu}p\alpha_{l,1}^{**}\left\langle \mathcal{U}^{p-1}\chi_{\mathcal{C}\backslash\cup_{i=1}^{\nu}\mathcal{B}_i}\Psi_l,\Psi_j\right\rangle_{L^2}\\
&=&p\|\Psi\|^2\alpha_{j,1}^{**}+\sum_{l=1;l\not=j}^{\nu}\mathcal{O}(Q)\alpha_{l,1}^{**}
\end{eqnarray*}
and further by \eqref{eqn2004}, we have
\begin{eqnarray*}
\left\langle \mathcal{U}^{p-1}\rho_{**,2}^{\perp},\Psi_j\right\rangle_{L^2}&=&\left\langle \left(\mathcal{U}^{p-1}-\left(\Psi_j^*\right)^{p-1}\right)\Psi_j,\rho_{**,2}^{\perp}\right\rangle_{L^2}\\
&=&\|\rho_{**,2}^{\perp}\|\times\left\{\aligned
&\mathcal{O}\left(Q\right),\quad p>2,\\
&\mathcal{O}\left(Q\left|\log Q\right|^{\frac12}\right),\quad p=2,\\
&\mathcal{O}\left(Q^{\frac{p}{2}}\right),\quad 1<p<2.
\endaligned\right.
\end{eqnarray*}
It follows from \eqref{eqn5052} that
\begin{eqnarray*}
\sum_{j=1}^{\nu}\left|\alpha_{j,1}^{**}\right|\lesssim \beta_*^4+\beta_*^3Q+Q^{\frac{p}{2}\wedge1}\left|\log Q\right|\|\rho_{**,2}^{\perp}\|.
\end{eqnarray*}
Now, by multiplying \eqref{eqn6114} with $\rho_{**,2}^{\perp}$ on both sides and integrating by parts, we have $\|\rho_{**,2}^{\perp}\|\lesssim\beta_*^4+\beta_*^3Q$, which implies that \eqref{eqn5068} holds true.  To obtain the estimate~\eqref{eqn9079}, we shall decompose $\mathcal{R}_{new,**}$ into two parts, where $\mathcal{R}_{new,**}$ is given by \eqref{eqn6061}.  The first part is given by $\sum_{j=1}^{\nu}p(p-1)\left(\Psi_j^*\right)^{p-2}\mathcal{V}_j\gamma_{\mathcal{N},led,j}$, which generates the bound
\begin{eqnarray*}
\sup_{(t,\theta)\in\mathcal{C}}\frac{|\rho_{**,2}^{\perp}|}{\sum_{j=1}^{\nu}\Psi_{j}^{1-\sigma}(t)}\lesssim\beta_*^4+\beta_*^3Q
\end{eqnarray*}
as that of $\gamma_{\mathcal{N},led}$.  The second part is given by $\sum_{l=1}^{\nu}p\alpha_{l,1}^{**}\left(\mathcal{U}^{p-1}-\Psi_l^{p-1}\right)\Psi_l$, which generates the bound
\begin{eqnarray*}
\beta_*^4+\beta_*^3Q\gtrsim\left\{\aligned
&\|\rho_{**,2}^{\perp}\|_{\sharp},\quad p\geq3,\\
&\|\rho_{**,2}^{\perp}\|_{\natural,1,*},\quad 1<p<3,
\endaligned\right.
\end{eqnarray*}
as that of $\gamma_{1,ex}$.
\end{proof}

\vskip0.12in

We also need to consider the following equation:
\begin{eqnarray}\label{eqn5013}
\left\{\aligned&\mathcal{L}(\rho_{**,3}^{\perp})=\mathcal{R}_{3,ex}-\sum_{j=1}^{\nu}\Psi_{j}^{p-1}\left(c_{3,ex,j}\partial_t\Psi_{j}+\sum_{l=1}^{d}\varsigma_{3,ex,j,l}w_{j,l}\right),\quad \text{in }\mathcal{C},\\
&\langle \partial_t\Psi_{j}, \rho_{**,3}^{\perp}\rangle=\langle w_{j,l}, \rho_{**,3}^{\perp}\rangle=0\quad\text{for all }1\leq j\leq\nu\text{ and all }1\leq l\leq d,\endaligned\right.
\end{eqnarray}
where $\mathcal{R}_{3,ex}=2A_p\mathcal{U}^{p-2}\mathcal{V}\gamma_{1,ex}$.
\begin{lemma}\label{lem0013}
Let $d\geq2$, $a<0$ and $b=b_{FS}(a)$.  Then \eqref{eqn5013} is uniquely solvable.  Moreover, the solution $\rho_{**,3}^{\perp}$ is odd on $\mathbb{S}^{d-1}$ and satisfies
\begin{eqnarray}\label{eqn4447}
\beta_*\gtrsim\left\{\aligned
&\|\rho_{**,3}^{\perp}\|_{\sharp},\quad p\geq\frac{7}{3},\\
&\|\rho_{**,3}^{\perp}\|_{\natural,2,*},\quad 1<p<\frac{7}{3},
\endaligned\right.
\end{eqnarray}
where the Lagrange multipliers $\{c_{3,ex,j}\}$ and $\{\varsigma_{3,ex,j,l}\}$ are chosen such that the right hand side of the equation~\eqref{eqn5013} is orthogonal to $\{\Psi_j^{p-1}\partial_t\Psi_{j}\}$ and $\{\Psi_j^{p-1}w_{j,l}\}$ in $L^2(\mathcal{C})$ and
the norms $\|\cdot\|_{\sharp}$ and $\|\cdot\|_{\natural,2,*}$ are given in Lemma~\ref{lem0008}.
The Lagrange multipliers also satisfy $c_{3,ex,j}=0$ for all $1\leq j\leq \nu$ and $|\varsigma_{3,ex,j,l}|\lesssim\beta_*Q$ for all $1\leq j\leq \nu$ and $1\leq l\leq d$.
\end{lemma}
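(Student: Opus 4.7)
The plan is to imitate the proof of Lemma~\ref{lem0008} almost verbatim, treating $\mathcal{R}_{3,ex}=2A_p\mathcal{U}^{p-2}\mathcal{V}\gamma_{1,ex}$ as the new data. First I would record the pointwise size of this data. By Lemma~\ref{lem0006} the factor $\gamma_{1,ex}$ is even on $\mathbb{S}^{d-1}$ with $|\gamma_{1,ex}|$ controlled by $Q\varphi^{1-\sigma}$ (for $p\ge 3$) or the stronger weight defining $\|\cdot\|_{\natural,1,*}$ (for $1<p<3$), and by \eqref{eqn0191} we have $|\mathcal{V}|\lesssim\beta_*\mathcal{U}^{(p+1)/2}$; combining this with the region-by-region behaviour of $\mathcal{U}$ used in \eqref{eqn3147} yields a bound on $\mathcal{R}_{3,ex}$ of the same shape as that of $\mathcal{R}_{2,ex}$ (up to the harmless extra factor $Q\varphi^{-\sigma}\le 1$ coming from $\gamma_{1,ex}$). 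Therefore, exactly as in \eqref{eqn29997}, $\|\mathcal{R}_{3,ex}\|_{\natural,2}\lesssim\beta_*$ for $1<p<\tfrac{7}{3}$ and $\|\mathcal{R}_{3,ex}\|_{\sharp}\lesssim\beta_*$ for $p\ge\tfrac{7}{3}$.

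Next, I would invoke Lemma~\ref{lem0001} and apply the same cut-off and blow-up argument used in Lemma~\ref{lem0008} (modelled on \cite[Proposition~3.1]{LWW2024}): one verifies that $\varphi_{s_i^*}^{1-\sigma}$ and $\varphi_{s_i^*}^{(3p-5)/2}$ are supersolutions of $\mathcal{L}(\rho)=0$ on each $\mathcal{B}_i$ (this is inherited from the proof of Lemma~\ref{lem0006}/\ref{lem0008}), then solves \eqref{eqn5013} in the orthogonal complement of the full kernel. Uniqueness comes from Lemma~\ref{lem0001}, and the weighted $L^\infty$ bound \eqref{eqn4447} is produced by the supersolution/maximum-principle step using the $\|\cdot\|_{\natural,2}$ or $\|\cdot\|_{\sharp}$ bound on $\mathcal{R}_{3,ex}$ just established. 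Since $\mathcal{V}$ is odd on $\mathbb{S}^{d-1}$ (by \eqref{eqn0040}) and $\gamma_{1,ex}$ is even on $\mathbb{S}^{d-1}$ (Lemma~\ref{lem0006}), the datum $\mathcal{R}_{3,ex}$ is odd on $\mathbb{S}^{d-1}$; uniqueness then forces $\rho_{**,3}^{\perp}$ to be odd on $\mathbb{S}^{d-1}$.

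The Lagrange-multiplier analysis follows the end of the proof of Lemma~\ref{lem0008}. Testing \eqref{eqn5013} against $\Psi_j^{p-1}\partial_t\Psi_j$ and $\Psi_j^{p-1}w_{j,l}$ yields the two diagonally dominant linear systems
\begin{eqnarray*}
\sum_{i=1}^{\nu}\langle\Psi_i^{p-1}\partial_t\Psi_i,\Psi_j^{p-1}\partial_t\Psi_j\rangle_{L^2}c_{3,ex,i}&=&-\langle\mathcal{R}_{3,ex},\Psi_j^{p-1}\partial_t\Psi_j\rangle_{L^2},\\
\sum_{m,n}\langle\Psi_m^{p-1}w_{m,n},\Psi_j^{p-1}w_{j,l}\rangle_{L^2}\varsigma_{3,ex,m,n}&=&-\langle\mathcal{R}_{3,ex},\Psi_j^{p-1}w_{j,l}\rangle_{L^2}.
\end{eqnarray*}
The right-hand side of the first system vanishes by parity: $\mathcal{R}_{3,ex}$ is odd on $\mathbb{S}^{d-1}$ while $\Psi_j^{p-1}\partial_t\Psi_j$ is even on $\mathbb{S}^{d-1}$. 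Hence $c_{3,ex,j}=0$ for every $j$. For the second system, I would bound $|\langle\mathcal{R}_{3,ex},\Psi_j^{p-1}w_{j,l}\rangle_{L^2}|$ by splitting the cylinder into the pieces $\mathcal{B}_{i,\pm}$ and $\mathcal{C}\setminus\cup_i\mathcal{B}_i$, using the pointwise bound on $\gamma_{1,ex}$ from Lemma~\ref{lem0006} together with the integral estimates \eqref{eqnew0001}--\eqref{eqnew0002} of Lemma~\ref{lem0005}. A computation of the same type as for $\varsigma_{2,ex,j,l}$ in Lemma~\ref{lem0008} (one extra factor $Q\varphi^{-\sigma}$ coming from $\gamma_{1,ex}$ against one less factor of $\mathcal{U}$) yields $|\varsigma_{3,ex,j,l}|\lesssim\beta_*Q$.

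The step I expect to be the most delicate is the region-by-region estimate of $\mathcal{R}_{3,ex}$ in the weighted norms: because $\gamma_{1,ex}$ is itself only bounded in the decomposed supersolution norms $\|\cdot\|_\sharp$ or $\|\cdot\|_{\natural,1,*}$, one has to multiply two weights on the same region and check that the product never exceeds the weight that appears in $\|\cdot\|_{\natural,2,*}$ (respectively $\|\cdot\|_\sharp$). Carrying out this matching in the transition zones $\mathcal{B}_{i,0}$ and at the interfaces of $\mathcal{B}_{i,\pm}$ with $\mathcal{C}\setminus\cup_j\mathcal{B}_j$, and verifying that the small parameter $\sigma>0$ can be chosen compatibly with \eqref{eqnewnew3345}, is the only non-routine part; everything else reduces to the arguments already used for Lemmas~\ref{lem0006} and \ref{lem0008}.
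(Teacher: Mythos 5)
Your proof is correct and follows essentially the same route as the paper: bound the new datum $\mathcal{R}_{3,ex}$ pointwise region by region using the weighted bounds on $\gamma_{1,ex}$ from Lemma~\ref{lem0006} together with \eqref{eqn0191}, verify that the resulting bound fits under the weight appearing in $\|\cdot\|_{\natural,2,*}$ or $\|\cdot\|_\sharp$, and then invoke the blow-up/cut-off construction of Lemma~\ref{lem0008} and parity for uniqueness, oddness, and the two Lagrange-multiplier systems. The paper records the pointwise bound explicitly as \eqref{eqn4548} and notes $\Psi_i^{(5p-7)/2}\lesssim\Psi_i^{(3p-5)/2}$ in $\mathcal{B}_i$; your description of the ratio $\mathcal{R}_{3,ex}/\mathcal{R}_{2,ex}$ as an ``extra factor $Q\varphi^{-\sigma}$'' is not quite the exponent that comes out (in $\mathcal{B}_{j,+}$ for $p\ge 3$ it is of the form $(Q/Q_j)\varphi^{2-\sigma}$, and $\varphi^{p-1}$ for $1<p<3$), but since the only fact used is that the ratio is $\lesssim 1$ this imprecision does not affect the argument. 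Everything else — the vanishing of $c_{3,ex,j}$ by the oddness of $\mathcal{R}_{3,ex}$ on $\mathbb{S}^{d-1}$, the $\beta_*Q$ bound on $\varsigma_{3,ex,j,l}$ via Lemma~\ref{lem0005}, and the reduction to Lemma~\ref{lem0008} — matches the paper's (very terse) proof.
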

\begin{proof}
By Lemma~\ref{lem0006}, $\gamma_{1,ex}$ is even on $\mathbb{S}^{d-1}$.  Moreover, similar to \eqref{eqn3147}, direct calculations show that
\begin{eqnarray}\label{eqn4548}
\left|\mathcal{R}_{3,ex}\right|\lesssim\left\{\aligned
&\beta_*\left(\sum_{i=1}^{\nu}\Psi_i^{\frac{3p-1-2\sigma}{2}}\left(Q_i\chi_{\mathcal{B}_{i,+}}+Q_{i-1}\chi_{\mathcal{B}_{i,-}}\right)+Q\mathcal{U}^{\frac{3p-1-2\sigma}{2}}\chi_{\mathcal{C}\backslash\left(\cup_{i=1}^{\nu}\mathcal{B}_i\right)}\right),\quad p\geq3,\\
&\beta_*\left(\sum_{i=1}^{\nu}\Psi_i^{\frac{5p-7}{2}}\left(Q_i\chi_{\mathcal{B}_{i,+}}+Q_{i-1}\chi_{\mathcal{B}_{i,-}}\right)+Q\mathcal{U}^{\frac{3p-1-2\sigma}{2}}\chi_{\mathcal{C}\backslash\left(\cup_{i=1}^{\nu}\mathcal{B}_i\right)}\right),\quad 1<p<3.
\endaligned\right.
\end{eqnarray}
Since $\Psi_i^{\frac{5p-7}{2}}\lesssim\Psi_i^{\frac{3p-5}{2}}$ in $\mathcal{B}_i$ for all $1\leq i\leq \nu$, the rest of the proof is the same of that of Lemma~\ref{lem0008}, so we omit it here.
\end{proof}

Let $\rho_{**}^{\perp}=\rho_{**,0}^{\perp}-\rho_{**,2}^{\perp}-\rho_{**,3}^{\perp}$, then we have the following decomposition of $\rho_*$.
\begin{proposition}\label{prop0001}
Let $d\geq2$, $a<0$ and $b=b_{FS}(a)$.  Then we have $\rho_*=\rho_0+\rho_{**}^{\perp}$,
where
\begin{enumerate}
\item[$(1)$] \quad the regular part $\rho_0=\gamma_{ex}+\gamma_{*}+\gamma_{\mathcal{N},led}$
and
\begin{enumerate}
\item[$(i)$]\quad$\gamma_{ex}=\sum_{l=1}^{2}\gamma_{l,ex}$ with $\gamma_{1,ex}$ even on $\mathbb{S}^{d-1}$ and $\gamma_{2,ex}=\overline{\gamma}_{2,ex}+\rho_{**,3}^{\perp}$ odd on $\mathbb{S}^{d-1}$ satisfying
\begin{eqnarray*}
1\gtrsim\left\{\aligned
&\left\|\gamma_{ex}\right\|_{\sharp}+\left\|\gamma_{1,ex}\right\|_{\sharp},\quad p\geq3,\\
&\left\|\gamma_{ex}\right\|_{\natural,1,*}+\left\|\gamma_{1,ex}\right\|_{\natural,1,*},\quad 1<p<3
\endaligned\right.
\end{eqnarray*}
and
\begin{eqnarray*}
\beta_*\gtrsim\left\{\aligned
&\|\gamma_{2,ex}\|_{\sharp},\quad p\geq\frac{7}{3},\\
&\|\gamma_{2,ex}\|_{\natural,2,*},\quad 1<p<\frac{7}{3},
\endaligned\right.
\end{eqnarray*}

\item[$(ii)$]\quad $\gamma_{*}=\sum_{j=1}^{\nu}\gamma_{j,*}$ is even on $\mathbb{S}^{d-1}$ with $\gamma_{j,*}$ even in terms of $t-s_j^*$ in $\mathbb{R}$ and satisfying
\begin{eqnarray*}
Q+\sum_{j=1}^{\nu}\left|\left(\alpha_j^{*}\right)^{p-1}-1\right|+\beta_*^2\gtrsim\sup_{(t,\theta)\in\mathcal{C}}\frac{|\gamma_{*}|}{\sum_{i=1}^{\nu}\Psi_{j}^{1-\sigma}(t)},
\end{eqnarray*}
where $\gamma_{j,*}=\gamma_{1,j}+\alpha_{j,0}^{**}\Psi_{j}$ and $\sigma>0$ is chosen to satisfy \eqref{eqnewnew3345}.
\item[$(iii)$]\quad $\gamma_{\mathcal{N},led}=\gamma_{\mathcal{N},led,*}+\rho_{**,2}^{\perp}$ with the symmetrical part of $\gamma_{\mathcal{N},led}$, in terms of $t-s_j^*$,  given by $\gamma_{\mathcal{N},led,j}+\rho_{**,1,j}^{\perp}-\alpha_{j,1}^{**}\Psi_j$ and the remaining parts, denoted by
$\gamma_{\mathcal{N},led,rem,j}$, satisfies the following estimates
\begin{eqnarray*}
\beta_*^2\gtrsim\sup_{(t,\theta)\in\mathcal{C}}\frac{|\gamma_{\mathcal{N},led,rem,j}|}{\sum_{i=1;i\not=j}^{\nu}\Psi_{j}^{1-\sigma}(t)}.
\end{eqnarray*}
Moreover, $\gamma_{\mathcal{N},led}$ satisfies the following estimates
\begin{eqnarray*}
1\gtrsim\left\{\aligned
&\frac{\|\rho_{**,2}^{\perp}\|_{\sharp}}{\beta_*^4+\beta_*^3Q}+\sup_{(t,\theta)\in\mathcal{C}}\frac{|\gamma_{\mathcal{N},led,*}|}{\sum_{j=1}^{\nu}\beta_*^2\Psi_{j}^{1-\sigma}(t)},\quad p\geq3,\\
&\frac{\|\rho_{**,2}^{\perp}\|_{\natural,1,*}}{\beta_*^4+\beta_*^3Q}+\sup_{(t,\theta)\in\mathcal{C}}\frac{|\gamma_{\mathcal{N},led,*}|}{\sum_{j=1}^{\nu}\beta_*^2\Psi_{j}^{1-\sigma}(t)},\quad 1<p<3,
\endaligned\right.
\end{eqnarray*}
where $\sigma>0$ is chosen to satisfy \eqref{eqnewnew3345}.
\end{enumerate}
\item[$(2)$]\quad The singular part $\rho_{**}^{\perp}$ satisfies the following equation:
\begin{eqnarray}\label{eqn1114}
\left\{\aligned&\mathcal{L}(\rho_{**}^{\perp})=f+\mathcal{R}_{new,0},\quad \text{in }\mathcal{C},\\
&\langle \Psi_{j}, \rho_{**}^{\perp}\rangle=\langle \partial_t\Psi_{j}, \rho_{**}^{\perp}\rangle=\langle w_{j,l}, \rho_{**}^{\perp}\rangle=0\quad\text{for }1\leq j\leq\nu\text{ and }1\leq l\leq d,\endaligned\right.
\end{eqnarray}
where
\begin{eqnarray}
\mathcal{R}_{new,0}&=&\sum_{i=1}^{\nu}(c_{1,ex,i}+c_{1,j,i}+c_{3,led,i}-c_{new,*,i})\Psi_{i}^{p-1}\partial_t\Psi_{i}+\sum_{j=1}^{\nu}\mathcal{R}_{2,j}\notag\\
&&+\sum_{i=1}^{\nu}\sum_{l=1}^{d}(\varsigma_{2,ex,i,l}+\varsigma_{3,led,i,l}-\varsigma_{new,*,i,l})\Psi_{i}^{p-1}w_{i,l}+\mathcal{N}_{rem}\notag\\
&&+\sum_{j=1}^{\nu}\left(2A_p\left(\Psi_j^*\right)^{p-2}\mathcal{V}_j(\rho_*-\gamma_{1,ex}-\gamma_{\mathcal{N},led,j})+3B_p\left(\Psi_j^*\right)^{p-3}\mathcal{V}_j^2\rho_*\right)\chi_{\mathcal{B}_j}\notag\\
&&+\sum_{j=1}^{\nu}\mathcal{O}\left(\beta_*^2\mathcal{U}_j\Psi_{j}^{2p-3}\left(\Psi_j+\rho_*\right)+\beta_*|\rho_*-\gamma_{1,ex}|\Psi_j^{\frac{3p-5}{2}}\mathcal{U}_j\right)\chi_{\mathcal{B}_j}\notag\\
&&+\mathcal{O}\left(\beta_*|\rho_*-\gamma_{1,ex}|\mathcal{U}^{\frac{3(p-1)}{2}}\right)\chi_{\mathcal{C}\backslash\left(\cup_{j=1}^{\nu}\mathcal{B}_j\right)}\notag\\
&&+\sum_{j=1}^{\nu}\alpha_{j}^{**}\left(\mathcal{U}^{p-1}-\Psi_j^{p-1}\right)\Psi_{j}\label{eqn0061}
\end{eqnarray}
with $A_p$ and $B_p$ given in Lemma~\ref{lem0003}, $\alpha_j^{**}=\alpha_{j,0}^{**}-\alpha_{j,1}^{**}$ and
\begin{eqnarray*}
\sum_{j=1}^{\nu}\left|\alpha_j^{**}\right|\lesssim Q+\sum_{j=1}^{\nu}\left|\left(\alpha_j^{*}\right)^{p-1}-1\right|+\beta_*^2.
\end{eqnarray*}
\end{enumerate}
\end{proposition}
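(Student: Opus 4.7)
The plan is that Proposition~\ref{prop0001} is essentially an assembly statement: Lemmas~\ref{lem0006}--\ref{lem0013} already provide all the ingredients, and the task is to concatenate the decompositions and to derive the equation for the residual $\rho_{**}^{\perp}$ by a linear subtraction from \eqref{eq0014}. First I would verify the identity $\rho_* = \rho_0 + \rho_{**}^{\perp}$ by direct bookkeeping. By the definition of $\rho_{**,0}$ immediately preceding the proposition,
\begin{eqnarray*}
\rho_* = \gamma_{1,ex} + \overline{\gamma}_{2,ex} + \sum_{j=1}^{\nu}\gamma_{1,j} + \gamma_{\mathcal{N},led,*} + \sum_{j=1}^{\nu}\alpha_{j,0}^{**}\Psi_j + \rho_{**,0}^{\perp}.
\end{eqnarray*}
Substituting $\rho_{**,0}^{\perp} = \rho_{**}^{\perp} + \rho_{**,2}^{\perp} + \rho_{**,3}^{\perp}$ and $\rho_{**,1}^{\perp} = \sum_{l}\alpha_{l,1}^{**}\Psi_l + \rho_{**,2}^{\perp}$ and then regrouping, the combination $\gamma_{1,ex}+\rho_{**,3}^{\perp}$ is relabelled $\gamma_{ex}$ (with odd-in-$\mathbb{S}^{d-1}$ piece $\gamma_{2,ex}:=\overline{\gamma}_{2,ex}+\rho_{**,3}^{\perp}$), the sum $\gamma_{\mathcal{N},led,*}+\rho_{**,2}^{\perp}$ is relabelled $\gamma_{\mathcal{N},led}$, and $\sum_j(\gamma_{1,j}+\alpha_{j,0}^{**}\Psi_j)=:\gamma_*$. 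Pulling $-\sum_l \alpha_{l,1}^{**}\Psi_l$ into $\gamma_{\mathcal{N},led}$ then forces the definition $\alpha_j^{**}:=\alpha_{j,0}^{**}-\alpha_{j,1}^{**}$, whose size estimate is immediate from \eqref{eqn0068} together with \eqref{eqn5068}.

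The symmetry and weighted $L^\infty$ assertions in (i)--(iii) are inherited summand-by-summand from the source lemmas: the $\mathbb{S}^{d-1}$-parities of $\gamma_{1,ex}$, $\overline{\gamma}_{2,ex}$, $\rho_{**,3}^{\perp}$ come from Lemmas~\ref{lem0006}, \ref{lem0008}, \ref{lem0013}; the $(t-s_j^*)$-parities of $\gamma_{1,j}$, $\gamma_{\mathcal{N},led,j}$, $\rho_{**,1,j}^{\perp}$ from Lemmas~\ref{lem0007}, \ref{lem0010}, \ref{lem0011}; and the pointwise norm bounds are combinations of \eqref{eqn0047}, \eqref{eqn0048}, \eqref{eqn2047}, \eqref{eqn0049}, \eqref{eqn4447}, \eqref{eqn9079} and the analogous bound from Lemma~\ref{lem0010}. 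The only mildly nontrivial repackaging is that the ``cross-bubble remainder'' $\gamma_{\mathcal{N},led,rem,j}$ must absorb both the genuine cross-bubble piece $\gamma_{\mathcal{N},led,rem,j,*}$ produced in Lemma~\ref{lem0010} and the contribution $-\alpha_{j,1}^{**}\Psi_j$; since $|\alpha_{j,1}^{**}|\lesssim \beta_*^4+\beta_*^3 Q \ll \beta_*^2$ by Lemma~\ref{lem0012}, the latter is dominated and the claimed $\beta_*^2\sum_{i\neq j}\Psi_i^{1-\sigma}$ bound survives.

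For part~(2) I apply $\mathcal{L}$ to $\rho_{**}^{\perp}=\rho_{**,0}^{\perp}-\rho_{**,2}^{\perp}-\rho_{**,3}^{\perp}$ and combine the right-hand sides of \eqref{eqn5114}, \eqref{eqn6114}, \eqref{eqn5013}. The forcing $f$ persists from \eqref{eqn5114}; the Lagrange multipliers along $\Psi_i^{p-1}\partial_t\Psi_i$ and $\Psi_i^{p-1}w_{i,l}$ regroup into the first two lines of \eqref{eqn0061}; the source $\mathcal{R}_{new,*,j}=2A_p(\Psi_j^*)^{p-2}\mathcal{V}_j\gamma_{\mathcal{N},led,j}$ from \eqref{eqn4015} (which enters $\mathcal{R}_{new,**}$ through \eqref{eqn6061}) exactly cancels the corresponding $\gamma_{\mathcal{N},led,j}$-piece of the $2A_p(\Psi_j^*)^{p-2}\mathcal{V}_j\rho_*$ source appearing in $\mathcal{R}_{new}$ from \eqref{eqn5061}, leaving the combination $\rho_*-\gamma_{1,ex}-\gamma_{\mathcal{N},led,j}$ displayed in \eqref{eqn0061}. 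Likewise $\mathcal{R}_{3,ex}=2A_p\mathcal{U}^{p-2}\mathcal{V}\gamma_{1,ex}$ from \eqref{eqn5013} cancels the $\gamma_{1,ex}$-part of the $2A_p\mathcal{U}^{p-2}\mathcal{V}\rho_*$ source on $\mathcal{C}\setminus\cup_j\mathcal{B}_j$, producing the $\mathcal{O}(\beta_*|\rho_*-\gamma_{1,ex}|\mathcal{U}^{3(p-1)/2})$ error; the analogous bubble-local $\mathcal{O}(\beta_*|\rho_*-\gamma_{1,ex}|\Psi_j^{(3p-5)/2}\mathcal{U}_j)$ arises from the cross-bubble $\mathcal{U}^{p-2}\mathcal{V}$ contribution in the same way. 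Finally, transferring $-\alpha_{j,1}^{**}\Psi_j$ from $\gamma_{\mathcal{N},led}$ to the right-hand side produces the $\sum_j\alpha_j^{**}(\mathcal{U}^{p-1}-\Psi_j^{p-1})\Psi_j$ line of \eqref{eqn0061}, and the orthogonality conditions in \eqref{eqn1114} are inherited from the three summands.

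The main obstacle is the case-by-case bookkeeping of where each remainder is supported (inside $\mathcal{B}_j$ versus its complement) and checking that the cancellations generating the differences $\rho_*-\gamma_{1,ex}$ and $\rho_*-\gamma_{1,ex}-\gamma_{\mathcal{N},led,j}$ leave no extra leading-order residue that could break the subsequent energy estimate for $\rho_{**}^{\perp}$. No delicate analysis is needed at this stage---only Taylor expansion in the bubble regions combined with the pointwise bounds of the preceding lemmas---but the number of interacting pieces makes strict algebraic care indispensable, and one must double-check that the error absorbed into $\mathcal{N}_{rem}$ (via \eqref{eqn0018}) is still compatible with the $\beta_*^4 + \beta_*^3 Q$ scale that governs the singular part.
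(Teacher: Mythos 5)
Your proposal follows the same overall approach as the paper: Proposition \ref{prop0001} is an assembly statement, and its proof is a bookkeeping exercise that concatenates the decompositions from Lemmas \ref{lem0006}--\ref{lem0013} and derives the equation for $\rho_{**}^{\perp}$ by linear subtraction of \eqref{eqn5114}, \eqref{eqn6114}, \eqref{eqn5013}. Your identification of the cancellations that produce $\rho_*-\gamma_{1,ex}$ and $\rho_*-\gamma_{1,ex}-\gamma_{\mathcal{N},led,j}$ is the right mechanism for part~(2), and your regrouping for part~(1) is consistent with the paper.

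There is, however, a genuine slip in your discussion of $(iii)$. The proposition declares the \emph{symmetric} part of $\gamma_{\mathcal{N},led}$ (in $t-s_j^*$) to be $\gamma_{\mathcal{N},led,j}+\rho_{**,1,j}^{\perp}-\alpha_{j,1}^{**}\Psi_j$, so $-\alpha_{j,1}^{**}\Psi_j$ lives in the symmetric part, not in $\gamma_{\mathcal{N},led,rem,j}$. The remainder is
$\gamma_{\mathcal{N},led,rem,j}=\gamma_{\mathcal{N},led,rem,j,*}+\sum_{i\neq j}\rho_{**,1,i}^{\perp}-\sum_{l\neq j}\alpha_{l,1}^{**}\Psi_l,$
i.e.\ it only contains the $l\neq j$ pieces of $\sum_l\alpha_{l,1}^{**}\Psi_l$. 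Your proposed justification---that $|\alpha_{j,1}^{**}|\lesssim\beta_*^4+\beta_*^3Q\ll\beta_*^2$ so the term is "dominated"---would actually not suffice if $-\alpha_{j,1}^{**}\Psi_j$ really had to be absorbed by the remainder: evaluating at $t=s_j^*$, the weight $\sum_{i\neq j}\Psi_i^{1-\sigma}$ is of order $Q^{1-\sigma}$ while $\Psi_j\sim1$, so the ratio is $\sim|\alpha_{j,1}^{**}|/Q^{1-\sigma}$, which is not $\lesssim\beta_*^2$ unless one also assumes $\beta_*^2\lesssim Q^{1-\sigma}$. The reason the stated $\beta_*^2\sum_{i\neq j}\Psi_i^{1-\sigma}$ bound does survive is the different one above: the $\Psi_l$ with $l\neq j$ fit the weight, and then the small coefficient bound from Lemma~\ref{lem0012} is enough. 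A second, smaller omission: to conclude the displayed bound $1\gtrsim\|\gamma_{ex}\|_{\natural,1,*}$ for $1<p<3$ you must pass from the $\|\cdot\|_{\natural,2,*}$ bound on the odd piece $\gamma_{2,ex}=\overline{\gamma}_{2,ex}+\rho_{**,3}^{\perp}$ to a $\|\cdot\|_{\natural,1,*}$ bound; this uses that $\frac{3p-5}{2}>p-2$ for $p>1$, hence $\varphi_{s_i^*}^{(3p-5)/2}\leq\varphi_{s_i^*}^{p-2}$, which is the content of \eqref{eqqnew0009} in the paper's own (very terse) proof, and should be stated rather than hidden inside "combinations of the lemma bounds."
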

\begin{proof}
Since $\frac{3p-5}{2}>p-2$ for $p>1$, by \eqref{eqn2047}, \eqref{eqn29997}, \eqref{eqn4447} and \eqref{eqn4548}, we also have
\begin{eqnarray}\label{eqqnew0009}
\|\mathcal{R}_{2,ex}+\mathcal{R}_{3,ex}\|_{\natural,1,*}\lesssim\beta_*\quad\text{and}\quad\|\gamma_{2,ex}\|_{\natural,1,*}\lesssim\beta_*.
\end{eqnarray}
Thus, the rest proof of $(i)$ of $(1)$ follows from Lemmas~\ref{lem0006}, \ref{lem0008} and \ref{lem0013}.  The conclusion of $(ii)$ of $(1)$ follows from Lemma~\ref{lem0007} and \eqref{eqn0068}.  The conclusion of $(iii)$ of $(1)$ follows from Lemmas~\ref{lem0010}, \ref{lem0011} and \ref{lem0012}.  The conclusion of $(2)$ follows from \eqref{eqn0068}, \eqref{eqn5114}, \eqref{eqn4015} and Lemma~\ref{lem0012}.
\end{proof}

\section{Refined expansion of $\mathcal{N}$ and estimate of $\left\{\alpha_j^{*}\right\}$}
As we stated before, inspired by the optimal Bianchi-Egnell stability of the CKN inequality for $d\geq2$, $a<0$ and $b=b_{FS}(a)$ proved in \cite[Theorem~1]{FP2024}, we need to eliminate the lower order terms (compared to the $\beta_*^4$ terms) in the data $\mathcal{R}_{new,0}$ to get the desired stability inequality.  Thus, we need to refine the expansion of $\mathcal{N}$ since we have picked up a regular part $\rho_0$ in the remaining term $\rho_*$.
\begin{lemma}\label{lemn0001}
Let $d\geq2$, $a<0$ and $b=b_{FS}(a)$.  Then the nonlinear part $\mathcal{N}$, which is given by \eqref{eqnewnew8856}, can be refinedly expanded as follows:
\begin{eqnarray*}
\mathcal{N}&=&A_p\mathcal{U}^{p-2}\left(\overline{\mathcal{V}}^2+2\overline{\mathcal{V}}\rho_{**}^{\perp}\right)+B_p\mathcal{U}^{p-3}\left(\overline{\mathcal{V}}^3+3\overline{\mathcal{V}}^2\rho_{**}^{\perp}\right)+\overline{\mathcal{N}}_{rem}\notag\\
&=&A_p\mathcal{U}^{p-2}\left(\mathcal{V}^2+2\mathcal{V}\rho_{*}+\rho_0^2+2\rho_0\rho_{**}^{\perp}\right)+\overline{\mathcal{N}}_{rem}\notag\\
&&+B_p\mathcal{U}^{p-3}\left(\mathcal{V}^3+3\mathcal{V}^2\rho_{*}+3\mathcal{V}\rho_0^2+\rho_0^3+6\mathcal{V}\rho_0\rho_{**}^{\perp}+3\rho_0^2\rho_{**}^{\perp}\right)\notag\\
&=&\mathcal{N}_{*}+\overline{\mathcal{N}}_{rem}+A_p\mathcal{U}^{p-2}\left(\rho_0^2+2\rho_0\rho_{**}^{\perp}\right)\notag\\
&&+B_p\mathcal{U}^{p-3}\left(3\mathcal{V}\rho_0^2+\rho_0^3+6\mathcal{V}\rho_0\rho_{**}^{\perp}+3\rho_0^2\rho_{**}^{\perp}\right)\notag\\
&:=&\mathcal{N}_{*}+\overline{\mathcal{N}}_{rem}+\mathcal{N}_{0}
\end{eqnarray*}
in $\mathcal{C}$, where $\mathcal{N}_{*}$ is given in \eqref{eqn0018}, $A_p$ and $B_p$ are given in Lemma~\ref{lem0003}, $\overline{\mathcal{V}}=\mathcal{V}+\rho_{0}$ and
\begin{eqnarray*}
\overline{\mathcal{N}}_{rem}&=&\mathcal{O}\left(\mathcal{U}^{p-4\sigma}\left(\beta_*+Q^{\frac{2\wedge p}{2}}\left|\log Q\right|+\sum_{j=1}^{\nu}\left|\left(\alpha_{j}^{*}\right)^{p-1}-1\right|\right)^4\right)\notag\\
&&+\mathcal{O}\left(\chi_{p\geq2}\left|\rho_{**}^{\perp}\right|^{2}+\left|\rho_{**}^{\perp}\right|^{p}+\left|\rho_{**}^{\perp}\right|^{1+\ve}+|\gamma_*+\gamma_{\mathcal{N},led}|^{1+\ve}\chi_{\mathcal{C}\backslash\overline{\mathcal{B}}_*}\right)
\end{eqnarray*}
where $\ve>0$ is a constant which depends on $p>1$ and $\sigma$ and $\overline{\mathcal{B}}_*$ are given by \eqref{eqnewnew3345}.
\end{lemma}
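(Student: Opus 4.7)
My plan is to reduce everything to a single Taylor expansion in which the role previously played by $\mathcal{V}$ in Lemma~\ref{lem0003} is played by $\overline{\mathcal{V}}=\mathcal{V}+\rho_{0}$, and then to use the pointwise bounds on the regular part $\rho_{0}$ collected in Proposition~\ref{prop0001} to bound the remainder. The three displayed forms of the conclusion are algebraically equivalent: the second follows from the first by expanding $\overline{\mathcal{V}}^{k}=(\mathcal{V}+\rho_{0})^{k}$ for $k=2,3$ and using $\mathcal{V}\rho_{0}+\mathcal{V}\rho_{**}^{\perp}=\mathcal{V}\rho_{*}$ and $\mathcal{V}^{2}\rho_{0}+\mathcal{V}^{2}\rho_{**}^{\perp}=\mathcal{V}^{2}\rho_{*}$; the third is obtained by isolating $\mathcal{N}_{*}$ from Lemma~\ref{lem0003} and collecting the purely $\rho_{0}$-generated pieces into $\mathcal{N}_{0}$. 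So the only real content is to prove the first form with the stated $\overline{\mathcal{N}}_{rem}$.

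First, I would rewrite $\mathcal{N}=(\mathcal{U}+\overline{\mathcal{V}}+\rho_{**}^{\perp})^{p}-\mathcal{U}^{p}-p\mathcal{U}^{p-1}(\overline{\mathcal{V}}+\rho_{**}^{\perp})$, and redo the proof of Lemma~\ref{lem0003} verbatim but with $\overline{\mathcal{V}}$ replacing $\mathcal{V}$. Splitting the cylinder into $\overline{\mathcal{A}}=\{|\rho_{**}^{\perp}|\leq|\overline{\mathcal{V}}|\}$ and its complement, a fourth-order Taylor expansion on $\overline{\mathcal{A}}$ yields the two displayed leading terms up to an error of size $\mathcal{O}\bigl(\mathcal{U}^{p-4}(\overline{\mathcal{V}}+\rho_{**}^{\perp})^{4}+\chi_{p\geq 2}\mathcal{U}^{p-2}|\rho_{**}^{\perp}|^{2}\bigr)$, while on $\overline{\mathcal{A}}^{c}$ all three pieces are absorbed into $\mathcal{O}(\chi_{p\geq 2}|\rho_{**}^{\perp}|^{2}+|\rho_{**}^{\perp}|^{p})$ exactly as in Lemma~\ref{lem0003}. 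This gives a remainder of shape $\mathcal{O}(\mathcal{U}^{p-4}|\overline{\mathcal{V}}|^{4})$ plus the familiar $|\rho_{**}^{\perp}|$-terms.

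Next, I would convert $\mathcal{U}^{p-4}|\overline{\mathcal{V}}|^{4}$ into the stated form using the size estimates from Proposition~\ref{prop0001}. In the region $\overline{\mathcal{B}}_{*}$, combining $|\mathcal{V}|\lesssim\beta_{*}\mathcal{U}^{(p+1)/2}$, $|\gamma_{*}|\lesssim\bigl(Q+\sum_{j}|(\alpha_{j}^{*})^{p-1}-1|+\beta_{*}^{2}\bigr)\mathcal{U}^{1-\sigma}$, the $\sharp$/$\natural,1,*$ bounds on $\gamma_{1,ex}$ (which in the low-$p$ transition zones $\mathcal{B}_{i,0}$ yield the $Q^{(2\wedge p)/2}|\log Q|$ scale, while elsewhere they produce $Q$-type decay that is absorbed), the bound $|\gamma_{2,ex}|\lesssim\beta_{*}$ in its norm, and $|\gamma_{\mathcal{N},led,*}|\lesssim\beta_{*}^{2}\mathcal{U}^{1-\sigma}$ together with the $\beta_{*}^{4}+\beta_{*}^{3}Q$ control on $\rho_{**,2}^{\perp}$, I obtain pointwise
\begin{eqnarray*}
|\overline{\mathcal{V}}|\lesssim\mathcal{U}^{1-\sigma}\Bigl(\beta_{*}+Q^{(2\wedge p)/2}|\log Q|+\sum_{j=1}^{\nu}|(\alpha_{j}^{*})^{p-1}-1|\Bigr)\qquad\text{in }\overline{\mathcal{B}}_{*},
\end{eqnarray*}
so $\mathcal{U}^{p-4}|\overline{\mathcal{V}}|^{4}$ is dominated by the first term of $\overline{\mathcal{N}}_{rem}$. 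Outside $\overline{\mathcal{B}}_{*}$, the bubbles $\Psi_{j}$ decay exponentially and no such clean $\mathcal{U}^{p-4\sigma}$ comparison holds; there I would dispose of the fourth-order contribution by interpolating between the $L^{\infty}$ and the natural $L^{p+1}$ information on $\gamma_{*}+\gamma_{\mathcal{N},led}$, trading one power for a small $\ve>0$ depending on $p$ and $\sigma$, which produces the final term $|\gamma_{*}+\gamma_{\mathcal{N},led}|^{1+\ve}\chi_{\mathcal{C}\backslash\overline{\mathcal{B}}_{*}}$. The $|\rho_{**}^{\perp}|^{1+\ve}$ piece arises analogously when the complementary set $\overline{\mathcal{A}}^{c}$ meets $\mathcal{C}\setminus\overline{\mathcal{B}}_{*}$.

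The main obstacle is the bookkeeping in the transition regions $\mathcal{B}_{i,0}$ and in $\mathcal{C}\setminus\overline{\mathcal{B}}_{*}$: there the weighted norms $\|\cdot\|_{\sharp}$ and $\|\cdot\|_{\natural,1,*}$ are not uniformly comparable to a single $\mathcal{U}^{1-\sigma}$-weight, and the naive pointwise bound on $\gamma_{1,ex}$ is just $Q$ (not $Q\mathcal{U}^{1-\sigma}$). This forces the $Q^{(2\wedge p)/2}|\log Q|$ scale in the interior zones and forces us to keep the explicit boundary correction $|\gamma_{*}+\gamma_{\mathcal{N},led}|^{1+\ve}\chi_{\mathcal{C}\setminus\overline{\mathcal{B}}_{*}}$; choosing $\sigma$ as in \eqref{eqnewnew3345} guarantees that all these local losses still leave a factor $\mathcal{U}^{p-4\sigma}$ strong enough for later use in the $H^{-1}$ estimates for $\rho_{**}^{\perp}$.
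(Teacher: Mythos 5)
Your algebraic reduction of the three displayed forms to the first one is correct, and the overall strategy of rerunning the Lemma~\ref{lem0003} Taylor argument with $\overline{\mathcal{V}}=\mathcal{V}+\rho_0$ in place of $\mathcal{V}$ and then substituting the pointwise bounds from Proposition~\ref{prop0001} is the right idea. However, your case split has a genuine gap. You take $\overline{\mathcal{A}}=\{|\rho_{**}^{\perp}|\leq|\overline{\mathcal{V}}|\}$ over all of $\mathcal{C}$ and claim a valid fourth-order Taylor expansion on $\overline{\mathcal{A}}$. But the expansion of $(\mathcal{U}+h)^p$ requires $|h|\lesssim\mathcal{U}$, and the condition $|\rho_{**}^{\perp}|\leq|\overline{\mathcal{V}}|$ does not give that: while $|\mathcal{V}|\lesssim\beta_*\mathcal{U}$ automatically, the regular part satisfies only $|\gamma_*+\gamma_{\mathcal{N},led}|\lesssim(Q+\beta_*^2+\cdots)\mathcal{U}^{1-\sigma}$, and $\mathcal{U}^{1-\sigma}\gg\mathcal{U}$ outside $\overline{\mathcal{B}}_*$ where $\mathcal{U}$ is exponentially small. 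The choice of $\sigma$ in \eqref{eqnewnew3345} only guarantees $|\gamma_*+\gamma_{\mathcal{N},led}|\leq\frac18\mathcal{U}$ \emph{inside} $\overline{\mathcal{B}}_*$. So on $\overline{\mathcal{A}}\cap(\mathcal{C}\setminus\overline{\mathcal{B}}_*)\cap\{|\gamma_*+\gamma_{\mathcal{N},led}|>\tfrac12\mathcal{U}\}$ you have no Taylor expansion at all, and your ``error of size $\mathcal{O}(\mathcal{U}^{p-4}(\overline{\mathcal{V}}+\rho_{**}^{\perp})^4+\cdots)$'' is simply false there.

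This is not merely bookkeeping: in that regime the quadratic and cubic ``leading'' terms $A_p\mathcal{U}^{p-2}\overline{\mathcal{V}}^2$, $B_p\mathcal{U}^{p-3}\overline{\mathcal{V}}^3$, etc.\ are no longer leading, so they themselves must be absorbed into the remainder. The paper therefore restricts $\mathcal{A}_*$ to $\overline{\mathcal{B}}_*$ from the outset and runs a three-way case analysis on $\mathcal{A}_*^c$; the term $|\gamma_*+\gamma_{\mathcal{N},led}|^{1+\varepsilon}\chi_{\mathcal{C}\setminus\overline{\mathcal{B}}_*}$ arises there as an upper bound for the \emph{entire} expression $A_p\mathcal{U}^{p-2}(\overline{\mathcal{V}}^2+2|\overline{\mathcal{V}}\rho_{**}^{\perp}|)+B_p\mathcal{U}^{p-3}(|\overline{\mathcal{V}}|^3+3\overline{\mathcal{V}}^2|\rho_{**}^{\perp}|)$ (the analogue of \eqref{eqn0017} and its $|\rho_{**}^{\perp}|^{1+\varepsilon}$ twin), using $|\overline{\mathcal{V}}|\lesssim\mathcal{U}^{1-\sigma}$ and that $\sigma$ may be taken small — not as a bound on a fourth-order Taylor remainder, which does not exist there. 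You describe it as ``disposing of the fourth-order contribution,'' which misplaces its role and leaves the breakdown of the expansion itself unaddressed. To repair your argument, redefine the good set as $\mathcal{A}_*\subset\overline{\mathcal{B}}_*$, keep the two subcases of $\mathcal{A}_*^c$ (whether $|\rho_{**}^{\perp}|$ or $|\overline{\mathcal{V}}|$ dominates, and whether $|\gamma_*+\gamma_{\mathcal{N},led}|$ dominates $\mathcal{U}$ outside $\overline{\mathcal{B}}_*$), and prove that the full leading-order polynomial in $\overline{\mathcal{V}},\rho_{**}^{\perp}$ is controlled by $|\rho_{**}^{\perp}|^{1+\varepsilon}+|\gamma_*+\gamma_{\mathcal{N},led}|^{1+\varepsilon}\chi_{\mathcal{C}\setminus\overline{\mathcal{B}}_*}$ there.
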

\begin{proof}
We improve the set $\mathcal{A}$, used in the proof of Lemma~\ref{lem0003}, by introducing the set
\begin{eqnarray*}
\mathcal{A}_*=\left\{(\theta,t)\in\overline{\mathcal{B}}_*\mid |\rho_{**}^{\perp}|\leq\left|\mathcal{V}+\rho_{0}\right|\right\},
\end{eqnarray*}
where $\rho_0$ is the regular part of $\rho_*$ given in $(1)$ of Proposition~\ref{prop0001}.  By $(1)$ of Proposition~\ref{prop0001}, we have
\begin{eqnarray}\label{eqn0190}
\|\gamma_{ex}\|_{L^{\infty}}+\|\gamma_{*}\|_{L^{\infty}}+\|\gamma_{\mathcal{N},led}\|_{L^{\infty}}=o(1),
\end{eqnarray}
thus, $\left\|\overline{\mathcal{V}}\right\|_{L^{\infty}(\mathcal{C})}\to0$ as $\|f\|_{H^{-1}}\to0$.
Now, as that of \eqref{eqn0009}, by the choice of $\sigma$ given by \eqref{eqnewnew3345} and $(1)$ of Proposition~\ref{prop0001}, we can expand $\mathcal{N}$ as follows:
\begin{eqnarray}\label{eqn9009}
\mathcal{N}&=&A_p\mathcal{U}^{p-2}\left(\overline{\mathcal{V}}^2+2\overline{\mathcal{V}}\rho_{**}^{\perp}\right)+B_p\mathcal{U}^{p-3}\left(\overline{\mathcal{V}}^3+3\overline{\mathcal{V}}^2\rho_{**}^{\perp}\right)\notag\\
&&+\mathcal{O}\left(\mathcal{U}^{p-4}\left(\overline{\mathcal{V}}+\rho_{**}^{\perp}\right)^4+\mathcal{U}^{p-2}\left|\rho_{**}^{\perp}\right|^2\right)\notag\\
&=&A_p\mathcal{U}^{p-2}\left(\overline{\mathcal{V}}^2+2\overline{\mathcal{V}}\rho_{**}^{\perp}\right)+B_p\mathcal{U}^{p-3}\left(\overline{\mathcal{V}}^3+3\overline{\mathcal{V}}^2\rho_{**}^{\perp}\right)\notag\\
&&+\mathcal{O}\left(\mathcal{U}^{p-4\sigma}\left(\beta_*+Q^{\frac{2\wedge p}{2}}\left|\log Q\right|+\sum_{j=1}^{\nu}\left|\left(\alpha_{j}^{*}\right)^{p-1}-1\right|\right)^4\right)\notag\\
&&+\mathcal{O}\left(\chi_{p\geq2}|\rho_{**}^{\perp}|^{2}+|\rho_{**}^{\perp}|^{p}\right)
\end{eqnarray}
in $\mathcal{A}_*$.  In $\mathcal{A}_*^c$, if $\left|\mathcal{V}+\rho_{0}\right|\leq |\rho_{**}^{\perp}|$, then as that of \eqref{eqn0010}, we have
\begin{eqnarray}\label{eqn9010}
\mathcal{N}=\mathcal{O}\left(\chi_{p\geq2}|\rho_{**}^{\perp}|^{2}+|\rho_{**}^{\perp}|^{p}\right).
\end{eqnarray}
Otherwise, we have $(\theta,t)\in\mathcal{C}\backslash\overline{\mathcal{B}}_*$ and $|\rho_{**}^{\perp}|\leq\left|\mathcal{V}+\rho_{0}\right|$ in $\mathcal{A}_*^c$.  Since we always have $|\gamma_{ex}|=o(\mathcal{U})$ by $(i)$ of $(1)$ of Proposition~\ref{prop0001}, as that of \eqref{eqn0009} and \eqref{eqn0010}, either we have the expansion~\eqref{eqn9009} if we further have $|\gamma_*+\gamma_{\mathcal{N},led}|\leq\frac12\mathcal{U}$ or we have
\begin{eqnarray}\label{eqn9110}
\mathcal{N}=\mathcal{O}\left(|\gamma_*+\gamma_{\mathcal{N},led}|^{2\wedge p}\right)
\end{eqnarray}
if we further have $\frac12\mathcal{U}\leq|\gamma_*+\gamma_{\mathcal{N},led}|$.  Since by $(1)$ of Proposition~\ref{prop0001}, we have $\left|\overline{\mathcal{V}}\right|\lesssim \mathcal{U}^{1-\sigma}$ and by \eqref{eqnewnew3345}, we can take $\sigma>0$ arbitrary small if necessary.  Thus, similar to \eqref{eqn0017},
we have
\begin{eqnarray}\label{eqn5017}
|\rho_{**}^{\perp}|^{1+\ve}\gtrsim A_p\mathcal{U}^{p-2}\left(\overline{\mathcal{V}}^2+2\left|\overline{\mathcal{V}}\rho_{**}^{\perp}\right|\right)+B_p\mathcal{U}^{p-3}\left(\left|\overline{\mathcal{V}}\right|^3+3\overline{\mathcal{V}}^2|\rho_{**}^{\perp}|\right)
\end{eqnarray}
if $\left|\mathcal{V}+\rho_{0}\right|\leq |\rho_{**}^{\perp}|$ in $\mathcal{A}^c$ and
\begin{eqnarray}\label{eqn5217}
|\gamma_*+\gamma_{\mathcal{N},led}|^{1+\ve}\gtrsim A_p\mathcal{U}^{p-2}\left(\overline{\mathcal{V}}^2+2\left|\overline{\mathcal{V}}\rho_{**}^{\perp}\right|\right)+B_p\mathcal{U}^{p-3}\left(\left|\overline{\mathcal{V}}\right|^3+3\overline{\mathcal{V}}^2|\rho_{**}^{\perp}|\right)
\end{eqnarray}
if $(\theta,t)\in\mathcal{C}\backslash\overline{\mathcal{B}}_*$, $|\rho_{**}^{\perp}|\leq\left|\mathcal{V}+\rho_{0}\right|$ and $\frac12\mathcal{U}\leq|\gamma_*+\gamma_{\mathcal{N},led}|$ in $\mathcal{A}_*^c$, where $\ve>0$ is a constant which depends on $p>1$.  The conclusion then follows from \eqref{eqn9009}, \eqref{eqn9010}, \eqref{eqn9110}, \eqref{eqn5017} and \eqref{eqn5217}.
\end{proof}

\vskip0.12in

By multiplying \eqref{eq0014} with $\Psi_j$ on both sides and integrating by parts and by the orthogonal conditions of $\rho_*$ given in \eqref{eq0014} and the oddness of $\{\mathcal{V}_i\}$ on $\mathbb{S}^{d-1}$, we have
\begin{eqnarray}\label{eqn0121}
-\left\langle f, \Psi_j\right\rangle_{H^1}&=&\left\langle \mathcal{R}_{1,j}, \Psi_j\right\rangle_{L^2}+\left\langle \mathcal{N}, \Psi_j\right\rangle_{L^2}+\sum_{i=1;i\not=j}^{\nu}\left\langle \mathcal{R}_{1,i}, \Psi_j\right\rangle_{L^2}\notag\\
&&+\left\langle \mathcal{R}_{1,ex}, \Psi_j\right\rangle_{L^2}+\left\langle \mathcal{L}_{j,ex}(\rho_*), \Psi_j\right\rangle_{L^2}
\end{eqnarray}
for all $j=1,2,\cdots,\nu$.  In what follows, by using the equality~\eqref{eqn0121}, we shall derive the estimate of $\sum_{j=1}^{\nu}\left|\left(\alpha_j^*\right)^p-\alpha_j^*\right|$.
\begin{proposition}\label{propn0001}
Let $d\geq2$, $a<0$ and $b=b_{FS}(a)$.  Then we have
\begin{eqnarray*}
\left(\sum_{j=1}^{\nu}\left(\left(\alpha_{j}^*\right)^{p}-\alpha_{j}^*\right)\right)
&=&-\left(\overline{B}_1+o(1)\right)Q-\left\langle f, \sum_{j=1}^{\nu}\frac{\Psi_j}{\|\Psi\|^2+o(1)}\right\rangle\notag\\
&&-\left(\overline{A}_1+o(1)\right)\beta_*^2+\mathcal{O}\left(\|\rho_{**}^{\perp}\|^{1+\ve}\right),
\end{eqnarray*}
where
\begin{eqnarray*}
\overline{B}_1=\frac{p\int_{\mathcal{C}}\Psi^{p-1}d\mu}{\|\Psi\|^2}\quad\text{and}\quad\overline{A}_1=\lim_{\|f\|_{H^{-1}}\to0}\frac{\sum_{j=1}^{\nu}A_p\left\langle\left(\Psi_j^*\right)^{p-2}\mathcal{V}_j^2, \Psi_j\right\rangle_{L^2}}{\beta_*^2\|\Psi\|^2}.
\end{eqnarray*}
\end{proposition}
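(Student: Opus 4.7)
The plan is to take the $L^2$-inner product of the equation \eqref{eq0014} for $\rho_*$ with $\Psi_j$, sum over $j=1,\ldots,\nu$ to obtain the summed version of \eqref{eqn0121}, and then identify the precise order of each of the five resulting contributions, using the refined decomposition $\rho_*=\rho_0+\rho_{**}^{\perp}$ of Proposition~\ref{prop0001} and the refined expansion of $\mathcal{N}$ in Lemma~\ref{lemn0001}.

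The diagonal interaction $\sum_j\langle \mathcal{R}_{1,j},\Psi_j\rangle_{L^2}=\sum_j((\alpha_j^*)^p-\alpha_j^*)\int_{\mathcal{C}}\Psi^{p+1}d\mu$, together with the equation satisfied by $\Psi$, produces $(\sum_j((\alpha_j^*)^p-\alpha_j^*))(\|\Psi\|^2+o(1))$, which after dividing through is the quantity on the left of the claimed identity. The off-diagonal contribution $\sum_{i\neq j}\langle\mathcal{R}_{1,i},\Psi_j\rangle_{L^2}$ is controlled by Lemma~\ref{lem0005} as $O(\sum_i|(\alpha_i^*)^p-\alpha_i^*|\,Q^p|\log Q|)$ and is absorbed as a small multiplicative correction to $\|\Psi\|^2$.

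The $Q$-order contribution comes from $\sum_j\langle\mathcal{R}_{1,ex},\Psi_j\rangle_{L^2}$: expanding $\mathcal{U}^p-\sum_i(\Psi_i^*)^p$ locally in each $\mathcal{B}_j$ around $(\Psi_j^*)^p$, the leading interaction $p(\Psi_j^*)^{p-1}\mathcal{U}_j$ integrated against $\Psi_j$, combined with the exponential asymptotics recorded in \eqref{eqnewnew0006}--\eqref{eqnewnew0008}, yields after a telescoping of the boundary behavior the leading term $-\overline{B}_1Q\|\Psi\|^2(1+o(1))$. The $\beta_*^2$-order contribution comes from the leading piece of $\mathcal{N}$ in Lemma~\ref{lemn0001}: the quadratic term $A_p\mathcal{U}^{p-2}\mathcal{V}^2$, split via the $\chi_{\mathcal{B}_j}$ partition and paired with $\Psi_j$, gives exactly $\sum_jA_p\langle(\Psi_j^*)^{p-2}\mathcal{V}_j^2,\Psi_j\rangle_{L^2} =-\overline{A}_1\beta_*^2\|\Psi\|^2(1+o(1))$. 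The cubic contribution $B_p\mathcal{U}^{p-3}\mathcal{V}^3$ and the cross-term $2A_p\mathcal{U}^{p-2}\mathcal{V}\rho_{**}^{\perp}$ are handled by angular parity: $\mathcal{V}_j$ and $\mathcal{V}_j^3$ are odd on $\mathbb{S}^{d-1}$ while $\Psi_j$ is even, so their leading diagonal contributions vanish identically, leaving pieces that are at most of order $\beta_*^2Q$ or $\beta_*\|\rho_{**}^{\perp}\|$.

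The remaining pieces are shown to be of strictly higher order. For $\sum_j\langle\mathcal{L}_{j,ex}(\rho_*),\Psi_j\rangle_{L^2}$ we use the pointwise bound $|\mathcal{U}^{p-1}-(\Psi_j^*)^{p-1}|\lesssim Q$ on $\mathcal{B}_j$ together with the weighted $L^{\infty}$ estimates on $\rho_0$ from Proposition~\ref{prop0001} and Cauchy--Schwarz for $\rho_{**}^{\perp}$; the cross-terms $\langle A_p\mathcal{U}^{p-2}(\rho_0^2+2\rho_0\rho_{**}^{\perp}),\Psi_j\rangle_{L^2}$ are bounded similarly, while the $\overline{\mathcal{N}}_{rem}$ term in Lemma~\ref{lemn0001} contributes $o(Q+\beta_*^2)+\mathcal{O}(\|\rho_{**}^{\perp}\|^{1+\varepsilon})$ directly. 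The hard part will be verifying that every cross-interaction between $\mathcal{V}$, $\rho_0$ (which itself contains pieces of order $\beta_*^2$ from $\gamma_{\mathcal{N},led}$, of order $Q$ from $\gamma_{1,ex}$, and of order $\sum_j|(\alpha_j^*)^{p-1}-1|$ from $\gamma_*$) and $\rho_{**}^{\perp}$ either vanishes by the evenness/oddness symmetries on $\mathbb{S}^{d-1}$ and in $t-s_j^*$ recorded in Proposition~\ref{prop0001}, or is strictly of higher order than $Q+\beta_*^2$; this requires systematic bookkeeping of the symmetry decomposition together with the weighted norms $\|\cdot\|_{\sharp}$, $\|\cdot\|_{\natural,1,*}$, $\|\cdot\|_{\natural,2,*}$ introduced in Lemmas~\ref{lem0006}--\ref{lem0013}.
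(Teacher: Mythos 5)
Your proposal follows the paper's own proof essentially step for step: test \eqref{eq0014} against $\sum_j\Psi_j$, identify the three leading contributions (the diagonal $\mathcal{R}_{1,j}$ giving the left side, $\mathcal{R}_{1,ex}$ giving $-\overline{B}_1 Q$, and $A_p(\Psi_j^*)^{p-2}\mathcal{V}_j^2$ giving $-\overline{A}_1\beta_*^2$), kill the odd pieces, and push the remaining cross-terms into the error using the decomposition $\rho_*=\rho_0+\rho_{**}^{\perp}$ of Proposition~\ref{prop0001} together with Lemmas~\ref{lemn0001} and \ref{lem0005}, exactly as the paper does in its Steps~1--5. One small precision worth noting: the vanishing of the diagonal cross-term $\langle(\Psi_j^*)^{p-2}\mathcal{V}_j\rho_*,\Psi_j\rangle_{L^2}$ is due to the fact that $(\Psi_j^*)^{p-2}\mathcal{V}_j\Psi_j$ is a linear combination of $\Psi_j^{p-1}w_{j,l}$ and is therefore annihilated by the orthogonality conditions $\langle w_{j,l},\rho_*\rangle=0$, not by angular parity alone, since $\rho_*$ (and $\rho_{**}^{\perp}$) need not be even on $\mathbb{S}^{d-1}$; parity genuinely kills only the pure $\mathcal{V}_j^3$ piece.
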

\begin{proof}
By the the orthogonal conditions of $\Psi_j$, $\partial_t\Psi_j$, $w_{j,l}$ and $\rho_*$, and the oddness of $w_{j,l}$ on $\mathbb{S}^{d-1}$ and $\partial_t\Psi_j$ in $\bbr$, we also have
\begin{eqnarray}\label{eqn0025}
\left\langle \mathcal{N}_{j}, \Psi_j\right\rangle_{L^2}=A_p\left\langle\left(\Psi_j^*\right)^{p-2}\mathcal{V}_j^2, \Psi_j\right\rangle_{L^2}+3B_p\left\langle\left(\Psi_j^*\right)^{p-3}\mathcal{V}_j^2\rho_*, \Psi_j\right\rangle_{L^2}
\end{eqnarray}
for all $j=1,2,\cdots,\nu$, where
\begin{eqnarray}\label{eqnew9999}
\mathcal{N}_{j}=A_p\left(\Psi_j^*\right)^{p-2}\left(\mathcal{V}_j^2+2\mathcal{V}_j\rho_*\right)+B_p\left(\Psi_j^*\right)^{p-3}\left(\mathcal{V}_j^3+3\mathcal{V}_j^2\rho_*\right).
\end{eqnarray}
Intersecting \eqref{eqn0025} into \eqref{eqn0121}, we have
\begin{eqnarray}\label{eqn0028}
-\sum_{j=1}^{\nu}\left\langle f, \Psi_j\right\rangle_{H^1}&=&\sum_{j=1}^{\nu}\left(\left(\alpha_j^*\right)^p-\alpha_j^*\right)\|\Psi\|^2+\sum_{j=1}^{\nu}\sum_{i=1;i\not=j}^{\nu}\left\langle \mathcal{R}_{1,i}, \Psi_j\right\rangle_{L^2}\notag\\
&&+\sum_{j=1}^{\nu}\left\langle \mathcal{N}-\mathcal{N}_{j}, \Psi_j\right\rangle_{L^2}+\sum_{j=1}^{\nu}\left\langle \mathcal{N}_{j}, \Psi_j\right\rangle_{L^2}\notag\\
&&+\sum_{j=1}^{\nu}\left\langle \mathcal{R}_{1,ex}, \Psi_j\right\rangle_{L^2}+\sum_{j=1}^{\nu}\left\langle \mathcal{L}_{j,ex}(\rho_*), \Psi_j\right\rangle_{L^2}.
\end{eqnarray}
The rest of the proof is to estimate every terms in \eqref{eqn0028}.

{\bf Step.~1}\quad The estimate of $\sum_{j=1}^{\nu}\left\langle \mathcal{N}_{j}, \Psi_j\right\rangle_{L^2}$.

By Lemma~\ref{lem0002},
\begin{eqnarray*}
\sum_{j=1}^{\nu}\left\langle \mathcal{N}_{j}, \Psi_j\right\rangle_{L^2}&=&\sum_{j=1}^{\nu}\left(A_p\left\langle\left(\Psi_j^*\right)^{p-2}\mathcal{V}_j^2, \Psi_j\right\rangle_{L^2}+3B_p\left\langle\left(\Psi_j^*\right)^{p-3}\mathcal{V}_j^2\rho_*, \Psi_j\right\rangle_{L^2}\right)\notag\\
&=&(\overline{A}_{1,*}+o(1))\beta_*^2,
\end{eqnarray*}
where
\begin{eqnarray*}
\overline{A}_{1,*}=\lim_{\|f\|_{H^{-1}}\to0}\frac{\sum_{j=1}^{\nu}A_p\left\langle\left(\Psi_j^*\right)^{p-2}\mathcal{V}_j^2, \Psi_j\right\rangle_{L^2}}{\beta_*^2}>0.
\end{eqnarray*}

{\bf Step.~2}\quad The estimate of $\sum_{j=1}^{\nu}\sum_{i=1;i\not=j}^{\nu}\left\langle \mathcal{R}_{1,i}, \Psi_j\right\rangle_{L^2}$.

By \eqref{eqn0020} and Lemma~\ref{lem0005},
\begin{eqnarray*}
\sum_{j=1}^{\nu}\sum_{i=1;i\not=j}^{\nu}\left\langle \mathcal{R}_{1,i}, \Psi_j\right\rangle_{L^2}=\sum_{j=1}^{\nu}\sum_{i=1;i\not=j}^{\nu}\left(\left(\alpha_{l}^*\right)^{p-1}-1\right)\left\langle \Psi_i^p, \Psi_j\right\rangle_{L^2}=o(Q).
\end{eqnarray*}

{\bf Step.~3}\quad The estimate of $\sum_{j=1}^{\nu}\left\langle \mathcal{R}_{1,ex}, \Psi_j\right\rangle_{L^2}$.

By the Taylor expansion, \eqref{eqn0020} and Lemma~\ref{lem0005},
\begin{eqnarray*}
\sum_{j=1}^{\nu}\left\langle \mathcal{R}_{1,ex}, \Psi_j\right\rangle_{L^2}&=&\sum_{j=1}^{\nu}\sum_{i=1}^{\nu}\left\langle p\Psi_i^{p-1}\mathcal{U}_i\chi_{\mathcal{B}_{i}}, \Psi_j\right\rangle_{L^2}\notag\\
&&+\mathcal{O}\left(\sum_{j=1}^{\nu}\left\langle\Psi_j^{p-1}\chi_{\mathcal{B}_{j}},\mathcal{U}_j^2\right\rangle_{L^2}+\|\mathcal{U}\|_{L^{p+1}(\mathcal{C}\backslash\cup_{j=1}^{\nu}\mathcal{B}_{j})}\right)\notag\\
&=&(\overline{B}_{1,*}+o(1))Q,
\end{eqnarray*}
where $\overline{B}_{1,*}=p\int_{\mathcal{C}}\Psi^{p-1}d\mu$ is a positive constant.

{\bf Step.~4}\quad The estimate of $\sum_{j=1}^{\nu}\left\langle \mathcal{L}_{j,ex}(\rho_*), \Psi_j\right\rangle_{L^2}$.

By \eqref{eqn0044} and $(1)$ of Proposition~\ref{prop0001},
\begin{eqnarray*}
\left\langle \mathcal{L}_{j,ex}(\rho_*), \Psi_j\right\rangle_{L^2}&=&\left\langle p\left(\mathcal{U}^{p-1}-\left(\Psi_{j}^*\right)^{p-1}\right)\rho_*, \Psi_j\right\rangle_{L^2}\notag\\
&=&\left\langle p\left(\mathcal{U}^{p-1}-\left(\Psi_{j}^*\right)^{p-1}\right)\Psi_j, \rho_0+\rho_{**}^{\perp}\right\rangle_{L^2}.
\end{eqnarray*}
Similar to \eqref{eqn3147},
\begin{eqnarray*}
\left|\left(\mathcal{U}^{p-1}-\left(\Psi_{j}^*\right)^{p-1}\right)\Psi_j\right|&\lesssim&\left(\sum_{i=1}^{\nu}\Psi_i^{p-1}\mathcal{U}_i\chi_{\mathcal{B}_{i}}\right)+\mathcal{U}^{p}\chi_{\mathcal{C}\backslash\cup_{j=1}^{\nu}\mathcal{B}_j}.
\end{eqnarray*}
By Lemma~\ref{lem0005} and $(i)$ of $(1)$ of Proposition~\ref{prop0001},
\begin{eqnarray*}
\left|\left\langle \left(\mathcal{U}^{p-1}-\left(\Psi_{j}^*\right)^{p-1}\right)\Psi_j, \gamma_{ex}\right\rangle_{L^2}\right|&\lesssim&\left\{\aligned
&\sum_{i=1}^{\nu}Q\left\langle\Psi_i^{p-\sigma}\chi_{\mathcal{B}_{i}}, \mathcal{U}_{i}\right\rangle_{L^2},\quad p\geq3,\\
&\sum_{i=1}^{\nu}Q\left\langle\Psi_i^{2p-3}\chi_{\mathcal{B}_{i}}, \mathcal{U}_{i}\right\rangle_{L^2},\quad 1<p<3,\\
\endaligned\right.\\
&=&o(Q).
\end{eqnarray*}
By Lemma~\ref{lem0005} and $(ii)$ of $(1)$ of Proposition~\ref{prop0001},
\begin{eqnarray*}
&&\left|\left\langle \left(\mathcal{U}^{p-1}-\left(\Psi_{j}^*\right)^{p-1}\right)\Psi_j, \gamma_{*}\right\rangle_{L^2}\right|\\
&\lesssim&\sum_{i=1}^{\nu}\left(Q+\sum_{l=1}^{\nu}\left|\left(\alpha_{l}^*\right)^{p-1}-1\right|+\beta_*^2\right)\left\langle\Psi_i^{p-\sigma}\chi_{\mathcal{B}_{i}}, \mathcal{U}_{i}\right\rangle_{L^2}\\
&=&o(Q).
\end{eqnarray*}
By Lemma~\ref{lem0005} and $(iii)$ of $(1)$ of Proposition~\ref{prop0001},
\begin{eqnarray*}
\left|\left\langle \left(\mathcal{U}^{p-1}-\left(\Psi_{j}^*\right)^{p-1}\right)\Psi_j, \gamma_{\mathcal{N},led}\right\rangle_{L^2}\right|&\lesssim&\sum_{i=1}^{\nu}\beta_*^2\left\langle\Psi_i^{p-\sigma}\chi_{\mathcal{B}_{i}}, \mathcal{U}_{i}\right\rangle_{L^2}\\
&=&o(Q).
\end{eqnarray*}
By Lemma~\ref{lem0005},
\begin{eqnarray*}
\left|\left\langle\left(\mathcal{U}^{p-1}-\Psi_j^{p-1}\right)\Psi_{j},\rho_{**}^{\perp}\right\rangle_{L^2}\right|&\lesssim&\left\langle\sum_{i=1}^{\nu}\Psi_i^{p-1}\mathcal{U}_{i}\chi_{\mathcal{B}_{i}},\left|\rho_{**}^{\perp}\right|\right\rangle_{L^2}\notag\\
&\lesssim&\left\{\aligned
&Q\|\rho_{**}^{\perp}\|,\quad p>2,\\
&Q\left|\log Q\right|^{\frac{1}{2}}\|\rho_{**}^{\perp}\|,\quad p=2,\\
&Q^{\frac{p}{2}}\|\rho_{**}^{\perp}\|,\quad 1<p<2.
\endaligned\right.\\
&=&o(Q)+\|\rho_{**}^{\perp}\|^{2+\ve}.
\end{eqnarray*}
Summarizing the above estimates, we have
\begin{eqnarray}\label{eqn0074}
\sum_{j=1}^{\nu}\left\langle \mathcal{L}_{j,ex}(\rho_*), \Psi_j\right\rangle_{L^2}= o(Q)+\mathcal{O}\left(\|\rho_{**}^{\perp}\|^{2+\ve}\right).
\end{eqnarray}

{\bf Step.~5}\quad The estimate of $\sum_{j=1}^{\nu}\left\langle \mathcal{N}-\mathcal{N}_{j}, \Psi_j\right\rangle_{L^2}$.

By Lemmas~\ref{lem0004} and \ref{lemn0001}, \eqref{eqnew9999} and the oddness of $\{\mathcal{V}_i\}$ on $\mathbb{S}^{d-1}$, we have
\begin{eqnarray}
\left\langle \mathcal{N}-\mathcal{N}_{j}, \Psi_j\right\rangle_{L^2}&=&
A_p\sum_{i=1;i\not=j}^{\nu}\left\langle\left(\Psi_i^*\right)^{p-2}\left(\mathcal{V}_i^2+2\mathcal{V}_i\rho_*\right)\chi_{\mathcal{B}_i}, \Psi_j\right\rangle_{L^2}\notag\\
&&+3B_p\sum_{i=1;i\not=j}^{\nu}\left\langle\left(\Psi_i^*\right)^{p-3}\mathcal{V}_i^2\rho_*\chi_{\mathcal{B}_i}, \Psi_j\right\rangle_{L^2}\notag\\
&&-A_p\left\langle\left(\Psi_j^*\right)^{p-2}\left(\mathcal{V}_j^2+2\mathcal{V}_j\rho_*\right)\chi_{\mathcal{C}\backslash\mathcal{B}_j}, \Psi_j\right\rangle_{L^2}\notag\\
&&-3B_p\left\langle\left(\Psi_j^*\right)^{p-3}\mathcal{V}_j^2\rho_*\chi_{\mathcal{C}\backslash\mathcal{B}_j}, \Psi_j\right\rangle_{L^2}\notag\\
&&+\sum_{i=1}^{\nu}\left\langle\mathcal{O}\left(\beta_*|\rho_*|\Psi_{i}^{\frac{3p-5}{2}}\mathcal{U}_i+\beta_*^2\Psi_{i}^{2p-2}\mathcal{U}_i\right)\chi_{\mathcal{B}_i}, \Psi_j\right\rangle_{L^2}\notag\\
&&+\left\langle\mathcal{O}\left(\mathcal{U}^{p-2}\mathcal{V}^{2}+\beta_*|\rho_*|\mathcal{U}^{\frac{3(p-1)}{2}}\right)\chi_{\mathcal{C}\backslash\left(\cup_{j=1}^{\nu}\mathcal{B}_j\right)}, \Psi_j\right\rangle_{L^2}\notag\\
&&+\left\langle\overline{\mathcal{N}}_{rem}+\mathcal{N}_{0}, \Psi_j\right\rangle_{L^2}.\label{eqnew0020}
\end{eqnarray}

{\bf Step.~5.1}\quad The estimate of $\sum_{i=1;i\not=j}^{\nu}\left\langle\left(\Psi_i^*\right)^{p-2}\left(\mathcal{V}_i^2+2\mathcal{V}_i\rho_*\right)\chi_{\mathcal{B}_i}, \Psi_j\right\rangle_{L^2}$.

By $(1)$ of Proposition~\ref{prop0001},
\begin{eqnarray*}
\sum_{i=1;i\not=j}^{\nu}\left\langle\left(\Psi_i^*\right)^{p-2}\left(\mathcal{V}_i^2+2\mathcal{V}_i\rho_*\right)\chi_{\mathcal{B}_i}, \Psi_j\right\rangle_{L^2}&=&\sum_{i=1;i\not=j}^{\nu}\left\langle\left(\Psi_i^*\right)^{p-2}\left(\mathcal{V}_i^2+2\mathcal{V}_i\gamma_{2,ex}\right)\chi_{\mathcal{B}_i}, \Psi_j\right\rangle_{L^2}\\
&&\sum_{i=1;i\not=j}^{\nu}\left\langle\left(\Psi_i^*\right)^{p-2}\mathcal{V}_i\left(\gamma_{\mathcal{N},led}+\rho_{**}^{\perp}\right)\chi_{\mathcal{B}_i}, \Psi_j\right\rangle_{L^2}
\end{eqnarray*}
By Lemma~\ref{lem0005} and $(i)$ of $(1)$ of Proposition~\ref{prop0001},
\begin{eqnarray*}
&&\sum_{i=1;i\not=j}^{\nu}\left|\left\langle\left(\Psi_i^*\right)^{p-2}\left(\mathcal{V}_i^2+2\mathcal{V}_i\gamma_{2,ex}\right)\chi_{\mathcal{B}_i}, \Psi_j\right\rangle_{L^2}\right|\\
&\lesssim&\left\{\aligned
&\sum_{i=1;i\not=j}^{\nu}\left\langle\left(\beta_*^2\Psi_i^{2p-1}+\beta_*^2Q\Psi_i^{\frac{3p-1-2\sigma}{2}}\right)\chi_{\mathcal{B}_{i}}, \Psi_j\right\rangle_{L^2},\quad p\geq\frac{7}{3},\\
&\sum_{i=1;i\not=j}^{\nu}\left\langle\left(\beta_*^2\Psi_i^{2p-1}+\beta_*^2Q\Psi_i^{3p-4}\right)\chi_{\mathcal{B}_{i}}, \Psi_j\right\rangle_{L^2},\quad 1<p<\frac{7}{3}
\endaligned\right.\\
&=&o(Q).
\end{eqnarray*}
By Lemma~\ref{lem0005} and $(iii)$ of $(1)$ of Proposition~\ref{prop0001},
\begin{eqnarray*}
&&\sum_{i=1;i\not=j}^{\nu}\left|\left\langle\left(\Psi_i^*\right)^{p-2}\mathcal{V}_i\gamma_{\mathcal{N},led}\chi_{\mathcal{B}_i}, \Psi_j\right\rangle_{L^2}\right|\\
&\lesssim&\left\{\aligned
&\sum_{i=1;i\not=j}^{\nu}\left\langle\beta_*^3\Psi_i^{\frac{3p-1-2\sigma}{2}}\chi_{\mathcal{B}_{i}}, \Psi_j\right\rangle_{L^2},\quad p\geq3,\\
&\sum_{i=1;i\not=j}^{\nu}\left\langle\left(\beta_*^3\Psi_i^{\frac{3p-1-2\sigma}{2}}+\beta_*^4Q\Psi_i^{\frac{5p-7}{2}}\right)\chi_{\mathcal{B}_{i}}, \Psi_j\right\rangle_{L^2},\quad 1<p<3
\endaligned\right.\\
&=&o(Q).
\end{eqnarray*}
By Lemma~\ref{lem0005},
\begin{eqnarray*}
\sum_{i=1;i\not=j}^{\nu}\left|\left\langle\left(\Psi_i^*\right)^{p-2}\mathcal{V}_i\rho_{**}^{\perp}\chi_{\mathcal{B}_i}, \Psi_j\right\rangle_{L^2}\right|
&\lesssim&\sum_{i=1;i\not=j}^{\nu}\beta_*\|\rho_{**}^{\perp}\|\left\|\Psi_{i}^{\frac{3p-3}{2}}\Psi_j\right\|_{L^2(\mathcal{B}_i)}\\
&=&o(Q)+\mathcal{O}\left(\|\rho_{**}^{\perp}\|^{2+\ve}\right).
\end{eqnarray*}
Summarizing the above estimates, we have
\begin{eqnarray*}
\sum_{j=1}^{\nu}\sum_{i=1;i\not=j}^{\nu}\left|\left\langle\left(\Psi_i^*\right)^{p-2}\left(\mathcal{V}_i^2+2\mathcal{V}_i\rho_*\right)\chi_{\mathcal{B}_i}, \Psi_j\right\rangle_{L^2}\right|
=o(Q)+\mathcal{O}\left(\|\rho_{**}^{\perp}\|^{2+\ve}\right).
\end{eqnarray*}

{\bf Step.~5.2}\quad The estimate of $\left\langle\left(\Psi_j^*\right)^{p-2}\left(\mathcal{V}_j^2+2\mathcal{V}_j\rho_*\right)\chi_{\mathcal{C}\backslash\mathcal{B}_j}, \Psi_j\right\rangle_{L^2}$.

By Lemma~\ref{lem0005} and $(1)$ of Proposition~\ref{prop0001},
\begin{eqnarray*}
\left|\left\langle\left(\Psi_j^*\right)^{p-2}\left(\mathcal{V}_j^2+2\mathcal{V}_j\rho_*\right)\chi_{\mathcal{C}\backslash\mathcal{B}_j}, \Psi_j\right\rangle_{L^2}\right|
&\lesssim&\beta_*^2Q^{p}+\beta_*Q^{\frac{3p-1}{4}}\|\rho_{**}^{\perp}\|\\
&&+\beta_*\left\langle\Psi_j^{\frac{3p-1}{2}}\chi_{\mathcal{C}\backslash\mathcal{B}_j}, \left|\gamma_{2,ex}+\gamma_{\mathcal{N},led}\right|\right\rangle_{L^2}.
\end{eqnarray*}
By Lemma~\ref{lem0005} and $(i)$ of $(1)$ of Proposition~\ref{prop0001},
\begin{eqnarray*}
\left\langle\Psi_j^{\frac{3p-1}{2}}\chi_{\mathcal{C}\backslash\mathcal{B}_j}, \left|\gamma_{2,ex}\right|\right\rangle_{L^2}&\lesssim&\left\{\aligned
&\sum_{i=1;i\not=j}^{\nu}\beta_*Q\left\langle\Psi_i^{1-\sigma}\chi_{\mathcal{B}_i}, \Psi_j^{\frac{3p-1}{2}}\right\rangle_{L^2},\quad p\geq\frac{7}{3},\\
&\sum_{i=1;i\not=j}^{\nu}\beta_*Q\left\langle\Psi_i^{\frac{3p-5}{2}}, \Psi_j^{\frac{3p-1}{2}}\right\rangle_{L^2},\quad 1<p<\frac{7}{3}
\endaligned\right.\\
&=&o(Q).
\end{eqnarray*}
By Lemma~\ref{lem0005} and $(iii)$ of $(1)$ of Proposition~\ref{prop0001},
\begin{eqnarray*}
&&\left\langle\Psi_j^{\frac{3p-1}{2}}\chi_{\mathcal{C}\backslash\mathcal{B}_j}, \left|\gamma_{\mathcal{N},led}\right|\right\rangle_{L^2}\\
&\lesssim&\left\{\aligned
&\sum_{i=1;i\not=j}^{\nu}\beta_*^2\left\langle\Psi_i^{1-\sigma}\chi_{\mathcal{B}_i}, \Psi_j^{\frac{3p-1}{2}}\right\rangle_{L^2},\quad p\geq3,\\
&\sum_{i=1;i\not=j}^{\nu}\left(\beta_*^2\left\langle\Psi_i^{1-\sigma}\chi_{\mathcal{B}_i}, \Psi_j^{\frac{3p-1}{2}}\right\rangle_{L^2}+\beta_*^3Q\left\langle\Psi_i^{p-2}\chi_{\mathcal{B}_i}, \Psi_j^{\frac{3p-1}{2}}\right\rangle_{L^2}\right),\quad 1<p<3
\endaligned\right.\\
&=&o(Q).
\end{eqnarray*}
Summarizing the above estimates, we have
\begin{eqnarray*}
\left\langle\left(\Psi_j^*\right)^{p-2}\left(\mathcal{V}_j^2+2\mathcal{V}_j\rho_*\right)\chi_{\mathcal{C}\backslash\mathcal{B}_j}, \Psi_j\right\rangle_{L^2}=o(Q)+\mathcal{O}\left(\|\rho_{**}^{\perp}\|^{2+\ve}\right).
\end{eqnarray*}

{\bf Step.~5.3}\quad The estimate of $\sum_{i=1;i\not=j}^{\nu}\left\langle\left(\Psi_i^*\right)^{p-3}\mathcal{V}_i^2\rho_*\chi_{\mathcal{B}_i}, \Psi_j\right\rangle_{L^2}$.

By $(i)$ of $(1)$ of Proposition~\ref{prop0001},
\begin{eqnarray*}
\sum_{i=1;i\not=j}^{\nu}\left\langle\left(\Psi_i^*\right)^{p-3}\mathcal{V}_i^2\rho_*\chi_{\mathcal{B}_i}, \Psi_j\right\rangle_{L^2}
&=&\sum_{i=1;i\not=j}^{\nu}\left\langle\left(\Psi_i^*\right)^{p-3}\mathcal{V}_i^2\left(\gamma_{\mathcal{N},led}+\gamma_{*}\right)\chi_{\mathcal{B}_i}, \Psi_j\right\rangle_{L^2}\\
&&+\sum_{i=1;i\not=j}^{\nu}\left\langle\left(\Psi_i^*\right)^{p-3}\mathcal{V}_i^2\left(\gamma_{1,ex}+\rho_{**}^{\perp}\right)\chi_{\mathcal{B}_i}, \Psi_j\right\rangle_{L^2}.
\end{eqnarray*}
By Lemma~\ref{lem0005} and $(i)$ of $(1)$ of Proposition~\ref{prop0001},
\begin{eqnarray*}
\sum_{i=1;i\not=j}^{\nu}\left|\left\langle\left(\Psi_i^*\right)^{p-3}\mathcal{V}_i^2\gamma_{1,ex}\chi_{\mathcal{B}_i}, \Psi_j\right\rangle_{L^2}\right|
&\lesssim&\left\{\aligned
&\beta_*^2Q\left\langle\Psi_i^{2p-1-\sigma}\chi_{\mathcal{B}_{i}}, \Psi_j\right\rangle_{L^2},\quad p\geq3,\\
&\beta_*^2Q\left\langle\Psi_i^{3p-4}\chi_{\mathcal{B}_{i}}, \Psi_j\right\rangle_{L^2},\quad 1<p<3
\endaligned\right.\notag\\
&=&o(Q).
\end{eqnarray*}
By Lemma~\ref{lem0005} and $(ii)$ of $(1)$ of Proposition~\ref{prop0001},
\begin{eqnarray*}
&&\sum_{i=1;i\not=j}^{\nu}\left|\left\langle\left(\Psi_i^*\right)^{p-3}\mathcal{V}_i^2\gamma_{*}\chi_{\mathcal{B}_i}, \Psi_j\right\rangle_{L^2}\right|\notag\\
&\lesssim&\sum_{i=1;i\not=j}^{\nu}\beta_*^2\left(\beta_*^2+\sum_{l=1}^{\nu}\left(\left|\left(\alpha_l^*\right)^{p-1}-1\right|+Q\right)\right)\left\langle\Psi_i^{2p-1-\sigma}\chi_{\mathcal{B}_i}, \Psi_j\right\rangle_{L^2}\notag\\
&=&o(Q).
\end{eqnarray*}
By Lemma~\ref{lem0005} and $(iii)$ of $(1)$ of Proposition~\ref{prop0001},
\begin{eqnarray*}
\sum_{i=1;i\not=j}^{\nu}\left|\left\langle\left(\Psi_i^*\right)^{p-3}\mathcal{V}_i^2\gamma_{\mathcal{N},led}\chi_{\mathcal{B}_i}, \Psi_j\right\rangle_{L^2}\right|&\lesssim&\sum_{i=1;i\not=j}^{\nu}\beta_*^4\left\langle\Psi_i^{2p-1-\sigma}\chi_{\mathcal{B}_i}, \Psi_j\right\rangle_{L^2}\notag\\
&=&o(Q).
\end{eqnarray*}
By Lemma~\ref{lem0005},
\begin{eqnarray*}
\sum_{i=1;i\not=j}^{\nu}\left|\left\langle\left(\Psi_i^*\right)^{p-3}\mathcal{V}_i^2\rho_{**}^{\perp}\chi_{\mathcal{B}_i}, \Psi_j\right\rangle_{L^2}\right|&\lesssim&\beta_*^2\|\rho_{**}^{\perp}\|\left\|\Psi_i^{2p-2}\Psi_j\right\|_{L^2(\mathcal{B}_i)}\\
&=&o(Q)+\mathcal{O}\left(\|\rho_{**}^{\perp}\|^{2+\ve}\right).
\end{eqnarray*}
Summarizing the above estimates, we have
\begin{eqnarray*}
\sum_{i=1;i\not=j}^{\nu}\left\langle\left(\Psi_i^*\right)^{p-3}\mathcal{V}_i^2\rho_*\chi_{\mathcal{B}_i}, \Psi_j\right\rangle_{L^2}=o(Q)+\mathcal{O}\left(\|\rho_{**}^{\perp}\|^{2+\ve}\right).
\end{eqnarray*}

{\bf Step.~5.4}\quad The estimate of $\left\langle\left(\Psi_j^*\right)^{p-3}\mathcal{V}_j^2\rho_*\chi_{\mathcal{C}\backslash\mathcal{B}_j}, \Psi_j\right\rangle_{L^2}$.

By $(i)$ of $(1)$ of Proposition~\ref{prop0001},
\begin{eqnarray*}
\left\langle\left(\Psi_j^*\right)^{p-3}\mathcal{V}_j^2\rho_*\chi_{\mathcal{C}\backslash\mathcal{B}_j}, \Psi_j\right\rangle_{L^2}
&=&\left\langle\left(\Psi_i^*\right)^{p-3}\mathcal{V}_i^2\left(\gamma_{\mathcal{N},led}+\gamma_{*}\right)\chi_{\mathcal{C}\backslash\mathcal{B}_j}, \Psi_j\right\rangle_{L^2}\\
&&+\left\langle\left(\Psi_i^*\right)^{p-3}\mathcal{V}_i^2\left(\gamma_{1,ex}+\rho_{**}^{\perp}\right)\chi_{\mathcal{C}\backslash\mathcal{B}_j}, \Psi_j\right\rangle_{L^2}.
\end{eqnarray*}
By Lemma~\ref{lem0005} and $(i)$ of $(1)$ of Proposition~\ref{prop0001},
\begin{eqnarray*}
\left|\left\langle\left(\Psi_j^*\right)^{p-3}\mathcal{V}_j^2\gamma_{1,ex}\chi_{\mathcal{C}\backslash\mathcal{B}_j}, \Psi_j\right\rangle_{L^2}\right|&\lesssim&\left\{\aligned
&\sum_{i=1;i\not=j}^{\nu}\beta_*^2Q\left\langle\Psi_i^{1-\sigma}\chi_{\mathcal{B}_{i}}, \Psi_j^{2p-1}\right\rangle_{L^2},\quad p\geq3,\\
&\sum_{i=1;i\not=j}^{\nu}\beta_*^2Q\left\langle\Psi_i^{p-2}\chi_{\mathcal{B}_{i}}, \Psi_j^{2p-1}\right\rangle_{L^2},\quad 1<p<3,\\
\endaligned\right.\\
&=&o(Q).
\end{eqnarray*}
By Lemma~\ref{lem0005} and $(ii)$ of $(1)$ of Proposition~\ref{prop0001},
\begin{eqnarray*}
&&\left|\left\langle\left(\Psi_j^*\right)^{p-3}\mathcal{V}_j^2\gamma_{*}\chi_{\mathcal{C}\backslash\mathcal{B}_j}, \Psi_j\right\rangle_{L^2}\right|\\
&\lesssim&
\sum_{i=1;i\not=j}^{\nu}\beta_*^2\left(\beta_*^2+\sum_{l=1}^{\nu}\left|\left(\alpha_l^*\right)^{p-1}-1\right|+Q\right)\left\langle\Psi_i^{1-\sigma}\chi_{\mathcal{B}_{i}}, \Psi_j^{2p-1}\right\rangle_{L^2}\\
&=&o(Q).
\end{eqnarray*}
By Lemma~\ref{lem0005} and $(iii)$ of $(1)$ of Proposition~\ref{prop0001},
\begin{eqnarray*}
\left|\left\langle\left(\Psi_j^*\right)^{p-3}\mathcal{V}_j^2\gamma_{\mathcal{N},led}\chi_{\mathcal{C}\backslash\mathcal{B}_j}, \Psi_j\right\rangle_{L^2}\right|=o(Q).
\end{eqnarray*}
By Lemma~\ref{lem0005},
\begin{eqnarray*}
\left|\left\langle\left(\Psi_j^*\right)^{p-3}\mathcal{V}_j^2\rho_{**}^{\perp}\chi_{\mathcal{C}\backslash\mathcal{B}_j}, \Psi_j\right\rangle_{L^2}\right|
=o(Q)+\mathcal{O}\left(\|\rho_{**}^{\perp}\|^{2+\ve}\right).
\end{eqnarray*}
Summarizing the above estimates, we have
\begin{eqnarray*}
\left|\left\langle\left(\Psi_j^*\right)^{p-3}\mathcal{V}_j^2\rho_*\chi_{\mathcal{C}\backslash\mathcal{B}_j}, \Psi_j\right\rangle_{L^2}\right|=o(Q)+\mathcal{O}\left(\|\rho_{**}^{\perp}\|^{2+\ve}\right).
\end{eqnarray*}

{\bf Step.~5.5}\quad The estimate of $\sum_{i=1}^{\nu}\left\langle\beta_*\rho_*\Psi_{i}^{\frac{3p-5}{2}}\mathcal{U}_i\chi_{\mathcal{B}_i}, \Psi_j\right\rangle_{L^2}$.

By $(1)$ of Proposition~\ref{prop0001},
\begin{eqnarray*}
\sum_{i=1}^{\nu}\left\langle\beta_*\rho_*\Psi_{i}^{\frac{3p-5}{2}}\mathcal{U}_i\chi_{\mathcal{B}_i}, \Psi_j\right\rangle_{L^2}
&=&\sum_{i=1}^{\nu}\beta_*\left\langle\Psi_{i}^{\frac{3p-3}{2}}\mathcal{U}_{i}\chi_{\mathcal{B}_{i}}, \gamma_{\mathcal{N},led}+\gamma_{*}\right\rangle_{L^2}\\
&&+\sum_{i=1}^{\nu}\beta_*\left\langle\Psi_{i}^{\frac{3p-3}{2}}\mathcal{U}_{i}\chi_{\mathcal{B}_{i}}, \gamma_{ex}+\rho_{**}^{\perp}\right\rangle_{L^2}.
\end{eqnarray*}
By Lemma~\ref{lem0005} and $(i)$ of $(1)$ of Proposition~\ref{prop0001},
\begin{eqnarray*}
\sum_{i=1}^{\nu}\beta_*\left|\left\langle\Psi_{i}^{\frac{3p-3}{2}}\mathcal{U}_{i}\chi_{\mathcal{B}_{i}}, \gamma_{ex}\right\rangle_{L^2}\right|
&\lesssim&\left\{\aligned
&\sum_{i=1}^{\nu}\beta_*Q\left\langle\Psi_{i}^{\frac{3p-1-2\sigma}{2}}\chi_{\mathcal{B}_{i}},\mathcal{U}_{i}\right\rangle_{L^2},\quad p\geq3,\\
&\sum_{i=1}^{\nu}\beta_*Q\left\langle\Psi_{i}^{\frac{5p-7}{2}}\chi_{\mathcal{B}_{i}},\mathcal{U}_{i}\right\rangle_{L^2},\quad 1<p<3
\endaligned\right.\\
&=&o(Q).
\end{eqnarray*}
By Lemma~\ref{lem0005} and $(ii)$ of $(1)$ of Proposition~\ref{prop0001},
\begin{eqnarray*}
&&\sum_{i=1}^{\nu}\beta_*\left|\left\langle\Psi_{i}^{\frac{3p-3}{2}}\mathcal{U}_{i}\chi_{\mathcal{B}_{i}}, \gamma_{*}\right\rangle_{L^2}\right|\\
&\lesssim&\sum_{i=1}^{\nu}\beta_*\left(\beta_*^2+\sum_{l=1}^{\nu}\left|\left(\alpha_l^*\right)^{p-1}-1\right|+Q\right)\left\langle\Psi_{i}^{\frac{3p-1-2\sigma}{2}},\mathcal{U}_i\chi_{\mathcal{B}_i}\right\rangle_{L^2}\\
&=&o(Q).
\end{eqnarray*}
By Lemma~\ref{lem0005} and $(iii)$ of $(1)$ of Proposition~\ref{prop0001},
\begin{eqnarray*}
\sum_{i=1}^{\nu}\beta_*\left|\left\langle\Psi_{i}^{\frac{3p-3}{2}}\mathcal{U}_{i}\chi_{\mathcal{B}_{i}}, \gamma_{\mathcal{N},led}\right\rangle_{L^2}\right|
=o(Q).
\end{eqnarray*}
By Lemma~\ref{lem0005},
\begin{eqnarray*}
\sum_{i=1}^{\nu}\beta_*\left|\left\langle\Psi_{i}^{\frac{3p-3}{2}}\mathcal{U}_{i}\chi_{\mathcal{B}_{i}}, \rho_{**}^{\perp}\right\rangle_{L^2}\right|&\lesssim&\sum_{i=1}^{\nu}\|\rho_{**}^{\perp}\|\left\|\Psi_{i}^{\frac{3p-3}{2}}\mathcal{U}_{i}\right\|_{L^2(\mathcal{B}_i)}\\
&=&o(Q)+\mathcal{O}\left(\|\rho_{**}^{\perp}\|^{2+\ve}\right).
\end{eqnarray*}
Summarizing the above estimates, we have
\begin{eqnarray*}
\sum_{i=1}^{\nu}\left|\left\langle\beta_*\rho_*\Psi_{i}^{\frac{3p-5}{2}}\mathcal{U}_i\chi_{\mathcal{B}_i}, \Psi_j\right\rangle_{L^2}\right|
=o(Q)+\mathcal{O}\left(\|\rho_{**}^{\perp}\|^{2+\ve}\right).
\end{eqnarray*}

{\bf Step.~5.6}\quad The estimate of $\sum_{i=1}^{\nu}\beta_*^2\left\langle\Psi_{i}^{2p-2}\mathcal{U}_i\chi_{\mathcal{B}_i}, \Psi_j\right\rangle_{L^2}$.

By Lemma~\ref{lem0005},
\begin{eqnarray*}
\sum_{i=1}^{\nu}\beta_*^2\left|\left\langle\Psi_{i}^{2p-2}\mathcal{U}_i\chi_{\mathcal{B}_i}, \Psi_j\right\rangle_{L^2}\right|\lesssim\sum_{i=1}^{\nu}\beta_*^2\left\langle\Psi_{i}^{2p-1}\chi_{\mathcal{B}_{i}}, \mathcal{U}_{i}\right\rangle_{L^2}=o(Q).
\end{eqnarray*}

{\bf Step.~5.7}\quad The estimate of $\left\langle\left(\mathcal{U}^{p-2}\mathcal{V}^{2}+\beta_*\rho_*\mathcal{U}^{\frac{3(p-1)}{2}}\right)\chi_{\mathcal{C}\backslash\left(\cup_{j=1}^{\nu}\mathcal{B}_j\right)}, \Psi_j\right\rangle_{L^2}$.

By $(1)$ of Proposition~\ref{prop0001},
\begin{eqnarray*}
&&\left|\left\langle\left(\mathcal{U}^{p-2}\mathcal{V}^{2}+\beta_*\rho_*\mathcal{U}^{\frac{3(p-1)}{2}}\right)\chi_{\mathcal{C}\backslash\left(\cup_{j=1}^{\nu}\mathcal{B}_j\right)}, \Psi_j\right\rangle_{L^2}\right|\\
&\lesssim&\left|\left\langle\left(\mathcal{U}^{p-2}\mathcal{V}^{2}+\beta_*|\gamma_{ex}+\gamma_{*}+\gamma_{\mathcal{N},led}|\mathcal{U}^{\frac{3(p-1)}{2}}\right)\chi_{\mathcal{C}\backslash\left(\cup_{j=1}^{\nu}\mathcal{B}_j\right)}, \Psi_j\right\rangle_{L^2}\right|\\
&&+\left|\left\langle\beta_*|\rho_{**}^{\perp}|\mathcal{U}^{\frac{3(p-1)}{2}}\chi_{\mathcal{C}\backslash\left(\cup_{j=1}^{\nu}\mathcal{B}_j\right)}, \Psi_j\right\rangle_{L^2}\right|.
\end{eqnarray*}
By Lemma~\ref{lem0005} and $(1)$ of Proposition~\ref{prop0001},
\begin{eqnarray*}
&&\left|\left\langle\left(\mathcal{U}^{p-2}\mathcal{V}^{2}+\beta_*|\gamma_{ex}+\gamma_{*}+\gamma_{\mathcal{N},led}|\mathcal{U}^{\frac{3(p-1)}{2}}\right)\chi_{\mathcal{C}\backslash\left(\cup_{j=1}^{\nu}\mathcal{B}_j\right)}, \Psi_j\right\rangle_{L^2}\right|\\
&\lesssim&\beta_*\left(Q+\beta_*^2+\sum_{l=1}^{\nu}\left(\left|\left(\alpha_l^*\right)^{p-1}-1\right|\right)\right)\left\|\mathcal{U}^{\frac{3p+1-2\sigma}{2}}\right\|_{L^1\left(\mathcal{C}\backslash\left(\cup_{j=1}^{\nu}\mathcal{B}_j\right)\right)}\\
&&+\beta_*^2\left\|\mathcal{U}^{2p}\right\|_{L^1\left(\mathcal{C}\backslash\left(\cup_{j=1}^{\nu}\mathcal{B}_j\right)\right)}\\
&=&o(Q).
\end{eqnarray*}
By Lemma~\ref{lem0005},
\begin{eqnarray*}
\left|\left\langle\beta_*|\rho_{**}^{\perp}|\mathcal{U}^{\frac{3(p-1)}{2}}\chi_{\mathcal{C}\backslash\left(\cup_{j=1}^{\nu}\mathcal{B}_j\right)}, \Psi_j\right\rangle_{L^2}\right|&\lesssim&\beta_*\|\rho_{**}^{\perp}\|\left\|\mathcal{U}^{\frac{3p-1}{2}}\right\|_{L^2\left(\mathcal{C}\backslash\left(\cup_{j=1}^{\nu}\mathcal{B}_j\right)\right)}\\
&=&o(Q)+\mathcal{O}\left(\|\rho_{**}^{\perp}\|^{2+\ve}\right).
\end{eqnarray*}
Summarizing the above estimates, we have
\begin{eqnarray*}
\left\langle\left(\mathcal{U}^{p-2}\mathcal{V}^{2}+\beta_*|\rho_*|\mathcal{U}^{\frac{3(p-1)}{2}}\right)\chi_{\mathcal{C}\backslash\left(\cup_{j=1}^{\nu}\mathcal{B}_j\right)}, \Psi_j\right\rangle_{L^2}=o(Q)+\mathcal{O}\left(\|\rho_{**}^{\perp}\|^{2+\ve}\right).
\end{eqnarray*}

{\bf Step.~5.8}\quad The estimate of $\left\langle\mathcal{N}_{0}, \Psi_j\right\rangle_{L^2}$.

By \eqref{eqn0191}, \eqref{eqn0190} and Lemma~\ref{lemn0001},
\begin{eqnarray*}
\left|\left\langle\mathcal{N}_{0}, \Psi_j\right\rangle_{L^2}\right|\lesssim\left\langle\mathcal{U}^{p-2-\sigma}\Psi_j, \gamma_{ex}^2+\left|\gamma_{\mathcal{N},led}+\gamma_{*}\right|^2\right\rangle_{L^2}+\|\rho_{**}^{\perp}\|^2.
\end{eqnarray*}
By Lemma~\ref{lem0005} and $(i)$ of $(1)$ of Proposition~\ref{prop0001},
\begin{eqnarray}\label{eqnew9998}
\left\langle\mathcal{U}^{p-2-\sigma}\Psi_j, \gamma_{ex}^2\right\rangle_{L^2}
&\lesssim&\left\{\aligned
&\sum_{i=1}^{\nu}Q^2\left\langle \Psi_i^{p-3\sigma}, \Psi_j\right\rangle_{L^2(\mathcal{B}_i)},\quad p\geq3,\\
&\sum_{i=1}^{\nu}Q^2\left\langle \Psi_i^{3(p-2)-\sigma}, \Psi_j\right\rangle_{L^2(\mathcal{B}_i)},\quad 1<p<3
\endaligned\right.\notag\\
&=&o(Q).
\end{eqnarray}
By Lemma~\ref{lem0005} and $(ii)$ of $(1)$ of Proposition~\ref{prop0001},
\begin{eqnarray*}
\left\langle\mathcal{U}^{p-2}\Psi_j, \left|\gamma_{*}\right|^2\right\rangle_{L^2}
\lesssim\left(Q+\beta_*^2+\sum_{l=1}^{\nu}\left|\left(\alpha_l^*\right)^{p-1}-1\right|\right)^{2}.
\end{eqnarray*}
By Lemma~\ref{lem0005} and $(iii)$ of $(1)$ of Proposition~\ref{prop0001},
\begin{eqnarray*}
\left\langle\mathcal{U}^{p-2}\Psi_j, \gamma_{\mathcal{N},led}^2\right\rangle_{L^2}=o(Q)+\mathcal{O}\left(\beta_*^2+\sum_{l=1}^{\nu}\left|\left(\alpha_l^*\right)^{p-1}-1\right|\right)^{2}.
\end{eqnarray*}
Thus, summarizing the above estimates, we have
\begin{eqnarray*}
\left|\left\langle\mathcal{N}_{0}, \Psi_j\right\rangle_{L^2}\right|\lesssim\|\rho_{**}^{\perp}\|^{2}+o(Q)+\left(\beta_*^2+\sum_{l=1}^{\nu}\left|\left(\alpha_l^*\right)^{p-1}-1\right|\right)^{2}.
\end{eqnarray*}

{\bf Step.~5.9}\quad The estimate of $\left\langle\overline{\mathcal{N}}_{rem}, \Psi_j\right\rangle_{L^2}$.

By \eqref{eqnewnew3345}, Lemma~\ref{lemn0001} and $(ii)$ and $(iii)$ of Proposition~\ref{prop0001},
\begin{eqnarray*}
\left|\left\langle\overline{\mathcal{N}}_{rem}, \Psi_j\right\rangle_{L^2}\right|\lesssim\|\rho_{**}^{\perp}\|^{1+\ve}+\left(\beta_*+\sum_{l=1}^{\nu}\left|\left(\alpha_l^*\right)^{p-1}-1\right|\right)^{4}+o(Q).
\end{eqnarray*}

By summarizing the estimates from Step.~5.1 to Step.~5.9, we have
\begin{eqnarray*}
\left|\left\langle \mathcal{N}-\mathcal{N}_{j}, \Psi_j\right\rangle_{L^2}\right|
\lesssim\|\rho_{**}^{\perp}\|^{1+\ve}+o\left(Q\right)+\left(\beta_*^2+\sum_{l=1}^{\nu}\left|\left(\alpha_l^*\right)^{p-1}-1\right|\right)^2.
\end{eqnarray*}

The conclusion follows from the estimates from Step.~1 to Step.5.
\end{proof}

\section{Final expansion of $\mathcal{N}$ and estimates of $Q$}
Again, we emphasize that we need to eliminate the lower order terms (compared to the $\beta_*^4$ terms) in the data $\mathcal{R}_{new,0}$ which is given in $(2)$ of Proposition~\ref{prop0001} to get the desired stability inequality.
Thus, we need to finally further expand $\overline{\mathcal{N}}_{rem}$, the remaining term in the expansion of $\mathcal{N}$ given by Lemma~\ref{lemn0001}, into higher order terms.
\begin{lemma}\label{lemn0002}
Let $d\geq2$, $a<0$ and $b=b_{FS}(a)$.  Then $\overline{\mathcal{N}}_{rem}$, the remaining term in the expansion of $\mathcal{N}$ given by Lemma~\ref{lemn0001}, can be further expanded as follows:
\begin{eqnarray*}
\overline{\mathcal{N}}_{rem}&=&C_p\mathcal{U}^{p-4}\left(\overline{\mathcal{V}}+\rho_{**}^{\perp}\right)^4+D_p\mathcal{U}^{p-5}\left(\overline{\mathcal{V}}+\rho_{**}^{\perp}\right)^5\\
&&+\mathcal{O}\left(\mathcal{U}^{p-6\sigma}\left(\beta_*+Q^{\frac{2\wedge p}{2}}+\sum_{j=1}^{\nu}\left|\left(\alpha_{j}^{*}\right)^{p-1}-1\right|\right)^6\right)\notag\\
&&+\mathcal{O}\left(\chi_{p\geq2}|\rho_{**}^{\perp}|^{2}+|\rho_{**}^{\perp}|^{p}\right)\\
&=&C_p\mathcal{U}^{p-4}\left(\overline{\mathcal{V}}^4+4\overline{\mathcal{V}}^3\rho_{**}^{\perp}\right)+D_p\mathcal{U}^{p-5}\left(\overline{\mathcal{V}}^5+5\overline{\mathcal{V}}^4\rho_{**}^{\perp}\right)\\
&&+\mathcal{O}\left(\mathcal{U}^{p-6\sigma}\left(\beta_*+Q^{\frac{2\wedge p}{2}}\left|\log Q\right|+\sum_{j=1}^{\nu}\left|\left(\alpha_{j}^{*}\right)^{p-1}-1\right|\right)^6\right)\notag\\
&&+\mathcal{O}\left(\chi_{p\geq2}|\rho_{**}^{\perp}|^{2}+|\rho_{**}^{\perp}|^{p}+|\gamma_*+\gamma_{\mathcal{N},led}|^{1+\ve}\chi_{\mathcal{C}\backslash\overline{\mathcal{B}}_*}\right)
\end{eqnarray*}
in $\mathcal{C}$ where $\overline{\mathcal{V}}=\mathcal{V}+\rho_{0}$ with $\mathcal{V}$ given by \eqref{eqn0040}, $\rho_0$ given in Proposition~\ref{prop0001}, $C_p=\frac{p(p-1)(p-2)(p-3)}{24}$, $D_p=\frac{p(p-1)(p-2)(p-3)(p-4)}{120}$ and and $\sigma$ and $\overline{\mathcal{B}}_*$ are given by \eqref{eqnewnew3345}.
\end{lemma}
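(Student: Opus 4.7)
My plan is to mimic the proof of Lemma~\ref{lemn0001} but push the Taylor expansion of the nonlinearity two orders further, so that the $\mathcal{O}(\mathcal{U}^{p-4\sigma}(\cdots)^4)$ remainder there is now split into the explicit $C_p$ (fourth-order) and $D_p$ (fifth-order) terms plus a new $\mathcal{O}(\mathcal{U}^{p-6\sigma}(\cdots)^6)$ remainder. Recall that since $\rho_* = \rho_0 + \rho_{**}^{\perp}$ and $\overline{\mathcal{V}} = \mathcal{V} + \rho_0$, one has $\mathcal{V}+\rho_* = \overline{\mathcal{V}} + \rho_{**}^{\perp}$, so
\begin{eqnarray*}
\mathcal{N} = (\mathcal{U}+\overline{\mathcal{V}}+\rho_{**}^{\perp})^p - \mathcal{U}^p - p\mathcal{U}^{p-1}(\overline{\mathcal{V}}+\rho_{**}^{\perp}),
\end{eqnarray*}
and the first expansion in Lemma~\ref{lemn0001} is precisely the second- and third-order Taylor jets of $(\mathcal{U}+y)^p - \mathcal{U}^p - p\mathcal{U}^{p-1}y$ at $y=0$ evaluated at $y = \overline{\mathcal{V}}+\rho_{**}^{\perp}$. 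Hence $\overline{\mathcal{N}}_{rem}$ is exactly the remainder after the third-order jet.

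First, I would decompose $\mathcal{C}$ into the region $\mathcal{A}_*$ (where $|\rho_{**}^{\perp}|\leq|\overline{\mathcal{V}}|$ inside $\overline{\mathcal{B}}_*$) and its complement, as in the proof of Lemma~\ref{lemn0001}. On $\mathcal{A}_*$, both $|\overline{\mathcal{V}}|/\mathcal{U}$ and $|\rho_{**}^{\perp}|/\mathcal{U}$ are small, so applying Taylor's theorem with integral remainder up to fifth order in $y$ yields
\begin{eqnarray*}
\overline{\mathcal{N}}_{rem} = C_p\mathcal{U}^{p-4}(\overline{\mathcal{V}}+\rho_{**}^{\perp})^4 + D_p\mathcal{U}^{p-5}(\overline{\mathcal{V}}+\rho_{**}^{\perp})^5 + \mathcal{O}\left(\mathcal{U}^{p-6}(\overline{\mathcal{V}}+\rho_{**}^{\perp})^6\right).
\end{eqnarray*}
Combining $(i)$--$(iii)$ of Proposition~\ref{prop0001} with the choice of $\sigma$ in \eqref{eqnewnew3345}, one has the pointwise estimate
\begin{eqnarray*}
|\overline{\mathcal{V}}| \lesssim \mathcal{U}^{1-\sigma}\left(\beta_* + Q^{\frac{2\wedge p}{2}}|\log Q| + \sum_{j=1}^{\nu}|(\alpha_j^*)^{p-1}-1|\right)
\end{eqnarray*}
on $\overline{\mathcal{B}}_*$, so $\mathcal{U}^{p-6}|\overline{\mathcal{V}}|^6 \lesssim \mathcal{U}^{p-6\sigma}(\cdots)^6$, giving the first stated form of the expansion on $\mathcal{A}_*$. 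On $\mathcal{A}_*^c$ I would argue exactly as in Lemma~\ref{lemn0001}: either $|\rho_{**}^{\perp}| \geq |\overline{\mathcal{V}}|$ and then $|\mathcal{N}|\lesssim \chi_{p\geq2}|\rho_{**}^{\perp}|^2 + |\rho_{**}^{\perp}|^p$ controls everything, or we are outside $\overline{\mathcal{B}}_*$ with $|\gamma_*+\gamma_{\mathcal{N},led}| \geq \frac12\mathcal{U}$, giving the $|\gamma_*+\gamma_{\mathcal{N},led}|^{1+\varepsilon}$ piece of the remainder.

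For the second form, I would expand the binomial
\begin{eqnarray*}
(\overline{\mathcal{V}}+\rho_{**}^{\perp})^k = \overline{\mathcal{V}}^k + k\overline{\mathcal{V}}^{k-1}\rho_{**}^{\perp} + \sum_{m=2}^{k}\binom{k}{m}\overline{\mathcal{V}}^{k-m}(\rho_{**}^{\perp})^m
\end{eqnarray*}
for $k=4,5$ and absorb the cross-terms with $m\geq2$ into the error. Using $|\overline{\mathcal{V}}|\lesssim\mathcal{U}^{1-\sigma}(\cdots)$, a weighted Young inequality turns each cross-term $\mathcal{U}^{p-k}|\overline{\mathcal{V}}|^{k-m}|\rho_{**}^{\perp}|^m$ into a combination of $\mathcal{U}^{p-6\sigma}(\cdots)^6$ and the $\chi_{p\geq2}|\rho_{**}^{\perp}|^2 + |\rho_{**}^{\perp}|^p$ pieces, which are already in the error budget.

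The main obstacle I anticipate is the case $1<p<2$, where the Taylor expansion of $(1+y)^p$ has a finite radius of convergence and the pointwise fifth-order remainder $\mathcal{O}(\mathcal{U}^{p-6}y^6)$ breaks down when $|y|$ is not small compared to $\mathcal{U}$; this is precisely what forces the introduction of the $|\rho_{**}^{\perp}|^p$ and $|\gamma_*+\gamma_{\mathcal{N},led}|^{1+\varepsilon}\chi_{\mathcal{C}\backslash\overline{\mathcal{B}}_*}$ absorption pieces, and one has to keep track of these carefully on $\mathcal{A}_*^c$ since the $\sigma$ chosen in \eqref{eqnewnew3345} must simultaneously control $\overline{\mathcal{V}}$ and all the lower order cross-terms generated by the binomial expansion.
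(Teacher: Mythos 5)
Your proposal is correct and takes essentially the same approach as the paper. The paper's own proof of this lemma is only a single sentence --- it states that the result follows from a direct application of the Taylor expansion to $\mathcal{N}$ in the set $\mathcal{A}_*$ up to the sixth-order term --- and your write-up is a faithful, more detailed implementation of exactly that remark, with the same splitting of $\mathcal{C}$ into $\mathcal{A}_*$ and its complement, the same pointwise bound on $\overline{\mathcal{V}}$ from Proposition~\ref{prop0001} and \eqref{eqnewnew3345}, and the same absorption of cross-terms.
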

\begin{proof}
The proof is a direct application of the Taylor expansion to $\mathcal{N}$ in the set $\mathcal{A}_*$, which is introduced in the proof of Lemma~\ref{lemn0001}, up to the sixth order term.
\end{proof}

\vskip0.12in

By multiplying \eqref{eq0014} with $\partial_t\Psi_j$ on both sides and integrating by parts, the orthogonal conditions of $\rho_*$ given in \eqref{eq0014} and the oddness of $\{\mathcal{V}_i\}$ on $\mathbb{S}^{d-1}$ and $\partial_t\Psi_j$ in $\bbr$, we have
\begin{eqnarray}\label{eqn0022}
-\left\langle f, \sum_{j=1}^{\nu}\partial_t\Psi_j\right\rangle_{H^1}&=&\sum_{j=1}^{\nu}\sum_{i=1;i\not=j}^{\nu}\left\langle \mathcal{R}_{1,i}, \partial_t\Psi_j\right\rangle_{L^2}+\sum_{j=1}^{\nu}\left\langle \mathcal{L}_{j,ex}(\rho_*), \partial_t\Psi_j\right\rangle_{L^2}\notag\\
&&+\sum_{j=1}^{\nu}\left\langle \mathcal{N}, \partial_t\Psi_j\right\rangle_{L^2}+\sum_{j=1}^{\nu}\left\langle \mathcal{R}_{1,ex}, \partial_t\Psi_j\right\rangle_{L^2}.
\end{eqnarray}
In what follows, we shall derive the estimate of $Q$ from \eqref{eqn0022}.
\begin{proposition}\label{propn0002}
Let $d\geq2$, $a<0$ and $b=b_{FS}(a)$.  Then we have
\begin{eqnarray*}
Q=\mathcal{O}\left(\beta_*^{6}+\|\rho_{**}^{\perp}\|^{1+\ve}+\|f\|_{H^{-1}}\right).
\end{eqnarray*}
\end{proposition}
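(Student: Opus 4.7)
The plan is to test the equation \eqref{eq0014} satisfied by $\rho_*$ against $\partial_t\Psi_j$ and sum over $j$, arriving at identity \eqref{eqn0022}. This is the natural analogue of the argument behind Proposition~\ref{propn0001}; the roles played there by the test functions $\Psi_j$ are now played by $\partial_t\Psi_j$. The LHS of \eqref{eqn0022} is controlled directly by $\|f\|_{H^{-1}}$ times the uniformly bounded norm of $\sum_j\partial_t\Psi_j$, and the four sums on the RHS will be estimated one by one.

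The central new feature compared to Proposition~\ref{propn0001} is that $\partial_t\Psi_j$ is odd in $t-s_j^*$, while every symmetric piece of the ansatz, namely $(\Psi_j^*)^k$, $\mathcal{V}_j^m$ (whose angular factor is odd in $\theta$ but whose radial factor $\Psi_j^{(p+1)/2}$ is even in $t-s_j^*$), $\gamma_{j,*}$ and $\gamma_{\mathcal{N},led,j,*}$, is even in $t-s_j^*$; simultaneously, $\mathcal{V}_j^{2m+1}$, $\gamma_{2,ex}$ and $\gamma_{\mathcal{N},led,j,**}$ are odd in $\theta$. These are precisely the parity statements recorded in Proposition~\ref{prop0001} and Lemma~\ref{lem0010}. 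Inserting Lemmas~\ref{lemn0001}--\ref{lemn0002} and expanding, every single-bubble monomial $(\Psi_j^*)^{p-k}\mathcal{V}_j^k$ of order $\beta_*^k$ for $k=2,3,4,5$ that produced the $\beta_*^2$ term in Proposition~\ref{propn0001} now vanishes: even powers by $t$-parity, odd powers by $\theta$-parity. The first surviving purely-nonlinear contribution from Lemma~\ref{lemn0002} is of order $\beta_*^6$, plus pieces of size $\mathcal{O}(\|\rho_{**}^{\perp}\|^{1+\ve})$ coming from cross terms with $\rho_{**}^{\perp}$.

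The $Q$-contribution is extracted from $\sum_j\langle\mathcal{R}_{1,ex},\partial_t\Psi_j\rangle_{L^2}$. Writing $\mathcal{R}_{1,ex}\approx p(\Psi_j^*)^{p-1}\mathcal{U}_j$ in each $\mathcal{B}_j$, integrating by parts in $t$, and using $\partial_t\Psi(t-s_{j\pm1}^*)\approx\mp\sqrt{\Lambda_{FS}}\Psi(t-s_{j\pm1}^*)$ on the support of $\Psi_j^{p-1}$ (together with Lemma~\ref{lem0005} to control boundary contributions), this integral is shown to produce, after summation, a leading term which is a definite multiple of $Q$, up to corrections of size $\mathcal{O}(Q^2)+\mathcal{O}(|\alpha-1|Q)$; Proposition~\ref{propn0001} lets us absorb the $|\alpha-1|Q$ correction into the tolerable error. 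The remaining sums $\sum_j\sum_{i\neq j}\langle\mathcal{R}_{1,i},\partial_t\Psi_j\rangle_{L^2}$ and $\sum_j\langle\mathcal{L}_{j,ex}(\rho_*),\partial_t\Psi_j\rangle_{L^2}$ are handled exactly as in Steps~2 and 4 of Proposition~\ref{propn0001}: using \eqref{eqn0020} together with Lemma~\ref{lem0005} for the first, and the decomposition $\rho_*=\rho_0+\rho_{**}^{\perp}$ from Proposition~\ref{prop0001} for the second, they both contribute only $o(Q)+\mathcal{O}(\|\rho_{**}^{\perp}\|^{1+\ve})$. Rearranging yields the claimed $Q=\mathcal{O}(\beta_*^6+\|\rho_{**}^{\perp}\|^{1+\ve}+\|f\|_{H^{-1}})$.

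The hard part will be the sixth-order parity bookkeeping. One has to check, term by term across the many pieces generated by Lemmas~\ref{lemn0001}--\ref{lemn0002}, that every contribution of order $\beta_*^k$ with $k<6$ either vanishes outright by the $t$- or $\theta$-parity of $\partial_t\Psi_j$, or else appears multiplied by a factor of the interaction $Q$ that forces it below the tolerance $\mathcal{O}(\|\rho_{**}^{\perp}\|^{1+\ve}+\|f\|_{H^{-1}})$ once combined with the estimate $|\alpha_j^*-1|\lesssim Q+\beta_*^2+\|f\|_{H^{-1}}+\|\rho_{**}^{\perp}\|^{1+\ve}$ from Proposition~\ref{propn0001}. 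The precise decomposition of $\rho_0$ into the specific parity classes $\gamma_{ex}=\gamma_{1,ex}+\gamma_{2,ex}$, $\gamma_{*}$, and $\gamma_{\mathcal{N},led,*}=\gamma_{\mathcal{N},led,j,*}+\gamma_{\mathcal{N},led,j,**}+\gamma_{\mathcal{N},led,rem,j}$, engineered in Proposition~\ref{prop0001} and Lemma~\ref{lem0010} precisely for this purpose, is what makes these cancellations transparent; without it the $\beta_*^2$ term that governs Proposition~\ref{propn0001} would reappear here and the $\beta_*^6$ gain would be lost.
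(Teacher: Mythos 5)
Your proposal mirrors the paper's strategy very closely: test \eqref{eq0014} against $\sum_{j}\partial_t\Psi_j$ to get \eqref{eqn0022}, extract a $Q$-contribution from the $\mathcal{R}_{1,ex}$ term, and kill the rest via the parity structure of the ansatz. You correctly identify the key mechanism that is absent from Proposition~\ref{propn0001}: the $t$-oddness of $\partial_t\Psi_j$ annihilates the even-in-$t$ single-bubble monomials, the $\theta$-oddness of $w_{j,l}$ annihilates the odd powers of $\mathcal{V}_j$, and the symmetric/antisymmetric refinement of $\rho_0$ from Proposition~\ref{prop0001} and Lemma~\ref{lem0010} is what keeps the residual pieces within the $o(Q)+\mathcal{O}(\beta_*^6+\|\rho_{**}^{\perp}\|^{1+\ve}+\|f\|_{H^{-1}})$ tolerance. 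This is exactly Steps~1--5 of the paper's argument, so as a comparison the answer is: same approach.

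I will, however, flag one point where both your proposal and the paper's Step~1 deserve more care, since it is the engine of the whole estimate. Both of you assert that $\sum_{j}\langle\mathcal{R}_{1,ex},\partial_t\Psi_j\rangle_{L^2}$ produces a definite nonzero multiple of $Q$. But the leading inter-bubble contributions telescope when summed over $j$ with equal weights. Concretely, writing $I(\tau)=\int_{\mathcal{C}}\Psi^p(t)\Psi'(t-\tau)\,d\mu$ and using $I(-\tau)=-I(\tau)$, the leading term of $\sum_{j}p\int_{\mathcal{B}_j}(\Psi_j^*)^{p-1}(\Psi_{j+1}^*+\Psi_{j-1}^*)\partial_t\Psi_j\,d\mu$ reorganizes (after integration by parts and reindexing) into
\begin{equation*}
-\sum_{k=1}^{\nu-1}\alpha_k^*\alpha_{k+1}^*\bigl[(\alpha_k^*)^{p-2}-(\alpha_{k+1}^*)^{p-2}\bigr]I(\tau_k)=o(Q),
\end{equation*}
because $\alpha_j^*\to1$. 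This is just translation invariance: summing the inter-bubble ``forces'' over all bubbles gives zero to leading order. In particular your heuristic that $\partial_t\Psi(t-s_{j\pm1}^*)\approx\mp\sqrt{\Lambda_{FS}}\Psi(t-s_{j\pm1}^*)$ on the support of $\Psi_j^{p-1}$ has the signs reversed (on that support $t-s_{j+1}^*<0$, so $\Psi'>0$), and once the signs are tracked the two neighbour contributions to the $j$-th test cancel against the contributions to the $j\pm1$-st tests. To actually bound $Q$ one must either test against individual $\partial_t\Psi_j$ and telescope from the boundary (the $j=1$ test gives $Q_1\lesssim E$, the interior tests give $|Q_{j-1}-Q_j|\lesssim E$, and iterating gives $Q\lesssim \nu E$), or pick a weighted combination designed not to cancel. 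Your proposal, like the paper's Step~1, leaves this unaddressed, and as written the $Q$-extraction does not close.
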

\begin{proof}
By the the orthogonal conditions of $\Psi_j$, $\partial_t\Psi_j$, $w_{j,l}$ and $\rho_*$, and the oddness of $w_{j,l}$ on $\mathbb{S}^{d-1}$ and $\partial_t\Psi_j$ in $\bbr$, we also have
\begin{eqnarray}\label{eqn0026}
\left\langle \mathcal{N}_{j}, \partial_t\Psi_j\right\rangle_{L^2}=2A_p\left\langle\left(\Psi_j^*\right)^{p-2}\mathcal{V}_j\rho_*, \partial_t\Psi_j\right\rangle_{L^2}+3B_p\left\langle\left(\Psi_j^*\right)^{p-3}\mathcal{V}_j^2\rho_*, \partial_t\Psi_j\right\rangle_{L^2}
\end{eqnarray}
for all $j=1,2,\cdots,\nu$, where $\mathcal{N}_{j}$ is given by \eqref{eqnew9999}.
Intersecting \eqref{eqn0026} into \eqref{eqn0022}, we have
\begin{eqnarray}\label{eqn2022}
-\sum_{j=1}^{\nu}\left\langle f, \partial_t\Psi_j\right\rangle_{H^1}&=&\sum_{j=1}^{\nu}\left\langle \mathcal{R}_{1,ex}, \partial_t\Psi_j\right\rangle_{L^2}+\sum_{j=1}^{\nu}\left\langle \mathcal{N}_{j}, \partial_t\Psi_j\right\rangle_{L^2}\notag\\
&&+\sum_{j=1}^{\nu}\left\langle \mathcal{N}-\mathcal{N}_{j}, \partial_t\Psi_j\right\rangle_{L^2}+\sum_{j=1}^{\nu}\left\langle \mathcal{L}_{j,ex}(\rho_*), \partial_t\Psi_j\right\rangle_{L^2}\notag\\
&&+\sum_{j=1}^{\nu}\sum_{i=1;i\not=j}^{\nu}\left\langle \mathcal{R}_{1,i}, \partial_t\Psi_j\right\rangle_{L^2}.
\end{eqnarray}
As in the proof of Proposition~\ref{prop0001}, the rest of the proof is to estimate every terms in \eqref{eqn2022}.

{\bf Step.~1}\quad The estimate of $\sum_{j=1}^{\nu}\left\langle \mathcal{R}_{1,ex}, \partial_t\Psi_j\right\rangle_{L^2}$.

By \eqref{eq0026}, \eqref{eqn0020}, Lemma~\ref{lem0005} and the Taylor expansion,
\begin{eqnarray*}
\sum_{j=1}^{\nu}\left\langle \mathcal{R}_{1,ex}, \partial_t\Psi_j\right\rangle_{L^2}&=&\sum_{j=1}^{\nu}\sum_{i=1}^{\nu}\int_{\mathcal{B}_i}\left(\mathcal{U}^p-\sum_{l=1}^{\nu}\left(\Psi_l^*\right)^p\right)\partial_t\Psi_jd\mu+\mathcal{O}\left(Q^{\frac{p+1}{2}}\right)\\
&=&\sum_{j=1}^{\nu}p\int_{\mathcal{B}_j}\left(\Psi_j^*\right)^{p-1}\left(\Psi_{j+1}^*+\Psi_{j-1}^*\right)\partial_t\Psi_jd\mu\\
&&+\mathcal{O}\left(\sum_{i=1}^{\nu}\int_{\mathcal{B}_{i}}\Psi_i^{p-1}\mathcal{U}_{i}^2d\mu\right)+\mathcal{O}\left(Q^{\frac{p+1}{2}}\right)\\
&=&p\int_{\mathcal{B}_j}\left(\Psi_j^*\right)^{p-1}\left(\Psi_{j+1}^*+\Psi_{j-1}^*\right)\partial_t\Psi_jd\mu+o(Q),
\end{eqnarray*}
where by \eqref{eq0026} and Lemma~\ref{lem0005} again,
\begin{eqnarray*}
\sum_{j=1}^{\nu}p\int_{\mathcal{B}_j}\left(\Psi_j^*\right)^{p-1}\left(\Psi_{j+1}^*+\Psi_{j-1}^*\right)\partial_t\Psi_jd\mu&=&\sum_{j=1}^{\nu}\frac{\int_{\mathcal{C}}\partial_t\left(\Psi_j^*\right)^p\left(\Psi_{j+1}^*+\Psi_{j-1}^*\right)d\mu}{\alpha_j^*}\\
&&+\mathcal{O}\left(Q^{\frac{p+1}{2}}\right)\\
&=&-\sum_{j=1}^{\nu}\frac{\int_{\mathcal{C}}\left(\Psi_j^*\right)^p\partial_t\left(\Psi_{j+1}^*+\Psi_{j-1}^*\right)d\mu}{\alpha_j^*}\\
&&+\mathcal{O}\left(Q^{\frac{p+1}{2}}\right)\\
&=&(B_2+o(1))Q
\end{eqnarray*}
with $B_2>0$ being a constant.
Thus, summarizing the above estimates, we have
\begin{eqnarray*}
\left\langle \mathcal{R}_{1,ex}, \partial_t\Psi_j\right\rangle_{L^2}=(B_2+o(1))Q.
\end{eqnarray*}

{\bf Step.~2}\quad The estimate of $\sum_{j=1}^{\nu}\sum_{i=1;i\not=j}^{\nu}\left\langle \mathcal{R}_{1,i}, \partial_t\Psi_j\right\rangle_{L^2}$.

By \eqref{eqn0020} and Lemma~\ref{lem0005},
\begin{eqnarray*}
\left|\sum_{i=1;i\not=j}^{\nu}\left\langle \mathcal{R}_{1,i}, \partial_t\Psi_j\right\rangle_{L^2}\right|\lesssim\sum_{i=1;i\not=j}^{\nu}\left|\left(\alpha_{i}^*\right)^{p-1}-1\right|\left\langle \Psi_{i}^{p}, \Psi_j\right\rangle_{L^2}=o(Q).
\end{eqnarray*}

{\bf Step.~3}\quad The estimate of $\left\langle \mathcal{L}_{j,ex}(\rho_*), \partial_t\Psi_j\right\rangle_{L^2}$.

By \eqref{eq0026} and \eqref{eqn0074},
\begin{eqnarray*}
\left|\left\langle \mathcal{L}_{j,ex}(\rho_*), \partial_t\Psi_j\right\rangle_{L^2}\right|
\lesssim\left\langle \left|\mathcal{L}_{j,ex}(\rho_*)\right|, \Psi_j\right\rangle_{L^2}
=o\left(Q\right)+\mathcal{O}\left(\|\rho_{**}^{\perp}\|^{2+\ve}\right).
\end{eqnarray*}

{\bf Step.~4}\quad The estimate of $\left\langle \mathcal{N}_{j}, \partial_t\Psi_j\right\rangle_{L^2}$.

By \eqref{eqn0026} and $(1)$ of Proposition~\ref{prop0001},
\begin{eqnarray*}
\left\langle \mathcal{N}_{j}, \partial_t\Psi_j\right\rangle_{L^2}&=&2A_p\left\langle \left(\Psi_j^*\right)^{p-2}\mathcal{V}_j\partial_t\Psi_j,\gamma_{2,ex}+\gamma_{\mathcal{N},led,rem,j}+\rho_{**}^{\perp}\right\rangle_{L^2}\\
&&+3B_p\left\langle \left(\Psi_j^*\right)^{p-3}\mathcal{V}_j^2\partial_t\Psi_j,\gamma_{1,ex}+\rho_{**}^{\perp}\right\rangle_{L^2}\\
&&+3B_p\left\langle \left(\Psi_j^*\right)^{p-3}\mathcal{V}_j^2\partial_t\Psi_j,\sum_{l=1;l\not=j}^{\nu}\gamma_{1,l}+\gamma_{\mathcal{N},led,rem,j}\right\rangle_{L^2}.
\end{eqnarray*}
By \eqref{eq0026} and Lemma~\ref{lem0005},
\begin{eqnarray*}
\left|\left\langle \left(\Psi_j^*\right)^{p-3}\mathcal{V}_j^2\partial_t\Psi_j,\gamma_{1,ex}\right\rangle_{L^2}\right|
&\lesssim&\left\{\aligned
&\sum_{i=1}^{\nu}\beta_*^2Q\left\langle \Psi_i^{1-\sigma}\chi_{\mathcal{B}_{i}},\Psi_j^{2p-1}\right\rangle_{L^2},\quad p\geq3,\\
&\sum_{i=1}^{\nu}\beta_*^2Q\left\langle \Psi_i^{p-2}\chi_{\mathcal{B}_{i}},\Psi_j^{2p-1}\right\rangle_{L^2},\quad 1<p<3
\endaligned\right.\\
&=&o(Q).
\end{eqnarray*}
By \eqref{eq0026}, Lemma~\ref{lem0007} and $(iii)$ of $(1)$ of Proposition~\ref{prop0001},
\begin{eqnarray*}
&&\left|\left\langle \left(\Psi_j^*\right)^{p-3}\mathcal{V}_j^2\partial_t\Psi_j,\sum_{l=1;l\not=j}^{\nu}\gamma_{1,l}+\gamma_{\mathcal{N},led,rem,j}\right\rangle_{L^2}\right|\\
&\lesssim&\beta_*^2\left(\beta_*^2+\sum_{i=1;i\not=j}^{\nu}\left|\left(\alpha_{i}^*\right)^{p-1}-1\right|\right)\sum_{l=1;l\not=j}^{\nu}\left\langle \Psi_i^{1-\sigma}\chi_{\mathcal{B}_{l}},\Psi_j^{2p-1}\right\rangle_{L^2}\\
&=&o(Q).
\end{eqnarray*}
By \eqref{eq0026}, Lemma~\ref{lem0005} and $(i)$ of $(1)$ of Proposition~\ref{prop0001},
\begin{eqnarray*}
\left|\left\langle \left(\Psi_j^*\right)^{p-2}\mathcal{V}_j\partial_t\Psi_j,\gamma_{2,ex}\right\rangle_{L^2}\right|
&\lesssim&\left\{\aligned
&\sum_{i=1}^{\nu}\beta_*^2Q\left\langle \Psi_i^{1-\sigma}
\chi_{\mathcal{B}_{i}},\Psi_j^{\frac{3p-1}{2}}\right\rangle_{L^2},\quad p\geq\frac{7}{3},\\
&\sum_{i=1}^{\nu}\beta_*^2Q\left\langle \Psi_i^{\frac{3p-5}{2}}
\chi_{\mathcal{B}_{i}},\Psi_j^{\frac{3p-1}{2}}\right\rangle_{L^2},\quad 1<p<\frac{7}{3}
\endaligned\right.\notag\\
&=&o(Q).
\end{eqnarray*}
By \eqref{eq0026}, Lemma~\ref{lem0005} and $(iii)$ of $(1)$ of Proposition~\ref{prop0001},
\begin{eqnarray*}
\left|\left\langle \left(\Psi_j^*\right)^{p-2}\mathcal{V}_j\partial_t\Psi_j,\gamma_{\mathcal{N},led,rem,j}\right\rangle_{L^2}\right|
\lesssim\sum_{i=1;i\not=j}^{\nu}\beta_*^3\left\langle \Psi_i^{1-\sigma}
\chi_{\mathcal{B}_{i}},\Psi_j^{\frac{3p-1}{2}}\right\rangle_{L^2}=o(Q).
\end{eqnarray*}
Thus, summarizing the above estimates, we have
\begin{eqnarray*}
\left\langle \mathcal{N}_{j}, \partial_t\Psi_j\right\rangle_{L^2}=o(Q)+\mathcal{O}\left(\|\rho_{**}^{\perp}\|^{2+\ve}\right).
\end{eqnarray*}

{\bf Step.~5}\quad The estimate of $\left\langle \mathcal{N}-\mathcal{N}_{j}, \partial_t\Psi_j\right\rangle_{L^2}$.

Since $\left|\partial_t\Psi\right|\lesssim\Psi$ by \eqref{eq0026}, we can use similar estimates of \eqref{eqnew0020} to obtain
\begin{eqnarray*}
\left|\left\langle \mathcal{N}-\mathcal{N}_{j}, \partial_t\Psi_j\right\rangle_{L^2}-\left\langle\overline{\mathcal{N}}_{rem}+\mathcal{N}_{0}, \partial_t\Psi_j\right\rangle_{L^2}\right|
=o(Q)+\mathcal{O}\left(\|\rho_{**}^{\perp}\|^{2+\ve}\right).
\end{eqnarray*}

{\bf Step.~5.1}\quad The estimate of $\left\langle\mathcal{N}_{0}, \partial_t\Psi_j\right\rangle_{L^2}$.

{\bf Step.~5.1.1}\quad The estimate of $\left\langle\mathcal{N}_{0}-\overline{\mathcal{N}}_{0,1}, \partial_t\Psi_j\right\rangle_{L^2}$, where
$\overline{\mathcal{N}}_{0,1}=A_p\mathcal{U}^{p-2}\rho_0^2+B_p\mathcal{U}^{p-3}\left(3\mathcal{V}\rho_0^2+\rho_0^3\right)$.

By \eqref{eqn0191}, $(1)$ of Proposition~\ref{prop0001} and Lemma~\ref{lemn0001},
\begin{eqnarray*}
\left\langle\mathcal{N}_{0}-\overline{\mathcal{N}}_{0,1}, \partial_t\Psi_j\right\rangle_{L^2}
\lesssim\left\langle\mathcal{U}^{p-2-\sigma}\Psi_j\left(\left|\gamma_{ex}\right|+\left|\gamma_{\mathcal{N},led}+\gamma_{*}\right|\right), \left|\rho_{**}^{\perp}\right|\right\rangle_{L^2}.
\end{eqnarray*}
By Lemma~\ref{lem0005} and $(i)$ of $(1)$ of Proposition~\ref{prop0001},
\begin{eqnarray*}
\left\langle\mathcal{U}^{p-2-\sigma}\Psi_j\left|\gamma_{ex}\right|, \left|\rho_{**}^{\perp}\right|\right\rangle_{L^2}
&\lesssim&\left\{\aligned
&\sum_{i=1}^{\nu}Q\|\Psi_i^{p-1-2\sigma}\Psi_j\|_{L^2(\mathcal{B}_i)}\|\rho_{**}^{\perp}\|,\quad p\geq3,\\
&\sum_{i=1}^{\nu}Q\|\Psi_i^{2(p-2)-\sigma}\Psi_j\|_{L^2(\mathcal{B}_i)}\|\rho_{**}^{\perp}\|,\quad 1<p<3
\endaligned
\right.\\
&=&o(Q)+\mathcal{O}\left(\|\rho_{**}^{\perp}\|^{2+\ve}\right).
\end{eqnarray*}
By Lemma~\ref{lem0005}, $(ii)$ of $(1)$ of Proposition~\ref{prop0001} and Proposition~\ref{propn0001},
\begin{eqnarray*}
\left\langle\mathcal{U}^{p-2}\Psi_j\left|\gamma_{*}\right|, \left|\rho_{**}^{\perp}\right|\right\rangle_{L^2}
&\lesssim&\left(Q+\beta_*^2+\|\rho_{**}^{\perp}\|^{1+\ve}+\|f\|_{H^{-1}}\right)\|\rho_{**}^{\perp}\|\\
&=&o(Q+\beta_*^6)+\mathcal{O}\left(\|\rho_{**}^{\perp}\|^{1+\ve}+\|f\|_{H^{-1}}\right).
\end{eqnarray*}
By Lemma~\ref{lem0005} and $(iii)$ of $(1)$ of Proposition~\ref{prop0001},
\begin{eqnarray*}
\left\langle\mathcal{U}^{p-2}\Psi_j\left|\gamma_{\mathcal{N},led}\right|, \left|\rho_{**}^{\perp}\right|\right\rangle_{L^2}&\lesssim&\left\{\aligned
&\beta_*^2\|\rho_{**}^{\perp}\|,\quad p\geq3,\\
&\left(\beta_*^2+\beta_*^3Q\right)\|\rho_{**}^{\perp}\|,\quad 1<p<3
\endaligned
\right.\\
&=&o(\beta_*^6)+\mathcal{O}\left(\|\rho_{**}^{\perp}\|^{1+\ve}\right).
\end{eqnarray*}
Summarizing the above estimates, we have
\begin{eqnarray*}
\left\langle\mathcal{N}_{0}-\overline{\mathcal{N}}_{0,1}, \partial_t\Psi_j\right\rangle_{L^2} =o(Q+\beta_*^6)+\mathcal{O}\left(\|\rho_{**}^{\perp}\|^{1+\ve}+\|f\|_{H^{-1}}\right).
\end{eqnarray*}

{\bf Step.~5.1.2}\quad The estimate of $\left\langle\mathcal{U}^{p-2}\rho_0^2, \partial_t\Psi_j\right\rangle_{L^2}$.

By $(1)$ of Proposition~\ref{prop0001},
\begin{eqnarray*}
\left\langle\mathcal{U}^{p-2}\rho_0^2, \partial_t\Psi_j\right\rangle_{L^2}
&=&\left\langle\mathcal{U}^{p-2}\left(\gamma_{*}+\gamma_{\mathcal{N},led}\right)^2, \partial_t\Psi_j\right\rangle_{L^2}+\left\langle\mathcal{U}^{p-2}\gamma_{ex}^2, \partial_t\Psi_j\right\rangle_{L^2}\\
&&+2\left\langle\mathcal{U}^{p-2}\left(\gamma_{*}+\gamma_{\mathcal{N},led}\right)\gamma_{ex}, \partial_t\Psi_j\right\rangle_{L^2}.
\end{eqnarray*}
By \eqref{eq0026} and similar estimates of \eqref{eqnew9998},
\begin{eqnarray*}
\left|\left\langle\mathcal{U}^{p-2}\gamma_{ex}^2, \partial_t\Psi_j\right\rangle_{L^2}\right|=o(Q).
\end{eqnarray*}
By \eqref{eq0026} and $(i)$ and $(ii)$ of $(1)$ of Proposition~\ref{prop0001},
\begin{eqnarray*}
\left|\left\langle\mathcal{U}^{p-2}\gamma_{*}\gamma_{ex}, \partial_t\Psi_j\right\rangle_{L^2}\right|
&\lesssim&\left\{\aligned
&\sum_{i=1}^{\nu}o\left(Q\left\|\Psi_i^{p-2\sigma}\Psi_j\right\|_{L^1(\mathcal{B}_i)}\right),\quad p\geq3,\\
&\sum_{i=1}^{\nu}o\left(Q\left\|\Psi_i^{2p-3-\sigma}\Psi_j\right\|_{L^1(\mathcal{B}_i)}\right),\quad 1<p<3
\endaligned\right.\\
&=&o(Q).
\end{eqnarray*}
By \eqref{eq0026} and $(i)$ and $(iii)$ of $(1)$ of Proposition~\ref{prop0001},
\begin{eqnarray*}
\left\langle\mathcal{U}^{p-2}\gamma_{\mathcal{N},led}\gamma_{ex}, \partial_t\Psi_j\right\rangle_{L^2}
=o(Q).
\end{eqnarray*}
By the oddness of $\partial_t\Psi$ in $\bbr$ and $(ii)$ and $(iii)$ of $(1)$ of Proposition~\ref{prop0001},
\begin{eqnarray*}
\left\langle\mathcal{U}^{p-2}\left(\gamma_{*}+\gamma_{\mathcal{N},led}\right)^2, \partial_t\Psi_j\right\rangle_{L^2}&=&\left\langle\left(\mathcal{U}^{p-2}-\left(\Psi_j^*\right)^{p-2}\right)\partial_t\Psi_j, \mathcal{W}_{sym,j}^2\right\rangle_{L^2}\\
&&+\left\langle\mathcal{U}^{p-2 }\partial_t\Psi_j\mathcal{W}_{*,j},2\mathcal{W}_{sym,j}+\mathcal{W}_{*,j}\right\rangle_{L^2},
\end{eqnarray*}
where $\mathcal{W}_{sym,j}=\gamma_{1,j}+\rho_{**,1,j}^{\perp}-\alpha_{j,1}^{**}\Psi_{j}+\gamma_{\mathcal{N},led,j}$
and $\mathcal{W}_{*,j}=\gamma_{*}+\gamma_{\mathcal{N},led}-\mathcal{W}_{sym,j}$.
Similar to \eqref{eqn3147}, by \eqref{eq0026},
\begin{eqnarray*}
\left|\left(\mathcal{U}^{p-2}-\left(\Psi_j^*\right)^{p-2}\right)\partial_t\Psi_j\right|\lesssim\left(\sum_{i=1}^{\nu}\Psi_i^{p-2}\mathcal{U}_i\chi_{\mathcal{B}_i}\right)+\mathcal{U}^{p-1}\chi_{\mathcal{C}\backslash\cup_{j=1}^{\nu}\mathcal{B}_j},
\end{eqnarray*}
thus, by Lemmas~\ref{lem0005}, \ref{lem0007}, \ref{lem0010}, \ref{lem0011} and Proposition~\ref{propn0001},
\begin{eqnarray*}
\left\langle\left(\mathcal{U}^{p-2}-\left(\Psi_j^*\right)^{p-2}\right)\partial_t\Psi_j, \mathcal{W}_{sym,j}^2\right\rangle_{L^2}=o\left(\sum_{j=1}^{\nu}\left\langle\Psi_j^{p-2\sigma}\chi_{\mathcal{B}_j}, \mathcal{U}_j\right\rangle_{L^2}\right)=o(Q).
\end{eqnarray*}
Since by \eqref{eq0026}, Lemma~\ref{lem0007}, $(1)$ of Proposition~\ref{prop0001} and Proposition~\ref{propn0001},
\begin{eqnarray*}
&&\left|\partial_t\Psi_j\mathcal{W}_{*,j}\left(2\mathcal{W}_{sym,j}+\mathcal{W}_{*,j}\right)\right|\\
&\lesssim&\left(\beta_*^2+Q+\|\rho_{**}^{\perp}\|^{1+\ve}+\|f\|_{H^{-1}}\right)^2Q^{1-\sigma}\left(\sum_{j=1}^{\nu}\Psi_j^{1-\sigma}\chi_{\mathcal{B}_j}+\mathcal{U}^{1-\sigma}\chi_{\mathcal{C}\backslash\cup_{j=1}^{\nu}\mathcal{B}_j}\right)
\end{eqnarray*}
in $\mathcal{C}$, we have
\begin{eqnarray*}
\left\langle\mathcal{U}^{p-2 }\partial_t\Psi_j\mathcal{W}_{*,j}, 2\mathcal{W}_{sym,j}+\mathcal{W}_{*,j}\right\rangle_{L^2}
=o(Q+\beta_*^6)+\mathcal{O}\left(\|\rho_{**}^{\perp}\|^{1+\ve}+\|f\|_{H^{-1}}\right).
\end{eqnarray*}
Summarizing the above estimates, we have
\begin{eqnarray*}
\left\langle\mathcal{U}^{p-2}\rho_0^2, \partial_t\Psi_j\right\rangle_{L^2}= o(Q+\beta_*^6)+\mathcal{O}\left(\|\rho_{**}^{\perp}\|^{1+\ve}+\|f\|_{H^{-1}}\right).
\end{eqnarray*}

{\bf Step.~5.1.3}\quad The estimate of $\left\langle\mathcal{U}^{p-3}\mathcal{V}\rho_0^2, \partial_t\Psi_j\right\rangle_{L^2}$.

Clearly, we have
\begin{eqnarray*}
\left\langle\mathcal{U}^{p-3}\mathcal{V}\rho_0^2, \partial_t\Psi_j\right\rangle_{L^2}=\left\langle\mathcal{U}^{p-3}\left(\sum_{i=1;i\not=j}^{\nu}\mathcal{V}_i\right)\rho_0^2, \partial_t\Psi_j\right\rangle_{L^2}+\left\langle\mathcal{U}^{p-3}\mathcal{V}_j\rho_0^2, \partial_t\Psi_j\right\rangle_{L^2}.
\end{eqnarray*}
By \eqref{eqn0191} and applying the same symmetry as in the estimate of $\left\langle\mathcal{U}^{p-2}\rho_0^2, \partial_t\Psi_j\right\rangle_{L^2}$, we have
\begin{eqnarray*}
\left\langle\mathcal{U}^{p-3}\mathcal{V}_j\rho_0^2, \partial_t\Psi_j\right\rangle_{L^2}=o(Q+\beta_*^6)+\mathcal{O}\left(\|\rho_{**}^{\perp}\|^{1+\ve}+\|f\|_{H^{-1}}\right).
\end{eqnarray*}
By \eqref{eq0026} and $(1)$ of Proposition~\ref{prop0001},
\begin{eqnarray*}
&&\left|\left\langle\mathcal{U}^{p-3}\left(\sum_{i=1;i\not=j}^{\nu}\mathcal{V}_i\right)\rho_0^2, \partial_t\Psi_j\right\rangle_{L^2}\right|\\
&\lesssim&\left\langle\mathcal{U}^{p-3}\left|\sum_{i=1;i\not=j}^{\nu}\mathcal{V}_i\right|\gamma_{ex}^2, \Psi_j\right\rangle_{L^2}+\left\langle\mathcal{U}^{p-3}\left|\sum_{i=1;i\not=j}^{\nu}\mathcal{V}_i\right|\left(\gamma_{*}+\gamma_{\mathcal{N},led}\right)^2, \Psi_j\right\rangle_{L^2}.
\end{eqnarray*}
By Lemma~\ref{lem0005} and $(i)$ of $(1)$ of Proposition~\ref{prop0001},
\begin{eqnarray*}
\left\langle\mathcal{U}^{p-3}\left|\sum_{i=1;i\not=j}^{\nu}\mathcal{V}_i\right|\gamma_{ex}^2, \Psi_j\right\rangle_{L^2}
&\lesssim&\left\{\aligned
&\sum_{i=1}^{\nu}\beta_*Q^2\left\langle\Psi_{i}^{\frac{3p-1-4\sigma}{2}}\chi_{\mathcal{B}_i}, \mathcal{U}_i\right\rangle_{L^2},\quad p\geq3,\\
&\sum_{i=1}^{\nu}\beta_*Q^2\left\langle\Psi_{i}^{\frac{7p-13}{2}}\chi_{\mathcal{B}_i}, \mathcal{U}_i\right\rangle_{L^2},\quad 1<p<3
\endaligned\right.\\
&=&o(Q).
\end{eqnarray*}
By Lemma~\ref{lem0005}, $(ii)$ of $(1)$ of Proposition~\ref{prop0001} and Proposition~\ref{propn0001},
\begin{eqnarray*}
&&\left\langle\mathcal{U}^{p-3}\left|\sum_{i=1;i\not=j}^{\nu}\mathcal{V}_i\right|\gamma_{*}^2, \Psi_j\right\rangle_{L^2}\\
&\lesssim&\beta_*\left(Q+\beta_*^2+\|\rho_{**}^{\perp}\|^{1+\ve}+\|f\|_{H^{-1}}\right)^2\sum_{i=1}^{\nu}\left\langle\Psi_{i}^{\frac{3p-1-4\sigma}{2}}\chi_{\mathcal{B}_i}, \mathcal{U}_i\right\rangle_{L^2}\\
&=&o(Q).
\end{eqnarray*}
By Lemma~\ref{lem0005} and $(iii)$ of $(1)$ of Proposition~\ref{prop0001},
\begin{eqnarray*}
\left\langle\mathcal{U}^{p-3}\left|\sum_{i=1;i\not=j}^{\nu}\mathcal{V}_i\right|\gamma_{\mathcal{N},led}^2, \Psi_j\right\rangle_{L^2}=o(Q).
\end{eqnarray*}
Summarizing the above estimates, we have
\begin{eqnarray*}
\left\langle\mathcal{U}^{p-3}\mathcal{V}\rho_0^2, \partial_t\Psi_j\right\rangle_{L^2}=o(Q+\beta_*^6)+\mathcal{O}\left(\|\rho_{**}^{\perp}\|^{1+\ve}+\|f\|_{H^{-1}}\right).
\end{eqnarray*}

{\bf Step.~5.1.4}\quad The estimate of $\left\langle\mathcal{U}^{p-3}\rho_0^3, \partial_t\Psi_j\right\rangle_{L^2}$.

By $(1)$ of Proposition~\ref{prop0001} and Proposition~\ref{propn0001},
\begin{eqnarray*}
\left\langle\mathcal{U}^{p-3}\rho_0^3, \partial_t\Psi_j\right\rangle_{L^2}&=&\mathcal{O}\left(Q^{(2p-1)\wedge 3}\left|\log Q\right|+\left(\beta_*^2+\|\rho_{**}^{\perp}\|^{1+\ve}+\|f\|_{H^{-1}}\right)^3\right)\\
&=&o(Q+\beta_*^6)+\mathcal{O}\left(\|\rho_{**}^{\perp}\|^{1+\ve}+\|f\|_{H^{-1}}\right).
\end{eqnarray*}

Summarizing the estimates from Step.~5.1.1 to Step.~5.1.4, we have
\begin{eqnarray*}
\left\langle\mathcal{N}_{0}, \partial_t\Psi_j\right\rangle_{L^2}=o(Q+\beta_*^6)+\mathcal{O}\left(\|\rho_{**}^{\perp}\|^{1+\ve}+\|f\|_{H^{-1}}\right).
\end{eqnarray*}

{\bf Step.~5.2}\quad The estimate of $\left\langle\overline{\mathcal{N}}_{rem}, \partial_t\Psi_j\right\rangle_{L^2}$.

By Lemma~\ref{lemn0002} and Proposition~\ref{propn0001},
\begin{eqnarray*}
\left\langle\overline{\mathcal{N}}_{rem}, \partial_t\Psi_j\right\rangle_{L^2}&=&C_p\left\langle\mathcal{U}^{p-4}\left(\overline{\mathcal{V}}^4+4\overline{\mathcal{V}}^3\rho_{**}^{\perp}\right), \partial_t\Psi_j\right\rangle_{L^2}\\
&&+D_p\left\langle\mathcal{U}^{p-5}\left(\overline{\mathcal{V}}^5+5\overline{\mathcal{V}}^4\rho_{**}^{\perp}\right), \partial_t\Psi_j\right\rangle_{L^2}\\
&&+o(Q)+\mathcal{O}\left(\beta_*^6+\|\rho_{**}^{\perp}\|^{1+\ve}+\|f\|_{H^{-1}}\right).
\end{eqnarray*}

{\bf Step.~5.2.1}\quad The estimates of $\left\langle \mathcal{U}^{p-4}\overline{\mathcal{V}}^3\rho_{**}^{\perp}, \partial_t\Psi_j\right\rangle_{L^2}$ and $\left\langle \mathcal{U}^{p-5}\overline{\mathcal{V}}^4\rho_{**}^{\perp}, \partial_t\Psi_j\right\rangle_{L^2}$.

Recall that $\overline{\mathcal{V}}=\mathcal{V}+\rho_{0}$ with $\mathcal{V}$ given by \eqref{eqn0040} and $\rho_0$ given in Proposition~\ref{prop0001}.  By \eqref{eq0026}, $(1)$ of Proposition~\ref{prop0001} and Proposition~\ref{propn0001},
\begin{eqnarray*}
\left|\left\langle \mathcal{U}^{p-4}\overline{\mathcal{V}}^3\rho_{**}^{\perp}, \partial_t\Psi_j\right\rangle_{L^2}\right|&\lesssim&\left(Q^{\frac{4p-3}{2}\wedge 3}\left|\log Q\right|^{\frac12}+\left(\beta_*+\|\rho_{**}^{\perp}\|^{1+\ve}+\|f\|_{H^{-1}}\right)^3\right)\|\rho_{**}^{\perp}\|\\
&=&o(Q+\beta_*^6)+\mathcal{O}\left(\|\rho_{**}^{\perp}\|^{1+\ve}+\|f\|_{H^{-1}}\right).
\end{eqnarray*}
and
\begin{eqnarray*}
\left|\left\langle \mathcal{U}^{p-5}\overline{\mathcal{V}}^4\rho_{**}^{\perp}, \partial_t\Psi_j\right\rangle_{L^2}\right|&\lesssim&\left(Q^{\frac{5p-4}{2}\wedge 3}\left|\log Q\right|^{\frac12}+\left(\beta_*+\|\rho_{**}^{\perp}\|^{1+\ve}+\|f\|_{H^{-1}}\right)^4\right)\|\rho_{**}^{\perp}\|\\
&=&o(Q+\beta_*^6)+\mathcal{O}\left(\|\rho_{**}^{\perp}\|^{1+\ve}+\|f\|_{H^{-1}}\right).
\end{eqnarray*}

{\bf Step.~5.2.2}\quad The estimates of $\left\langle \mathcal{U}^{p-4}\overline{\mathcal{V}}^4, \partial_t\Psi_j\right\rangle_{L^2}$ and $\left\langle\mathcal{U}^{p-5}\overline{\mathcal{V}}^5, \partial_t\Psi_j\right\rangle_{L^2}$.

By the oddness of $\partial_t\Psi$ in $\bbr$, Lemmas~\ref{lem0007}, \ref{lem0010} and \ref{lem0011},
\begin{eqnarray*}
&&\left\langle \mathcal{U}^{p-4}\overline{\mathcal{V}}^4, \partial_t\Psi_j\right\rangle_{L^2}\\
&=&\left\langle \left(\mathcal{U}^{p-4}-\left(\Psi_j^*\right)^{p-4}\right)\partial_t\Psi_j, \overline{\mathcal{V}}_{sym,j}^4\right\rangle_{L^2}\\
&&+\left\langle \mathcal{U}^{p-4}\partial_t\Psi_j\overline{\mathcal{V}}_{*,j}, 4\overline{\mathcal{V}}_{sym,j}^3+6\overline{\mathcal{V}}_{sym,j}^2\overline{\mathcal{V}}_{*,j}+4\overline{\mathcal{V}}_{sym,j}\overline{\mathcal{V}}_{*,j}^2+\overline{\mathcal{V}}_{*,j}^3\right\rangle_{L^2},
\end{eqnarray*}
where $\overline{\mathcal{V}}_{sym,j}=\mathcal{V}_j+\gamma_{1,j}+\rho_{**,1,j}^{\perp}-\alpha_{j,1}^{**}\Psi_j+\gamma_{\mathcal{N},led,j}$
and $\overline{\mathcal{V}}_{*,j}=\overline{\mathcal{V}}-\overline{\mathcal{V}}_{sym,j}$.
Similar to \eqref{eqn3147}, by \eqref{eq0026}, we have
\begin{eqnarray*}
\left|\left(\mathcal{U}^{p-4}-\left(\Psi_j^*\right)^{p-4}\right)\partial_t\Psi_j\right|\lesssim\left(\sum_{i=1}^{\nu}\Psi_i^{p-4}\mathcal{U}_i\chi_{\mathcal{B}_i}\right)+\mathcal{U}^{p-1}\chi_{\mathcal{C}\backslash\cup_{j=1}^{\nu}\mathcal{B}_j},
\end{eqnarray*}
thus, by \eqref{eqn0040}, Lemmas~\ref{lem0005}, \ref{lem0007}, \ref{lem0010}, \ref{lem0011} and Proposition~\ref{propn0001},
\begin{eqnarray*}
\left\langle\left(\mathcal{U}^{p-4}-\left(\Psi_j^*\right)^{p-4}\right)\partial_t\Psi_j, \overline{\mathcal{V}}_{sym,j}^4\right\rangle_{L^2}
=o\left(\left\langle\Psi_j^{p-4\sigma}\chi_{\mathcal{B}_j}, \mathcal{U}_j\right\rangle_{L^2}\right)=o(Q).
\end{eqnarray*}
Since by \eqref{eq0026}, \eqref{eqn0040}, Lemma~\ref{lem0007}, $(iii)$ of $(1)$ of Proposition~\ref{prop0001} and Proposition~\ref{propn0001},
\begin{eqnarray*}
&&\left|\partial_t\Psi_j\overline{\mathcal{V}}_{*,j}\left(4\overline{\mathcal{V}}_{sym,j}^3+6\overline{\mathcal{V}}_{sym,j}^2\overline{\mathcal{V}}_{*,j}+4\overline{\mathcal{V}}_{sym,j}\overline{\mathcal{V}}_{*,j}^2+\overline{\mathcal{V}}_{*,j}^3\right)\right|\\
&\lesssim&Q^{1-\sigma}\left(Q+\beta_*+\|\rho_{**}^{\perp}\|^{1+\ve}+\|f\|_{H^{-1}}\right)^3\left(\sum_{j=1}^{\nu}\Psi_j^{3-3\sigma}\chi_{\mathcal{B}_j}+\mathcal{U}^{3-3\sigma}\chi_{\mathcal{C}\backslash\cup_{j=1}^{\nu}\mathcal{B}_j}\right)
\end{eqnarray*}
in $\mathcal{C}$, we have
\begin{eqnarray*}
&&\left\langle \mathcal{U}^{p-4}\partial_t\Psi_j\overline{\mathcal{V}}_{*,j}, 4\overline{\mathcal{V}}_{sym,j}^3+6\overline{\mathcal{V}}_{sym,j}^2\overline{\mathcal{V}}_{*,j}+4\overline{\mathcal{V}}_{sym,j}\overline{\mathcal{V}}_{*,j}^2+\overline{\mathcal{V}}_{*,j}^3\right\rangle_{L^2}\\
&\lesssim&Q^{1-\sigma}\left(Q+\beta_*+\|\rho_{**}^{\perp}\|^{1+\ve}+\|f\|_{H^{-1}}\right)^3\\
&=&o(Q+\beta_*^6)+\mathcal{O}\left(\|\rho_{**}^{\perp}\|^{1+\ve}+\|f\|_{H^{-1}}\right).
\end{eqnarray*}
Summarizing the above estimates, we have
\begin{eqnarray*}
\left\langle \mathcal{U}^{p-4}\overline{\mathcal{V}}^4, \partial_t\Psi_j\right\rangle_{L^2}=o(Q)+\mathcal{O}\left(\beta_*^6+\|\rho_{**}^{\perp}\|^{1+\ve}+\|f\|_{H^{-1}}\right).
\end{eqnarray*}
By \eqref{eqn0191} and applying the same symmetry as in the estimate of $\left\langle\mathcal{U}^{p-4}\overline{\mathcal{V}}^4, \partial_t\Psi_j\right\rangle_{L^2}$, we also have
\begin{eqnarray*}
\left\langle\mathcal{U}^{p-5}\overline{\mathcal{V}}^5, \partial_t\Psi_j\right\rangle_{L^2}=o(Q+\beta_*^6)+\mathcal{O}\left(\|\rho_{**}^{\perp}\|^{1+\ve}+\|f\|_{H^{-1}}\right).
\end{eqnarray*}

Summarizing the estimates from Step.~5.2.1 to Step.~5.2.2, we have
\begin{eqnarray*}
\left\langle \overline{\mathcal{N}}_{rem}, \partial_t\Psi_j\right\rangle_{L^2}=o(Q)+\mathcal{O}\left(\beta_*^6+\|\rho_{**}^{\perp}\|^{1+\ve}+\|f\|_{H^{-1}}\right).
\end{eqnarray*}

The conclusion follows from the estimates from Step.~1 to Step.~5.
\end{proof}

\section{Estimate of $\rho_{**}^{\perp}$}
By the orthogonal conditions of $\rho_{**}^{\perp}$, given by \eqref{eqn2004} and multiplying \eqref{eqn1114} which is given in $(2)$ of Propostion~\ref{prop0001} with $\rho_{**}^{\perp}$ on both sides and integrating by parts, we have
\begin{eqnarray}\label{eqn0060}
\|\rho_{**}^{\perp}\|^2\lesssim\|f\|_{H^{-1}}\|\rho_{**}^{\perp}\|+\left|\left\langle \mathcal{R}_{new,0}, \rho_{**}^{\perp}\right\rangle_{L^2}\right|,
\end{eqnarray}
where $\mathcal{R}_{new,0}$ is given by \eqref{eqn0061} which is given in $(2)$ of Propostion~\ref{prop0001}.  Moreover, we remark that by Lemmas~\ref{lem0003} and \ref{lemn0001}, we have
\begin{eqnarray}\label{eqnew0093}
\mathcal{N}_{rem}=\mathcal{N}_{0}+\overline{\mathcal{N}}_{rem},
\end{eqnarray}
where $\mathcal{N}_{rem}$ is the remaining term in $\mathcal{R}_{new,0}$.  We emphasize once more that we need to eliminate the lower order terms (compared to the $\beta_*^4$ terms) in the data $\mathcal{R}_{new,0}$ which is given in $(2)$ of Proposition~\ref{prop0001} to get the desired stability inequality.  Thus, we
need further decompose $\sum_{j=1}^{\nu}\gamma_{1,j}$ which is given in Lemma~\ref{lem0007}.
\begin{lemma}\label{lemn0003}
Let $d\geq2$, $a<0$ and $b=b_{FS}(a)$.  Then we have the following decomposition
\begin{eqnarray*}
\sum_{j=1}^{\nu}\gamma_{1,j}=\gamma_{1,*}+\sum_{l=1}^{\nu}\alpha_l^{***}\Psi_l,
\end{eqnarray*}
where $\{\alpha_l^{***}\}$ is chosen such that $\left\langle\gamma_{1,*}, \Psi_l\right\rangle=0$ for all $1\leq l\leq\nu$.  Moreover, we have the following estimates
\begin{eqnarray*}
\|\gamma_{1,*}\|_{L^\infty}\lesssim \sum_{l=1}^{\nu}Q^{1-\sigma}\left|\left(\alpha_l^*\right)^p-\alpha_l^*\right|\quad\text{and}\quad\sum_{j=1}^{\nu}\left|\alpha_{j}^{***}\right|\lesssim \sum_{l=1}^{\nu}\left|\left(\alpha_l^*\right)^p-\alpha_l^*\right|.
\end{eqnarray*}
\end{lemma}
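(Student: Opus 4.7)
The projection coefficients $\{\alpha_l^{***}\}$ are determined by the orthogonality requirement $\langle \gamma_{1,*}, \Psi_m\rangle = 0$ for all $1\leq m\leq\nu$, which produces the linear system
\begin{eqnarray*}
\sum_{l=1}^\nu \langle \Psi_l, \Psi_m\rangle\, \alpha_l^{***} & = & \sum_{j=1}^\nu \langle \gamma_{1,j}, \Psi_m\rangle,\qquad 1\leq m\leq\nu.
\end{eqnarray*}
By \eqref{eqnewnew1005} and Lemma~\ref{lem0005}, the Gram matrix equals $\|\Psi\|^2 I + \mathcal{O}(Q)$, so it is diagonally dominant and uniquely invertible. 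Since $\Psi_m$ solves \eqref{eq0006}, I rewrite $\langle \gamma_{1,j}, \Psi_m\rangle = \langle \gamma_{1,j}, \Psi_m^p\rangle_{L^2}$; the pointwise bound \eqref{eqn0048} combined with Lemma~\ref{lem0005} then gives $|\langle \gamma_{1,m}, \Psi_m^p\rangle_{L^2}|\lesssim |(\alpha_m^*)^{p-1}-1|$ from the diagonal contribution and $|\langle \gamma_{1,j}, \Psi_m^p\rangle_{L^2}|\lesssim |(\alpha_j^*)^{p-1}-1|\,Q^{1-\sigma}$ when $j\neq m$. Inverting the Gram system and using $|(\alpha_l^*)^{p-1}-1|\sim|(\alpha_l^*)^p-\alpha_l^*|$ as $\alpha_l^*\to 1$ yields the estimate $\sum_{j=1}^\nu|\alpha_j^{***}|\lesssim \sum_{l=1}^\nu |(\alpha_l^*)^p - \alpha_l^*|$.

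For the $L^\infty$ bound on $\gamma_{1,*}$ I derive its equation from \eqref{eqn0012} together with $\mathcal{L}(\Psi_l) = \Psi_l^p - p\mathcal{U}^{p-1}\Psi_l$, giving
\begin{eqnarray*}
\mathcal{L}(\gamma_{1,*}) & = & \sum_{j=1}^\nu \bigl((\alpha_j^*)^p - \alpha_j^*\bigr)\Psi_j^p - \sum_{l=1}^\nu \alpha_l^{***}\bigl(\Psi_l^p - p\mathcal{U}^{p-1}\Psi_l\bigr)\\
& & - \sum_{j,i} c_{1,j,i}\Psi_i^{p-1}\partial_t\Psi_i,
\end{eqnarray*}
the $w_{i,l}$-terms being absent since $\varsigma_{1,j,i,l}=0$ by Lemma~\ref{lem0007}. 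The key observation is that the orthogonality-based $\alpha_l^{***}$ coincides, up to a correction of size $Q\cdot\max_l|(\alpha_l^*)^p - \alpha_l^*|$, with the explicit \emph{cancellation coefficient} $\tfrac{(\alpha_l^*)^p - \alpha_l^*}{1 - p(\alpha_l^*)^{p-1}}$ that makes the local combination $((\alpha_l^*)^p-\alpha_l^*)\Psi_l^p - \alpha_l^{***}\Psi_l^p + p(\Psi_l^*)^{p-1}\alpha_l^{***}\Psi_l$ vanish pointwise near $s_l^*$. Consequently the surviving data reduces to the genuine interaction term $p\sum_l\alpha_l^{***}\bigl(\mathcal{U}^{p-1}-(\Psi_l^*)^{p-1}\bigr)\Psi_l$ together with the Lagrange multiplier contributions; all of these are supported in the overlap regions and have $\sum_l\Psi_l^{1-\sigma}$-weighted $L^\infty$ norm bounded by $Q^{1-\sigma}\sum_l|(\alpha_l^*)^p - \alpha_l^*|$ by Lemmas~\ref{lem0005} and \ref{lem0007}. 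Applying the pointwise linear inversion used in Lemma~\ref{lem0006} (whose supersolution is precisely $\sum_l\Psi_l^{1-\sigma}$), together with the built-in orthogonality $\langle \gamma_{1,*}, \Psi_l\rangle = 0$ which removes the kernel direction, delivers the stated $L^\infty$ estimate.

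The principal obstacle is the circular dependence between the implicit definition of $\alpha_l^{***}$ and the leading-order cancellation that drives the improved $Q^{1-\sigma}$ decay of $\gamma_{1,*}$. I will resolve this by a two-step bootstrap: first use the Gram inversion to establish the crude bound $|\alpha_l^{***}|\lesssim |(\alpha_l^*)^p-\alpha_l^*|$; next plug this into the equation for $\gamma_{1,*}$ to derive the improved pointwise estimate via Lemma~\ref{lem0006}-type inversion; finally verify that the implicit and explicit formulas for $\alpha_l^{***}$ agree up to an $\mathcal{O}(Q)$ discrepancy, closing the loop. A secondary verification is that the Lagrange multipliers $c_{1,j,i}$, which by Lemma~\ref{lem0007} have size at most $|(\alpha_j^*)^{p-1}-1|Q^p|\log Q|$, are comfortably absorbed within the $Q^{1-\sigma}\sum_l|(\alpha_l^*)^p-\alpha_l^*|$ target on the data.
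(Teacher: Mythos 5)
Your argument follows the paper's proof essentially verbatim: the Gram projection onto $\{\Psi_l\}$, the identification of the cancellation coefficient $\tfrac{(\alpha_l^*)^p-\alpha_l^*}{1-p(\alpha_l^*)^{p-1}}$ that eliminates the $\Psi_l^p$-part of the residual data, and a Lemma~\ref{lem0006}-type supersolution bound for the resulting equation for $\gamma_{1,*}$ are exactly the ingredients the paper uses. The only small observation worth adding is that, since Lemma~\ref{lem0007} already gives a pointwise bound on each individual $\gamma_{1,j}$, projecting the equation~\eqref{eqn0012} for $\gamma_{1,m}$ onto $\Psi_m$ and using $\mathcal{L}(\Psi_m)=\Psi_m^p-p\mathcal{U}^{p-1}\Psi_m$ determines $\alpha_m^{***}$ to precision $\mathcal{O}(Q^{1-\sigma})\sum_l|(\alpha_l^*)^{p}-\alpha_l^*|$ without ever invoking $\gamma_{1,*}$, so the circularity your bootstrap is designed to break can in fact be avoided outright.
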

\begin{proof}
By the orthogonal conditions of $\gamma_{1,*}$ and multiplying \eqref{eqn0012} with $\Psi_j$ on both sides and integrating by parts, we have
\begin{eqnarray}
\|\Psi\|^2\alpha_j^{***}+\sum_{l=1;l\not=j}^{\nu}\left\langle\Psi_j, \Psi_l\right\rangle\alpha_l^{***}&=&\left\langle\mathcal{R}_{1,j}, \Psi_j\right\rangle_{L^2}+\sum_{l=1;l\not=j}^{\nu}\left\langle\mathcal{R}_{1,l}, \Psi_j\right\rangle_{L^2}\notag\\
&&+\sum_{l=1}^{\nu}p\left\langle\mathcal{U}^{p-1}\gamma_{1,l}, \Psi_j\right\rangle_{L^2}\notag\\
&=&\left\langle\mathcal{R}_{1,j}, \Psi_j\right\rangle_{L^2}+p\sum_{l=1;l\not=j}\alpha_l^{***}\left\langle\mathcal{U}^{p-1}, \Psi_j\Psi_l\right\rangle_{L^2}\notag\\
&&+p\alpha_j^{***}\left\langle\mathcal{U}^{p-1}, \Psi_j^2\right\rangle_{L^2}+\sum_{l=1;l\not=j}^{\nu}\left\langle\mathcal{R}_{1,l}, \Psi_j\right\rangle_{L^2}\notag\\
&&+p\left\langle\left(\mathcal{U}^{p-1}-\left(\Psi_j^*\right)^{p-1}\right)\Psi_j, \gamma_{1,*}\right\rangle_{L^2}\label{eqnewnew0020}
\end{eqnarray}
for all $1\leq j\leq \nu$ and $\gamma_{1,*}$ satisfies the following equation:
\begin{eqnarray}\label{eqn9012}
\left\{\aligned&\mathcal{L}(\gamma_{1,*})=\mathcal{R}_{1,*}-\sum_{i=1}^{\nu}\Psi_{i}^{p-1}\left(c_{1,j,i}\partial_t\Psi_{i}+\sum_{l=1}^{d}\varsigma_{1,j,i,l}w_{i,l}\right),\quad \text{in }\mathcal{C},\\
&\langle \partial_t\Psi_{j}, \gamma_{1,*}\rangle=\langle w_{j,l}, \gamma_{1,*}\rangle=0\quad\text{for all }1\leq j\leq\nu\text{ and }1\leq l\leq d,\endaligned\right.
\end{eqnarray}
where by \eqref{eqn0020},
\begin{eqnarray*}
\mathcal{R}_{1,*}&=&\sum_{l=1}^{\nu}\left(\mathcal{R}_{1,l}-\alpha_l^{***}\left(\Psi_l^p-p\mathcal{U}^{p-1}\Psi_l\right)\right)\notag\\
&=&\sum_{l=1}^{\nu}\left(\left(\alpha_l^*\right)^p-\alpha_l^*-\alpha_l^{***}\left(1-p\left(\alpha_l^*\right)^{p-1}\right)\right)\Psi_l^p\notag\\
&&+\sum_{l=1}^{\nu}p\alpha_l^{***}\left(\mathcal{U}^{p-1}-\left(\Psi_l^*\right)^{p-1}\right)\Psi_l.
\end{eqnarray*}
By Lemma~\ref{lem0005}, \eqref{eqn0020} and \eqref{eqnewnew0020}, we have
\begin{eqnarray*}
\left(1-p\left(\alpha_j^*\right)^{p-1}\right)\alpha_{j}^{***}=\left(\left(\alpha_j^*\right)^p-\alpha_j^*\right)+\sum_{l=1}^{\nu}\mathcal{O}(Q)\left(\left(\alpha_l^*\right)^p-\alpha_l^*\right)+\mathcal{O}\left(Q^{1-\sigma}\right)\|\gamma_{1,*}\|
\end{eqnarray*}
for all $1\leq j\leq\nu$.  Thus, by Lemma~\ref{lem0007}, \eqref{eqn9012}, the orthogonal conditions of $\gamma_{1,j}$ given in \eqref{eqn0012} and the elliptic estimates, we have the desired estimates of $\|\gamma_{1,*}\|_{L^\infty}$ and $\sum_{j=1}^{\nu}\left|\alpha_{j}^{***}\right|$.
\end{proof}

\vskip0.12in

In what follows, we shall estimate $\|\rho_{**}^{\perp}\|$ by \eqref{eqn0060}.
\begin{proposition}\label{propn0003}
Let $d\geq2$, $a<0$ and $b=b_{FS}(a)$.  Then we have
\begin{eqnarray*}
\|\rho_{**}^{\perp}\|\lesssim\beta_*^4+\|f\|_{H^{-1}}.
\end{eqnarray*}
\end{proposition}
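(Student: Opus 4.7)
The starting point is precisely the inequality \eqref{eqn0060}, which follows from testing the equation \eqref{eqn1114} for $\rho_{**}^{\perp}$ against $\rho_{**}^{\perp}$ and invoking the spectral gap of $\mathcal{L}$ on the orthogonal complement of $\mathrm{span}\{\Psi_j,\partial_t\Psi_j,w_{j,l}\}$ guaranteed by Lemma~\ref{lem0001} and the orthogonality conditions built into \eqref{eqn1114}. Hence the whole task is to prove
$$\bigl|\langle \mathcal{R}_{new,0}, \rho_{**}^{\perp}\rangle_{L^2}\bigr|\lesssim \bigl(\beta_*^4 + \|f\|_{H^{-1}} + o(1)\|\rho_{**}^{\perp}\|\bigr)\|\rho_{**}^{\perp}\|,$$
after which Young's inequality and absorption into the left-hand side of \eqref{eqn0060} yield $\|\rho_{**}^{\perp}\|\lesssim\beta_*^4+\|f\|_{H^{-1}}$.

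The first simplification is a clean elimination of the Lagrange-multiplier contributions in $\mathcal{R}_{new,0}$. Since $\partial_t\Psi_i$ and $w_{i,l}$ are solutions of the linearised equation \eqref{eq0016}, we have $\Psi_i^{p-1}\partial_t\Psi_i=\tfrac{1}{p}\mathcal{L}_i(\partial_t\Psi_i)$ and $\Psi_i^{p-1}w_{i,l}=\tfrac{1}{p}\mathcal{L}_i(w_{i,l})$ with $\mathcal{L}_i$ the local linear operator at $\Psi_i$; integrating by parts against $\rho_{**}^{\perp}$ and invoking the orthogonality $\langle \partial_t\Psi_i,\rho_{**}^{\perp}\rangle=\langle w_{i,l},\rho_{**}^{\perp}\rangle=0$, every term $(c_{1,ex,i}+\cdots)\Psi_i^{p-1}\partial_t\Psi_i$ and $(\varsigma_{2,ex,i,l}+\cdots)\Psi_i^{p-1}w_{i,l}$ pairs to zero with $\rho_{**}^{\perp}$ in $L^2$ up to a lower-order error of size $(\mathrm{interaction})\cdot\|\rho_{**}^{\perp}\|$, which is harmless by the size estimates of those Lagrange multipliers in Lemmas~\ref{lem0006}--\ref{lem0011}. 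The projection term $\sum_j\alpha_j^{**}(\mathcal{U}^{p-1}-\Psi_j^{p-1})\Psi_j$ is handled similarly: $\langle\Psi_j^p,\rho_{**}^{\perp}\rangle_{L^2}=\langle\Psi_j,\rho_{**}^{\perp}\rangle=0$, so only $\alpha_j^{**}\langle(\mathcal{U}^{p-1}-\Psi_j^{p-1})\Psi_j,\rho_{**}^{\perp}\rangle_{L^2}$ survives, and Lemma~\ref{lem0005} bounds it by $|\alpha_j^{**}|Q^{(p/2)\wedge 1}|\log Q|^{1/2}\|\rho_{**}^{\perp}\|$; combined with the estimate $\sum_j|\alpha_j^{**}|\lesssim Q+\beta_*^2+\sum_j|(\alpha_j^*)^{p-1}-1|$ from Proposition~\ref{prop0001} and the bounds on $Q$ and $\sum_j|(\alpha_j^*)^{p-1}-1|$ from Propositions~\ref{propn0001}--\ref{propn0002}, this is absorbed into $(\beta_*^4+\|f\|_{H^{-1}}+o(1)\|\rho_{**}^{\perp}\|)\|\rho_{**}^{\perp}\|$.

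The remaining pieces of $\mathcal{R}_{new,0}$ are the error $\sum_j\mathcal{R}_{2,j}$ (bounded pointwise by $|(\alpha_j^*)^{p-1}-1|\beta_*\Psi_j^{\frac{3p-1}{2}}$, hence handled by Proposition~\ref{propn0001}); the interaction terms $(\Psi_j^*)^{p-2}\mathcal{V}_j(\rho_*-\gamma_{1,ex}-\gamma_{\mathcal{N},led,j})$, $(\Psi_j^*)^{p-3}\mathcal{V}_j^2\rho_*$, and the various $\mathcal{O}$-remainders; and the nonlinear remainder $\mathcal{N}_{rem}=\mathcal{N}_0+\overline{\mathcal{N}}_{rem}$ of \eqref{eqnew0093}. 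The nonlinear part is treated by Lemma~\ref{lemn0001} (for $\mathcal{N}_0$, whose explicit form is $A_p\mathcal{U}^{p-2}(\rho_0^2+2\rho_0\rho_{**}^{\perp})+B_p\mathcal{U}^{p-3}(\cdots)$ and is bounded, after pairing with $\rho_{**}^{\perp}$, by $o(\beta_*^4)+o(\|f\|_{H^{-1}})+o(1)\|\rho_{**}^{\perp}\|^2$ via the pointwise estimates on $\gamma_{ex}$, $\gamma_*$, $\gamma_{\mathcal{N},led}$ from Proposition~\ref{prop0001}) and by Lemma~\ref{lemn0002} for $\overline{\mathcal{N}}_{rem}$: its leading piece $C_p\mathcal{U}^{p-4}\overline{\mathcal{V}}^4$ produces exactly the bound $\beta_*^4\|\rho_{**}^{\perp}\|$ that we need, and all the other terms in Lemma~\ref{lemn0002} are genuinely of order $o(\beta_*^4)$.

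The main obstacle is the interaction term $\sum_j 2A_p(\Psi_j^*)^{p-2}\mathcal{V}_j(\rho_*-\gamma_{1,ex}-\gamma_{\mathcal{N},led,j})\chi_{\mathcal{B}_j}$: naive estimation would give order $\beta_*^3\|\rho_{**}^{\perp}\|$ from the $\rho_{**}^{\perp}$ component and larger cross contributions from the non-subtracted pieces of $\rho_0$, all of which are \emph{larger} than the target $\beta_*^4\|\rho_{**}^{\perp}\|$. The whole point of the careful construction in Proposition~\ref{prop0001}---namely the subtraction of $\gamma_{1,ex}$ and of the symmetric parts $\gamma_{\mathcal{N},led,j}$, $\rho_{**,1,j}^\perp$ inside $\rho_0$---is precisely to ensure that, after splitting $\rho_*-\gamma_{1,ex}-\gamma_{\mathcal{N},led,j}$ into $\gamma_{2,ex}+\sum_{l\ne j}\gamma_{1,l}+\gamma_{j,*}+\gamma_{\mathcal{N},led,rem,j}+\rho_{**}^{\perp}$, the odd--even symmetry on $\mathbb{S}^{d-1}$ and in $t-s_j^*$ of the various pieces kills the would-be leading cubic contribution upon pairing with $(\Psi_j^*)^{p-2}\mathcal{V}_j$ (which itself is odd on $\mathbb{S}^{d-1}$). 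What survives is at worst $\beta_*\|\gamma_{2,ex}\|+\beta_*^2|\alpha_{l,1}^{**}|+\beta_*|(\alpha_l^*)^{p-1}-1|$ times bounded weighted $L^1$ integrals, all of which fall into the desired bound after inserting Propositions~\ref{propn0001}--\ref{propn0002}. Gathering all contributions, dividing \eqref{eqn0060} by $\|\rho_{**}^{\perp}\|$, and absorbing the remaining $o(1)\|\rho_{**}^{\perp}\|$ into the left-hand side completes the proof.
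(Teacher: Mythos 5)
Your overall strategy matches the paper's: test the equation for $\rho_{**}^{\perp}$ against itself, use the spectral gap from the orthogonality conditions, and estimate $\langle \mathcal{R}_{new,0},\rho_{**}^{\perp}\rangle_{L^2}$. You also correctly pinpoint the genuinely delicate step (the interaction term $(\Psi_j^*)^{p-2}\mathcal{V}_j(\rho_*-\gamma_{1,ex}-\gamma_{\mathcal{N},led,j})$ and the fact that the subtractions in $\rho_0$ plus the odd/even symmetries are precisely what suppress the would-be cubic contribution), and your treatments of $\mathcal{N}_0$ and $\overline{\mathcal{N}}_{rem}$ are in line with Steps~1--2 of the paper. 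A minor inaccuracy first: the Lagrange-multiplier blocks do not merely ``pair to zero up to a lower-order error'' — they vanish exactly, since $\langle\Psi_i^{p-1}\partial_t\Psi_i,\rho_{**}^{\perp}\rangle_{L^2}=\tfrac{1}{p}\langle\partial_t\Psi_i,\rho_{**}^{\perp}\rangle=0$ and $\langle\Psi_i^{p-1}w_{i,l},\rho_{**}^{\perp}\rangle_{L^2}=\tfrac{1}{p}\langle w_{i,l},\rho_{**}^{\perp}\rangle=0$. This is harmless, but it points to a genuine gap elsewhere.

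The gap is your treatment of $\sum_j\mathcal{R}_{2,j}$. You bound it pointwise by $|(\alpha_j^*)^{p-1}-1|\,\beta_*\Psi_j^{\frac{3p-1}{2}}$ and then invoke Proposition~\ref{propn0001}, which controls $\sum_j|(\alpha_j^*)^{p-1}-1|$ only at the level $\beta_*^2 + \|f\|_{H^{-1}} + Q + \|\rho_{**}^{\perp}\|^{1+\ve}$. After multiplying by $\beta_*$ the dominant term is $\beta_*^3$, so this route yields at best $|\langle\sum_j\mathcal{R}_{2,j},\rho_{**}^{\perp}\rangle_{L^2}|\lesssim\beta_*^3\|\rho_{**}^{\perp}\|$, and after absorption you would only conclude $\|\rho_{**}^{\perp}\|\lesssim\beta_*^3+\|f\|_{H^{-1}}$. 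That is strictly weaker than the claimed $\beta_*^4$, and the downstream arguments (Steps~5.10 of Proposition~\ref{propn0004} and especially Proposition~\ref{propn0005}, which hinge on $\|\rho_{**}^{\perp}\|\lesssim\beta_*^4$) would not close. The paper instead observes that $\mathcal{R}_{2,j}=p\left((\alpha_j^*)^{p-1}-1\right)\Psi_j^{p-1}\mathcal{V}_j$ is a linear combination of $\Psi_j^{p-1}w_{j,l}$, so by the very same mechanism you use for the Lagrange multipliers, $\langle\mathcal{R}_{2,j},\rho_{**}^{\perp}\rangle_{L^2}=\sum_l((\alpha_j^*)^{p-1}-1)\beta_{j,l}^*\langle\Psi_j^{p-1}w_{j,l},\rho_{**}^{\perp}\rangle_{L^2}=0$ exactly. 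This exact cancellation, rather than a pointwise size estimate, is what keeps the bound at $\beta_*^4$ and is indispensable here.
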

\begin{proof}
By \eqref{eqn0061} and \eqref{eqnew0093},
\begin{eqnarray*}
\left\langle \mathcal{R}_{new,0}, \rho_{**}^{\perp}\right\rangle_{L^2}&=&\sum_{i=1}^{\nu}\left\langle (c_{1,ex,i}+c_{1,j,i}+c_{3,led,i}-c_{new,*,i})\Psi_{i}^{p-1}\partial_t\Psi_{i}, \rho_{**}^{\perp}\right\rangle_{L^2}\notag\\
&&+\sum_{i=1}^{\nu}\sum_{l=1}^{d}\left\langle (\varsigma_{2,ex,i,l}+\varsigma_{3,led,i,l}-\varsigma_{new,*,i,l})\Psi_{i}^{p-1}w_{i,l}, \rho_{**}^{\perp}\right\rangle_{L^2}\notag\\
&&+\sum_{j=1}^{\nu}2A_p\left\langle\left(\Psi_j^*\right)^{p-2}\mathcal{V}_j(\rho_*-\gamma_{1,ex}-\gamma_{\mathcal{N},led,j})\chi_{\mathcal{B}_j}, \rho_{**}^{\perp}\right\rangle_{L^2}\notag\\
&&+\sum_{j=1}^{\nu}3B_p\left\langle\left(\Psi_j^*\right)^{p-3}\mathcal{V}_j^2\rho_*\chi_{\mathcal{B}_j}, \rho_{**}^{\perp}\right\rangle_{L^2}+\sum_{j=1}^{\nu}\left\langle\mathcal{R}_{2,j}, \rho_{**}^{\perp}\right\rangle_{L^2}\notag\\
&&+\sum_{j=1}^{\nu}\left\langle\mathcal{O}\left(\beta_*\mathcal{U}_j\left(\beta_*\Psi_{j}^{2p-3}\left(\Psi_j+\rho_*\right)+|\rho_*-\gamma_{1,ex}|\Psi_j^{\frac{3p-5}{2}}\right)\right)\chi_{\mathcal{B}_j}, \rho_{**}^{\perp}\right\rangle_{L^2}\notag\\
&&+\sum_{j=1}^{\nu}\alpha_{j}^{**}\left\langle\left(\mathcal{U}^{p-1}-\Psi_j^{p-1}\right)\Psi_{j}, \rho_{**}^{\perp}\right\rangle_{L^2}+\left\langle\overline{\mathcal{N}}_{rem}, \rho_{**}^{\perp}\right\rangle_{L^2}\notag\\
&&+\left\langle\mathcal{N}_{0}, \rho_{**}^{\perp}\right\rangle_{L^2}+\left\langle\mathcal{O}\left(|\rho_*-\gamma_{1,ex}|\mathcal{U}^{\frac{3p-3}{2}}\right)\chi_{\mathcal{C}\backslash\cup_{j=1}^{\nu}\mathcal{B}_j}, \rho_{**}^{\perp}\right\rangle_{L^2}.
\end{eqnarray*}
By the orthogonal conditions of $\rho_{**}^{\perp}$ given by \eqref{eqn2004} once more,
\begin{eqnarray*}
\left\langle\Psi_{i}^{p-1}(c_{1,ex,i}+c_{1,j,i}+c_{3,led,i}-c_{new,*,i})\partial_t\Psi_{i},\rho_{**}^{\perp}\right\rangle_{L^2}=0
\end{eqnarray*}
and
\begin{eqnarray*}
\left\langle\sum_{l=1}^{d}(\varsigma_{2,ex,j,l}+\varsigma_{3,led,j,l}-\varsigma_{new,*,i,l})w_{i,l},\rho_{**}^{\perp}\right\rangle_{L^2}=0
\end{eqnarray*}
for all $1\leq i,j\leq \nu$, and further by \eqref{eqn0021}, we also have
\begin{eqnarray*}
\left\langle\sum_{i=1}^{\nu}\mathcal{R}_{2,i},\rho_{**}^{\perp}\right\rangle_{L^2}=0.
\end{eqnarray*}

{\bf Step.~1}\quad The estimate of $\left\langle \overline{\mathcal{N}}_{rem}, \rho_{**}^{\perp}\right\rangle_{L^2}$.

By Lemma~\ref{lemn0001} and Propositions~\ref{propn0001} and \ref{propn0002},
\begin{eqnarray*}
\left|\left\langle \overline{\mathcal{N}}_{rem}, \rho_{**}^{\perp}\right\rangle_{L^2}\right|\lesssim\beta_*^{4}\|\rho_{**}^{\perp}\|+\|\rho_{**}^{\perp}\|^{2+\ve}+\|\rho_{**}^{\perp}\|\|f\|_{H^{-1}}.
\end{eqnarray*}

{\bf Step.~2}\quad The estimate of $\left\langle \mathcal{N}_{0}, \rho_{**}^{\perp}\right\rangle_{L^2}$.

By Lemma~\ref{lemn0001}, $(1)$ of Proposition~\ref{prop0001} and Propositions~\ref{propn0001} and \ref{propn0002},
\begin{eqnarray*}
\left|\left\langle \mathcal{N}_{0}, \rho_{**}^{\perp}\right\rangle_{L^2}\right|&\lesssim&\left\|\mathcal{U}^{p-2}\rho_0^2\right\|_{L^2}\|\rho_{**}^{\perp}\|+o(\|\rho_{**}^{\perp}\|^2)\notag\\
&\lesssim&\left(\left\|\mathcal{U}^{p-2}\gamma_{ex}^2\right\|_{L^2}+\|f\|_{H^{-1}}+\beta_*^4\right)\|\rho_{**}^{\perp}\|+o(\|\rho_{**}^{\perp}\|^2).
\end{eqnarray*}
By Lemma~\ref{lem0005}, $(i)$ of $(1)$ of Proposition~\ref{prop0001} and Proposition~\ref{propn0002},
\begin{eqnarray*}
\left\|\mathcal{U}^{p-2}\gamma_{1,ex}^2\right\|_{L^2}^2&\lesssim& \left\{\aligned
&Q^4\sum_{j=1}^{\nu}\left\|\Psi_j\right\|_{L^{2p-4\sigma}(\mathcal{B}_j)}^{2p-4\sigma},\quad p\geq3,\\
&Q^4\sum_{j=1}^{\nu}\left\|\Psi_j\right\|_{L^{6(p-2)}(\mathcal{B}_j)}^{6(p-2)},\quad 1<p<3,
\endaligned\right.\\
&=&o(\beta_*^6+\|\rho_{**}^{\perp}\|^{1+\ve}+\|f\|_{H^{-1}}).
\end{eqnarray*}
Summarizing the above estimates, we have
\begin{eqnarray*}
\left\langle \mathcal{N}_{0}, \rho_{**}^{\perp}\right\rangle_{L^2}
=\mathcal{O}\left(\left(\|f\|_{H^{-1}}+\beta_*^4\right)\|\rho_{**}^{\perp}\|\right)+o(\|\rho_{**}^{\perp}\|^2).
\end{eqnarray*}

{\bf Step.~3}\quad The estimate of $\left\langle\sum_{j=1}^{\nu}\alpha_j^{**}\left(\mathcal{U}^{p-1}-\Psi_j^{p-1}\right)\Psi_{j},\rho_{**}^{\perp}\right\rangle_{L^2}$.

By the orthogonal conditions of $\rho_{**}^{\perp}$ given by \eqref{eqn2004}, we have
\begin{eqnarray*}
\left\langle\sum_{j=1}^{\nu}\alpha_j^{**}\left(\mathcal{U}^{p-1}-\Psi_j^{p-1}\right)\Psi_{j},\rho_{**}^{\perp}\right\rangle_{L^2}=\left\langle\sum_{j=1}^{\nu}\alpha_j^{**}\left(\mathcal{U}^{p-1}-\left(\Psi_{j}^*\right)^{p-1}\right)\Psi_{j},\rho_{**}^{\perp}\right\rangle_{L^2}.
\end{eqnarray*}
Similar to \eqref{eqn3147}, we have
\begin{eqnarray*}
\left|\sum_{j=1}^{\nu}\alpha_j^{**}\left(\mathcal{U}^{p-1}-\Psi_j^{p-1}\right)\Psi_{j}\right|\lesssim\sum_{j=1}^{\nu}\left|\alpha_j^{**}\right|\Psi_{j}^{p-1}\mathcal{U}_j\chi_{\mathcal{B}_j}+\left(\sum_{j=1}^{\nu}\left|\alpha_j^{**}\right|\right)\mathcal{U}^{p}\chi_{\mathcal{C}\backslash\cup_{j=1}^{\nu}\mathcal{B}_j}.
\end{eqnarray*}
Thus, by Lemma~\ref{lem0005} $(2)$ of Proposition~\ref{prop0001} and Propositions~\ref{propn0001} and \ref{propn0002},
\begin{eqnarray*}
\left\langle\sum_{j=1}^{\nu}\alpha_j^{**}\left(\mathcal{U}^{p-1}-\Psi_j^{p-1}\right)\Psi_{j},\rho_{**}^{\perp}\right\rangle_{L^2}&=&\mathcal{O}\left(\sum_{j=1}^{\nu}\left|\alpha_j^{**}\right|Q^{\frac {1}{2}+\sigma}\|\rho_{**}^{\perp}\|\right)\notag\\
&=&\mathcal{O}\left(\left(\beta_*^4+\|f\|_{H^{-1}}\right)\|\rho_{**}^{\perp}\|\right)+o(\|\rho_{**}^{\perp}\|^2).
\end{eqnarray*}

{\bf Step.~4}\quad The estimate of $\sum_{j=1}^{\nu}\left\langle\beta_*^2\Psi_{j}^{2(p-1)}\mathcal{U}_j\chi_{\mathcal{B}_j}, \rho_{**}^{\perp}\right\rangle_{L^2}$.

By Lemma~\ref{lem0005},
\begin{eqnarray*}
\sum_{j=1}^{\nu}\left\langle\beta_*^2\Psi_{j}^{2(p-1)}\mathcal{U}_j\chi_{\mathcal{B}_j}, \rho_{**}^{\perp}\right\rangle_{L^2}&=&\mathcal{O}\left(\beta_*^2Q^{\frac {1}{2}+\sigma}\|\rho_{**}^{\perp}\|\right)\\
&=&\mathcal{O}\left(\left(\beta_*^4+\|f\|_{H^{-1}}\right)\|\rho_{**}^{\perp}\|\right)+o(\|\rho_{**}^{\perp}\|^2).
\end{eqnarray*}

{\bf Step.~5}\quad The estimates of
\begin{eqnarray*}
\sum_{j=1}^{\nu}\left\langle\beta_*^2\mathcal{U}_j\left(\Psi_{j}^{2p-3}\left(\Psi_j+\gamma_{1,ex}\right)\right)\chi_{\mathcal{B}_j}, \rho_{**}^{\perp}\right\rangle_{L^2}
\end{eqnarray*}
and $\left\langle\beta_*(\rho_*-\gamma_{1,ex})\mathcal{U}_*, \rho_{**}^{\perp}\right\rangle_{L^2}$, where $\mathcal{U}_*=\sum_{j=1}^{\nu}\Psi_{j}^{\frac{3p-5}{2}}\mathcal{U}_j\chi_{\mathcal{B}_j}+\mathcal{U}^{\frac{3(p-1)}{2}}\chi_{\mathcal{C}\backslash\left(\cup_{j=1}^{\nu}\mathcal{B}_j\right)}$.

By Lemma~\ref{lem0005} and Proposition~\ref{propn0002}
\begin{eqnarray*}
\left\langle\beta_*^2\mathcal{U}_j\Psi_{j}^{2p-2}\chi_{\mathcal{B}_j}, \rho_{**}^{\perp}\right\rangle_{L^2}=o\left(\beta_*^4\|\rho_{**}^{\perp}\|\right)+\mathcal{O}\left(\|f\|_{H^{-1}}\|\rho_{**}^{\perp}\|+\|\rho_{**}^{\perp}\|^{2+\ve}\right).
\end{eqnarray*}
By Lemma~\ref{lem0005}, $(i)$ of $(1)$ of Proposition~\ref{prop0001} and Proposition~\ref{propn0002},
\begin{eqnarray*}
\left\langle\beta_*^2\mathcal{U}_j\Psi_{j}^{2p-3}\gamma_{1,ex}\chi_{\mathcal{B}_j}, \rho_{**}^{\perp}\right\rangle_{L^2}&=&\left\{\aligned
&\left\langle\beta_*^2\mathcal{U}_j\Psi_{j}^{2p-2-\sigma}\left(Q_j\chi_{\mathcal{B}_{j,+}}+Q_{j-1}\chi_{\mathcal{B}_{j,-}}\right), \rho_{**}^{\perp}\right\rangle_{L^2},\quad p\geq3,\\
&\left\langle\beta_*^2\mathcal{U}_j\Psi_{j}^{3p-5}\left(Q_j\chi_{\mathcal{B}_{j,+}}+Q_{j-1}\chi_{\mathcal{B}_{j,-}}\right), \rho_{**}^{\perp}\right\rangle_{L^2},\quad1<p<3
\endaligned\right.\\
&=&o\left(\beta_*^4\|\rho_{**}^{\perp}\|\right)+\mathcal{O}\left(\|f\|_{H^{-1}}\|\rho_{**}^{\perp}\|+\|\rho_{**}^{\perp}\|^{2+\ve}\right).
\end{eqnarray*}
By $(1)$ of Proposition~\ref{prop0001},
\begin{eqnarray*}
\left\langle\beta_*(\rho_*-\gamma_{1,ex})\mathcal{U}_*, \rho_{**}^{\perp}\right\rangle_{L^2}=\beta_*\left\langle\gamma_{2,ex}+\gamma_{*}+\gamma_{\mathcal{N},led}, \mathcal{U}_*\rho_{**}^{\perp}\right\rangle_{L^2}+\beta_*\left\langle \mathcal{U}_*, \left(\rho_{**}^{\perp}\right)^2\right\rangle_{L^2}.
\end{eqnarray*}
Since $\|\mathcal{U}_*\|_{L^{\infty}}=o(1)$, we have $\beta_*\left\langle \mathcal{U}_*, \left|\rho_{**}^{\perp}\right|^2\right\rangle_{L^2}=o\left(\|\rho_{**}^{\perp}\|^2\right)$.
By Lemma~\ref{lem0005}, $(i)$ of $(1)$ of Proposition~\ref{prop0001} and Proposition~\ref{propn0002}, we have
\begin{eqnarray*}
\beta_*\left|\left\langle\gamma_{2,ex}, \mathcal{U}_*\rho_{**}^{\perp}\right\rangle_{L^2}\right|\
&\lesssim&
\left\{\aligned
&\beta_*^2Q\|\rho_{**}^{\perp}\|\left(\sum_{i=1}^{\nu}\left\langle \Psi_i^{3p-3-2\sigma},\mathcal{U}_i^2\right\rangle_{L^2(\mathcal{B}_{i})}\right)^\frac12,\quad p\geq\frac{7}{3},\\
&\beta_*^2Q\|\rho_{**}^{\perp}\|\left(\sum_{i=1}^{\nu}\left\langle \Psi_i^{6p-10},\mathcal{U}_i^2\right\rangle_{L^2(\mathcal{B}_{i})}\right)^\frac12,\quad 1<p<\frac{7}{3}
\endaligned
\right.\\
&=&o\left(\beta_*^4\|\rho_{**}^{\perp}\|\right)+\mathcal{O}\left(\|f\|_{H^{-1}}\|\rho_{**}^{\perp}\|+\|\rho_{**}^{\perp}\|^{2+\ve}\right).
\end{eqnarray*}
By Lemma~\ref{lem0005}, $(ii)$ and $(iii)$ of $(1)$ of Proposition~\ref{prop0001} and Propositions~\ref{propn0001} and \ref{propn0002}, we have
\begin{eqnarray*}
&&\beta_*\left\langle\gamma_{*}+\gamma_{\mathcal{N},led}, \mathcal{U}_*\rho_{**}^{\perp}\right\rangle_{L^2}\\
&\lesssim&\beta_*^3\|\rho_{**}^{\perp}\|\sum_{j=1}^\nu\left(\left\|\Psi_j^{\frac{3(p-1)-2\sigma}{2}}\mathcal{U}_j\right\|_{L^2(\mathcal{B}_j)}\right)+\|\rho_{**}^{\perp}\|^{2+\ve}+\|f\|_{H^{-1}}\|\rho_{**}^{\perp}\|\\
&&+\beta_*^4Q^{\frac12}\|\rho_{**}^{\perp}\|\\
&=&o\left(\beta_*^4\|\rho_{**}^{\perp}\|\right)+\mathcal{O}\left(\|f\|_{H^{-1}}\|\rho_{**}^{\perp}\|+\|\rho_{**}^{\perp}\|^{2+\ve}\right).
\end{eqnarray*}
Summarizing the above estimates, we have
\begin{eqnarray*}
\left\langle\beta_*\rho_*\mathcal{U}_*, \rho_{**}^{\perp}\right\rangle_{L^2}= o\left(\beta_*^4\|\rho_{**}^{\perp}\|\right)+\mathcal{O}\left(\|f\|_{H^{-1}}\|\rho_{**}^{\perp}\|+\|\rho_{**}^{\perp}\|^{2+\ve}\right).
\end{eqnarray*}

{\bf Step.~6}\quad The estimates of
\begin{eqnarray*}
\sum_{j=1}^{\nu}\left\langle\left(\Psi_j^*\right)^{p-2}\mathcal{V}_j(\rho_*-\gamma_{1,ex}-\gamma_{\mathcal{N},led,j})\chi_{\mathcal{B}_j},\rho_{**}^{\perp}\right\rangle_{L^2}
\end{eqnarray*}
and
\begin{eqnarray*}
\sum_{j=1}^{\nu}\left\langle\left(\Psi_j^*\right)^{p-3}\mathcal{V}_j^2\rho_*\chi_{\mathcal{B}_j},\rho_{**}^{\perp}\right\rangle_{L^2}.
\end{eqnarray*}

By $(1)$ of Proposition~\ref{prop0001},
\begin{eqnarray*}
&&\sum_{j=1}^{\nu}\left\langle\left(2A_p\left(\Psi_j^*\right)^{p-2}\mathcal{V}_j(\rho_*-\gamma_{1,ex}-\gamma_{\mathcal{N},led,j})+3B_p\left(\Psi_j^*\right)^{p-3}\mathcal{V}_j^2\rho_*\right)\chi_{\mathcal{B}_j},\rho_{**}^{\perp}\right\rangle_{L^2}\\
&=&\left\langle\mathcal{V}_*\rho_0,\rho_{**}^{\perp}\right\rangle_{L^2}-\sum_{j=1}^{\nu}\left\langle 2A_p\left(\Psi_j^*\right)^{p-2}\mathcal{V}_j(\gamma_{1,ex}+\gamma_{\mathcal{N},led,j})\chi_{\mathcal{B}_j},\rho_{**}^{\perp}\right\rangle_{L^2}+o\left(\|\rho_{**}^{\perp}\|^2\right).
\end{eqnarray*}
where
\begin{eqnarray*}
\mathcal{V}_*=\sum_{j=1}^{\nu}\left(2A_p\left(\Psi_j^*\right)^{p-2}\mathcal{V}_j+3B_p\left(\Psi_j^*\right)^{p-3}\mathcal{V}_j^2\right)\chi_{\mathcal{B}_j}.
\end{eqnarray*}
By Lemma~\ref{lem0005}, $(i)$ of $(1)$ of Proposition~\ref{prop0001} and Propositions~\ref{propn0001} and \ref{propn0002}, we have
\begin{eqnarray*}
\left|\left\langle \gamma_{2,ex}, \mathcal{V}_*\rho_{**}^{\perp}\right\rangle_{L^2}\right|&\lesssim&\left\{\aligned
&\beta_*^2\|\rho_{**}^{\perp}\|\left(\sum_{j=1}^{\nu}Q_j\left\|\Psi_j^{\frac{3p-1-2\sigma}{2}}\right\|_{L^2(\mathcal{B}_{j})}\right),\quad p\geq\frac{7}{3},\\
&\beta_*^2\|\rho_{**}^{\perp}\|\left(\sum_{j=1}^{\nu-1}Q_j\left\|\Psi_j^{3p-4}\right\|_{L^2(\mathcal{B}_{j})}\right),\quad 1<p<\frac{7}{3}
\endaligned
\right.\\
&=&o\left(\beta_*^4\|\rho_{**}^{\perp}\|\right)+\mathcal{O}\left(\|f\|_{H^{-1}}\|\rho_{**}^{\perp}\|+\|\rho_{**}^{\perp}\|^{2+\ve}\right).
\end{eqnarray*}
By the orthogonal conditions of $\rho_{**}^{\perp}$ given in $(2)$ of Proposition~\ref{prop0001}, Lemmas~\ref{lem0007} and \ref{lem0012}, $(iii)$ of $(1)$ of Proposition~\ref{prop0001} and Propositions~\ref{propn0001} and \ref{propn0002} and Lemma~\ref{lemn0003} that
\begin{eqnarray*}
&&\sum_{j=1}^{\nu}\left\langle\left(\Psi_j^*\right)^{p-2}\mathcal{V}_j(\sum_{l=1}^{\nu}\gamma_{1,l}+\gamma_{\mathcal{N},led}-\gamma_{\mathcal{N},led,j})\chi_{\mathcal{B}_j},\rho_{**}^{\perp}\right\rangle_{L^2}\\
&&+\sum_{j=1}^{\nu}\left\langle\left(\Psi_j^*\right)^{p-3}\mathcal{V}_j^2\left(\sum_{l=1}^{\nu}\gamma_{1,l}+\gamma_{\mathcal{N},led}\right)\chi_{\mathcal{B}_j},\rho_{**}^{\perp}\right\rangle_{L^2}\\
&\lesssim&\sum_{j=1}^{\nu}\left|\left\langle\gamma_{1,*}+\gamma_{\mathcal{N},led,rem,j,*}+\rho_{**,2}^{\perp}+\sum_{l=1;l\not=j}^\nu\left(\alpha_l^{***}-\alpha_{l,1}^{**}\right)\Psi_l, \Psi_j^{p-2}\mathcal{V}_j\rho_{**}^{\perp}\right\rangle_{L^2(\mathcal{B}_j)}\right|\\
&&+\left|\left\langle\left(\alpha_j^{***}-\alpha_{j,1}^{**}\right)\Psi_j, \Psi_j^{p-2}\mathcal{V}_j\rho_{**}^{\perp}\right\rangle_{L^2(\mathcal{C}\backslash\mathcal{B}_j)}\right|+\beta_*^2\|\rho_{**}^{\perp}\|\left(\beta_*^{2}+\|\rho_{**}^{\perp}\|^{1+\ve}+\|f\|_{H^{-1}}\right)\\
&\lesssim&\beta_*^4\|\rho_{**}^{\perp}\|+\|\rho_{**}^{\perp}\|\|f\|_{H^{-1}}+\|\rho_{**}^{\perp}\|^{2+\ve}.
\end{eqnarray*}
Summarizing the above estimates, we have
\begin{eqnarray*}
&&\left|\sum_{j=1}^{\nu}\left\langle\left(2A_p\left(\Psi_j^*\right)^{p-2}\mathcal{V}_j(\rho_*-\gamma_{\mathcal{N},led,j})+3B_p\left(\Psi_j^*\right)^{p-3}\mathcal{V}_j^2\rho_*\right)\chi_{\mathcal{B}_j},\rho_{**}^{\perp}\right\rangle_{L^2}\right|\notag\\
&\lesssim&  \beta_*^4\|\rho_{**}^{\perp}\|+\|f\|_{H^{-1}}\|\rho_{**}^{\perp}\|+\|\rho_{**}^{\perp}\|^{2+\ve}.
\end{eqnarray*}

The conclusion follows from the estimates from Step.~1 to Step.~6.
\end{proof}

\section{Estimate of $\beta_*$ and proof of $(a)$ of Theorem~\ref{thmn0001}}
By multiplying \eqref{eq0014} with $\mathcal{V}_j$ on both sides and integrating by parts, the orthogonal conditions of $\rho_*$ and the oddness of $\{\mathcal{V}_i\}$ on $\mathbb{S}^{d-1}$, we have
\begin{eqnarray}\label{eqn0023}
-\left\langle f, \mathcal{V}_j\right\rangle_{H^1}&=&\left\langle \mathcal{R}_{2,j}, \mathcal{V}_j\right\rangle_{L^2}+\left\langle \mathcal{N}, \mathcal{V}_j\right\rangle_{L^2}+\left\langle \mathcal{L}_{j,ex}(\rho_*), \mathcal{V}_j\right\rangle_{L^2}\notag\\
&&+\sum_{i=1;i\not=j}^{\nu}\left\langle \mathcal{R}_{2,i}, \mathcal{V}_j\right\rangle_{L^2}+\left\langle \mathcal{R}_{2,ex}, \mathcal{V}_j\right\rangle_{L^2}
\end{eqnarray}
for all $j=1,2,\cdots,\nu$.
\begin{proposition}\label{propn0004}
Let $d\geq2$, $a<0$ and $b=b_{FS}(a)$.  Then we have
\begin{eqnarray*}
\sum_{j=1}^{\nu}\left(p\left((\alpha_j^*)^{p-1}-1\right)\left\|\Psi_{j}^{p-1}\mathcal{V}_j^2\right\|_{L^1}+\left\langle \mathcal{N}_{j}, \mathcal{V}_j\right\rangle_{L^2}+\left\langle f, \mathcal{V}_j\right\rangle_{H^1}\right)=o(\beta_*^4)+\mathcal{O}\left(\beta_*\|f\|_{H^{-1}}\right),
\end{eqnarray*}
where $\mathcal{N}_{j}$ is given by \eqref{eqnew9999}.
\end{proposition}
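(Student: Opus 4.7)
The plan is to start from the integral identity \eqref{eqn0023}, move the three terms of the claimed sum to one side, and show that every remaining term on the right is absorbed into the error $o(\beta_*^4)+\mathcal{O}(\beta_*\|f\|_{H^{-1}})$. Since by definition $\mathcal{R}_{2,j}=p((\alpha_j^*)^{p-1}-1)\Psi_j^{p-1}\mathcal{V}_j$, testing with $\mathcal{V}_j$ immediately produces $p((\alpha_j^*)^{p-1}-1)\|\Psi_j^{p-1}\mathcal{V}_j^2\|_{L^1}$. The remaining terms to estimate are
$\sum_{i\neq j}\langle\mathcal{R}_{2,i},\mathcal{V}_j\rangle_{L^2}$, $\langle\mathcal{R}_{2,ex},\mathcal{V}_j\rangle_{L^2}$, $\langle\mathcal{L}_{j,ex}(\rho_*),\mathcal{V}_j\rangle_{L^2}$, and $\langle\mathcal{N}-\mathcal{N}_j,\mathcal{V}_j\rangle_{L^2}$. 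Throughout I would use the a priori relations coming from Propositions~\ref{propn0001}, \ref{propn0002}, \ref{propn0003}, namely
\begin{eqnarray*}
\sum_j|(\alpha_j^*)^{p-1}-1|\lesssim\beta_*^2+Q+\|\rho_{**}^\perp\|^{1+\ve}+\|f\|_{H^{-1}},\quad Q\lesssim\beta_*^6+\|\rho_{**}^\perp\|^{1+\ve}+\|f\|_{H^{-1}},\quad \|\rho_{**}^\perp\|\lesssim\beta_*^4+\|f\|_{H^{-1}},
\end{eqnarray*}
so that any factor of $Q$, $\|\rho_{**}^\perp\|^2$, or $\sum_j|(\alpha_j^*)^{p-1}-1|\cdot \beta_*^2$ sits inside the error budget once multiplied by the $\beta_*$ provided by $\mathcal{V}_j$.

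First, the two pure-interaction terms: for $i\neq j$, \eqref{eqn0021} together with Lemma~\ref{lem0005} yields $|\langle\mathcal{R}_{2,i},\mathcal{V}_j\rangle_{L^2}|\lesssim\beta_*^2|(\alpha_i^*)^{p-1}-1|\cdot Q^{(p+1)/4}$, and similarly $|\langle\mathcal{R}_{2,ex},\mathcal{V}_j\rangle_{L^2}|\lesssim\beta_*^2 Q$ (split over the $\mathcal{B}_i$ and their complement and use \eqref{eqn3147}). Both are $o(\beta_*^4)+\mathcal{O}(\beta_*\|f\|_{H^{-1}})$ after substituting the a priori bounds above. For $\langle\mathcal{L}_{j,ex}(\rho_*),\mathcal{V}_j\rangle_{L^2}$ I would plug in the decomposition $\rho_*=\gamma_{ex}+\gamma_{*}+\gamma_{\mathcal{N},led}+\rho_{**}^\perp$ of Proposition~\ref{prop0001} and control each piece via Lemma~\ref{lem0005}: the $\gamma_{ex}$ piece gives $\beta_*Q$, the $\gamma_*$ piece gives $\beta_*(Q+\beta_*^2+\sum|(\alpha_j^*)^{p-1}-1|)$, the $\gamma_{\mathcal{N},led}$ piece gives $\beta_*^3$, and the $\rho_{**}^\perp$ piece gives $\beta_*\|\rho_{**}^\perp\|\cdot Q^{\sigma}$ after Cauchy--Schwarz on a set of bubble-type weight. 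All of these fit in the claimed error.

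The main obstacle is the bulk term $\langle\mathcal{N}-\mathcal{N}_j,\mathcal{V}_j\rangle_{L^2}$. I would decompose $\mathcal{N}=\mathcal{N}_*+\mathcal{N}_0+\overline{\mathcal{N}}_{rem}$ via Lemma~\ref{lemn0001} and treat each block separately. The geometric block $\mathcal{N}_*-\mathcal{N}_j$ (bubbles different from $j$, or the bubble $j$ contribution on $\mathcal{C}\setminus\mathcal{B}_j$) is handled as in Steps~5.1--5.7 of the proof of Proposition~\ref{propn0001} but with $\Psi_j$ replaced by $\mathcal{V}_j\sim\beta_*\Psi_j^{(p+1)/2}\theta$; the additional $\beta_*$ immediately upgrades all the $o(Q)$ and $o(Q^{3/2})$ bounds to $o(\beta_*\cdot Q)=o(\beta_*^4)+o(\beta_*\|f\|_{H^{-1}})$ after invoking Proposition~\ref{propn0002}. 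For the regular-regular block $\mathcal{N}_0$, the oddness of $\mathcal{V}_j$ on $\mathbb{S}^{d-1}$ kills the leading parity-symmetric parts: writing $\rho_0^2$ and $\rho_0^3$ via the even pieces $\mathcal{W}_{sym,i}$ and the cross piece $\mathcal{W}_{*,i}$ as in Step~5.1 of Proposition~\ref{propn0002}, the symmetric integrand against $\mathcal{V}_j$ vanishes, and the cross piece inherits a factor $\beta_*^2+Q+\|\rho_{**}^\perp\|+\|f\|_{H^{-1}}$. The higher-order remainder $\overline{\mathcal{N}}_{rem}$ is controlled by Lemma~\ref{lemn0002} and Proposition~\ref{propn0003}: every term there is $\mathcal{O}(\mathcal{U}^{p-4\sigma})$ times a fourth power of small quantities, so after testing with $\mathcal{V}_j$ and integrating we obtain $\beta_*\cdot\mathcal{O}(\beta_*^4+\|\rho_{**}^\perp\|^{1+\ve})$, again within budget.

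Collecting the above estimates and moving the three distinguished terms to the left of \eqref{eqn0023} will produce exactly the asserted identity. The technical heart is therefore the systematic use of the parity of $\mathcal{V}_j$ on $\mathbb{S}^{d-1}$ to suppress the would-be $\mathcal{O}(\beta_*^3)$ terms arising from the symmetric part of $\rho_0^2$, which would otherwise exceed the $o(\beta_*^4)$ tolerance; this parity cancellation, combined with the sharp a priori bounds on $Q$, $\|\rho_{**}^\perp\|$, and $\sum_j|(\alpha_j^*)^{p-1}-1|$, is what allows the pure $\mathcal{V}_j$-projection equation to isolate exactly $\mathcal{N}_j$ and $\mathcal{R}_{2,j}$ as the only contributions that are not already of size $o(\beta_*^4)+\mathcal{O}(\beta_*\|f\|_{H^{-1}})$.
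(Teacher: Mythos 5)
Your proposal follows the paper's route almost exactly: start from the tested identity \eqref{eqn0023} (or equivalently \eqref{eqn0029}), isolate the $\mathcal{R}_{2,j}$ and $\mathcal{N}_j$ terms, and absorb all remaining contributions into $o(\beta_*^4)+\mathcal{O}(\beta_*\|f\|_{H^{-1}})$ using Lemma~\ref{lem0005}, the decomposition of Proposition~\ref{prop0001}, and the previously established bounds on $Q$, $\|\rho_{**}^{\perp}\|$ and $\sum_j|(\alpha_j^*)^{p-1}-1|$. The overall skeleton is the right one and the estimates you indicate are the ones the paper carries out in Steps~1--5.

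However, the passage you call ``the technical heart'' --- the claimed $\theta$-parity cancellation for $\langle\mathcal{N}_0,\mathcal{V}_j\rangle_{L^2}$ via the $\mathcal{W}_{sym,i}$ decomposition borrowed from the proof of Proposition~\ref{propn0002} --- is both misattributed and, more importantly, unnecessary. The decomposition $\gamma_*+\gamma_{\mathcal{N},led}=\mathcal{W}_{sym,j}+\mathcal{W}_{*,j}$ used in Proposition~\ref{propn0002} is a decomposition by parity in $t-s_j^*$, designed to kill pairings against $\partial_t\Psi_j$, which is \emph{odd in $t$}; the test function here is $\mathcal{V}_j=\Psi_j^{(p+1)/2}\sum_l\beta_{j,l}^*\theta_l$, which is \emph{even} in $t-s_j^*$ (and odd on $\mathbb{S}^{d-1}$), so that $t$-symmetry mechanism does not produce any cancellation against $\mathcal{V}_j$. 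Moreover no such cancellation is needed: since Proposition~\ref{prop0001} gives the pointwise smallness $|\gamma_{ex}|\lesssim\beta_* Q$ (times weight), $|\gamma_*|\lesssim Q+\beta_*^2+\sum_j|(\alpha_j^*)^{p-1}-1|$ and $|\gamma_{\mathcal{N},led}|\lesssim\beta_*^2$, the brute-force bound $|\langle\mathcal{N}_0,\mathcal{V}_j\rangle_{L^2}|\lesssim\beta_*\langle\mathcal{U}^{p-2}\Psi_j^{(p+1)/2},\rho_0^2\rangle_{L^2}+\beta_*\|\rho_{**}^{\perp}\|^2$ already gives $\beta_*(\beta_*^2+Q+\|f\|_{H^{-1}})^2+\beta_*\|\rho_{**}^{\perp}\|^2$, which after inserting Propositions~\ref{propn0001}, \ref{propn0002}, \ref{propn0003} is $o(\beta_*^4)+\mathcal{O}(\beta_*\|f\|_{H^{-1}})$; that is exactly what the paper does in its Step~5.10, with no parity argument. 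The only place the oddness of $\mathcal{V}_j$ on $\mathbb{S}^{d-1}$ is essential is earlier, in the derivation of \eqref{eqn0023} and in the simplification \eqref{eqn0027}, where it isolates the $\mathcal{R}_{2,j}$ and $\mathcal{N}_j$ terms. With that correction the argument is sound.
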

\begin{proof}
By the oddness of $\{\mathcal{V}_i\}$ on $\mathbb{S}^{d-1}$, we have
\begin{eqnarray}\label{eqn0027}
\left\langle \mathcal{N}_{j}, \mathcal{V}_j\right\rangle_{L^2}&=&B_p\left\langle\left(\Psi_j^*\right)^{p-3}, \mathcal{V}_j^4\right\rangle_{L^2}+3B_p\left\langle\left(\Psi_j^*\right)^{p-3}\mathcal{V}_j^3,\rho_*\right\rangle_{L^2}\notag\\
&&+2A_p\left\langle\left(\Psi_j^*\right)^{p-2}\mathcal{V}_j^2, \rho_*\right\rangle_{L^2}
\end{eqnarray}
for all $1\leq j\leq\nu$, where $\mathcal{N}_{j}$ is given by \eqref{eqnew9999}.  By \eqref{eqn0023} and \eqref{eqn0027}, we have
\begin{eqnarray}\label{eqn0029}
-\left\langle f, \mathcal{V}_j\right\rangle_{H^1}&=&\left\langle \mathcal{R}_{2,j}, \mathcal{V}_j\right\rangle_{L^2}+\left\langle \mathcal{N}_{j}, \mathcal{V}_j\right\rangle_{L^2}+\left\langle \mathcal{L}_{j,ex}(\rho_*), \mathcal{V}_j\right\rangle_{L^2}+\left\langle \mathcal{R}_{2,ex}, \mathcal{V}_j\right\rangle_{L^2}\notag\\
&&+\left\langle \mathcal{N}-\mathcal{N}_{j}, \mathcal{V}_j\right\rangle_{L^2}+\sum_{i=1;i\not=j}^{\nu}\left\langle \mathcal{R}_{2,i}, \mathcal{V}_j\right\rangle_{L^2}.
\end{eqnarray}
As in the proof of Proposition~\ref{prop0001}, the rest of the proof is to estimate every terms in \eqref{eqn0029}.

{\bf Step.~1}\quad The estimate of $\sum_{j=1}^{\nu}\left\langle \mathcal{R}_{2,j}, \mathcal{V}_j\right\rangle_{L^2}$.

By \eqref{eqn0021},
\begin{eqnarray*}
\sum_{j=1}^{\nu}\left\langle \mathcal{R}_{2,j}, \mathcal{V}_j\right\rangle_{L^2}=\sum_{j=1}^{\nu}p\left((\alpha_j^*)^{p-1}-1\right)\left\|\Psi_{j}^{p-1}\mathcal{V}_j^2\right\|_{L^1}.
\end{eqnarray*}

{\bf Step.~2}\quad The estimate of $\sum_{i=1;i\not=j}^{\nu}\left\langle \mathcal{R}_{2,i}, \mathcal{V}_j\right\rangle_{L^2}$.

By \eqref{eqn0021}, Lemma~\ref{lem0005} and Propositions~\ref{propn0001} and \ref{propn0002},
\begin{eqnarray*}
\left|\sum_{i=1;i\not=j}^{\nu}\left\langle \mathcal{R}_{2,i}, \mathcal{V}_j\right\rangle_{L^2}\right|&\lesssim&\sum_{i=1;i\not=j}^{\nu}\beta_*^2\left|(\alpha_i^*)^{p-1}-1\right|\left\langle\Psi_{i}^{\frac{3p-1}{2}}, \Psi_{j}^{\frac{p+1}{2}}\right\rangle_{L^2}\notag\\
&=&o(\beta_*^4+\|\rho_{**}^{\perp}\|^2)+\mathcal{O}\left(\beta_*\|f\|_{H^{-1}}\right).
\end{eqnarray*}

{\bf Step.~3}\quad The estimate of $\left\langle \mathcal{R}_{2,ex}, \mathcal{V}_j\right\rangle_{L^2}$.

By \eqref{eqn3147}, Lemma~\ref{lem0005} and Propositions~\ref{propn0001} and \ref{propn0002},
\begin{eqnarray*}
\left|\left\langle \mathcal{R}_{2,ex}, \mathcal{V}_j\right\rangle_{L^2}\right|\lesssim\sum_{i=1}^{\nu}\beta_*^2\left\langle\Psi_{i}^{2p-1}, \mathcal{U}_i\right\rangle_{L^2(\mathcal{B}_{j})}=o(\beta_*^4+\|\rho_{**}^{\perp}\|^2)+\mathcal{O}\left(\beta_*\|f\|_{H^{-1}}\right).
\end{eqnarray*}

{\bf Step.~4}\quad The estimate of $\left\langle \mathcal{L}_{j,ex}(\rho_*), \mathcal{V}_j\right\rangle_{L^2}$.

By \eqref{eqn0044} and $(i)$ and $(ii)$ of $(1)$ of Proposition~\ref{prop0001},
\begin{eqnarray*}
\left\langle \mathcal{L}_{j,ex}(\rho_*), \mathcal{V}_j\right\rangle_{L^2}&=&p\left\langle\left(\mathcal{U}^{p-1}-\left(\Psi_j^*\right)^{p-1}\right)\rho_*, \mathcal{V}_j\right\rangle_{L^2}\\
&=&\mathcal{O}\left(\left\langle\beta_*\mathcal{U}_{**}, \gamma_{2,ex}+\gamma_{\mathcal{N},led}+\rho_{**}^{\perp}\right\rangle_{L^2}\right),
\end{eqnarray*}
where $\mathcal{U}_{**}=\sum_{j=1}^{\nu}\Psi_j^{\frac{3(p-1)}{2}}\mathcal{U}_j\chi_{\mathcal{B}_j}+\mathcal{U}^{\frac{3p-1}{2}}\chi_{\mathcal{C}\backslash\cup_{i=1}^{\nu}\mathcal{B}_i}$.
By Lemmas~\ref{lem0005}, \ref{lem0008} and Proposition~\ref{propn0002},
\begin{eqnarray*}
\beta_*\left\langle \mathcal{U}_{**}, \gamma_{2,ex}\right\rangle_{L^2}
&\lesssim&\left\{\aligned
&\sum_{j=1}^{\nu}\beta_*^2Q\left\langle \Psi_j^{\frac{3p-1-2\sigma}{2}},\mathcal{U}_j\right\rangle_{L^2(\mathcal{B}_j)},\quad p\geq\frac{7}{3},\\
&\sum_{j=1}^{\nu}\beta_*^2Q\left\langle \Psi_j^{3p-4},\mathcal{U}_j\right\rangle_{L^2(\mathcal{B}_j)},\quad 1<p<\frac{7}{3}
\endaligned\right.\\
&=&o(\beta_*^4+\|\rho_{**}^{\perp}\|^2)+\mathcal{O}\left(\beta_*\|f\|_{H^{-1}}\right).
\end{eqnarray*}
By Lemma~\ref{lem0005}, $(iii)$ of $(1)$ of Proposition~\ref{prop0001} and Proposition~\ref{propn0002},
\begin{eqnarray*}
\beta_*\left\langle \mathcal{U}_{**}, \gamma_{\mathcal{N},led}\right\rangle_{L^2}
&\lesssim&\beta_*^3Q+\left\{\aligned
&\sum_{j=1}^{\nu}\beta_*^4Q\left\langle\Psi_j^{\frac{3p-1-2\sigma}{2}}\chi_{\mathcal{B}_j},\mathcal{U}_j\right\rangle_{L^2},\quad p\geq3,\\
&\sum_{j=1}^{\nu}\beta_*^4Q\left\langle\Psi_j^{3p-4}\chi_{\mathcal{B}_j},\mathcal{U}_j\right\rangle_{L^2},\quad 1<p<3
\endaligned\right.\\
&=&o(\beta_*^4+\|\rho_{**}^{\perp}\|^2)+\mathcal{O}\left(\beta_*\|f\|_{H^{-1}}\right).
\end{eqnarray*}
By Lemma~\ref{lem0005} and Proposition~\ref{propn0002},
\begin{eqnarray*}
\beta_*\left\langle \mathcal{U}_{**}, \left|\rho_{**}^{\perp}\right|\right\rangle_{L^2}
\lesssim\sum_{i=1}^{\nu}\beta_*\left\|\Psi_i^{\frac{3(p-1)}{2}}\mathcal{U}_i\right\|_{L^2(\mathcal{B}_j)}\|\rho_{**}^{\perp}\|=o(\beta_*^4+\|\rho_{**}^{\perp}\|^2)+\mathcal{O}\left(\beta_*\|f\|_{H^{-1}}\right).
\end{eqnarray*}
Summarizing the above estimates, we have
\begin{eqnarray*}
\left\langle \mathcal{L}_{j,ex}(\rho_*), \mathcal{V}_j\right\rangle_{L^2}=o(\beta_*^4)+\mathcal{O}\left(\beta_*\|f\|_{H^{-1}}\right).
\end{eqnarray*}

{\bf Step.~5}\quad The estimate of $\left\langle \mathcal{N}-\mathcal{N}_{j}, \mathcal{V}_j\right\rangle_{L^2}$.

Similar to \eqref{eqnew0020}, by the oddness of $w_{j,l}$ on $\mathbb{S}^{d-1}$,
\begin{eqnarray*}
\left\langle \mathcal{N}-\mathcal{N}_{j}, \mathcal{V}_j\right\rangle_{L^2}&=&
\sum_{i=1;i\not=j}^{\nu}A_p\left\langle\left(\Psi_i^*\right)^{p-2}\left(\mathcal{V}_i^2+2\mathcal{V}_i\rho_*\right)\chi_{\mathcal{B}_i}, \mathcal{V}_j\right\rangle_{L^2}\notag\\
&&+\sum_{i=1;i\not=j}^{\nu}B_p\left\langle\left(\Psi_i^*\right)^{p-3}\left(\mathcal{V}_i^3+\mathcal{V}_i^2\rho_*\right)\chi_{\mathcal{B}_i}, \mathcal{V}_j\right\rangle_{L^2}\notag\\
&&+2A_p\left\langle\left(\Psi_j^*\right)^{p-2}\mathcal{V}_j\rho_*\chi_{\mathcal{C}\backslash\mathcal{B}_j}, \mathcal{V}_j\right\rangle_{L^2}\notag\\
&&+B_p\left\langle\left(\Psi_j^*\right)^{p-3}\left(\mathcal{V}_j^3+3\mathcal{V}_j^2\rho_*\right)\chi_{\mathcal{C}\backslash\mathcal{B}_j}, \mathcal{V}_j\right\rangle_{L^2}\notag\\
&&+\sum_{i=1}^{\nu}\left\langle\mathcal{O}\left(\beta_*|\rho_*|\Psi_{i}^{\frac{3p-5}{2}}\mathcal{U}_i+\beta_*^2\Psi_{i}^{2p-2}\mathcal{U}_i\right)\chi_{\mathcal{B}_i}, \mathcal{V}_j\right\rangle_{L^2}\notag\\
&&+\left\langle\mathcal{O}\left(\mathcal{U}^{p-2}\mathcal{V}^{2}+\beta_*|\rho_*|\mathcal{U}^{\frac{3(p-1)}{2}}\right)\chi_{\mathcal{C}\backslash\left(\cup_{j=1}^{\nu}\mathcal{B}_j\right)}, \mathcal{V}_j\right\rangle_{L^2}\notag\\
&&+\left\langle\overline{\mathcal{N}}_{rem}, \mathcal{V}_j\right\rangle_{L^2}+\left\langle\mathcal{N}_{0}, \mathcal{V}_j\right\rangle_{L^2}.
\end{eqnarray*}

{\bf Step.~5.1}\quad The estimate of $\sum_{i=1;i\not=j}^{\nu}\left\langle\left(\Psi_i^*\right)^{p-2}\mathcal{V}_i^2\chi_{\mathcal{B}_i}, \mathcal{V}_j\right\rangle_{L^2}$.

By Lemma~\ref{lem0005} and Proposition~\ref{propn0002},
\begin{eqnarray*}
\left|\sum_{i=1;i\not=j}^{\nu}\left\langle\left(\Psi_i^*\right)^{p-2}\mathcal{V}_i^2\chi_{\mathcal{B}_i}, \mathcal{V}_j\right\rangle_{L^2}\right|\lesssim\beta_*^3\left\langle\Psi_i^{2p-1},\Psi_j^{\frac{p+1}{2}} \right\rangle_{L^2}
=o(\beta_*^4+\|\rho_{**}^{\perp}\|^2)+\mathcal{O}\left(\beta_*\|f\|_{H^{-1}}\right).
\end{eqnarray*}

{\bf Step.~5.2}\quad The estimate of $\sum_{i=1;i\not=j}^{\nu}\left\langle\left(\Psi_i^*\right)^{p-3}\mathcal{V}_i^3\chi_{\mathcal{B}_i}, \mathcal{V}_j\right\rangle_{L^2}$.

By \eqref{eqn0191}, we also have
\begin{eqnarray*}
\left|\sum_{i=1;i\not=j}^{\nu}\left\langle\left(\Psi_i^*\right)^{p-3}\mathcal{V}_i^3\chi_{\mathcal{B}_i}, \mathcal{V}_j\right\rangle_{L^2}\right|&\lesssim&\left|\sum_{i=1;i\not=j}^{\nu}\left\langle\left(\Psi_i^*\right)^{p-2}\mathcal{V}_i^2\chi_{\mathcal{B}_i}, \mathcal{V}_j\right\rangle_{L^2}\right|\\
&=&o(\beta_*^4+\|\rho_{**}^{\perp}\|^2)+\mathcal{O}\left(\beta_*\|f\|_{H^{-1}}\right).
\end{eqnarray*}

{\bf Step.~5.3}\quad The estimate of $\sum_{i=1;i\not=j}^{\nu}\left\langle\left(\Psi_i^*\right)^{p-2}\mathcal{V}_i\rho_*\chi_{\mathcal{B}_i}, \mathcal{V}_j\right\rangle_{L^2}$.

By $(1)$ of Proposition~\ref{prop0001},
\begin{eqnarray*}
\left\langle\left(\Psi_i^*\right)^{p-2}\mathcal{V}_i\rho_*\chi_{\mathcal{B}_i}, \mathcal{V}_j\right\rangle_{L^2}=\left\langle\left(\Psi_i^*\right)^{p-2}\mathcal{V}_i\mathcal{V}_j\chi_{\mathcal{B}_i}, \rho_{**}^{\perp}+\gamma_{ex}+\gamma_{*}+\gamma_{\mathcal{N},led}\right\rangle_{L^2}.
\end{eqnarray*}
By Lemma~\ref{lem0005}, $(i)$ of $(1)$ of Proposition~\ref{prop0001} and Proposition~\ref{propn0002},
\begin{eqnarray*}
\left|\left\langle \left(\Psi_i^*\right)^{p-2}\mathcal{V}_i\mathcal{V}_j\chi_{\mathcal{B}_i}, \gamma_{ex}\right\rangle_{L^2}\right|
&\lesssim&\left\{\aligned
&\beta_*^2Q\left\langle \Psi_i^{\frac{3p-1-2\sigma}{2}},\Psi_j^{\frac{p+1}{2}}\right\rangle_{L^2(\mathcal{B}_i)},\quad p\geq3,\\
&\beta_*^2Q\left\langle \Psi_i^{\frac{5p-7}{2}},\Psi_j^{\frac{p+1}{2}}\right\rangle_{L^2(\mathcal{B}_i)},\quad 1<p<3
\endaligned\right.\\
&=&o(\beta_*^4+\|\rho_{**}^{\perp}\|^2)+\mathcal{O}\left(\beta_*\|f\|_{H^{-1}}\right).
\end{eqnarray*}
By Lemma~\ref{lem0005}, $(ii)$ and $(iii)$ of $(1)$ of Proposition~\ref{prop0001} and Propositions~\ref{propn0001} and \ref{propn0002},
\begin{eqnarray*}
\left|\left\langle \left(\Psi_i^*\right)^{p-2}\mathcal{V}_i\mathcal{V}_j\chi_{\mathcal{B}_i}, \gamma_{*}+\gamma_{\mathcal{N},led}\right\rangle_{L^2}\right|
&\lesssim&\beta_*^4\left\langle \Psi_i^{\frac{3p-1-2\sigma}{2}},\Psi_j^{\frac{p+1}{2}}\right\rangle_{L^2(\mathcal{B}_i)}\\
&&+o(\beta_*^4+\|\rho_{**}^{\perp}\|^2)+\beta_*\|f\|_{H^{-1}}\\
&=&o(\beta_*^4)+\mathcal{O}\left(\beta_*\|f\|_{H^{-1}}\right).
\end{eqnarray*}
By Lemma~\ref{lem0005} and Proposition~\ref{propn0002},
\begin{eqnarray*}
\left\langle \left(\Psi_i^*\right)^{p-2}\mathcal{V}_i\mathcal{V}_j\chi_{\mathcal{B}_i}, \rho_{**}^{\perp}\right\rangle_{L^2}
&\lesssim&\beta_*^2\|\rho_{**}^{\perp}\|\left(\left\langle \Psi_i^{3p-3},\Psi_j^{p+1}\right\rangle_{L^2(\mathcal{B}_i)}\right)^{\frac12}\\
&=&o(\beta_*^4+\|\rho_{**}^{\perp}\|^2)+\mathcal{O}\left(\beta_*\|f\|_{H^{-1}}\right).
\end{eqnarray*}
Summarizing the above estimates, we have
\begin{eqnarray*}
\sum_{i=1;i\not=j}^{\nu}\left\langle\left(\Psi_i^*\right)^{p-2}\mathcal{V}_i\rho_*\chi_{\mathcal{B}_i}, \mathcal{V}_j\right\rangle_{L^2}=o(\beta_*^4+\|\rho_{**}^{\perp}\|^2)+\mathcal{O}\left(\beta_*\|f\|_{H^{-1}}\right).
\end{eqnarray*}

{\bf Step.~5.4}\quad The estimate of $\sum_{i=1;i\not=j}^{\nu}\left\langle\left(\Psi_i^*\right)^{p-3}\mathcal{V}_i^2\rho_*\chi_{\mathcal{B}_i}, \mathcal{V}_j\right\rangle_{L^2}$.

By \eqref{eqn0191}, we have
\begin{eqnarray*}
\left|\sum_{i=1;i\not=j}^{\nu}\left\langle\left(\Psi_i^*\right)^{p-3}\mathcal{V}_i^2\rho_*\chi_{\mathcal{B}_i}, \mathcal{V}_j\right\rangle_{L^2}\right|&\lesssim&\left|\sum_{i=1;i\not=j}^{\nu}\left\langle\left(\Psi_i^*\right)^{p-2}\mathcal{V}_i\rho_*\chi_{\mathcal{B}_i}, \mathcal{V}_j\right\rangle_{L^2}\right|\\
&=&o(\beta_*^4+\|\rho_{**}^{\perp}\|^2)+\mathcal{O}\left(\beta_*\|f\|_{H^{-1}}\right).
\end{eqnarray*}

{\bf Step.~5.5}\quad The estimate of $\left\langle\left(\Psi_j^*\right)^{p-2}\mathcal{V}_j\rho_*\chi_{\mathcal{C}\backslash\mathcal{B}_j}, \mathcal{V}_j\right\rangle_{L^2}$.

By $(1)$ of Proposition~\ref{prop0001},
\begin{eqnarray*}
\left\langle\left(\Psi_j^*\right)^{p-2}\mathcal{V}_j\rho_*\chi_{\mathcal{C}\backslash\mathcal{B}_j}, \mathcal{V}_j\right\rangle_{L^2}=
\left\langle\left(\Psi_j^*\right)^{p-2}\mathcal{V}_j^2\chi_{\mathcal{C}\backslash\mathcal{B}_j}, \rho_{**}^{\perp}+\gamma_{1,ex}+\gamma_{*}+\gamma_{\mathcal{N},led}\right\rangle_{L^2}.
\end{eqnarray*}
By Lemmas~\ref{lem0005}, \ref{lem0006} and Proposition~\ref{propn0002},
\begin{eqnarray*}
\left|\left\langle\left(\Psi_j^*\right)^{p-2}\mathcal{V}_j^2\chi_{\mathcal{C}\backslash\mathcal{B}_j}, \gamma_{1,ex}\right\rangle_{L^2}\right|
&\lesssim&\left\{\aligned
&\sum_{i=1;i\not=j}^{\nu}\beta_*^2Q\left\langle \Psi_i^{1-\sigma},\Psi_j^{2p-1}\right\rangle_{L^2(\mathcal{B}_i)},\quad p\geq3,\\
&\sum_{i=1;i\not=j}^{\nu}\beta_*^2Q\left\langle \Psi_i^{p-2},\Psi_j^{2p-1}\right\rangle_{L^2(\mathcal{B}_i)},\quad 1<p<3
\endaligned\right.\\
&=&o(\beta_*^4+\|\rho_{**}^{\perp}\|^2)+\mathcal{O}\left(\beta_*\|f\|_{H^{-1}}\right).
\end{eqnarray*}
By Lemma~\ref{lem0005}, $(ii)$ and $(iii)$ of $(1)$ of Proposition~\ref{prop0001} and Propositions~\ref{propn0001} and \ref{propn0002},
\begin{eqnarray*}
\left\langle \left(\Psi_j^*\right)^{p-2}\mathcal{V}_j^2\chi_{\mathcal{C}\backslash\mathcal{B}_j}, \gamma_{*}+\gamma_{\mathcal{N},led}\right\rangle_{L^2}
&\lesssim&\sum_{i=1;i\not=j}^{\nu}\beta_*^4\left\langle \Psi_i^{1-\sigma},\Psi_j^{2p-1}\right\rangle_{L^2(\mathcal{B}_i)}\\
&&+o(\beta_*^4+\|\rho_{**}^{\perp}\|^2)+\beta_*\|f\|_{H^{-1}}\\
&=&o(\beta_*^4+\|\rho_{**}^{\perp}\|^2)+\mathcal{O}\left(\beta_*\|f\|_{H^{-1}}\right).
\end{eqnarray*}
By Lemma~\ref{lem0005} and Proposition~\ref{propn0002},
\begin{eqnarray*}
\left|\left\langle\left(\Psi_j^*\right)^{p-2}\mathcal{V}_j^2\chi_{\mathcal{C}\backslash\mathcal{B}_j}, \rho_{**}^{\perp}\right\rangle_{L^2}\right|&\lesssim&\beta_*^2\|\rho_{**}^{\perp}\|\left(\int_{\mathcal{C}\backslash\mathcal{B}_j}\Psi_j^{4p-2}d\mu\right)^{\frac12}\\
&=&o(\beta_*^4+\|\rho_{**}^{\perp}\|^2)+\mathcal{O}\left(\beta_*\|f\|_{H^{-1}}\right).
\end{eqnarray*}
Summarizing the above estimates, we have
\begin{eqnarray*}
\left\langle\left(\Psi_j^*\right)^{p-2}\mathcal{V}_j\rho_*\chi_{\mathcal{C}\backslash\mathcal{B}_j}, \mathcal{V}_j\right\rangle_{L^2}=o(\beta_*^4)+\mathcal{O}\left(\beta_*\|f\|_{H^{-1}}\right).
\end{eqnarray*}

{\bf Step.~5.6}\quad The estimate of $\left\langle\left(\Psi_j^*\right)^{p-3}\left(\mathcal{V}_j^3+3\mathcal{V}_j^2\rho_*\right)\chi_{\mathcal{C}\backslash\mathcal{B}_j}, \mathcal{V}_j\right\rangle_{L^2}$.

By Lemma~\ref{lem0005} and $(1)$ of Proposition~\ref{prop0001},
\begin{eqnarray*}
\left\langle\left(\Psi_j^*\right)^{p-3}\left(\mathcal{V}_j^3+3\mathcal{V}_j^2\rho_*\right)\chi_{\mathcal{C}\backslash\mathcal{B}_j}, \mathcal{V}_j\right\rangle_{L^2}&=&3\left\langle\left(\Psi_j^*\right)^{p-3}\mathcal{V}_j^3\chi_{\mathcal{C}\backslash\mathcal{B}_j}, \gamma_{2,ex}+\gamma_{\mathcal{N},led}\right\rangle_{L^2}\\
&&3\left\langle\left(\Psi_j^*\right)^{p-3}\mathcal{V}_j^3\chi_{\mathcal{C}\backslash\mathcal{B}_j}, \rho_{**}^{\perp}\right\rangle_{L^2}+o(\beta_*^4).
\end{eqnarray*}
By Lemmas~\ref{lem0005} and \ref{lem0008}, $\left|\left\langle\left(\Psi_j^*\right)^{p-3}\mathcal{V}_j^3\chi_{\mathcal{C}\backslash\mathcal{B}_j}, \gamma_{2,ex}\right\rangle_{L^2}\right|=o(\beta_*^4)$.
By Lemma~\ref{lem0005} and $(iii)$ of $(1)$ of Proposition~\ref{prop0001},
\begin{eqnarray*}
\left|\left\langle \left(\Psi_j^*\right)^{p-3}\mathcal{V}_j^3\chi_{\mathcal{C}\backslash\mathcal{B}_j}, \gamma_{\mathcal{N},led}\right\rangle_{L^2}\right|
=o(\beta_*^4).
\end{eqnarray*}
By Lemma~\ref{lem0005} and Proposition~\ref{propn0002},
\begin{eqnarray*}
\left|\left\langle\left(\Psi_j^*\right)^{p-3}\mathcal{V}_j^3\chi_{\mathcal{C}\backslash\mathcal{B}_j}, \rho_{**}^{\perp}\right\rangle_{L^2}\right|&\lesssim&\beta_*^3\|\rho_{**}^{\perp}\|\left(\int_{\mathcal{C}\backslash\mathcal{B}_j}\Psi_j^{5p-3}d\mu\right)^{\frac12}\\
&=&o(\beta_*^4+\|\rho_{**}^{\perp}\|^2)+\mathcal{O}\left(\beta_*\|f\|_{H^{-1}}\right).
\end{eqnarray*}
Summarizing the above estimates, we have
\begin{eqnarray*}
\left\langle\left(\Psi_j^*\right)^{p-3}\left(\mathcal{V}_j^3+3\mathcal{V}_j^2\rho_*\right)\chi_{\mathcal{C}\backslash\mathcal{B}_j}, \mathcal{V}_j\right\rangle_{L^2}=o(\beta_*^4+\|\rho_{**}^{\perp}\|^2)+\mathcal{O}\left(\beta_*\|f\|_{H^{-1}}\right).
\end{eqnarray*}

{\bf Step.~5.7}\quad The estimate of $\sum_{i=1}^{\nu}\left\langle\beta_*\rho_*\Psi_{i}^{\frac{3p-5}{2}}\mathcal{U}_i\chi_{\mathcal{B}_i}, \mathcal{V}_j\right\rangle_{L^2}$.

By $(1)$ of Proposition~\ref{prop0001},
\begin{eqnarray*}
\left|\left\langle\beta_*\rho_*\Psi_{i}^{\frac{3p-5}{2}}\mathcal{U}_i\chi_{\mathcal{B}_i}, \mathcal{V}_j\right\rangle_{L^2}\right|&\lesssim&\beta_*^2\left\langle\Psi_i^{2p-2}\mathcal{U}_i\chi_{\mathcal{B}_{i}}, \left|\rho_{**}^{\perp}+\gamma_{ex}+\gamma_{*}+\gamma_{\mathcal{N},led}\right|\right\rangle_{L^2}.
\end{eqnarray*}
By Lemma~\ref{lem0005}, $(i)$ of $(1)$ of Proposition~\ref{prop0001}, and Proposition~\ref{propn0002},
\begin{eqnarray*}
\left|\beta_*^2\left\langle\Psi_i^{2p-2}\mathcal{U}_i\chi_{\mathcal{B}_{i}}, \gamma_{ex}\right\rangle_{L^2}\right|&\lesssim&\left\{\aligned
&\beta_*^2Q\left\langle \Psi_i^{2p-1-\sigma},\mathcal{U}_i\right\rangle_{L^2(\mathcal{B}_{i,+})},\quad p\geq3,\\
&\beta_*^2Q\left\langle \Psi_i^{3p-4},\mathcal{U}_i\right\rangle_{L^2(\mathcal{B}_{i,+})},\quad 1<p<3
\endaligned\right.\\
&=&o(\beta_*^4+\|\rho_{**}^{\perp}\|^2)+\mathcal{O}\left(\beta_*\|f\|_{H^{-1}}\right).
\end{eqnarray*}
By Lemma~\ref{lem0005}, $(ii)$ and $(iii)$ of $(1)$ of Proposition~\ref{prop0001} and Propositions~\ref{propn0001} and \ref{propn0002},
\begin{eqnarray*}
\left|\beta_*^2\left\langle\Psi_i^{2p-2}\mathcal{U}_i\chi_{\mathcal{B}_{i}}, \gamma_{*}+\gamma_{\mathcal{N},led}\right\rangle_{L^2}\right|
&\lesssim&\beta_*^4Q+o(\beta_*^4+\|\rho_{**}^{\perp}\|^2)+\beta_*\|f\|_{H^{-1}}\\
&=&o(\beta_*^4+\|\rho_{**}^{\perp}\|^2)+\mathcal{O}\left(\beta_*\|f\|_{H^{-1}}\right).
\end{eqnarray*}
By Lemma~\ref{lem0005} and Proposition~\ref{propn0002},
\begin{eqnarray*}
\left|\beta_*^2\left\langle\Psi_i^{2p-2}\mathcal{U}_i\chi_{\mathcal{B}_{i}}, \rho_{**}^{\perp}\right\rangle_{L^2}\right|
&\lesssim&\beta_*^2\|\rho_{**}^{\perp}\|\left(\left\langle \Psi_i^{4p-4},\mathcal{U}_i^2\right\rangle_{L^2(\mathcal{B}_{i})}\right)^{\frac12}\\
&=&o(\beta_*^4+\|\rho_{**}^{\perp}\|^2)+\mathcal{O}\left(\beta_*\|f\|_{H^{-1}}\right).
\end{eqnarray*}
Summarizing the above estimates, we have
\begin{eqnarray*}
\sum_{i=1}^{\nu}\left\langle\beta_*\rho_*\Psi_{i}^{\frac{3p-5}{2}}\mathcal{U}_i\chi_{\mathcal{B}_i}, \mathcal{V}_j\right\rangle_{L^2}=o(\beta_*^4+\|\rho_{**}^{\perp}\|^2)+\mathcal{O}\left(\beta_*\|f\|_{H^{-1}}\right).
\end{eqnarray*}

{\bf Step.~5.8}\quad The estimates of
\begin{eqnarray*}
\sum_{i=1}^{\nu}\left\langle\beta_*^2\Psi_{i}^{2p-2}\mathcal{U}_i\chi_{\mathcal{B}_i}, \mathcal{V}_j\right\rangle_{L^2}\quad\text{and}\quad\left\langle\mathcal{U}^{p-2}\mathcal{V}^2\chi_{\mathcal{C}\backslash\cup_{i=1}^{\nu}\mathcal{B}_i}, \mathcal{V}_j\right\rangle_{L^2}.
\end{eqnarray*}

By Lemma~\ref{lem0005} and Proposition~\ref{propn0002},
\begin{eqnarray*}
\left|\sum_{i=1}^{\nu}\left\langle\beta_*^2\Psi_{i}^{2p-2}\mathcal{U}_i\chi_{\mathcal{B}_i}, \mathcal{V}_j\right\rangle_{L^2}\right|&\lesssim&\beta_*^3\sum_{i=1}^{\nu}\left\langle\Psi_i^{\frac{5p-3}{2}},\mathcal{U}_i \right\rangle_{L^2(\mathcal{B}_{i})}\\
&=&o(\beta_*^4+\|\rho_{**}^{\perp}\|^2)+\mathcal{O}\left(\beta_*\|f\|_{H^{-1}}\right)
\end{eqnarray*}
and
\begin{eqnarray*}
\left|\left\langle\mathcal{U}^{p-2}\mathcal{V}^2\chi_{\mathcal{C}\backslash\cup_{i=1}^{\nu}\mathcal{B}_i}, \mathcal{V}_j\right\rangle_{L^2}\right|&\lesssim&\beta_*^3\int_{\mathcal{C}\backslash\cup_{i=1}^{\nu}\mathcal{B}_i}\mathcal{U}^{\frac{5p-1}{2}}d\mu\\
&=&o(\beta_*^4+\|\rho_{**}^{\perp}\|^2)+\mathcal{O}\left(\beta_*\|f\|_{H^{-1}}\right).
\end{eqnarray*}

{\bf Step.~5.9}\quad The estimate of $\left\langle\beta_*\rho_{*}\mathcal{U}^{\frac{3(p-1)}{2}}\chi_{\mathcal{C}\backslash\cup_{i=1}^{\nu}\mathcal{B}_i}, \mathcal{V}_j\right\rangle_{L^2}$.

By $(1)$ of Proposition~\ref{prop0001},
\begin{eqnarray*}
\left|\left\langle\beta_*\rho_{*}\mathcal{U}^{\frac{3(p-1)}{2}}\chi_{\mathcal{C}\backslash\cup_{i=1}^{\nu}\mathcal{B}_i}, \mathcal{V}_j\right\rangle_{L^2}\right|&\lesssim&\beta_*^2\left\langle\mathcal{U}^{2p-1}\chi_{\mathcal{C}\backslash\cup_{i=1}^{\nu}\mathcal{B}_i}, \gamma_{ex}+\gamma_{*}+\gamma_{\mathcal{N},led}\right\rangle_{L^2}\\
&&+\beta_*^2\left\langle\mathcal{U}^{2p-1}\chi_{\mathcal{C}\backslash\cup_{i=1}^{\nu}\mathcal{B}_i}, \rho_{**}^{\perp}\right\rangle_{L^2}.
\end{eqnarray*}
Lemma~\ref{lem0005}, $(1)$ of Proposition~\ref{prop0001} and Propositions~\ref{propn0001} and \ref{propn0002},
\begin{eqnarray*}
\left|\left\langle\beta_*\rho_{*}\mathcal{U}^{\frac{3(p-1)}{2}}\chi_{\mathcal{C}\backslash\cup_{i=1}^{\nu}\mathcal{B}_i}, \mathcal{V}_j\right\rangle_{L^2}\right|&\lesssim&\beta_*^4\left\|\mathcal{U}^{2p-\sigma}\right\|_{L^1(\mathcal{C}\backslash\cup_{i=1}^{\nu}\mathcal{B}_i)}+\beta_*\|f\|_{H^{-1}}\notag\\
&&+\beta_*^2\|\rho_{**}^{\perp}\|\left\|\mathcal{U}^{2p-1}\right\|_{L^1(\mathcal{C}\backslash\cup_{i=1}^{\nu}\mathcal{B}_i)}+o(\|\rho_{**}^{\perp}\|^2)\notag\\
&=&o(\beta_*^4+\|\rho_{**}^{\perp}\|^2)+\mathcal{O}\left(\beta_*\|f\|_{H^{-1}}\right).
\end{eqnarray*}

{\bf Step.~5.10}\quad The estimate of $\left\langle\mathcal{N}_{0}, \mathcal{V}_j\right\rangle_{L^2}$.

By \eqref{eqn0191}, \eqref{eqn0190} and Lemma~\ref{lemn0001},
\begin{eqnarray*}
\left|\left\langle\mathcal{N}_{0}, \mathcal{V}_j\right\rangle_{L^2}\right|\lesssim\beta_*\left\langle\mathcal{U}^{p-2}\Psi_j^{\frac{p+1}{2}}, \gamma_{ex}^2+\left|\gamma_{\mathcal{N},led}+\gamma_{*}\right|^2\right\rangle_{L^2}+\beta_*\|\rho_{**}^{\perp}\|^2.
\end{eqnarray*}
By Lemma~\ref{lem0005}, $(i)$ of $(1)$ of Proposition~\ref{prop0001}, and Propositions~\ref{propn0002} and \ref{propn0003},
\begin{eqnarray*}
\beta_*\left\langle\mathcal{U}^{p-2}\Psi_j^{\frac{p+1}{2}}, \gamma_{ex}^2\right\rangle_{L^2}
&\lesssim&\left\{\aligned
&\beta_*Q^2\int_{\mathcal{B}_j}\Psi_j^{\frac{3p+1-4\sigma}{2}}d\mu,\quad p\geq3,\\
&\beta_*Q^2\int_{\mathcal{B}_j}\Psi_j^{\frac{7p-11}{2}}d\mu,\quad 1<p<3
\endaligned\right.\\
&=&o(\beta_*^4+\|\rho_{**}^{\perp}\|^2)+\mathcal{O}\left(\beta_*\|f\|_{H^{-1}}\right).
\end{eqnarray*}
By Lemma~\ref{lem0005}, $(ii)$ and $(iii)$ of $(1)$ of Proposition~\ref{prop0001} and Propositions~\ref{propn0001} and \ref{propn0002},
\begin{eqnarray*}
\beta_*\left\langle\mathcal{U}^2\Psi_j^{\frac{p+1}{2}}, \left|\gamma_{\mathcal{N},led}+\gamma_{*}\right|^2\right\rangle_{L^2}
&\lesssim&\beta_*\left(\beta_*^2+\|f\|_{H^{-1}}\right)^{2}+o(\beta_*^4+\|\rho_{**}^{\perp}\|^2)+\mathcal{O}\left(\beta_*\|f\|_{H^{-1}}\right)\\
&=&o(\beta_*^4+\|\rho_{**}^{\perp}\|^2)+\mathcal{O}\left(\beta_*\|f\|_{H^{-1}}\right).
\end{eqnarray*}
Summarizing the above estimates, we have
\begin{eqnarray*}
\left\langle\mathcal{N}_{0}, \mathcal{V}_j\right\rangle_{L^2}=o(\beta_*^4+\|\rho_{**}^{\perp}\|^2)+\mathcal{O}\left(\beta_*\|f\|_{H^{-1}}\right).
\end{eqnarray*}

{\bf Step.~5.11}\quad The estimate of $\left\langle\overline{\mathcal{N}}_{rem}, \mathcal{V}_j\right\rangle_{L^2}$.

By Lemma~\ref{lem0001} and Propositions~\ref{propn0001}, \ref{propn0002} and \ref{propn0003},
\begin{eqnarray*}
\left|\left\langle\overline{\mathcal{N}}_{rem}, \mathcal{V}_j\right\rangle_{L^2}\right|\lesssim\beta_*\left(\beta_*^4+\|\rho_{**}^{\perp}\|^{1+\ve}+\|f\|_{H^{-1}}\right)=o(\beta_*^4+\|\rho_{**}^{\perp}\|^2)+\mathcal{O}\left(\beta_*\|f\|_{H^{-1}}\right).
\end{eqnarray*}

By summarizing the estimates from Step.~5.1 to Step.~5.11, we have
\begin{eqnarray*}
\left\langle \mathcal{N}-\mathcal{N}_{j}, \mathcal{V}_j\right\rangle_{L^2}
=o(\beta_*^4+\|\rho_{**}^{\perp}\|^2)+\mathcal{O}\left(\beta_*\|f\|_{H^{-1}}\right).
\end{eqnarray*}

The conclusion follows from the estimates from Step.~1 to Step.~5 and Proposition~\ref{propn0003}.
\end{proof}

\vskip0.12in

With Proposition~\ref{propn0004} in hands, we can finally estimate $\beta_*$.
\begin{proposition}\label{propn0005}
Let $d\geq2$, $a<0$ and $b=b_{FS}(a)$.  Then we have $\beta_*\lesssim\|f\|_{H^{-1}}^{\frac{1}{3}}$.
\end{proposition}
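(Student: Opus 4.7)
The plan is to turn the identity in Proposition~\ref{propn0004} into a coercive lower bound for a positive quartic form in the parameters $\{\beta_{j,l}^*\}$, and then absorb the error terms against $\beta_*\|f\|_{H^{-1}}$. First, I would use Propositions~\ref{propn0002} and~\ref{propn0003} together to eliminate $Q$ and $\|\rho_{**}^\perp\|$: Proposition~\ref{propn0003} gives $\|\rho_{**}^\perp\|\lesssim\beta_*^4+\|f\|_{H^{-1}}$, and feeding this back into Proposition~\ref{propn0002} yields $Q\lesssim\beta_*^{4(1+\varepsilon)}+\|f\|_{H^{-1}}$. Substituting both bounds into Proposition~\ref{propn0001} then shows that $\sum_j((\alpha_j^*)^p-\alpha_j^*)$ is $-(\overline{A}_1+o(1))\beta_*^2$ plus an $O(\|f\|_{H^{-1}})$ error, so the individual differences $(\alpha_j^*)^{p-1}-1$ are $O(\beta_*^2+\|f\|_{H^{-1}})$; moreover, by the spherical harmonic decomposition $\mathcal{V}_j=\Psi_j^{(p+1)/2}\sum_l\beta_{j,l}^*\theta_l$, each factor $\|\Psi_j^{p-1}\mathcal{V}_j^2\|_{L^1}$ equals $c_*\sum_l(\beta_{j,l}^*)^2$ with $c_*>0$ explicit.

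Next, I would expand $\langle\mathcal{N}_j,\mathcal{V}_j\rangle_{L^2}$ via \eqref{eqn0027}. The genuinely quartic piece is $B_p\langle(\Psi_j^*)^{p-3},\mathcal{V}_j^4\rangle_{L^2}$, which upon integrating the angular variable produces a symmetric quartic polynomial in $\{\beta_{j,l}^*\}$. The remaining contributions of the form $\langle(\Psi_j^*)^{p-2}\mathcal{V}_j^2,\rho_*\rangle_{L^2}$ and $\langle(\Psi_j^*)^{p-3}\mathcal{V}_j^3,\rho_*\rangle_{L^2}$ are handled with the decomposition $\rho_*=\rho_0+\rho_{**}^\perp$ of Proposition~\ref{prop0001}: the $\rho_{**}^\perp$ piece is absorbed by Cauchy--Schwarz and the bound from Proposition~\ref{propn0003}, while the contribution of $\gamma_{\mathcal{N},led,j}$ inside $\rho_0$ yields precisely the quartic correction identified in \cite{FP2024}.

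Combining the two leading terms, the left-hand side of Proposition~\ref{propn0004} becomes, modulo $o(\beta_*^4)+O((\beta_*+\beta_*^2)\|f\|_{H^{-1}})$, the quartic form $\mathcal{Q}_{FP}[\{\beta_{j,l}^*\}]$ studied by Frank--Peteranderl, whose strict positivity on the nontrivial kernel directions is the quartic Bianchi--Egnell stability \cite[Theorem~1]{FP2024}. This positivity gives $\mathcal{Q}_{FP}[\{\beta_{j,l}^*\}]\geq C_0\beta_*^4$ for some absolute $C_0>0$. Inserting this into Proposition~\ref{propn0004} and using $|\langle f,\mathcal{V}_j\rangle_{H^1}|\leq\|f\|_{H^{-1}}\|\mathcal{V}_j\|\lesssim\beta_*\|f\|_{H^{-1}}$, one obtains
\begin{equation*}
C_0\beta_*^4\lesssim\beta_*\|f\|_{H^{-1}}+\beta_*^2\|f\|_{H^{-1}}+o(\beta_*^4).
\end{equation*}
Absorbing the $o(\beta_*^4)$ into the left-hand side and applying Young's inequality to the cross term $\beta_*^2\|f\|_{H^{-1}}$ (splitting it as $\tfrac{C_0}{2}\beta_*^4+C\|f\|_{H^{-1}}^2$, noting $\|f\|_{H^{-1}}^2=o(\|f\|_{H^{-1}})$ for small data) yields $\beta_*^4\lesssim\beta_*\|f\|_{H^{-1}}$, hence $\beta_*^3\lesssim\|f\|_{H^{-1}}$.

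The main obstacle I expect is the identification of the leading quartic form with a positive multiple of the Frank--Peteranderl form: one has to verify that the combination of the $p((\alpha_j^*)^{p-1}-1)\|\Psi_j^{p-1}\mathcal{V}_j^2\|_{L^1}$ term, the $\mathcal{V}_j^4$ term and the projection onto $\gamma_{\mathcal{N},led}$ inside the $\rho_0$-corrections reproduces \emph{exactly} the quartic form whose positivity was established in \cite{FP2024}, and that no lower-order term of size $\beta_*^3\|f\|_{H^{-1}}^{2/3}$ or similar sneaks in and spoils the Young-inequality absorption. Once this matching is carried out carefully, every other step is a routine bookkeeping exercise in the weighted norms developed in Sections~3--8.
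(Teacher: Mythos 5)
Your plan correctly identifies the overall target---reduce Proposition~\ref{propn0004} to the quartic coercivity of Frank--Peteranderl---but there is a genuine gap in how you handle the term $\sum_j p\bigl((\alpha_j^*)^{p-1}-1\bigr)\|\Psi_j^{p-1}\mathcal{V}_j^2\|_{L^1}$, and it is precisely the step the paper calls ``finding the right third equation to match the special form.'' You propose to \emph{bound} $(\alpha_j^*)^{p-1}-1$ by $O(\beta_*^2+\|f\|_{H^{-1}})$ and treat the term as an absorbable error. Two problems. First, Proposition~\ref{propn0001} as stated only controls the \emph{sum} $\sum_j((\alpha_j^*)^p-\alpha_j^*)$, not each individual $\alpha_j^*$; since the coefficients $\|\Psi_j^{p-1}\mathcal{V}_j^2\|_{L^1}=c_*\sum_l(\beta_{j,l}^*)^2$ vary with $j$, the weighted sum appearing in Proposition~\ref{propn0004} is not controlled by the unweighted sum. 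Second, and more fundamentally, even granting the per-$j$ bound $|(\alpha_j^*)^{p-1}-1|\lesssim\beta_*^2+\|f\|_{H^{-1}}$ (which one can indeed extract from the $\Psi_j$-testing identity~\eqref{eqn0121} before summing), this makes the $\alpha_j^*$-term an $O(\beta_*^4)$ quantity of \emph{unknown sign}, and one cannot absorb a signed $O(\beta_*^4)$ term into a lower bound $C_0\beta_*^4$ unless $C_0$ is quantitatively larger---which is precisely not guaranteed. In fact, the quartic form obtained after simply discarding this term is \emph{not} the Frank--Peteranderl form and is not positive definite.

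What is actually required is to \emph{substitute} for $(\alpha_j^*)^{p-1}-1$ from the $\Psi_j$-testing identity, by forming the combination
\begin{eqnarray*}
\left\langle f,\mathcal{V}_j\right\rangle_{H^1}-\frac{\left\langle\Psi_j^{p-1},\mathcal{V}_j^2\right\rangle_{L^2}}{\alpha_j^*\,\|\Psi\|_{L^{p+1}}^{p+1}}\left\langle f,\Psi_j\right\rangle_{H^1}.
\end{eqnarray*}
Because $\bigl\langle\mathcal{R}_{1,j},\Psi_j\bigr\rangle_{L^2}=\bigl((\alpha_j^*)^p-\alpha_j^*\bigr)\|\Psi\|_{L^{p+1}}^{p+1}$, this combination cancels the $(\alpha_j^*)^{p-1}-1$ term of Proposition~\ref{propn0004} \emph{exactly} at leading order and replaces it with the squared term $-\dfrac{p^2(p-1)(\alpha_j^*)^{p-3}\bigl(\langle\Psi_j^{p-1},\mathcal{V}_j^2\rangle_{L^2}\bigr)^2}{\|\Psi\|_{L^{p+1}}^{p+1}}$. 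That squared term, together with $-B_p\langle(\Psi_j^*)^{p-3},\mathcal{V}_j^4\rangle_{L^2}$ and the correction $\langle\mathcal{L}_j(\overline{\gamma}_{**}^\perp)-p(p-1)(\Psi_j^*)^{p-2}\mathcal{V}_j^2,\overline{\gamma}_{**}^\perp\rangle$, is precisely the form whose coercivity is proved in \cite[Section~4.3]{FP2024}; remove any one piece and the coercivity fails. The coefficient $\frac{\langle\Psi_j^{p-1},\mathcal{V}_j^2\rangle_{L^2}}{\alpha_j^*\|\Psi\|_{L^{p+1}}^{p+1}}$ is $O(\beta_*^2)$, so the left-hand side of the combination is still $O(\beta_*\|f\|_{H^{-1}})+O(\beta_*^2\|f\|_{H^{-1}})$ and your final Young-inequality absorption is fine once this point is fixed. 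A secondary, lighter issue: to produce the $\overline{\gamma}_{**}^\perp$-piece of the Frank--Peteranderl form you need not just $\gamma_{\mathcal{N},led,j}$ but the full orthogonal part $\overline{\gamma}_{**}^\perp$ of $\sum_i\gamma_{1,i}+\gamma_{\mathcal{N},led}$ (projected off $\mathrm{span}\{\Psi_j\}$), together with the energy identity obtained by pairing its equation against itself, as in~\eqref{eqnewnew1220}; the $\gamma_{1,i}$ contributions do not vanish at the relevant order.
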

\begin{proof}
By Lemmas~\ref{lem0005}, \ref{lem0011} and \ref{lem0012}, $(1)$ of Proposition~\ref{prop0001} and Propositions~\ref{propn0001}, \ref{propn0002} and \ref{propn0003},
\begin{eqnarray}\label{eqnewnew4443}
\left\langle\left(\Psi_j^*\right)^{p-2}\mathcal{V}_j^2, \rho_*\right\rangle_{L^2}&=&\left\langle\left(\Psi_j^*\right)^{p-2}\mathcal{V}_j^2, \gamma_{ex}+\gamma_*+\gamma_{\mathcal{N},led}+\rho_{**}^{\perp}\right\rangle_{L^2}\notag\\
&=&\left\langle\left(\Psi_j^*\right)^{p-2}\mathcal{V}_j^2, \sum_{i=1}^{\nu}(\gamma_{1,i}+\alpha_{i,0}^{**}\Psi_i)+\gamma_{\mathcal{N},led}\right\rangle_{L^2}\notag\\
&&+o(\beta_*^4+\beta_*\|f\|_{H^{-1}}).
\end{eqnarray}
We write $\sum_{i=1}^{\nu}\gamma_{1,i}+\gamma_{\mathcal{N},led}=\sum_{j=1}^{\nu}\overline{\alpha}_{j,*}\Psi_j+\overline{\gamma}_{**}^{\perp}$
such that $\left\langle\Psi_j, \overline{\gamma}_{**}^{\perp}\right\rangle$ for all $1\leq j\leq \nu$.  Then by \eqref{eqn0052} and $(i)$ of $(1)$ of Proposition~\ref{prop0001},
\begin{eqnarray}\label{eqnewnew6678}
\left\langle\sum_{i=1}^{\nu}(\overline{\alpha}_{i,*}+\alpha_{i,0}^{**}\Psi_i)+\gamma_{\mathcal{N},led},\Psi_j\right\rangle&=&\left\langle\sum_{i=1}^{\nu}(\gamma_{1,i}+\alpha_{i,0}^{**}\Psi_i)+\gamma_{\mathcal{N},led},\Psi_j\right\rangle\notag\\
&=&-\left\langle\gamma_{1,ex},\Psi_j\right\rangle\notag\\
&=&\mathcal{O}(Q).
\end{eqnarray}
It follows from \eqref{eqnewnew4443} and Propositions~\ref{propn0002} and \ref{propn0003} that
\begin{eqnarray}\label{eqnewnew4444}
\left\langle\left(\Psi_j^*\right)^{p-2}\mathcal{V}_j^2, \rho_*\right\rangle_{L^2}
&=&\left\langle\left(\Psi_j^*\right)^{p-2}\mathcal{V}_j^2, \overline{\gamma}_{**}^{\perp}\right\rangle_{L^2}+o(\beta_*^4+\beta_*\|f\|_{H^{-1}}).
\end{eqnarray}
By \eqref{eqn0012}, \eqref{eqn0015} and Lemma~\ref{lemn0003}, we know that $\overline{\gamma}_{**}^{\perp}$ satisfies
\begin{eqnarray}\label{eqn5512}
\left\{\aligned&\mathcal{L}(\overline{\gamma}_{**}^{\perp})=\overline{\mathcal{R}}_{1,**}-\sum_{i=1}^{\nu}\Psi_{i}^{p-1}\left(c_{1,j,i}\partial_t\Psi_{i}+\sum_{l=1}^{d}\varsigma_{1,j,i,l}w_{i,l}\right),\quad \text{in }\mathcal{C},\\
&\langle\Psi_{j}, \overline{\gamma}_{**}^{\perp}\rangle=\langle \partial_t\Psi_{j}, \overline{\gamma}_{**}^{\perp}\rangle=\langle w_{j,l}, \overline{\gamma}_{**}^{\perp}\rangle=0\quad\text{for all }1\leq j\leq\nu\text{ and all }1\leq l\leq d,\endaligned\right.
\end{eqnarray}
where
\begin{eqnarray*}
\overline{\mathcal{R}}_{1,**}&=&\mathcal{N}_{led}+\sum_{l=1}^{\nu}\left(\left(\alpha_l^*\right)^p-\alpha_l^*-\overline{\alpha}_{l,*}\left(1-p\left(\alpha_l^*\right)^{p-1}\right)\right)\Psi_l^p\notag\\
&&+\sum_{l=1}^{\nu}p\overline{\alpha}_{l,*}\left(\mathcal{U}^{p-1}-\left(\Psi_l^*\right)^{p-1}\right)\Psi_l
\end{eqnarray*}
with $\mathcal{N}_{led}$ given by \eqref{eqn3045}.  Since by \eqref{eqn0068}, \eqref{eqnewnew6678} and Propositions~\ref{propn0001}, \ref{propn0002} and \ref{propn0003}, we have $\sum_{i=1}^{\nu}\left|\overline{\alpha}_{i,*}\right|=\mathcal{O}\left(\beta_*^2+\|f\|_{H^{-1}}\right)$.  Thus, by the orthogonal conditions of $\overline{\gamma}_{**}^{\perp}$, multiplying \eqref{eqn5512} with $\overline{\gamma}_{**}^{\perp}$ on both sides and integrating by parts, Lemma~\ref{lem0005} and Propositions~\ref{propn0002} and \ref{propn0003}, we have
\begin{eqnarray}
\left\langle\mathcal{L}(\overline{\gamma}_{**}^{\perp})-\mathcal{N}_{led}, \overline{\gamma}_{**}^{\perp}\right\rangle_{L^2}=o\left(\beta_*^4+\beta_*\|f\|_{H^{-1}}\right)+\mathcal{O}\left(\|f\|_{H^{-1}}^2\right).
\label{eqnewnew1220}
\end{eqnarray}
Since $Q\to0$ and $\beta_*\to0$ as $\|f\|_{H^{-1}}\to0$, by Lemma~\ref{lem0005}, Propositions~\ref{propn0001}, \ref{propn0002} and \ref{propn0003}, it is easy to see that
\begin{eqnarray*}
\left\langle \mathcal{L}(\overline{\gamma}_{**}^{\perp})-\mathcal{N}_{led}, \overline{\gamma}_{**}^{\perp}\right\rangle_{L^2}&=&\sum_{j=1}^{\nu}\left\langle \mathcal{L}_j(\overline{\gamma}_{**}^{\perp})-\frac{p(p-1)}{2}\left(\Psi_j^*\right)^{p-2}\mathcal{V}_j^2, \overline{\gamma}_{**}^{\perp}\right\rangle_{L^2}\\
&&+o(\beta_*^4)+\mathcal{O}(\beta_*\|f\|_{H^{-1}})
\end{eqnarray*}
which, together with \eqref{eqnewnew4444}, \eqref{eqnewnew1220} and Propositions~\ref{propn0001}, \ref{propn0002}, \ref{propn0003} and \ref{propn0004}, implies that
\begin{eqnarray}\label{eqn0030}
&&\sum_{j=1}^{\nu}\left(\left\langle f, \mathcal{V}_j\right\rangle_{H^1}-\frac{\left\langle\Psi_j^{p-1},\mathcal{V}_j^2\right\rangle_{L^2}}{\alpha_j^*\|\Psi\|_{L^{p+1}}^{p+1}}\left\langle f, \Psi_j\right\rangle_{H^1}\right)\notag\\
&=&\sum_{j=1}^{\nu}2\left\langle \mathcal{L}_j(\overline{\gamma}_{**}^{\perp})-p(p-1)\left(\Psi_j^*\right)^{p-2}\mathcal{V}_j^2, \overline{\gamma}_{**}^{\perp}\right\rangle_{L^2}\notag\\
&&-\sum_{j=1}^{\nu}\left(\frac{p^2(p-1)(\alpha_j^*)^{p-3}\left(\left\langle\Psi_j^{p-1},\mathcal{V}_j^2\right\rangle_{L^2}\right)^2}{\|\Psi\|_{L^{p+1}}^{p+1}}+\frac{p(p-1)(p-2)}{6}\left\langle\left(\Psi_j^*\right)^{p-3}, \mathcal{V}_j^4\right\rangle_{L^2}\right)\notag\\
&&+o(\beta_*^4)+\mathcal{O}\left(\beta_*\|f\|_{H^{-1}}+\|f\|_{H^{-1}}^2\right).
\end{eqnarray}
The conclusion then follows from applying the estimates in \cite[Section~4.3]{FP2024} and the orthogonal conditions of $\overline{\gamma}_{**}^{\perp}$ given in \eqref{eqn5512} into \eqref{eqn0030}.
\end{proof}

\vskip0.12in

We are now ready to given the proof of $(a)$ of Theorem~\ref{thmn0001}.

\noindent\textbf{Proof of $(a)$ of Theorem~\ref{thmn0001}:} The conclusions for $\nu\geq2$ follows immediately from Lemma~\ref{lem0002} and Propositions~\ref{propn0001}, \ref{propn0002}, \ref{propn0003}, \ref{propn0004} and \ref{propn0005}.  For $\nu=1$, there is no interaction between bubbles, that is, we have $Q=0$.  Thus, the conclusion for $\nu=1$ follows from Lemma~\ref{lem0002} and Propositions~\ref{propn0001}, \ref{propn0003}, \ref{propn0004} and \ref{propn0005}
\hfill$\Box$

\section{Optimal example and proof of $(b)$ of Theorem~\ref{thmn0001}}
Let $R>0$ be a sufficiently large parameter and $\beta>0$ is a sufficiently small parameter.  We shall use the function, given by
\begin{eqnarray*}
v=\Psi+\Psi_{R}+\beta(w_d+w_{R,d}):=\Gamma_R+\beta\Phi_R
\end{eqnarray*}
to construct a optimal example of the stability stated in Theorem~\ref{thmn0001}, where
$\Psi_R=\Psi(t-R)$ and, as above, $w_{d}=\Psi^{\frac{p+1}{2}}\theta_{d}$ and $w_{R,d}=w_d(t-R)$.  It is easy to see that
\begin{eqnarray*}
\frac32\left(S_{FS}^{-1}\right)^{\frac{p+1}{p-1}}<\|v\|_{H^{1}}^2<\frac52\left(S_{FS}^{-1}\right)^{\frac{p+1}{p-1}}.
\end{eqnarray*}
Moreover, since $\Psi(t)$ is the unique positive solution of \eqref{eq0006} for $d\geq2$, $a<0$ and $b=b_{FS}(a)$, by Lemmas~\ref{lem0001} and \ref{lem0004},
\begin{eqnarray}\label{eqqnew0001}
\Xi&:=&-\Delta_{\theta}v-\partial_t^2v+\Lambda_{FS}v-v^{p}\notag\\
&=&\Psi^p+\Psi_R^p+p\beta\left(\Psi^{p-1}w_d+\Psi_R^{p-1}w_{R,d}\right)-\left(\Gamma_R+\beta\Phi_R\right)^p\notag\\
&=&\Psi^p+\Psi_R^p-\Gamma_R^p+p\beta\left(\left(\Psi^{p-1}-\Gamma_R^{p-1}\right)w_d+\left(\Psi_R^{p-1}-\Gamma_R^{p-1}\right)w_{R,d}\right)\notag\\
&&-\left(A_p\beta^2\Psi^{p-2}w_d^2+B_p\beta^3\Psi^{p-3}w_d^3\right)\chi_{\mathcal{B}}-\left(\beta^2\Gamma_R^{p-3}\Phi_R^{2}\left(A_p\Gamma_R+B_p\beta\Phi_R\right)\right)\chi_{\mathcal{C}\backslash\left(\mathcal{B}\cup\mathcal{B}_R\right)}\notag\\
&&-\left(A_p\beta^2\Psi_R^{p-2}w_{R,d}^2+B_p\beta^3\Psi_R^{p-3}w_{R,d}^3\right)\chi_{\mathcal{B}_R}+\Xi_{rem}
\end{eqnarray}
where $A_p=\frac{p(p-1)}{2}$, $B_p=\frac{p(p-1)(p-2)}{6}$,
\begin{eqnarray*}
\mathcal{B}=\left[-\frac{R}{2}, \frac{R}{2}\right]\times\mathbb{S}^{d-1},\quad\mathcal{B}_R=\left[\frac{R}{2}, \frac{3R}{2}\right]\times\mathbb{S}^{d-1}
\end{eqnarray*}
and
\begin{eqnarray*}
\Xi_{rem}=\mathcal{O}\left(\beta^2\left(\Psi^{2(p-1)}\Psi_R\chi_{\mathcal{B}}+\Psi_R^{2(p-1)}\Psi\chi_{\mathcal{B}_R}\right)+\beta^4\Gamma_R^4\right).
\end{eqnarray*}
We denote
\begin{eqnarray}\label{eqqnew0005}
\Xi_1&=&\left(\Gamma_R^p-\Psi^p-\Psi_R^p\right)+p\beta\left(\left(\Gamma_R^{p-1}-\Psi^{p-1}\right)w_d+\left(\Gamma_R^{p-1}-\Psi_R^{p-1}\right)w_{R,d}\right)\notag\\
&:=&\Xi_{1,1}+\beta\Xi_{1,2}
\end{eqnarray}
and
\begin{eqnarray}\label{eqqnew0006}
\Xi_2&=&A_p\beta^2\left(\Psi^{p-2}w_{d}^2\chi_{\mathcal{B}}+\Psi_R^{p-2}w_{R,d}^2\chi_{\mathcal{B}_R}+\Gamma_R^{p-2}\Phi_R^{2}\chi_{\mathcal{C}\backslash\left(\mathcal{B}\cup\mathcal{B}_R\right)}\right)\notag\\
&&+B_p\beta^3\left(\Psi^{p-3}w_d^3\chi_{\mathcal{B}}+\Psi_R^{p-3}w_{R,d}^3\chi_{\mathcal{B}_R}+\Gamma_R^{p-3}\Phi_R^{3}\chi_{\mathcal{C}\backslash\left(\mathcal{B}\cup\mathcal{B}_R\right)}\right)\notag\\
&:=&\beta^2\Xi_{2,1}+\beta^3\Xi_{2,2}.
\end{eqnarray}
Applying Lemmas~\ref{lem0006}, \ref{lem0008} and \ref{lem0009}, we immediately have the following.
\begin{lemma}\label{lemq1001}
Let $d\geq2$, $a<0$ and $b=b_{FS}(a)$.  Then the following equation
\begin{eqnarray}\label{eqqnew1002}
\left\{\aligned&-\Delta_{\theta}\varrho_{i,j}-\partial_t^2\varrho_{i,j}+\Lambda_{FS}\varrho_{i,j}-p\Gamma_R^{p-1}\varrho_{i,j}=\Xi_{i,j}+\vartheta_{i,j},\quad \text{in }\mathcal{C},\\
&\langle \partial_t\Psi, \varrho_{i,j}\rangle=\langle \partial_t\Psi_R, \varrho_{i,j}\rangle=\langle w_{l}, \varrho_{i,j}\rangle=\langle w_{R,l}, \varrho_{i,j}\rangle=0\text{ for all }1\leq l\leq d,\endaligned\right.
\end{eqnarray}
is uniquely solvable, where $\Xi_{i,j}$ is given by \eqref{eqqnew0005} and \eqref{eqqnew0006}, and
\begin{eqnarray}\label{eqqnew0003}
\vartheta_{i,j}=\Psi^{p-1}\left(c_{i,j}\partial_t\Psi+\sum_{l=1}^{d}\varsigma_{i,j,l}w_{l}\right)+\Psi_R^{p-1}\left(c_{R,i,j}\partial_t\Psi_R+\sum_{l=1}^{d}\varsigma_{R,i,j,l}w_{R,l}\right)
\end{eqnarray}
with $c_{i,j}, c_{R,i,j}$ and $\{\varsigma_{i,j,l}\}$ and $\{\varsigma_{R,i,j,l}\}$ being chosen such that the right hand side of the equation~\eqref{eqqnew1002} is orthogonal to $\Psi^{p-1}\partial_t\Psi$, $\Psi_R^{p-1}\partial_t\Psi_R$, $\left\{\Psi^{p-1}w_{l}\right\}$ and $\left\{\Psi_R^{p-1}w_{R,l}\right\}$ in $L^2(\mathcal{C})$.  Moreover, $\varrho_{1,1}$ is even on $\mathbb{S}^{d-1}$ and $\varrho_{1,2}$ is odd on $\mathbb{S}^{d-1}$ with
\begin{eqnarray*}
1\gtrsim\left\{\aligned
&\|\varrho_{1,1}\|_{\sharp},\quad p\geq3,\\
&\|\varrho_{1,1}\|_{\natural,1,*},\quad 1<p<3,
\endaligned\right.
\quad\text{and}\quad
1\gtrsim\left\{\aligned
&\|\varrho_{1,2}\|_{\sharp},\quad p\geq\frac{7}{3},\\
&\|\varrho_{1,2}\|_{\natural,2,*},\quad 1<p<\frac{7}{3},
\endaligned\right.
\end{eqnarray*}
while, $\varrho_{2,1}$ is even on $\mathbb{S}^{d-1}$ and $\varrho_{2,2}$ is odd on $\mathbb{S}^{d-1}$ with
\begin{eqnarray*}
1\gtrsim\sup_{(t,\theta)\in\mathcal{C}}\frac{|\varrho_{2,1}|+|\varrho_{2,2}|}{\Psi^{1-\sigma}+\Psi_R^{1-\sigma}}
\end{eqnarray*}
and $\sigma>0$ is chosen to satisfy $\beta^2\leq\frac{1}{8}\Gamma_R^\sigma$ in $\overline{\mathcal{B}}_{*}=[-R,2R]\times\mathbb{S}^{d-1}$.
\end{lemma}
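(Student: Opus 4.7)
The plan is to deduce Lemma~\ref{lemq1001} by reducing it to the three solvability results already established for a single bubble (Lemmas~\ref{lem0006}, \ref{lem0008} and \ref{lem0009}) together with the two--bubble kernel description obtained by combining Lemma~\ref{lem0001} applied at $\Psi$ and at $\Psi_R$. The linearised operator $\mathcal{L}_R:=-\Delta_\theta-\partial_t^2+\Lambda_{FS}-p\Gamma_R^{p-1}$ differs from the $\mathcal{L}_j$ used earlier only by the interaction error $p(\Gamma_R^{p-1}-\Psi^{p-1}-\Psi_R^{p-1})$, which is pointwise $o(1)$ times $\Psi^{p-1}+\Psi_R^{p-1}$. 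Hence the kernel of $\mathcal{L}_R$ on $H^1(\mathcal{C})$ is, up to an $R\to\infty$ perturbation, the direct sum of the kernels at $\Psi$ and at $\Psi_R$, namely $\operatorname{span}\{\partial_t\Psi,\partial_t\Psi_R,w_1,\dots,w_d,w_{R,1},\dots,w_{R,d}\}$.

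First I would fix the Lagrange multipliers. The linear system determining $c_{i,j},c_{R,i,j},\{\varsigma_{i,j,l}\},\{\varsigma_{R,i,j,l}\}$ from the imposed $L^2$--orthogonality of the right-hand side has Gram matrix $[\langle\Psi^{p-1}\partial_t\Psi,\partial_t\Psi\rangle_{L^2}$ blocks plus analogous entries for $\Psi_R$ and $w_l,w_{R,l}]$, which is diagonally dominant by Lemma~\ref{lem0005} exactly as in the proofs of Lemmas~\ref{lem0006}--\ref{lem0009}, so the multipliers exist and are unique. Next I would verify the data estimates. For $\Xi_{1,1}$ a Taylor expansion in each of $\mathcal{B},\mathcal{B}_R,\mathcal{C}\setminus(\mathcal{B}\cup\mathcal{B}_R)$ reproduces the pointwise profile of $\mathcal{R}_{1,ex}$ in \eqref{eqnewnew0009}, so $\|\Xi_{1,1}\|_\sharp\lesssim 1$ for $p\geq 3$ and $\|\Xi_{1,1}\|_{\natural,1}\lesssim 1$ for $1<p<3$. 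For $\beta\Xi_{1,2}$ the factor $(\Gamma_R^{p-1}-\Psi^{p-1})w_d$ matches (up to the factor $\beta$) the profile of $\mathcal{R}_{2,ex}$ used in the proof of Lemma~\ref{lem0008}, giving $\|\Xi_{1,2}\|_\sharp\lesssim 1$ for $p\geq 7/3$ and $\|\Xi_{1,2}\|_{\natural,2}\lesssim 1$ for $1<p<7/3$. For $\Xi_{2,1}$ and $\Xi_{2,2}$ the data is a sum of the type $\Psi^{2p-1}\chi_{\mathcal{B}}+\Psi_R^{2p-1}\chi_{\mathcal{B}_R}+\Gamma_R^{2p-1}\chi_{\mathcal{C}\setminus(\mathcal{B}\cup\mathcal{B}_R)}$, so the bound is controlled pointwise by $\Psi^{1-\sigma}+\Psi_R^{1-\sigma}$ after testing against the natural barrier, exactly as in Lemma~\ref{lem0009}.

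With data estimates in hand, the existence, uniqueness and pointwise decay of $\varrho_{i,j}$ are produced by the cut-off plus blow-up contradiction argument of \cite[Proposition~3.1]{LWW2024}, already imported for Lemmas~\ref{lem0006}, \ref{lem0008} and \ref{lem0009}. The key input is that $\varphi_{s_i^*}^{1-\sigma}$, $\varphi_{s_i^*}^{p-2}$ and $\varphi_{s_i^*}^{(3p-5)/2}$ (for the appropriate ranges of $p$) are supersolutions of $\mathcal{L}_R\rho=0$ away from the centers of the bubbles, which lets one upgrade an $H^1$ bound, obtained from the Fredholm alternative on the orthogonal complement of the kernel, to the claimed weighted $L^\infty$ bound. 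The symmetry claims follow from uniqueness: the data $\Xi_{1,1}$ and $\Xi_{2,1}$ are even on $\mathbb{S}^{d-1}$ (no odd spherical harmonic enters), and the odd part of any solution with the prescribed orthogonality must solve a homogeneous problem and therefore vanish; analogously, $\beta\Xi_{1,2}$ and $\beta^3\Xi_{2,2}$ are odd on $\mathbb{S}^{d-1}$ since each monomial in $\Phi_R$ or $w_d,w_{R,d}$ contributes an odd number of odd factors, forcing $\varrho_{1,2},\varrho_{2,2}$ to be odd.

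The main obstacle will be the sharp weighted bookkeeping for the $(1,2)$ component in the sub-critical range $1<p<7/3$, where the natural profile $\Psi^{(3p-5)/2}$ is not integrable in the usual sense and one must separate the short-distance region $\mathcal{B}_{i,0}$ from the far region, following the splitting used in the definition of $\|\cdot\|_{\natural,2,*}$ in Lemma~\ref{lem0008}; this is exactly where the $\beta\Xi_{1,2}$ contribution, which will be matched against $\gamma_{2,ex}+\rho_{**,3}^\perp$ in the ansatz for Theorem~\ref{thmn0001}$(b)$, needs to be controlled without picking up extra logarithmic losses. Everything else is a direct transcription of the one-bubble arguments, with the interaction term $p(\Gamma_R^{p-1}-\Psi^{p-1}-\Psi_R^{p-1})$ absorbed as a small perturbation of the linear operator since $R\to\infty$.
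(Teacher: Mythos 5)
Your proposal is correct and follows essentially the same route as the paper: the paper's proof of Lemma~\ref{lemq1001} is a one-line appeal to Lemmas~\ref{lem0006}, \ref{lem0008} and \ref{lem0009}, noting that $\Xi_{1,1}$, $\Xi_{1,2}$, $\Xi_{2,1}$, $\Xi_{2,2}$ have the same pointwise profiles (and parities on $\mathbb{S}^{d-1}$) as $\mathcal{R}_{1,ex}$, $\mathcal{R}_{2,ex}$ and $\mathcal{N}_{led}$ respectively, so that the one-bubble solvability machinery (diagonally dominant Gram system for the multipliers, supersolution barriers, cut-off/blow-up argument from \cite{LWW2024}) carries over verbatim. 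What you wrote is simply the spelled-out version of that reduction, with the correct data profiles, norm choices ($\|\cdot\|_{\sharp}$, $\|\cdot\|_{\natural,1,*}$, $\|\cdot\|_{\natural,2,*}$) in the matching ranges of $p$, and the correct parity argument via uniqueness.
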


\vskip0.12in

Let $\varrho_*=\varrho_{1,1}+\beta\varrho_{1,2}+\beta^2\varrho_{2,1}+\beta^3\varrho_{2,2}$.
Then we have the following.
\begin{proposition}\label{propq1001}
Let $d\geq2$, $a<0$ and $b=b_{FS}(a)$.  Then
\begin{eqnarray}\label{eqqnew0020}
\|\varrho_*\|\sim\beta^2+\left\{\aligned
&Q_R,\quad p>2,\\
&Q_R\left|\log Q_R\right|,\quad p=2,\\
&Q_R^{\frac{p}{2}},\quad 1<p<2,
\endaligned
\right.
\end{eqnarray}
where $Q_R=e^{-\sqrt{\Lambda_{FS}}R}$.
\end{proposition}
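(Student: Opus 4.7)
\textbf{Proof plan for Proposition~\ref{propq1001}.}
The strategy is to obtain matching upper and lower bounds for $\|\varrho_*\|$ by exploiting the parity decomposition in the spherical variable together with the coercivity of the linearized operator $\mathcal{L}_R:=-\Delta_\theta-\partial_t^2+\Lambda_{FS}-p\Gamma_R^{p-1}$ on the orthogonal complement of its approximate kernel $\mathrm{span}\{\partial_t\Psi,\partial_t\Psi_R,w_l,w_{R,l}:1\le l\le d\}$, the latter being available for $R$ large by perturbation from Lemma~\ref{lem0001}. Since $\Xi_{1,1},\Xi_{2,1}$ are even on $\mathbb{S}^{d-1}$ and $\Xi_{1,2},\Xi_{2,2}$ odd, and $\mathcal{L}_R$ preserves parity, uniqueness in Lemma~\ref{lemq1001} forces matching parity for the $\varrho_{i,j}$, which gives the clean splitting $\|\varrho_*\|^2=\|\varrho_{1,1}+\beta^2\varrho_{2,1}\|^2+\|\beta\varrho_{1,2}+\beta^3\varrho_{2,2}\|^2$.

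For the upper bound I would multiply \eqref{eqqnew1002} by $\varrho_{i,j}$, integrate by parts, and exploit the $H^1$-orthogonality conditions of Lemma~\ref{lemq1001}. The key algebraic point is that $\langle\partial_t\Psi,\varrho_{i,j}\rangle=0$ is equivalent, via \eqref{eq0006}, to $\int_{\mathcal{C}}\Psi^{p-1}\partial_t\Psi\,\varrho_{i,j}\,d\mu=0$, and similarly for the $w_l$ and their $R$-translates, so the pairing of $\vartheta_{i,j}$ with $\varrho_{i,j}$ vanishes. This yields the identity $\|\varrho_{i,j}\|^2=p\int_{\mathcal{C}}\Gamma_R^{p-1}\varrho_{i,j}^2\,d\mu+\int_{\mathcal{C}}\Xi_{i,j}\varrho_{i,j}\,d\mu$; combined with the spectral gap for $\mathcal{L}_R$, one obtains $\|\varrho_{i,j}\|\lesssim\|\Xi_{i,j}\|_{H^{-1}}$. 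The $H^{-1}$-norms are then evaluated from the explicit forms in \eqref{eqqnew0005}--\eqref{eqqnew0006} using Lemma~\ref{lem0005} with $\nu=2$ and $\tau=R$: one finds $\|\Xi_{2,1}\|_{H^{-1}},\|\Xi_{2,2}\|_{H^{-1}}\lesssim 1$ together with the three-regime bound $\|\Xi_{1,1}\|_{H^{-1}}\lesssim Q_R$ for $p>2$, $Q_R|\log Q_R|$ for $p=2$, and $Q_R^{p/2}$ for $1<p<2$, exactly as in the analysis of $\left(\mathcal{U}^{p-1}-\Psi_j^{p-1}\right)\Psi_j$-type pairings in Step~4 of the proof of Proposition~\ref{propn0001} and reflecting the borderline case $\alpha=\beta$ of Lemma~\ref{lem0005}. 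The triangle inequality then yields the desired upper bound.

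For the lower bound, coercivity gives $\|\varrho_{1,1}+\beta^2\varrho_{2,1}\|\gtrsim\|\mathcal{L}_R(\varrho_{1,1}+\beta^2\varrho_{2,1})\|_{H^{-1}}=\|\Xi_{1,1}+\beta^2\Xi_{2,1}+(\text{kernel projections})\|_{H^{-1}}$, and I would show this is comparable to $\|\Xi_{1,1}\|_{H^{-1}}+\beta^2\|\Xi_{2,1}\|_{H^{-1}}$ by observing that $\Xi_{1,1}$ is concentrated in the interaction region $t\sim R/2$ while $\Xi_{2,1}$ is concentrated at the bubble peaks $t=0,R$; therefore the $L^2$-pairing of either against a unit $H^1$-profile efficient for detecting the other is exponentially smaller in $R$ than the individual norms, ruling out destructive interference. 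The matching lower bound $\|\Xi_{2,1}\|_{H^{-1}}\gtrsim 1$ follows by testing against the explicit $H^1$-function $(-\Delta_\theta-\partial_t^2+\Lambda_{FS})^{-1}\Xi_{2,1}$. The odd-in-$\mathbb{S}^{d-1}$ contribution $\beta\|\varrho_{1,2}\|+\beta^3\|\varrho_{2,2}\|$ is lower order in $\beta$ and $Q_R$ and hence harmless for both inequalities.

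The main technical obstacle is the sharp lower bound $\|\Xi_{1,1}\|_{H^{-1}}\gtrsim$ the stated $p$-dependent interaction weight, coupled with the verification that its projections onto the approximate kernel are strictly subleading. The projection of $\Xi_{1,1}$ onto $\Psi^{p-1}\partial_t\Psi$ is $\mathcal{O}(Q_R^{1+\kappa})$ for some $\kappa>0$ by the $t$-oddness of $\partial_t\Psi$ (exactly the mechanism producing $c_{1,ex,j}\lesssim Q$ in Lemma~\ref{lem0006}), while the projections onto $\Psi^{p-1}w_l$ vanish identically by the $\mathbb{S}^{d-1}$-evenness of $\Xi_{1,1}$ against the oddness of $w_l$. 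This guarantees that the kernel-orthogonal part of $\Xi_{1,1}$ retains a definite fraction of its $H^{-1}$-norm, and the sharp lower bound then follows by duality from an explicit test function concentrated in the interaction region $t\in[R/4,3R/4]$.
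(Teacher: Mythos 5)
Your plan is close in spirit to the paper's argument (parity splitting on $\mathbb{S}^{d-1}$, plus matching upper and lower bounds for each piece, with Lemma~\ref{lem0005} driving the interaction sizes), but there are two gaps in the lower--bound direction, and the claim on which you lean most heavily is numerically wrong.

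First, the coercivity step for the even-$\mathbb{S}^{d-1}$ parts. The orthogonality constraints in Lemma~\ref{lemq1001} kill $\partial_t\Psi,\partial_t\Psi_R,w_l,w_{R,l}$, but they do \emph{not} kill the directions $\Psi,\Psi_R$, which are the approximate negative-eigenvalue directions of $\mathcal{L}_R=-\Delta_\theta-\partial_t^2+\Lambda_{FS}-p\Gamma_R^{p-1}$ (recall $\mathcal{L}\Psi=(1-p)\Psi^p$). For $\varrho_{1,1}$ and $\varrho_{2,1}$, which are even in $\theta$, the identity $\|\varrho_{i,j}\|^2-p\int\Gamma_R^{p-1}\varrho_{i,j}^2=\int\Xi_{i,j}\varrho_{i,j}$ does not imply $\|\varrho_{i,j}\|\lesssim\|\Xi_{i,j}\|_{H^{-1}}$ by coercivity, because the left-hand side is indefinite on the $\mathrm{span}\{\Psi,\Psi_R\}$ directions. (The Fredholm alternative still gives a bound of this form, or one can use the pointwise weighted bounds of Lemma~\ref{lemq1001} as the paper does; but the coercivity justification as you state it is incorrect for these two functions.)

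Second, and more seriously, the last paragraph of your plan rests on the assertion that the projection of $\Xi_{1,1}$ onto $\Psi^{p-1}\partial_t\Psi$ is $\mathcal{O}(Q_R^{1+\kappa})$. This is not so. Near $t=0$, $\Xi_{1,1}=(\Psi+\Psi_R)^p-\Psi^p-\Psi_R^p\approx p\Psi^{p-1}\Psi_R$ and $\Psi_R\approx Q_R e^{\sqrt{\Lambda_{FS}}t}$, and the integral $\int\Psi^{2p-2}\partial_t\Psi\cdot e^{\sqrt{\Lambda_{FS}}t}\,dt$ does not vanish (only the $\cosh$ part of $e^{\sqrt{\Lambda_{FS}}t}$ is killed by oddness, not the $\sinh$ part). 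So the paper finds $c_{1,1}\sim c_{R,1,1}\sim Q_R$ in \eqref{eqqnew0023}, the same order as $\|\Xi_{1,1}\|_{H^{-1}}$ when $p>2$. Consequently $\|\Xi_{1,1}+\vartheta_{1,1}\|_{H^{-1}}$ is not a priori comparable to $\|\Xi_{1,1}\|_{H^{-1}}$, and your $H^{-1}$-duality lower bound for $\varrho_{1,1}$ has a genuine cancellation problem that it leaves unaddressed when $p\geq 2$. The paper circumvents this by testing \eqref{eqqnew1002} directly against explicit $H^1$ cut-off functions: $\widetilde{\varrho}_R$ localized near $t\approx R/2$ for $1<p<2$, a longer-support cut-off on $[R/4,R/2]$ for $p=2$, and $\widehat{\varrho}_R$ localized at a fixed large $T_*$ for $p>2$, following \cite[Proposition 6.2]{WW2022}. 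These test functions are supported away from the peak of $\Psi^{p-1}\partial_t\Psi$, so the $\vartheta_{1,1}$ contribution is exponentially subleading; no cancellation argument is needed.

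Finally, for the $\beta^2$ lower bound, the paper does not test against $(-\Delta_\theta-\partial_t^2+\Lambda_{FS})^{-1}\Xi_{2,1}$ but against the degree-2 spherical harmonic $\mathcal{Y}_{2,d}=\theta_d^2-\tfrac1d$. Since $\Xi_{2,1}\sim\Psi^{p-2}w_d^2=\Psi^{2p-1}\theta_d^2$ has nonzero degree-2 content and the kernel directions $w_l$ are degree-1 while $\partial_t\Psi$ is degree-0, $\mathcal{Y}_{2,d}$ is simultaneously orthogonal to the constraint set and pairs definitively with $\Xi_{2,1}$; this is the clean way to extract $\|\varrho_{2,1}\|\gtrsim\|\Psi^{p-2}\mathcal{Y}_{2,d}^2\|_{L^1}\gtrsim1$. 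Your duality approach likely works here after checking that the Lagrange multipliers $\vartheta_{2,1}$ are only $\mathcal{O}(Q_R^p)$ in $H^{-1}$, but the spherical harmonic test function is the decisive structural idea and is absent from your proposal.
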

\begin{proof}
By using the test functions
\begin{eqnarray*}
\widetilde{\varrho}_R(t)=\left\{\aligned&1,\quad \frac{R}{2}-3\leq t\leq \frac{R}{2}-2,\\
&0,\quad t\leq\frac{R}{2}-4 \text{ or }t\geq \frac{R}{2}-1.\endaligned\right.
\end{eqnarray*}
for $1<p<2$,
\begin{eqnarray*}
\widetilde{\varrho}_R(t)=\left\{\aligned&1,\quad \frac{R}{4}\leq t\leq \frac{R}{2}-2,\\
&0,\quad t\leq \frac{R}{4}-1 \text{ or }t\geq \frac{R}{2}-1.\endaligned\right.
\end{eqnarray*}
for $p=2$ and
\begin{eqnarray*}
\widehat{\varrho}_R(t)=\left\{\aligned&1,\quad T_*\leq t\leq T_*+1,\\
&0,\quad t\leq T_*-1 \text{ or }t\geq T_*+2,\endaligned\right.
\end{eqnarray*}
with $T_*>0$ sufficiently large for $p>2$ to \eqref{eqqnew1002}, as that in the proof of \cite[Proposition~6.2]{WW2022}, we can show that
\begin{eqnarray*}
\|\varrho_{1,1}\|\gtrsim\left\{\aligned
&Q_R,\quad p>2,\\
&Q_R\left|\log Q_R\right|,\quad p=2,\\
&Q_R^{\frac{p}{2}},\quad 1<p<2,
\endaligned
\right.
\end{eqnarray*}
which, together with \eqref{eqqnew0005}, Lemma~\ref{lemq1001} and multiplying \eqref{eqqnew1002} of $\varrho_{1,1}$ with $\varrho_{1,1}$ on both sides and integrating by parts,  implies that
\begin{eqnarray*}
\|\varrho_{1,1}\|\sim\left\{\aligned
&Q_R,\quad p>2,\\
&Q_R\left|\log Q_R\right|,\quad p=2,\\
&Q_R^{\frac{p}{2}},\quad 1<p<2.
\endaligned
\right.
\end{eqnarray*}
Similar to \eqref{eqqnew0009}, by \eqref{eqqnew0005}, Lemma~\ref{lemq1001} and multiplying \eqref{eqqnew1002} of $\varrho_{1,2}$ with $\varrho_{1,2}$ on both sides and integrating by parts, we also have
\begin{eqnarray*}
\|\varrho_{1,2}\|\lesssim\left\{\aligned
&Q_R,\quad p>2,\\
&Q_R\left|\log Q_R\right|,\quad p=2,\\
&Q_R^{\frac{p}{2}},\quad 1<p<2.
\endaligned
\right.
\end{eqnarray*}
By \eqref{eqqnew0006}, Lemma~\ref{lemq1001} and multiplying \eqref{eqqnew1002} of $\varrho_{2,2}$ with $\varrho_{2,2}$ on both sides and integrating by parts, it is also easy to see that $\|\varrho_{2,2}\|\lesssim1$.  It remains to estimate $\|\varrho_{2,1}\|$.  By \eqref{eqqnew0006}, Lemma~\ref{lemq1001} and multiplying \eqref{eqqnew1002} of $\varrho_{2,1}$ with $\varrho_{2,1}$ on both sides and integrating by parts, it is also easy to see that $\|\varrho_{2,1}\|\lesssim1$.  For the lower bound of $\|\varrho_{2,1}\|$, we recall that the spherical harmonics on $\mathbb{S}^{d-1}$, denoted by $\{\mathcal{Y}_{j,l}\}$ with $j=0,1,2,\cdots$ and $l=1,2,\cdots,l_j$ for some $l_j\in\bbn$, form an orthogonal basic of $L^2(\mathbb{S}^{d-1})$ with span$_{1\leq l\leq l_j}\{\mathcal{Y}_{j,l}\}$ forming the eigenspace of the $j$th eigenvalue of $-\Delta_\theta$ on $L^2\left(\mathbb{S}^{d-1}\right)$, where $\Delta_{\theta}$ is the Laplace-Beltrami operator on $\mathbb{S}^{d-1}$.  Moreover, it is well known that the eigenvalues of $-\Delta_\theta$ on $L^2\left(\mathbb{S}^{d-1}\right)$ are given by $j(j+d-2)$.
The first eigenvalue $0$ is simple with eigenfunctions $\mathcal{Y}_{0,1}=1$, the eigenfunctions of the second eigenvalue $d-1$ are precisely $\mathcal{Y}_{1,l}=\theta_l$ for $1\leq l\leq d$.  It is also well known that $\mathcal{Y}_{2,d}=\theta_d^2-\frac{1}{d}$ is a spherical harmonic on $\mathbb{S}^{d-1}$ with degree $2$ (cf. \cite[(2.6)]{S2007} or \cite[(4.9)]{FP2024}).  Now, by \eqref{eqqnew0006}, Lemma~\ref{lemq1001} and multiplying \eqref{eqqnew1002} of $\varrho_{2,1}$ with $\mathcal{Y}_{2,d}$ on both sides and integrating by parts,we have
\begin{eqnarray*}
\|\varrho_{2,1}\|\gtrsim\left\langle \varrho_{2,1}, \mathcal{Y}_{2,d}\right\rangle-p\left\langle \Gamma_{R}\varrho_{2,1}, \mathcal{Y}_{2,d}\right\rangle_{L^2}=\left\langle \Xi_{2,1}, \mathcal{Y}_{2,d}\right\rangle_{L^2}\gtrsim\left\|\Psi^{p-2}\mathcal{Y}_{2,d}^2\right\|_{L^1}.
\end{eqnarray*}
Thus, by $\varrho_*=\varrho_{1,1}+\beta\varrho_{1,2}+\beta^2\varrho_{2,1}+\beta^3\varrho_{2,2}$, we have the desired estimate of $\|\varrho\|$ given by \eqref{eqqnew0020}.
\end{proof}

\vskip0.12in

We define
\begin{eqnarray}\label{eqqnew0020}
f_*:=-\Delta_{\theta}(\varrho_*+v)-\partial_t^2(\varrho_*+v)+\Lambda_{FS}(\varrho_*+v)-(v+\varrho_*)^p.
\end{eqnarray}
Then by \eqref{eqqnew0001} and Lemma~\ref{lemq1001},
\begin{eqnarray}\label{eqqnew1230}
f_*&=&\left(-\Delta_{\theta}\varrho_*-\partial_t^2\varrho_*+\Lambda_{FS}\varrho_*-p\Gamma_R^{p-1}\varrho_*\right)+\Psi^p+\Psi_R^p+p\beta\left(\Psi^{p-1}w_d+\Psi_R^{p-1}w_{R,d}\right)\notag\\
&&+p\Gamma_R^{p-1}\varrho_*-(\Gamma_R+\beta\Phi_R+\varrho_*)^p\notag\\
&=&\vartheta_{1,1}+\beta\vartheta_{1,2}+\beta^2\vartheta_{2,1}+\beta^3\vartheta_{2,2}+\Xi_{1,1}+\beta\Xi_{1,2}+\beta^2\Xi_{2,1}+\beta^3\Xi_{2,2}\notag\\
&&+\Psi^p+\Psi_R^p-\Gamma_R^p+p\beta\left(\left(\Psi^{p-1}-\Gamma_R\right)w_d+\left(\Psi_R^{p-1}-\Gamma_R\right)w_{R,d}\right)\notag\\
&&+p\Gamma_R^{p-1}(\varrho_*+\beta\Phi_R)+\Gamma_R^p-(\Gamma_R+\beta\Phi_R+\varrho_*)^p\notag\\
&=&\vartheta_{1,1}+\beta\vartheta_{1,2}+\beta^2\vartheta_{2,1}+\beta^3\vartheta_{2,2}+\beta^2\Xi_{2,1}+\beta^3\Xi_{2,2}-\mathcal{N}_{\varrho_*},
\end{eqnarray}
where
$\vartheta_{i,j}$ is given by \eqref{eqqnew0003} and by Lemmas~\ref{lem0003}, \ref{lem0004} and \ref{lemn0001},
\begin{eqnarray*}
\mathcal{N}_{\varrho_*}
&=&A_p\Gamma_P^{p-2}\left((\beta\Phi_R)^2+2\beta\Phi_R\varrho_*+\varrho_*^2\right)\\
&&+B_p\Gamma_P^{p-3}\left((\beta\Phi_R)^3+3(\beta\Phi_R)^2\varrho_*+3(\beta\Phi_R)\varrho_*^2+\varrho_*^3\right)\\
&&+\mathcal{O}\left(\Gamma_R^{p-4\sigma}\left(\beta+Q_R^{\frac{2\wedge p}{2}}\left|\log Q_R\right|\right)^4+\left(\beta^2\Gamma_R^{1-\sigma}\right)^{1+\ve}\chi_{\mathcal{C}\backslash\overline{\mathcal{B}}_{*}}\right)\\
&=&\beta^2\Xi_{2,1}+\beta^3\Xi_{2,2}+\mathcal{N}_{\varrho_*,rem},
\end{eqnarray*}
where
\begin{eqnarray*}
\mathcal{N}_{\varrho_*,rem}&=&2A_p\beta\left(\Psi^{p-2}w_{d}\chi_{\mathcal{B}}+\Psi_R^{p-2}w_{R,d}\chi_{\mathcal{B}_R}+\Gamma_R^{p-2}\Phi_R\chi_{\mathcal{C}\backslash\left(\mathcal{B}\cup\mathcal{B}_R\right)}\right)\varrho_*\notag\\
&&+3B_p\beta^2\left(\Psi^{p-3}w_d^2\chi_{\mathcal{B}}+\Psi_R^{p-3}w_{R,d}^2\chi_{\mathcal{B}_R}\right)\varrho_*\notag\\
&&+2\beta A_p\left((\Gamma_R^{p-2}\Phi_R-\Psi^{p-2}w_{d})\chi_{\mathcal{B}}+(\Gamma_R^{p-2}\Phi_R-\Psi_{R}^{p-2}w_{R,d}\chi_{\mathcal{B}_R})\right)\varrho_*\\
&&+\mathcal{O}\left(\beta^2\left(\left(\Psi^{2p-2}\Psi_R+\Psi^{2p-3}\Psi_R\varrho_*\right)\chi_{\mathcal{B}}+\left(\Psi_R^{2p-2}\Psi+\Psi_R^{2p-3}\Psi\varrho_*\right)\chi_{\mathcal{B}_R}\right)\right)\\
&&+\mathcal{O}\left(\Gamma_R^{p-4\sigma}\left(\beta+Q_R^{\frac{2\wedge p}{2}}\left|\log Q_R\right|\right)^4+\left(\beta^2\Gamma_R^{1-\sigma}\right)^{1+\ve}\chi_{\mathcal{C}\backslash\overline{\mathcal{B}}_{*}}\right).
\end{eqnarray*}
By Lemmas~\ref{lem0013} and \ref{lemq1001}, we immediately have the following.
\begin{lemma}\label{lemq1002}
Let $d\geq2$, $a<0$ and $b=b_{FS}(a)$.  Then the following equation,
\begin{eqnarray}\label{eqqnew1102}
\left\{\aligned&-\Delta_{\theta}\varrho_{1,1,*}-\partial_t^2\varrho_{1,1,*}+\Lambda_{FS}\varrho_{1,1,*}-p\Gamma_R^{p-1}\varrho_{1,1,*}=\Xi_{1,1,*}+\vartheta_{1,1,*},\quad \text{in }\mathcal{C},\\
&\langle \partial_t\Psi, \varrho_{1,1,*}\rangle=\langle \partial_t\Psi_R, \varrho_{1,1,*}\rangle=\langle w_{l}, \varrho_{1,1,*}\rangle=\langle w_{R,l}, \varrho_{1,1,*}\rangle=0\text{ for all }1\leq l\leq d,\endaligned\right.
\end{eqnarray}
is uniquely solvable,  where $\Xi_{1,1,*}=2A_p\Gamma_R^{p-2}\Phi_R\varrho_{1,1}$ and
and
\begin{eqnarray*}
\vartheta_{1,1,*}=\Psi^{p-1}\left(c_{1,1,*}\partial_t\Psi+\sum_{l=1}^{d}\varsigma_{1,1,*,l}w_{l}\right)+\Psi_R^{p-1}\left(c_{R,1,1,*}\partial_t\Psi_R+\sum_{l=1}^{d}\varsigma_{R,1,1,*,l}w_{R,l}\right)
\end{eqnarray*}
with $c_{1,1,*}, c_{R,1,1,*}$ and $\{\varsigma_{1,1,*,l}\}$ and $\{\varsigma_{R,1,1,*,l}\}$ being chosen such that the right hand side of the equation~\eqref{eqqnew1102} is orthogonal to $\Psi^{p-1}\partial_t\Psi$, $\Psi_R^{p-1}\partial_t\Psi_R$, $\left\{\Psi^{p-1}w_{l}\right\}$ and $\left\{\Psi_R^{p-1}w_{R,l}\right\}$ in $L^2(\mathcal{C})$.  Moreover, $\varrho_{1,1,*}$ is odd on $\mathbb{S}^{d-1}$ with
\begin{eqnarray*}
1\gtrsim\left\{\aligned
&\|\varrho_{1,1,*}\|_{\sharp},\quad p\geq\frac{7}{3},\\
&\|\varrho_{1,1,*}\|_{\natural,2,*},\quad 1<p<\frac{7}{3}.
\endaligned\right.
\end{eqnarray*}
\end{lemma}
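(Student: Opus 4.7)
The proof will mirror that of Lemma~\ref{lem0013}, as the data $\Xi_{1,1,*} = 2A_p\Gamma_R^{p-2}\Phi_R\varrho_{1,1}$ is precisely the two-bubble, $\beta$-free analog of $\mathcal{R}_{3,ex} = 2A_p\mathcal{U}^{p-2}\mathcal{V}\gamma_{1,ex}$, with $\Gamma_R$ playing the role of $\mathcal{U}$, $\Phi_R$ the role of $\beta_*^{-1}\mathcal{V}$, and $\varrho_{1,1}$ the role of $\gamma_{1,ex}$. My first step will be to record the symmetry: since $\Phi_R = w_d + w_{R,d}$ inherits the odd spherical harmonic $\theta_d$, while $\varrho_{1,1}$ is even on $\mathbb{S}^{d-1}$ by Lemma~\ref{lemq1001}, the product $\Xi_{1,1,*}$ is odd on $\mathbb{S}^{d-1}$.

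Next I would derive pointwise weighted bounds for $\Xi_{1,1,*}$ by combining the explicit description of $\Phi_R$ (behaving like $\Psi^{(p+1)/2}\theta_d$ near $\Psi$, like $\Psi_R^{(p+1)/2}\theta_d$ near $\Psi_R$, and suppressed by the interaction factor $Q_R$ in the overlap region) with the bound on $\varrho_{1,1}$ from Lemma~\ref{lemq1001}. The outcome will exactly parallel the pointwise estimate \eqref{eqn4548}, yielding $\|\Xi_{1,1,*}\|_{\sharp} \lesssim 1$ for $p \geq 7/3$ and $\|\Xi_{1,1,*}\|_{\natural,2,*} \lesssim 1$ for $1<p<7/3$. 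With these bounds in hand, existence, uniqueness, and the claimed norm estimates for $\varrho_{1,1,*}$ follow from the same cut-off plus blow-up scheme used in Lemma~\ref{lem0013}, which rests on the kernel classification of Lemma~\ref{lem0001} together with the supersolutions $\varphi^{1-\sigma}$ and $\varphi^{p-2}$ as in \cite[Proposition~3.1]{LWW2024}; the presence of two bubbles instead of $\nu$ is immaterial to the argument. The Lagrange multipliers $c_{1,1,*}$, $c_{R,1,1,*}$, $\{\varsigma_{1,1,*,l}\}$, $\{\varsigma_{R,1,1,*,l}\}$ are then determined by a linear system obtained by pairing \eqref{eqqnew1102} with $\Psi^{p-1}\partial_t\Psi$, $\Psi_R^{p-1}\partial_t\Psi_R$, $\Psi^{p-1}w_l$, $\Psi_R^{p-1}w_{R,l}$, and the corresponding Gram matrix is diagonally dominant as $R\to\infty$, just as in Lemmas~\ref{lem0006} and \ref{lem0008}.

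Finally, the oddness of $\varrho_{1,1,*}$ on $\mathbb{S}^{d-1}$ will be forced by uniqueness: the entire system \eqref{eqqnew1102}, including the Lagrange multiplier coupling, respects the $\theta_d$ odd/even decomposition (since $w_l = \Psi^{(p+1)/2}\theta_l$ is odd while $\partial_t\Psi$ is rotationally symmetric), so the odd-in-$\theta_d$ reflection of $\varrho_{1,1,*}$ is again a solution with the same data and must coincide with $\varrho_{1,1,*}$ itself. The only mild technical point I expect is the bookkeeping in the two different regimes $p\geq 7/3$ and $1<p<7/3$, where the correct weighted norm has to be chosen to absorb $\Xi_{1,1,*}$; but this is already dictated by the analogous dichotomy for $\overline{\gamma}_{2,ex}$ in Lemma~\ref{lem0008}, and no new obstacle arises here.
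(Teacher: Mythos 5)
Your proposal is correct and follows precisely the route the paper takes: the paper simply states that Lemma~\ref{lemq1002} is an immediate consequence of Lemma~\ref{lem0013} and Lemma~\ref{lemq1001}, and your argument unpacks exactly that---the pointwise bound on $\Xi_{1,1,*}$ paralleling \eqref{eqn4548}, the solvability and norm estimates from the Lemma~\ref{lem0013} machinery (cut-off, blow-up, the $p\geq 7/3$ versus $1<p<7/3$ dichotomy), the diagonally dominant Gram system for the Lagrange multipliers, and the oddness on $\mathbb{S}^{d-1}$ by uniqueness. No gap.
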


\vskip0.12in

Let $\varrho=\varrho_{*}+\varrho_{1,1,*}$ and define
\begin{eqnarray}\label{eqqnew1020}
f:=-\Delta_{\theta}(\varrho+v)-\partial_t^2(\varrho+v)+\Lambda_{FS}(\varrho+v)-(v+\varrho)^p.
\end{eqnarray}
Then by \eqref{eqqnew1230} and Lemma~\ref{lemq1002},
\begin{eqnarray*}
f=\vartheta_{1,1}+\beta(\vartheta_{1,2}+\vartheta_{1,1,*})+\beta^2\vartheta_{2,1}+\beta^3\vartheta_{2,2}-\mathcal{N}_{\varrho_*,rem,1},
\end{eqnarray*}
where
\begin{eqnarray*}
\mathcal{N}_{\varrho_*,rem,1}&=&3B_p\beta^2\left(\Psi^{p-3}w_d^2\chi_{\mathcal{B}}+\Psi_R^{p-3}w_{R,d}^2\chi_{\mathcal{B}_R}\right)\varrho_*\notag\\
&&+\mathcal{O}\left(\beta^2\left(\left(\Psi^{2p-2}\Psi_R+\Psi^{2p-3}\Psi_R\varrho_*\right)\chi_{\mathcal{B}}+\left(\Psi_R^{2p-2}\Psi+\Psi_R^{2p-3}\Psi\varrho_*\right)\chi_{\mathcal{B}_R}\right)\right)\\
&&+\mathcal{O}\left(\Gamma_R^{p-4\sigma}\left(\beta+Q_R^{\frac{2\wedge p}{2}}\left|\log Q_R\right|\right)^4+\left(\beta^2\Gamma_R^{1-\sigma}\right)^{1+\ve}\chi_{\mathcal{C}\backslash\overline{\mathcal{B}}_{*}}\right).
\end{eqnarray*}
\begin{proposition}\label{propq0001}
Let $d\geq2$, $a<0$ and $b=b_{FS}(a)$.  Then
\begin{eqnarray*}
\|f\|_{H^{-1}}\sim\beta^3+Q_R,
\end{eqnarray*}
where $Q_R=e^{-\sqrt{\Lambda_{FS}}R}$.
\end{proposition}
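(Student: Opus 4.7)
The plan is to establish matching upper and lower bounds of order $\beta^3+Q_R$ by identifying the two leading contributions to $f$ and showing that they live in orthogonal symmetry classes, so they cannot cancel.

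For the upper bound, I will apply the triangle inequality to the decomposition $f=\vartheta_{1,1}+\beta(\vartheta_{1,2}+\vartheta_{1,1,*})+\beta^2\vartheta_{2,1}+\beta^3\vartheta_{2,2}-\mathcal{N}_{\varrho_*,rem,1}$ coming from \eqref{eqqnew1020} and the identity derived just before this proposition. Each $\vartheta_{i,j}$ is a linear combination of $\Psi^{p-1}\partial_t\Psi$, $\Psi_R^{p-1}\partial_t\Psi_R$, $\{\Psi^{p-1}w_l\}$, $\{\Psi_R^{p-1}w_{R,l}\}$, all of which have $H^{-1}$-norm uniformly bounded in $R$, so the task reduces to bounding the Lagrange multipliers. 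I will obtain these by mimicking the computations in Lemmas \ref{lem0006}--\ref{lem0009}, \ref{lem0013} in the present two-bubble setting: $\Xi_{1,1}$ is the analog of $\mathcal{R}_{1,ex}$, producing $|c_{1,1}|,|c_{R,1,1}|\lesssim Q_R$ and $\varsigma_{1,1,l}=\varsigma_{R,1,1,l}=0$; $\Xi_{1,2}$ and $\Xi_{1,1,*}$ are odd on $\mathbb{S}^{d-1}$ so their $\varsigma$ multipliers are $O(Q_R)$ while $c=0$; the quadratic-in-$w$ datum $\Xi_{2,1}$ is even on $\mathbb{S}^{d-1}$ so $\varsigma=0$ and $c\lesssim Q_R^p$ by parity in $t$; and the cubic datum $\Xi_{2,2}$, being odd on $\mathbb{S}^{d-1}$ and even in $t$, produces $c=0$ and $\varsigma=O(1)$. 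These bounds assemble to $\|\vartheta_{1,1}\|_{H^{-1}}\lesssim Q_R$, $\beta\|\vartheta_{1,2}+\vartheta_{1,1,*}\|_{H^{-1}}\lesssim\beta Q_R$, $\beta^2\|\vartheta_{2,1}\|_{H^{-1}}\lesssim\beta^2 Q_R^p$ and $\beta^3\|\vartheta_{2,2}\|_{H^{-1}}\lesssim\beta^3$. Finally, the pointwise form of $\mathcal{N}_{\varrho_*,rem,1}$ together with Proposition \ref{propq1001} and Lemma \ref{lemq1002} gives $\|\mathcal{N}_{\varrho_*,rem,1}\|_{H^{-1}}=o(\beta^3+Q_R)$.

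For the lower bound, the key observation is that the $Q_R$-contribution (residing entirely in $\vartheta_{1,1}$) is even on $\mathbb{S}^{d-1}$ and odd in $t$, whereas the $\beta^3$-contribution (in $\vartheta_{2,2}$) is odd on $\mathbb{S}^{d-1}$. Accordingly I will test $f$ against $\partial_t\Psi$ (or $\partial_t\Psi_R$) to produce the lower bound $\|f\|_{H^{-1}}\gtrsim Q_R$: the pairing with $\vartheta_{1,1}$ yields a nonzero multiple of $c_{1,1}\sim Q_R$ (the computation mirrors Step 1 of Proposition \ref{propn0002}, where the integral $\int \Psi^{p-1}\partial_t \Psi_R$ is explicitly shown to be $\sim Q_R$ with a fixed sign), while all other $\vartheta_{i,j}$ contribute either zero by parity or a strictly smaller power of $Q_R$. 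To extract $\beta^3$, I will test against $w_d$ (or $w_{R,d}$): by parity only $\vartheta_{2,2}$ survives at leading order, and the coefficient is a nonzero multiple of $\int \Psi^{p-3}w_d^4\,d\mu>0$, which is the nondegenerate cubic correction computed in \cite{FP2024}. Both tests must be combined with the remainder control $\|\mathcal{N}_{\varrho_*,rem,1}\|_{H^{-1}}=o(\beta^3+Q_R)$ from the upper-bound step.

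The main technical obstacle is verifying that these two test functions genuinely isolate their intended leading terms: one must exclude contamination from the $\vartheta_{2,1}$ multiplier (of size $\beta^2 Q_R^p$), from cross pairings inside $\mathcal{N}_{\varrho_*,rem,1}$, and from lower-order parts of $\vartheta_{1,2},\vartheta_{1,1,*}$. The parity bookkeeping, combined with the fact that $\varrho_*$ decomposes by symmetry into an even-on-$\mathbb{S}^{d-1}$ even-in-$t$ piece $\varrho_{1,1}$ plus odd-on-$\mathbb{S}^{d-1}$ pieces $\beta\varrho_{1,2},\beta^3\varrho_{2,2}$ plus an even piece $\beta^2\varrho_{2,1}$, should make these cross terms vanish or be absorbed; nevertheless the bookkeeping is delicate and will need to be written out term by term, very much in the spirit of Sections 6--8.
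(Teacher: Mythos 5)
Your proposal is correct and follows essentially the same route as the paper: compute the Lagrange multipliers $c_{i,j},\varsigma_{i,j,l}$ from the projection conditions, use parity in $t$ and on $\mathbb{S}^{d-1}$ to show $c_{1,1}\sim c_{R,1,1}\sim Q_R$, $\varsigma_{2,2,l}\sim 1$ and that the remaining multipliers are lower order, and control $\|\mathcal{N}_{\varrho_*,rem,1}\|$ by $o(\beta^4+Q_R)$. The paper simply asserts $\|\vartheta_{1,1}+\beta\vartheta_{1,2}+\beta^2\vartheta_{2,1}+\beta^3\vartheta_{2,2}\|_{L^2}\sim\beta^3+Q_R$ from the multiplier sizes and the $L^2$-orthogonality of the even/odd-on-$\mathbb{S}^{d-1}$ classes; your dual tests against $\partial_t\Psi$ and $w_d$ make the resulting $H^{-1}$ lower bound explicit, which is a harmless (arguably cleaner) rephrasing of the same mechanism. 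One small correction: the coefficient extracted from $\varsigma_{2,2,d}$ is a multiple of $B_p\int_{\mathcal{C}}\Psi^{2p-4}w_d^4\,d\mu$ (from the pairing $\langle\Psi^{p-1}w_d,\Xi_{2,2}\rangle_{L^2}$), not $\int\Psi^{p-3}w_d^4\,d\mu$; this does not affect the sign or the conclusion.
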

\begin{proof}
For the sake of simplicity, we redenote $\vartheta_{1,2}+\vartheta_{1,1,*}$ by $\vartheta_{1,2}$, $c_{1,2}+c_{1,1,*}$ by $c_{1,2}$, $c_{R,1,2}+c_{R,1,1,*}$ by $c_{R,1,2}$, $\varsigma_{1,2,l}+\varsigma_{1,1,*,l}$ by $\varsigma_{1,2,l}$ and $\varsigma_{R,1,2,l}+\varsigma_{R,1,1,*,l}$ by $\varsigma_{R,1,2,l}$.
As in the proof of Lemma~\ref{lem0012}, by Lemma~\ref{lemq1001}, the orthogonality of $\Psi^{p-1}\partial_t\Psi$ and $\left\{\Psi^{p-1}w_{l}\right\}$ in $L^2(\mathcal{C})$ and the oddness of $w_d$ on $\mathbb{S}^{d-1}$,
\begin{eqnarray*}
-\left\langle \Psi^{p-1}\partial_t\Psi, \Xi_{i,j}\right\rangle_{L^2}=\left\|\Psi^{p-1}\partial_t\Psi\right\|_{L^2}^2c_{i,j}+\left\langle \Psi^{p-1}\partial_t\Psi, \Psi_R^{p-1}\partial_t\Psi_R\right\rangle_{L^2}c_{R,i,j}
\end{eqnarray*}
and
\begin{eqnarray*}
-\left\langle \Psi_R^{p-1}\partial_t\Psi_R, \Xi_{i,j}\right\rangle_{L^2}=\left\|\Psi^{p-1}\partial_t\Psi\right\|_{L^2}^2c_{R,i,j}+\left\langle \Psi^{p-1}\partial_t\Psi, \Psi_R^{p-1}\partial_t\Psi_R\right\rangle_{L^2}c_{i,j}
\end{eqnarray*}
while for all $1\leq l\leq d$,
\begin{eqnarray*}
-\left\langle \Psi^{p-1}w_{j}, \Xi_{i,j}\right\rangle_{L^2}=\sum_{l=1}^{d}\left(\left\langle \Psi^{p-1}w_j, \Psi^{p-1}w_{l}\right\rangle_{L^2}\varsigma_{i,j,l}+\left\langle \Psi^{p-1}w_j, \Psi_R^{p-1}w_{R,i,j,l}\right\rangle_{L^2}\varsigma_{R,i,j,l}\right)
\end{eqnarray*}
and
\begin{eqnarray*}
-\left\langle \Psi_R^{p-1}w_{R,j}, \Xi_{i,j}\right\rangle_{L^2}=\sum_{l=1}^{d}\left(\left\langle \Psi_R^{p-1}w_{R,j}, \Psi^{p-1}w_{l}\right\rangle_{L^2}\varsigma_{i,j,l}+\left\langle \Psi_R^{p-1}w_{R,j}, \Psi_R^{p-1}w_{R,i,j,l}\right\rangle_{L^2}\varsigma_{R,i,j,l}\right).
\end{eqnarray*}
It follows from Lemma~\ref{lem0005} that
\begin{eqnarray*}
\left\{\aligned
&c_{i,j}=-B_*\left\langle \Psi^{p-1}\partial_t\Psi, \Xi_{i,j}\right\rangle_{L^2}+\mathcal{O}\left(Q_R^p\left|\log Q_R\right|\left\langle \Psi_R^{p-1}\partial_t\Psi_R, \Xi_{i,j}\right\rangle_{L^2}\right),\\
&c_{R,i,j}=-B_*\left\langle \Psi_R^{p-1}\partial_t\Psi_R, \Xi_{i,j}\right\rangle_{L^2}+\mathcal{O}\left(Q_R^p\left|\log Q_R\right|\left\langle \Psi^{p-1}\partial_t\Psi, \Xi_{i,j}\right\rangle_{L^2}\right)
\endaligned\right.
\end{eqnarray*}
and
\begin{eqnarray*}
\left\{\aligned
&\varsigma_{i,j,l}=-B_{**}\left\langle \Psi^{p-1}w_{j}, \Xi_{i,j}\right\rangle_{L^2}+\mathcal{O}\left(Q_R^{\frac{3p-1}{2}}\left|\log Q_R\right|\left\langle \Psi_R^{p-1}w_{R,j}, \Xi_{i,j}\right\rangle_{L^2}\right),\\
&\varsigma_{R,i,j,l}=-B_{**}\left\langle \Psi_R^{p-1}w_{R,j}, \Xi_{i,j}\right\rangle_{L^2}+\mathcal{O}\left(Q_R^{\frac{3p-1}{2}}\left|\log Q_R\right|\left\langle \Psi^{p-1}w_{j}, \Xi_{i,j}\right\rangle_{L^2}\right)
\endaligned\right.
\end{eqnarray*}
for all $1\leq l\leq d$, where $B_*=\left\|\Psi^{p-1}\partial_t\Psi\right\|_{L^2}^2$ and $B_{**}=\left\|\Psi^{p-1}w_d\right\|_{L^2}^2$.  Thus, by \eqref{eq0026} and Lemma~\ref{lem0005}, the oddness of $\partial_t\Psi$ in $\bbr$ and the oddness of $w_d$ on $\mathbb{S}^{d-1}$,
\begin{eqnarray}\label{eqqnew0023}
c_{1,1}\sim c_{R,1,1}\sim Q_R\quad\text{and}\quad \sum_{l=1}^{d}\left(\left|\varsigma_{1,1,l}\right|+\left|\varsigma_{R,1,1,l}\right|\right)=0.
\end{eqnarray}
Similarly, we also have
\begin{eqnarray}\label{eqqnew0024}
|c_{1,2}|+|c_{R,1,2}|=0\quad\text{and}\quad\sum_{l=1}^{d}\left(\left|\varsigma_{1,2,l}\right|+\left|\varsigma_{R,1,2,l}\right|\right)\lesssim Q_R.
\end{eqnarray}
Again, by \eqref{eq0026} and Lemma~\ref{lem0005}, the oddness of $\partial_t\Psi$ in $\bbr$ and the oddness of $w_d$ on $\mathbb{S}^{d-1}$, we have
\begin{eqnarray}\label{eqqnew0025}
c_{2,1}\sim c_{R,2,1}\sim Q_R^p,\quad \sum_{l=1}^{d}\left(\left|\varsigma_{2,1,l}\right|+\left|\varsigma_{R,2,1,l}\right|\right)=0
\end{eqnarray}
and
\begin{eqnarray}\label{eqqnew0026}
c_{2,2}=c_{R,2,2}=0,\quad \sum_{l=1}^{d}\left(\left|\varsigma_{2,2,l}\right|+\left|\varsigma_{R,2,2,l}\right|\right)\sim1
\end{eqnarray}
It follows that $\left\|\vartheta_{1,1}+\beta\vartheta_{1,2}+\beta^2\vartheta_{2,1}+\beta^3\vartheta_{2,2}\right\|_{L^2}^2\sim\beta^3+Q_R$.  On the other hand, by Lemmas~\ref{lem0005} and \ref{lemq1001},
\begin{eqnarray*}
\left\|\mathcal{N}_{\varrho_*,rem,1}\right\|_{L^2}=o(\beta^4+Q_R).
\end{eqnarray*}
Thus, we must have $\|f\|_{H^{-1}}\sim\beta^3+Q_R$.
\end{proof}

\vskip0.12in

We decompose $\varrho=\widetilde{\alpha}\Psi+\widetilde{\alpha}_R\Psi_R+\widetilde{\varrho}^{\perp}$ where $\widetilde{\alpha}$ and $\widetilde{\alpha}_R$ are chosen such that $\left\langle \Psi, \widetilde{\varrho}^{\perp}\right\rangle=0$ and $\left\langle \Psi_R, \widetilde{\varrho}^{\perp}\right\rangle=0$.  It follows from Lemma~\ref{lemq1001} that
\begin{eqnarray*}
\widetilde{\alpha}\|\Psi\|^2+\mathcal{O}\left(Q_R\right)\widetilde{\alpha}_R=\left\langle \Xi_{1,1}+\beta\Xi_{1,2}+\beta^2\Xi_{2,1}+\beta^3\Xi_{2,2}, \Psi\right\rangle_{L^2}
\end{eqnarray*}
and
\begin{eqnarray*}
\widetilde{\alpha}_R\|\Psi\|^2+\mathcal{O}\left(Q_R\right)\widetilde{\alpha}=\left\langle \Xi_{1,1}+\beta\Xi_{1,2}+\beta^2\Xi_{2,1}+\beta^3\Xi_{2,2}, \Psi_R\right\rangle_{L^2}.
\end{eqnarray*}
By \eqref{eqqnew0005} and \eqref{eqqnew0006},
\begin{eqnarray}\label{eqqnew0021}
\widetilde{\alpha}=(1+o(1))\widetilde{\alpha}=\left(\frac{p\int_{\mathcal{C}}\Psi^{p-1}d\mu}{\|\Psi\|^2}+o(1)\right)Q_R+\left(\frac{A_p\int_{\mathcal{C}}\Psi^{2p}d\mu}{\|\Psi\|^2}+o(1)\right)\beta^2.
\end{eqnarray}
\begin{proposition}\label{propq0003}
Let $d\geq2$, $a<0$ and $b=b_{FS}(a)$.  If $Q_R\lesssim \beta^3$ then $\left\|\beta\Phi_R+\widetilde{\varrho}^{\perp}\right\|\sim\left\|f\right\|_{H^{-1}}^{\frac13}$ as $\beta\to0$.
\end{proposition}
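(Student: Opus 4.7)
The plan is to reduce everything to size comparisons in $\beta$, using the hypothesis $Q_R\lesssim\beta^3$ to make $\beta$ the driving small parameter. First I would record that, under this hypothesis, Proposition~\ref{propq0001} gives $\|f\|_{H^{-1}}\sim\beta^3+Q_R\sim\beta^3$, so that $\|f\|_{H^{-1}}^{1/3}\sim\beta$; the goal thus becomes the sharp two-sided bound $\|\beta\Phi_R+\widetilde{\varrho}^\perp\|\sim\beta$. Next I would insert the decomposition $\varrho=\varrho_{1,1}+\beta\varrho_{1,2}+\beta^{2}\varrho_{2,1}+\beta^{3}\varrho_{2,2}+\varrho_{1,1,*}$ from Lemmas~\ref{lemq1001}--\ref{lemq1002} and Proposition~\ref{propq1001}, and note that the hypothesis $Q_R\lesssim\beta^{3}$ collapses the estimate in Proposition~\ref{propq1001} to $\|\varrho_*\|\sim\beta^2$ in every range of $p$ (since $Q_R^{(p/2)\wedge 1}|\log Q_R|^{\chi_{p=2}}\ll\beta^{2}$), while Lemma~\ref{lemq1002} gives $\|\varrho_{1,1,*}\|\ll\beta^{2}$ as its source is linear in $\varrho_{1,1}$. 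So $\|\varrho\|\lesssim\beta^{2}$.

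The projection step uses formula \eqref{eqqnew0021}: under $Q_R\lesssim\beta^{3}$ it reads $\widetilde\alpha=(1+o(1))\widetilde\alpha_R=\bigl(A_p\|\Psi\|^{-2}\int_{\mathcal C}\Psi^{2p}d\mu+o(1)\bigr)\beta^{2}+o(\beta^{2})$. Hence $|\widetilde\alpha|+|\widetilde\alpha_R|\lesssim\beta^{2}$, and subtracting from $\varrho$ yields $\|\widetilde\varrho^\perp\|\le\|\varrho\|+|\widetilde\alpha|\,\|\Psi\|+|\widetilde\alpha_R|\,\|\Psi_R\|\lesssim\beta^{2}$. I would also record the elementary cross-bubble computation $\|\Phi_R\|^{2}=\|w_d\|^{2}+\|w_{R,d}\|^{2}+2\langle w_d,w_{R,d}\rangle=2\|w_d\|^{2}+O(Q_R)\sim 1$, which follows from the exponential decay of $w_d$ away from the waist of $\Psi$ together with the translation invariance.

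With these quantitative estimates in hand, the upper bound is immediate by the triangle inequality,
\[
\|\beta\Phi_R+\widetilde\varrho^\perp\|\le\beta\,\|\Phi_R\|+\|\widetilde\varrho^\perp\|\lesssim\beta+\beta^{2}\lesssim\beta.
\]
For the matching lower bound I would expand the square,
\[
\|\beta\Phi_R+\widetilde\varrho^\perp\|^{2}=\beta^{2}\|\Phi_R\|^{2}+2\beta\langle\Phi_R,\widetilde\varrho^\perp\rangle+\|\widetilde\varrho^\perp\|^{2},
\]
and use Cauchy--Schwarz on the cross term, $|2\beta\langle\Phi_R,\widetilde\varrho^\perp\rangle|\lesssim\beta\cdot\beta^{2}=\beta^{3}$, to conclude $\|\beta\Phi_R+\widetilde\varrho^\perp\|^{2}=\beta^{2}\|\Phi_R\|^{2}+O(\beta^{3})\sim\beta^{2}$, as needed.

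The main obstacle I anticipate is not any single estimate but confirming the dominance $\|\varrho_{1,1,*}\|\ll\beta$ uniformly across the ranges $p\ge 7/3$ and $1<p<7/3$: one must feed the weighted bounds on $\varrho_{1,1}$ from Lemma~\ref{lemq1001} (which differ between regimes) into Lemma~\ref{lemq1002} and check that the resulting control on $\varrho_{1,1,*}$ never exceeds $Q_R^{(p/2)\wedge 1}|\log Q_R|^{\chi_{p=2}}$, so that the assumption $Q_R\lesssim\beta^{3}$ indeed keeps it $o(\beta)$. Once this uniform smallness is verified, the triangle inequality and the orthogonality of the two bubbles close the argument and give the sharp equivalence $\|\beta\Phi_R+\widetilde\varrho^\perp\|\sim\beta\sim\|f\|_{H^{-1}}^{1/3}$.
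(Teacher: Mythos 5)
Your proof is correct and is essentially a spelled-out version of the paper's one-line argument, which invokes the same three ingredients: Proposition~\ref{propq1001} for $\|\varrho_*\|$, Proposition~\ref{propq0001} for $\|f\|_{H^{-1}}\sim\beta^3$, and \eqref{eqqnew0021} for $\widetilde\alpha,\widetilde\alpha_R=\mathcal{O}(\beta^2)$. One minor imprecision worth flagging: for $1<p<4/3$ the assertion $\|\varrho_*\|\sim\beta^2$ can fail when $Q_R\sim\beta^3$ (since then $Q_R^{p/2}\sim\beta^{3p/2}$ with $3p/2<2$ dominates $\beta^2$), but the only fact your argument actually uses, namely $\|\varrho_*\|\ll\beta$, does hold in every range since $3p/2>1$.
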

\begin{proof}
If $Q_R\lesssim \beta^3$ then by Propositions~\ref{propq1001} and \ref{propq0001} and \eqref{eqqnew0021}, we have $\left\|\beta\Phi_R+\widetilde{\varrho}^{\perp}\right\|\sim\beta\sim\left\|f\right\|_{H^{-1}}^{\frac13}$.
\end{proof}

\vskip0.12in

For the sake of simplicity, we denote $\widetilde{v}=v+\varrho$.  We shall decompose $\widetilde{v}$ as in lemma~\ref{lem0002} by considering the following variational problem:
\begin{eqnarray}\label{eqqnew0011}
\inf_{\overrightarrow{\alpha}_{\nu}\in\left(\bbr_+\right)^{2}, \overrightarrow{s}_{2}\in\bbr^\nu}\left\|\widetilde{v}-\sum_{j=1}^{2}\alpha_j\Psi_{s_j}\right\|^2.
\end{eqnarray}
Clearly, as \eqref{eqn0001}, the variational problem~\eqref{eqqnew0011} has minimizers, say $(\widetilde{\alpha}^*_{1}, \widetilde{\alpha}^*_{2}, \widetilde{s}^*_{1}, \widetilde{s}^*_{2})$, satisfying
\begin{eqnarray}\label{eqqnew1005}
\sum_{j=1}^2\left|\widetilde{\alpha}^*_{j}-1\right|\to0\quad\text{and}\quad\left|\widetilde{s}^*_{1}-\widetilde{s}^*_{2}\right|\to+\infty
\end{eqnarray}
as $R\to+\infty$ and $\beta\to0$.
\begin{proposition}\label{propq0002}
Let $d\geq2$, $a<0$ and $b=b_{FS}(a)$.  Then for $R>0$ sufficiently large and $\beta>0$ sufficiently small, the variational problem~\eqref{eqqnew0011} has a unique minimizer, say $(\widetilde{\alpha}^*_{1}, \widetilde{\alpha}^*_{2}, \widetilde{s}^*_{1}, \widetilde{s}^*_{2})$, satisfying
\begin{eqnarray*}
\widetilde{s}^*_{1}=\mathcal{O}\left(\left(\beta^2+Q_R\right)^2\right),\quad\widetilde{s}^*_{2}=R+\mathcal{O}\left(\left(\beta^2+Q_R\right)^2\right)
\end{eqnarray*}
and
\begin{eqnarray*}
\widetilde{\alpha}^*_{1}-1=\frac{\left\langle\rho, \Psi\right\rangle}{\|\Psi\|^2}+\mathcal{O}\left(\left(\beta^2+Q_R\right)^2\right),\quad \widetilde{\alpha}^*_{2}-1=\frac{\left\langle\rho, \Psi_R\right\rangle}{\|\Psi\|^2}+\mathcal{O}\left(\left(\beta^2+Q_R\right)^2\right).
\end{eqnarray*}
\end{proposition}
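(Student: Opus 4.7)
The plan is to reduce the variational problem to its Euler--Lagrange system and apply the implicit function theorem around a carefully chosen initial guess. Existence of a minimizer is first established by the Struwe-type compactness of \cite[Proposition~3.2]{WW2022}, applied to $\widetilde{v} = \Psi + \Psi_R + \beta\Phi_R + \varrho$ with $\|\beta\Phi_R + \varrho\| \to 0$; this forces minimizing sequences to concentrate at exactly two bubbles and yields $(\widetilde{\alpha}_1^*, \widetilde{\alpha}_2^*, \widetilde{s}_1^*, \widetilde{s}_2^*)$ satisfying \eqref{eqqnew1005}.

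After writing $\alpha_j = 1 + a_j$, $s_1 = t_1$ and $s_2 = R + t_2$, any minimizer is characterised by the four Euler--Lagrange equations $F_j := \langle\widetilde{v} - \alpha_1\Psi_{s_1} - \alpha_2\Psi_{s_2}, \Psi_{s_j}\rangle = 0$ and $G_j := \langle\widetilde{v} - \alpha_1\Psi_{s_1} - \alpha_2\Psi_{s_2}, \partial_t\Psi_{s_j}\rangle = 0$ for $j = 1, 2$. Using the decomposition $\varrho = \widetilde{\alpha}\,\Psi + \widetilde{\alpha}_R\Psi_R + \widetilde{\varrho}^{\perp}$ introduced just before the proposition, the natural initial guess is $(a_1, a_2, t_1, t_2) = (\widetilde{\alpha}, \widetilde{\alpha}_R, 0, 0)$. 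At this guess $F_1 = F_2 = 0$ holds exactly: the defining orthogonalities of $\widetilde{\varrho}^{\perp}$ eliminate its contribution, while $\langle w_d, \Psi\rangle = \langle w_{R,d}, \Psi\rangle = 0$ by Remark~\ref{rmk0001} together with the spherical-harmonic identity $\int_{\mathbb{S}^{d-1}}\theta_d\,d\sigma = 0$. For the $G_j$ the same spherical averaging gives $\beta\langle\Phi_R,\partial_t\Psi\rangle = \beta\langle\Phi_R,\partial_t\Psi_R\rangle = 0$, and $\langle\varrho,\partial_t\Psi\rangle = \langle\varrho,\partial_t\Psi_R\rangle = 0$ comes directly from Lemma~\ref{lemq1001}; the only residue is a cross-interaction term such as $\widetilde{\alpha}_R\langle\Psi_R,\partial_t\Psi\rangle = O(\widetilde{\alpha}_R\, Q_R) = O((\beta^2+Q_R)^2)$, where the first factor is controlled by \eqref{eqqnew0021} and the second is the usual exponential decay.

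Next I would compute the Jacobian $D_{(a,t)}(F_1, F_2, G_1, G_2)$ at the initial guess. Its leading block is $\mathrm{diag}(\|\Psi\|^2, \|\Psi\|^2, -g''(0), -g''(0))$ with $g(s) := \langle\Psi, \Psi_s\rangle$ (so that $-g''(0) > 0$, since $g$ attains its maximum at $s = 0$), and the off-diagonal entries are exponentially small cross-interactions of order $O(Q_R)$. Hence the Jacobian is uniformly invertible for $R$ large and $\beta$ small. A quantitative implicit function theorem (equivalently a Banach contraction mapping) then produces a unique zero of $(F_1, F_2, G_1, G_2)$ in an $O(1)$ neighborhood of the initial guess, lying at distance $O((\beta^2+Q_R)^2)$ from that guess. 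Converting via $\widetilde{\alpha} = \langle\varrho,\Psi\rangle/\|\Psi\|^2 - \widetilde{\alpha}_R\langle\Psi_R,\Psi\rangle/\|\Psi\|^2 = \langle\varrho,\Psi\rangle/\|\Psi\|^2 + O((\beta^2+Q_R)^2)$, and similarly for $\widetilde{\alpha}_R$, produces the asymptotic formulas in the statement.

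The main technical obstacle is the bookkeeping at the initial guess: every contribution to $G_j$ that is linear in $\beta$ or linear in $Q_R$ must be shown either to vanish exactly (via the built-in orthogonality of $\widetilde{\varrho}^{\perp}$, the orthogonality of $\varrho$ to $\partial_t\Psi$ and $\partial_t\Psi_R$ from Lemma~\ref{lemq1001}, or spherical-harmonic averaging) or to carry a second small factor placing it in the $(\beta^2+Q_R)^2$ regime. Uniqueness of the minimizer itself (and not merely of the IFT zero) is then inherited from the same Jacobian coercivity, which translates into strict convexity of the reduced quadratic functional near the guess.
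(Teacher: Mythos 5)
Your proposal is correct and gives a valid proof, but it organizes the argument differently from the paper. The paper works directly with the Euler--Lagrange system \eqref{eqqnew0017}--\eqref{eqqnew0016} and does a two-pass bootstrap: it first uses the nondegeneracy of $s\mapsto\langle\Psi,\Psi_s^{p-1}\partial_t\Psi_s\rangle_{L^2}$ at $s=0$ to obtain the first-order bounds $\widetilde{s}_1^*=\mathcal{O}(\beta^2+Q_R)$, $\widetilde{s}_2^*=R+\mathcal{O}(\beta^2+Q_R)$, $\widetilde{\alpha}_j^*=1+\mathcal{O}(\beta^2+Q_R)$, and then Taylor-expands \eqref{eqqnew0016} once more (using oddness of $\partial_t^3\Psi$ and the orthogonality of $\varrho$) to upgrade to $\mathcal{O}((\beta^2+Q_R)^2)$. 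You instead linearize around the initial guess $(\alpha_1,\alpha_2,s_1,s_2)=(1+\widetilde{\alpha},\,1+\widetilde{\alpha}_R,\,0,\,R)$, where the decomposition $\varrho=\widetilde{\alpha}\Psi+\widetilde{\alpha}_R\Psi_R+\widetilde{\varrho}^{\perp}$ makes $F_1=F_2=0$ exactly and reduces the $G_j$ residuals to a single cross term $\widetilde{\alpha}_R\langle\Psi_R,\partial_t\Psi\rangle$ (respectively $\widetilde{\alpha}\langle\Psi,\partial_t\Psi_R\rangle$), of size $\mathcal{O}((\beta^2+Q_R)Q_R)\leq\mathcal{O}((\beta^2+Q_R)^2)$. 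A quantitative IFT then lands the zero at distance $\mathcal{O}((\beta^2+Q_R)^2)$ in one shot, bypassing the bootstrap. Both approaches rest on the same facts (orthogonality of $\varrho$ to $\partial_t\Psi$, $\partial_t\Psi_R$, $w_l$, $w_{R,l}$; spherical averaging of $\Phi_R$; exponential decay of cross interactions), but the IFT route buys a cleaner one-step derivation at the cost of checking Jacobian invertibility, while the paper's direct estimate avoids IFT machinery entirely.

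Two minor remarks. First, the orthogonality of $\varrho$ to $\partial_t\Psi$ and $\partial_t\Psi_R$ requires \emph{both} Lemma~\ref{lemq1001} (for $\varrho_*$) and Lemma~\ref{lemq1002} (for $\varrho_{1,1,*}$); you cite only the former. Second, the symmetric $2\times 2$ sub-block of your Jacobian in the scaling variables has entries $\alpha_j\|\partial_t\Psi\|^2+o(1)$ rather than $-g''(0)$ with $g(s)=\langle\Psi,\Psi_s\rangle$; these coincide (both equal $\|\partial_t\Psi\|^2$) after integration by parts, so the conclusion is unaffected, but the derivation as stated glosses over the contribution of the remainder $\widetilde{v}-\alpha_1\Psi_{s_1}-\alpha_2\Psi_{s_2}$ to $\partial_{t_1}G_1$, which is $\mathcal{O}(\|\beta\Phi_R+\widetilde{\varrho}^{\perp}\|)=o(1)$ and must be observed to be lower order.
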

\begin{proof}
Since $(\widetilde{\alpha}^*_{1}, \widetilde{\alpha}^*_{2}, \widetilde{s}^*_{1}, \widetilde{s}^*_{2})$ is a minimizer of the variational problem~\eqref{eqqnew0011} and $\Psi$ and $\partial_t\Psi$ are solutions of \eqref{eq0006} and \eqref{eq0016}, respectively, we have
\begin{eqnarray}\label{eqqnew0017}
0=\left\langle \widetilde{v}-\sum_{i=1}^{2}\widetilde{\alpha}^*_{i}\Psi_{\widetilde{s}^*_{i}}, \Psi_{\widetilde{s}^*_{j}}\right\rangle=\left\langle \widetilde{v}-\sum_{i=1}^{2}\widetilde{\alpha}^*_{i}\Psi_{\widetilde{s}^*_{i}}, \Psi_{\widetilde{s}^*_{j}}^{p}\right\rangle_{L^2}
\end{eqnarray}
and
\begin{eqnarray}\label{eqqnew0016}
0=\left\langle \widetilde{v}-\sum_{i=1}^{2}\widetilde{\alpha}^*_{i}\Psi_{\widetilde{s}^*_{i}}, \partial_t\Psi_{\widetilde{s}^*_{j}}\right\rangle=\left\langle \widetilde{v}-\sum_{i=1}^{2}\widetilde{\alpha}^*_{i}\Psi_{\widetilde{s}^*_{i}}, p\Psi_{\widetilde{s}^*_{j}}^{p-1}\partial_t\Psi_{\widetilde{s}^*_{j}}\right\rangle_{L^2}
\end{eqnarray}
for all $j=1,2$.  By the oddness of $w_d$ on $\mathbb{S}^{d-1}$, the oddness of $\partial_t\Psi$ in $\bbr$, \eqref{eqqnew1005} and \eqref{eqqnew0016}, we have $\sum_{i=1}^{2}|\widetilde{\alpha}^*_{i}|\lesssim1$ and
\begin{eqnarray*}
\left\langle \Gamma_R+\varrho, p\Psi_{\widetilde{s}^*_{j}}^{p-1}\partial_t\Psi_{\widetilde{s}^*_{j}}\right\rangle_{L^2}=\mathcal{O}\left(\left\langle \Psi_{\widetilde{s}^*_{i}}, \Psi_{\widetilde{s}^*_{j}}^{p-1}\partial_t\Psi_{\widetilde{s}^*_{j}}\right\rangle_{L^2}\right)=\mathcal{O}\left(Q_R^{\frac{\left|\widetilde{s}^*_{1}-\widetilde{s}^*_{2}\right|}{R}}\right).
\end{eqnarray*}
Recall that $\varrho=\varrho_{1,1}+\beta\varrho_{1,2}+\beta^2\varrho_{2,1}+\beta^3\varrho_{2,2}$.  Thus, by Lemma~\ref{lemq1001} and the fact that $\partial_t\Psi$ solves \eqref{eq0016}, we have
\begin{eqnarray*}
\left\langle \varrho, p\Psi_{\widetilde{s}^*_j}^{p-1}\partial_t\Psi_{\widetilde{s}^*_j}\right\rangle_{L^2}&=&\left\langle \Xi_{1,1}+\vartheta_{1,1}+\beta(\Xi_{1,2}+\vartheta_{1,2})+\beta^2(\Xi_{2,1}+\vartheta_{2,1}), \partial_t\Psi_{\widetilde{s}^*_j}\right\rangle_{L^2}\\
&&+\left\langle \beta^3(\Xi_{2,2}+\vartheta_{2,2}), \partial_t\Psi_{\widetilde{s}^*_j}\right\rangle_{L^2}+\left\langle p\Gamma_R^{p-1}\varrho, \partial_t\Psi_{\widetilde{s}^*_j}\right\rangle_{L^2}.
\end{eqnarray*}
It follows from \eqref{eq0026}, \eqref{eqqnew0023}, \eqref{eqqnew0024}, \eqref{eqqnew0025}, \eqref{eqqnew0026}, the oddness of $w_d$ on $\mathbb{S}^{d-1}$, the oddness of $\partial_t\Psi$ in $\bbr$ and Lemmas~\ref{lem0005} and \ref{lemq1001} that
\begin{eqnarray}\label{eqqnew0018}
\left|\left\langle\varrho, \Psi_{\widetilde{s}^*_j}^{p-1}\partial_t\Psi_{s_j}\right\rangle_{L^2}\right|&\lesssim&\left\langle \left|\Gamma_R^{p-1}\varrho\right|, \Psi_{\widetilde{s}^*_j}\right\rangle_{L^2}+\left\langle \left|\Xi_{1,1}+\beta^2\Psi_R^{p-2}w_{R,d}^2\chi_{\mathcal{B}_R}\right|, \Psi_{\widetilde{s}^*_j}\right\rangle_{L^2}\notag\\
&&+(c_{1,1}+c_{R,1,1})+\beta^2(c_{2,1}+c_{R,2,1})\notag\\
&\lesssim&\beta^2+Q_R.
\end{eqnarray}
On the other hand, for every $s_j\leq\frac{R}{2}$, by Lemma~\ref{lem0005},
\begin{eqnarray}\label{eqqnew0019}
\left\langle\Gamma_R, \Psi_{s_j}^{p-1}\partial_t\Psi_{s_j}\right\rangle_{L^2}=\left\langle\Psi, \Psi_{s_j}^{p-1}\partial_t\Psi_{s_j}\right\rangle_{L^2}+\mathcal{O}\left(Q_R^{\frac{R-s_j}{R}}\right).
\end{eqnarray}
Note that by the evenness of $\Psi$ and the oddness of $\partial_t\Psi$ in $\bbr$, $\left\langle\Psi, \Psi_{s_j}^{p-1}\partial_t\Psi_{s_j}\right\rangle_{L^2}=0$ has a uniquely nondegenerate solution $s_j=0$ on $(-\infty, \frac{R}{2}]$.  Thus, by \eqref{eqqnew0018}, \eqref{eqqnew0019}, the symmetry of $\Gamma_R$ about $s=\frac{R}{2}$, for $R>0$ sufficiently large, the solution of \eqref{eqqnew0017} and \eqref{eqqnew0016} must satify
\begin{eqnarray}\label{eqqnew0031}
\widetilde{s}^*_{1}=\mathcal{O}\left(\beta^2+Q_R\right)\quad\text{and}\quad \widetilde{s}^*_{2}=R+\mathcal{O}\left(\beta^2+Q_R\right).
\end{eqnarray}
which, together with \eqref{eqqnew0017}, implies that
\begin{eqnarray*}
\left\langle \Gamma_R+\varrho, \Psi_{\widetilde{s}^*_{j}}^{p}\right\rangle_{L^2}=\widetilde{\alpha}^*_{j}\|\Psi\|^2+\mathcal{O}\left(\left\langle \Psi_{\widetilde{s}^*_{i}}, \Psi_{\widetilde{s}^*_{j}}^{p}\right\rangle_{L^2}\right)=\widetilde{\alpha}^*_{j}\|\Psi\|^2+\mathcal{O}\left(Q_R\right)
\end{eqnarray*}
for all $j=1,2$.  Similar to \eqref{eqqnew0018} and \eqref{eqqnew0019}, we have
\begin{eqnarray*}
\left\langle \Gamma_R+\varrho, \Psi_{\widetilde{s}^*_{j}}^{p}\right\rangle_{L^2}=\|\Psi\|^2+\mathcal{O}\left(\beta^2+Q_R\right).
\end{eqnarray*}
Thus, we also have
\begin{eqnarray}\label{eqqnew0030}
\widetilde{\alpha}^*_{j}=1+\mathcal{O}\left(\beta^2+Q_R\right).
\end{eqnarray}
Now, by \eqref{eqqnew0016} once more, the oddness of $\partial_t^3\Psi$, the Taylor expansion and the orthogonal conditions of $\rho$ given in \eqref{eqqnew1002}, we have
\begin{eqnarray*}
0&=&\left\langle \widetilde{v}-\sum_{i=1}^{2}\widetilde{\alpha}^*_{i}\Psi_{\widetilde{s}^*_{i}}, \partial_t\Psi_{\widetilde{s}^*_{j}}\right\rangle\notag\\
&=&\left\langle \Psi^p, \partial_t\Psi_{\widetilde{s}^*_{1}}\right\rangle_{L^2}+\left\langle \Psi_R-\widetilde{\alpha}^*_{2}\Psi_{\widetilde{s}^*_{2}}, p\Psi_{\widetilde{s}^*_{1}}^{p-1}\partial_t\Psi_{\widetilde{s}^*_{1}}\right\rangle_{L^2}+\left\langle \varrho, p\Psi_{\widetilde{s}^*_{1}}^{p-1}\partial_t\Psi_{\widetilde{s}^*_{1}}\right\rangle_{L^2}\notag\\
&=&-\left\langle \Psi^p, \partial_t^2\Psi\right\rangle_{L^2}\widetilde{s}^*_{1}+\mathcal{O}\left(\left(\widetilde{s}^*_{1}\right)^3\right)+(1-\widetilde{\alpha}^*_{2})\left\langle \Psi_{\widetilde{s}^*_{2}}, p\Psi_{\widetilde{s}^*_{1}}^{p-1}\partial_t\Psi_{\widetilde{s}^*_{1}}\right\rangle_{L^2}\notag\\
&&+\left\langle \partial_t\Psi_R, p\Psi_{\widetilde{s}^*_{1}}^{p-1}\partial_t\Psi_{\widetilde{s}^*_{1}}\right\rangle_{L^2}(\widetilde{s}^*_{2}-R)+\mathcal{O}\left(\left(\widetilde{s}^*_{2}-R\right)^2+\|\varrho\|_{L^\infty}\widetilde{s}^*_{1}\right),
\end{eqnarray*}
which, together with \eqref{eqqnew0031} and \eqref{eqqnew0030}, implies that $\widetilde{s}^*_{1}=\mathcal{O}\left(\left(\beta^2+Q_R\right)^2\right)$.  Similarly, we also have $\widetilde{s}^*_{2}=R+\mathcal{O}\left(\left(\beta^2+Q_R\right)^2\right)$.  Again, by \eqref{eqqnew0017}, \eqref{eqqnew0030} and the Taylor expansion,
\begin{eqnarray*}
\left\langle \Gamma_R+\varrho, \Psi_{\widetilde{s}^*_{1}}^{p}\right\rangle_{L^2}&=&\widetilde{\alpha}^*_{1}\|\Psi\|^2+\left\langle \Psi_{\widetilde{s}^*_{2}}, \Psi_{\widetilde{s}^*_{1}}^{p}\right\rangle_{L^2}+\mathcal{O}\left(\left(\beta^2+Q_R\right)^2\right)\\
&=&\widetilde{\alpha}^*_{1}\|\Psi\|^2+\left\langle \Psi_{R}, \Psi_{\widetilde{s}^*_{1}}^{p}\right\rangle_{L^2}+\mathcal{O}\left(\left(\beta^2+Q_R\right)^2\right),
\end{eqnarray*}
which, together with
\begin{eqnarray*}
\left\langle \Gamma_R+\varrho, \Psi_{\widetilde{s}^*_{1}}^{p}\right\rangle_{L^2}=\|\Psi\|^2+\left\langle \Psi_{R}, \Psi_{\widetilde{s}^*_{1}}^{p}\right\rangle_{L^2}+\left\langle\rho, \Psi_{\widetilde{s}^*_{1}}^{p}\right\rangle_{L^2},
\end{eqnarray*}
implies that $\widetilde{\alpha}^*_{1}-1=\frac{\left\langle\varrho, \Psi\right\rangle}{\|\Psi\|^2}+\mathcal{O}\left(\left(\beta^2+Q_R\right)^2\right)$.
Similarly, we also have $\widetilde{\alpha}^*_{2}-1=\frac{\left\langle\varrho, \Psi_R\right\rangle}{\|\Psi\|^2}+\mathcal{O}\left(\left(\beta^2+Q_R\right)^2\right)$.
\end{proof}

\vskip0.12in

Let $\widetilde{v}_\pm=\max\{\pm\widetilde{v}, 0\}$.  Then $\widetilde{v}=\widetilde{v}_+-\widetilde{v}_-$ and by \eqref{eqqnew0020},
\begin{eqnarray}\label{eqqnew1020}
-\Delta_{\theta}\widetilde{v}_+-\partial_t^2\widetilde{v}_++\Lambda_{FS}\widetilde{v}_+-\widetilde{v}_+^p=f+\mathcal{G}(\widetilde{v}_-):=f_{\widetilde{v}_+},
\end{eqnarray}
where $\mathcal{G}(\widetilde{v}_-)=-\Delta_{\theta}\widetilde{v}_--\partial_t^2\widetilde{v}_-+\Lambda_{FS}\widetilde{v}_--\widetilde{v}_-^p$.

\noindent\textbf{Proof of $(b)$ of Theorem~\ref{thmn0001}:}
Recall that we have the decomposition
\begin{eqnarray}\label{eqqnew0028}
\widetilde{v}=v+\widetilde{\alpha}\Psi+\widetilde{\alpha}_R\Psi_R+\widetilde{\varrho}^{\perp}
\end{eqnarray}
in $H^1(\mathcal{C})$,
where by the orthogonal conditions of $\widetilde{\varrho}^{\perp}$ and \eqref{eqqnew0021},
\begin{eqnarray*}
\left\langle\varrho, \Psi\right\rangle=\widetilde{\alpha}\|\Psi\|^2+\mathcal{O}\left(\left(\beta^2+Q_R\right)^2\right)\quad\text{and}\quad\left\langle\varrho, \Psi_R\right\rangle=\widetilde{\alpha}_R\|\Psi\|^2+\mathcal{O}\left(\left(\beta^2+Q_R\right)^2\right).
\end{eqnarray*}
It follows from Proposition~\ref{propq0002} that
\begin{eqnarray*}
\widetilde{\alpha}^*_{1}=1+\widetilde{\alpha}+\mathcal{O}\left(\left(\beta^2+Q_R\right)^2\right)\quad\text{and}\quad\widetilde{\alpha}^*_{2}=1+\widetilde{\alpha}_R+\mathcal{O}\left(\left(\beta^2+Q_R\right)^2\right),
\end{eqnarray*}
which, together with Proposition~\ref{propq0002} once more and the Taylor expansion, implies that
\begin{eqnarray}\label{eqqnew0029}
\widetilde{v}&=&\sum_{j=1}^2\widetilde{\alpha}^*_{j}\Psi_{\widetilde{s}^*_{j}}+\widetilde{\varrho}\notag\\
&=&\Gamma_R+\widetilde{\alpha}\Psi+\widetilde{\alpha}_R\Psi_R+\widetilde{\varrho}+\mathcal{O}\left(\left(\beta^2+Q_R\right)^2\right)
\end{eqnarray}
in $H^1(\mathcal{C})$.  By \eqref{eqqnew0028} and \eqref{eqqnew0029}, we have
\begin{eqnarray*}
\widetilde{\varrho}=\beta\Phi_R+\widetilde{\varrho}^{\perp}+\mathcal{O}\left(\left(\beta^2+Q_R\right)^2\right).
\end{eqnarray*}
Thus, by \eqref{eqqnew0011}, Propositions~\ref{propq0003} and \ref{propq0002}, we have
\begin{eqnarray}\label{eqqnew0032}
\inf_{\overrightarrow{\alpha}_{\nu}\in\left(\bbr_+\right)^{2}, \overrightarrow{s}_{2}\in\bbr^\nu}\left\|\widetilde{v}-\sum_{j=1}^{2}\alpha_j\Psi_{s_j}\right\|\sim\|f\|_{H^{-1}}^{\frac13}.
\end{eqnarray}
By Lemma~\ref{lemq1001}, we know that $|\beta\Phi_R+\varrho_{1,1}+\beta\varrho_{1,2}|\lesssim\Gamma_R$ in $\mathcal{C}$ for sufficiently small $\beta$ and sufficiently large $R$.  Thus, $0\leq\widetilde{v}_-\leq\beta^2\varrho_{2,1}+\beta^3\varrho_{2,2}|$ in $\mathcal{C}$.  It follows from \eqref{eqqnew0020}, Lemma~\ref{lemq1001} and Proposition~\ref{propq0001} that
\begin{eqnarray*}
\|\widetilde{v}_-\|^2\lesssim\left\langle f, \widetilde{v}_-\right\rangle_{L^2}=\mathcal{O}\left(\left(\beta^2+Q_R\right)^2\right),
\end{eqnarray*}
which, together with \eqref{eqqnew1020} and \eqref{eqqnew0032}, implies that $\widetilde{v}_+$ is the desired functions.
\hfill$\Box$

\begin{remark}\label{rmkn0001}
The optimal example of Theorem~\ref{thmn0001} in this section, given by $\widetilde{v}=v+\varrho$, precisely describes the relation between $\|f\|_{H^{-1}}$ and
\begin{eqnarray*}
\inf_{\overrightarrow{\alpha}_{\nu}\in\left(\bbr_+\right)^{2}, \overrightarrow{s}_{2}\in\bbr^\nu}\left\|\widetilde{v}-\sum_{j=1}^{2}\alpha_j\Psi_{s_j}\right\|.
\end{eqnarray*}
Indeed, we have $\|f\|_{H^{-1}}\sim\beta^3+Q_R$ and
\begin{eqnarray*}
\inf_{\overrightarrow{\alpha}_{\nu}\in\left(\bbr_+\right)^{2}, \overrightarrow{s}_{2}\in\bbr^\nu}\left\|\widetilde{v}-\sum_{j=1}^{2}\alpha_j\Psi_{s_j}\right\|\sim \beta+\left\{\aligned
&Q_R,\quad p>2,\\
&Q_R\left|\log Q_R\right|,\quad p=2,\\
&Q_R^{\frac{p}{2}},\quad 1<p<2.
\endaligned
\right.
\end{eqnarray*}
If the interaction of two bubbles is much smaller than their projections on nontrivial kernel, that is, $\beta^3\gtrsim Q_R$, then we have
\begin{eqnarray*}
\inf_{\overrightarrow{\alpha}_{\nu}\in\left(\bbr_+\right)^{2}, \overrightarrow{s}_{2}\in\bbr^\nu}\left\|\widetilde{v}-\sum_{j=1}^{2}\alpha_j\Psi_{s_j}\right\|\sim\|f\|_{H^{-1}}^{\frac13}.
\end{eqnarray*}
If the interaction of two bubbles is much large than their projections on nontrivial kernel, that is,
\begin{eqnarray*}
\beta\lesssim\left\{\aligned
&Q_R,\quad p>2,\\
&Q_R\left|\log Q_R\right|,\quad p=2,\\
&Q_R^{\frac{p}{2}},\quad 1<p<2,
\endaligned
\right.
\end{eqnarray*}
then we have
\begin{eqnarray*}
\inf_{\overrightarrow{\alpha}_{\nu}\in\left(\bbr_+\right)^{2}, \overrightarrow{s}_{2}\in\bbr^\nu}\left\|\widetilde{v}-\sum_{j=1}^{2}\alpha_j\Psi_{s_j}\right\|\sim\left\{\aligned
&\|f\|_{H^{-1}},\quad p>2,\\
&\|f\|_{H^{-1}}\left|\log \|f\|_{H^{-1}}\right|,\quad p=2,\\
&\|f\|_{H^{-1}}^{\frac{p}{2}},\quad 1<p<2.
\endaligned
\right.
\end{eqnarray*}
If the interaction of two bubbles is somehow comparable with their projections on nontrivial kernel, that is $\beta^3\lesssim Q_R$ and
\begin{eqnarray*}
\beta\gtrsim\left\{\aligned
&Q_R,\quad p>2,\\
&Q_R\left|\log Q_R\right|,\quad p=2,\\
&Q_R^{\frac{p}{2}},\quad 1<p<2,
\endaligned
\right.
\end{eqnarray*}
then we have
\begin{eqnarray*}
\inf_{\overrightarrow{\alpha}_{\nu}\in\left(\bbr_+\right)^{2}, \overrightarrow{s}_{2}\in\bbr^\nu}\left\|\widetilde{v}-\sum_{j=1}^{2}\alpha_j\Psi_{s_j}\right\|\sim&\|f\|_{H^{-1}}^{t}
\end{eqnarray*}
with
\begin{eqnarray*}
\left\{\aligned
&\frac13\geq t\leq1,\quad p>2,\\
&\frac13\leq t\leq 1+o(1),\quad p=2,\\
&\frac13\leq t\leq\frac{p}{2},\quad 1<p<2.
\endaligned
\right.
\end{eqnarray*}
\end{remark}

\end{document}